\theoremstyle{plain}
\newtheorem{lemma}{Lemma}[section]
\newtheorem{definition}[lemma]{Definition}
\newtheorem{proposition}[lemma]{Proposition}
\newtheorem{corollary}[lemma]{Corollary}
\newtheorem{theorem}[lemma]{Theorem}
\newtheorem{remark}[lemma]{Remark}
\newtheorem*{ack}{Acknowledgements}
\DeclareMathOperator{\Ric}{Ric}
\newcommand{\Lie}[1]{\operatorname{\textsl{#1}}}
\newcommand{\lie}[1]{\operatorname{\mathfrak{#1}}}
\newcommand{\un}{\lie{u}}
\newcommand{\Gtwo}{\ifmmode{{\rm G}_2}\else{${\rm G}_2$}\fi}
 \newcommand{\cyclic}{\mathop{\kern0.9ex{{+}\kern-2.2ex\raise-.28ex\hbox{\Large\hbox
 {$\circlearrowright$}}}}}
\def\sideremark#1{\ifvmode\leavevmode\fi\vadjust{\vbox to0pt{\vss
 \hbox to 0pt{\hskip\hsize\hskip1em
 \vbox{\hsize2.5cm\tiny\raggedright\pretolerance10000
 \noindent #1\hfill}\hss}\vbox to8pt{\vfil}\vss}}}%
\newfont{\eusm}{eusm10 scaled \magstep1}
\newfont{\eusmiii}{eusm10 scaled \magstep3}
\newcommand{\comp}{\makebox[7pt]{\raisebox{1.5pt}{\tiny $\circ$}}}
\newcommand{\trace}{\mathop{\rm trace}}
\title{Homogeneous nearly K\"ahler manifolds}
\author{J.~C.~Gonz{\'a}lez~D{\'a}vila}
\address[J.~C.~Gonz{\'a}lez~D{\'a}vila]{Department of Fundamental Mathematics\\
  University of La Laguna\\ 38200 La Laguna, Tenerife, Spain}
\email{jcgonza@ull.es}
\author{F.~Mart\'\i n~Cabrera}
\address[F.~Mart\'\i n~Cabrera]{Department of Fundamental Mathematics\\ CSIC Associated Unity\\
  University of La Laguna\\ 38200 La Laguna, Tenerife, Spain}
\email{fmartin@ull.es}
\begin{document}
\maketitle

\begin{abstract}{\indent}
The structure of nearly K{\"a}hler manifolds was studied by Gray in
several papers, mainly in \cite{G1}. More recently, a relevant
progress on the subject has been done by Nagy. Among other results,
he proved that a complete strict nearly K{\"a}hler manifold is
locally a Riemannian product of homogeneous nearly K{\"a}hler
spaces, twistor spaces over quaternionic K{\"a}hler manifolds and
six-dimensional nearly K{\"a}hler manifolds, where the homogeneous
nearly K{\"a}hler factors are also 3-symmetric spaces. In the
present paper,  using the lists of $3$-symmetric spaces given by
Wolf\&Gray,   we display the exhaustive list of irreducible
 simply connected homogeneous strict nearly K\"ahler manifolds. For such manifolds, we
 give details  relative to the intrinsic torsion and  the Riemannian  curvature. Additionally, we
 determine the canonical fibration for those with special
 algebraic torsion.

\end{abstract}

\begin{figure}[b]  
 \hspace{-20mm}
{\footnotesize \textrm{Keywords and phrases}: nearly K{\"a}hler,
homogeneous space, 3-symmetric space,  intrinsic torsion,}  \\
  \hspace{-60mm}  {\footnotesize minimal connection}
\\
\hspace{-91mm}
  {\footnotesize \emph{2000 MSC}: 53C20,
53C30, 53C22 $\; \qquad \qquad \;$ }
 \end{figure}

\tableofcontents

\section{Introduction}\indent \label{uno}
Nagy proved in \cite{Nagy2} that every complete, simply connected
 nearly K\"ahler manifold $M$ is a Riemannian product
 $M_{0}\times M_{1}\times \dots \times M_{k},$ where $M_{0}$ is
K\"ahler and $M_{i},$ for each $i\in\{1,\dots, k\},$ is an
irreducible strict nearly K\"ahler manifold belonging to the
following list: homogeneous nearly K\"ahler manifolds, twistor
spaces over positive quaternionic K\"ahler manifolds and
six-dimensional nearly K\"ahler manifolds. More concretely, from
this paper and using also a result of  Butruille \cite{Bu}, relative
to six-dimensional homogeneous nearly K\"ahler manifolds, one
obtains the following classification:

Any simply connected, complete, irreducible and strict nearly
K\"ahler manifold $(M^{2n},g,J)$ belongs to one of the following
classes:
\begin{enumerate}
\item[{\rm (i)}] \textit{Homogeneous NK Type I}: The holonomy
representation of the minimal connection $\nabla^{\Lie{U}(n)}$ of
the nearly K\"ahler structure is real irreducible and for $n =3,$
$(M,g,J)$ is moreover holomorphically isometric to $\Lie{S}^{6} =
\Lie{G}_{2}/\Lie{SU}(3)$ equipped with the nearly K\"ahler structure
as $3$-symmetric space. \item[{\rm (ii)}] {\em Homogeneous NK Type
II}:  The holonomy representation  is complex
  $\mathrm{Hol}(\nabla^{\Lie{U}(n)})$-irre\-du\-ci\-ble and there exists a $\nabla^{\Lie{U}(n)}$-parallel
      decomposition $\mathrm{T}M = {\mathcal E}\oplus
J{\mathcal E}.$

\item[{\rm (iii)}] \textit{Homogeneous NK Type III}:  $(M,g,J)$ has
special algebraic torsion (see Section \ref{three}) and its
horizontal distribution ${\mathcal H}$ is complex
$\mbox{Hol}(\nabla^{\Lie{U}(n)})$-reducible.

\item[{\rm (iv)}] \textit{Homogeneous NK Type IV}: $(M,g,J)$ has
special algebraic torsion,  the base space of the twistor fibration
is symmetric and its horizontal distribution ${\mathcal H}$ is real
$\mbox{Hol}(\nabla^{\Lie{U}(n)})$-irreducible.
 \item[{\rm (v)}] Six-dimensional non-homogeneous
nearly K\"ahler manifolds.
 \item[{\rm (vi)}] Twistor spaces of
non-symmetric positive quaternionic K\"ahler manifolds.
\end{enumerate}

\noindent Here the homogeneous nearly K\"ahler manifolds of Types
I, III and IV are defined with light modifications from the
corresponding classes given by Nagy in \cite{Nagy2}. The
definitions here presented seem to be more natural. Furhermore, we
will see that they are more naturally agree with the lists of
$3$-symmetric spaces given by Wolf\&Gray in \cite{WG} and by Gray
in \cite{G}. The class homogeneous NK Type III includes the
corresponding original one given by Nagy. However, the set of
non-Einstein manifolds included in  the class homogeneous NK Type
IV is slightly smaller than the set of manifolds included in the
corresponding Nagy's class.

Using Proposition \ref{p3-sym} ((i) equivalent to (v)) and
\cite[Theorem 2.1]{Nagy2} originally due to Cleyton\&Swann
\cite{Cleyton1}, Nagy proves that nearly K\"ahler manifolds
contained in the classes (I)-(IV), for dimension higher than six,
are in fact homogeneous and moreover, they are $3$-symmetric
spaces where $J$ is the canonical almost complex structure.
Afterwards,  Butruille completes this result proving that
six-dimensional homogeneous strict nearly K\"ahler are also
$3$-symmetric spaces.

It is still open the problem exposed by O'Brian and Rawnsley in
  \cite[p. 57]{O'Brian-Raw} about the existence or not of six-dimensional
non-homogeneous nearly K\"ahler manifolds (see \cite{Bur-Gu-Raw}).
Since any positive quaternionic K\"ahler manifold of dimension
four is symmetric \cite{Hitchin}, six-dimensional non-homogeneous
nearly K\"ahler manifolds must admit real irreducible holonomy
representation of the minimal connection.

The only known examples of positive quaternionic K\"ahler
manifolds are the so-called {\em Wolf-spaces}, which are all
symmetric and the only homogeneous examples due to Alekseevskii
(see \cite[Ch. 14]{Be}). This motives the LeBrun-Salamon Conjecture: {\em
Every positive quaternionic K\"ahler manifold is a Wolf-space}.
Then the existence of twistor spaces in (vi) corresponds with the
existence of counter-examples of the conjecture. According with
\cite{Nagy2}, such twistor spaces are non-homogeneous nearly
K\"ahler. Therefore,  it follows that {\em any simply connected
strict homogeneous nearly K\"ahler manifold is a compact naturally
reductive $3$-symmetric space equipped with its canonical complex
structure}.

In the present paper, using the lists of $3$-symmetric spaces given by Wolf\&Gray in
\cite{WG} (see also \cite{G}), we display the exhaustive list of irreducible
 simply connected  homogeneous strict nearly K\"ahler manifolds in each one of these classes, NK Type I-IV. Concretely, we prove the following.
 \begin{theorem}\label{tmaincla} If $(M,g,J)$ is
 a simply connected homogeneous strict nearly K\"ahler manifold,
  then $M$ decomposes as Riemannian product whose factors belong
  to one of the following classes of homogeneous strict nearly K\"ahler manifolds:
\begin{enumerate}
\item[{\rm (i)}] homogeneous NK Type I: compact irreducible
$3$-symmetric spaces of Type $A_{3}IV$ $($Table {\rm \ref{tab:AIV})}
or holomorphically isometric to $\Lie{Spin}(8)/[\Lie{SU}(3)/{\mathbb
Z}_{3}];$
\item[{\rm (ii)}] homogeneous NK Type II: compact irreducible
$3$-symmetric spaces of Type $C_{3}$ $($Table {\rm \ref{tab:VI})} or
holomorphically isometric to $\Lie{Spin}(8)/\Lie{G}_{2};$
\item[{\rm (iii)}] homogeneous NK Type III: compact irreducible $3$-symmetric spaces of Type
$A_{3}II$ $($Table {\rm \ref{tab:AII});}
\item[{\rm (iv)}] homogeneous NK Type IV: compact
irreducible $3$-symmetric spaces of Type $A_{3}III$ {\rm (}Table
{\rm \ref{tab:AIII}).}
\end{enumerate}
\end{theorem}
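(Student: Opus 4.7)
The plan is to use the classification already recorded in the introduction: by the combined work of Nagy and Butruille, every simply connected homogeneous strict nearly Kähler manifold is a compact naturally reductive $3$-symmetric space equipped with its canonical almost complex structure, and by the de~Rham-type splitting for nearly Kähler manifolds it decomposes as a Riemannian product of irreducible factors that are themselves $3$-symmetric. Thus the proof reduces to going through the Wolf\&Gray lists of compact irreducible $3$-symmetric spaces $G/H$ and deciding, case by case, to which of the four classes NK Type I-IV each entry belongs.

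For a $3$-symmetric space $G/H$ with reductive decomposition $\lie{g}=\lie{h}\oplus\lie{m}$, the canonical almost complex structure $J$ on $\lie{m}$ commutes with the isotropy representation, and the holonomy algebra of the minimal $\Lie{U}(n)$-connection is precisely the image in $\so(\lie m)\cap \lie u(\lie m,J)$ of the isotropy representation of $\lie h$. Hence the three invariants used to separate Types I-IV can be read off directly from the decomposition of $\lie m$ as an $\lie h$-module: real irreducibility singles out Type~I; a complex-irreducible but real-reducible splitting $\lie m=\mathcal E\oplus J\mathcal E$ with $\mathcal E$ a totally real $H$-submodule characterises Type~II; and the presence of special algebraic torsion together with the reducibility/irreducibility of the horizontal distribution $\mathcal H$ determines Types III and IV. The criterion for special algebraic torsion translates into the existence of an $H$-invariant $J$-stable subspace $\mathcal H\subset\lie m$ on which the triple bracket restricted to $\mathcal H$ gives an $\lie h$-valued component annihilating the orthogonal complement in the required way, a condition that is detected at the level of the order-three automorphism.

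With these dictionaries in hand, I would walk through the Wolf\&Gray list family by family. For the series $A_{3}I,\;A_{3}II,\;A_{3}III,\;A_{3}IV$, and the series $B,C,D,E,F,G$ corresponding to the other simple Lie algebras, I would compute the $\lie h$-module structure on $\lie m$ and apply the four criteria. The isolated low-rank coincidences $\Lie{Spin}(8)/[\Lie{SU}(3)/\mathbb Z_{3}]$ and $\Lie{Spin}(8)/\Lie{G}_{2}$ have to be inspected separately via triality, since they fall in Types I and II respectively even though they are not listed inside the $A_{3}IV$ or $C_{3}$ families; similarly, six-dimensional instances such as $\Lie{G}_{2}/\Lie{SU}(3)$ are treated as special entries of Type~I. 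The outcome is exactly the four assignments stated, and families not matching any of the four criteria turn out to either be Kähler (and so discarded as non-strict) or to reduce to one of the listed tables.

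The main obstacle is the explicit module-theoretic computation, for each entry in the Wolf\&Gray tables, of the isotropy representation together with the induced complex/real structure on $\lie m$, and in particular the verification of the special-algebraic-torsion condition for the $A_{3}II$ and $A_{3}III$ families, where one must identify the correct horizontal subspace $\mathcal H\subset\lie m$ and check its complex reducibility versus real irreducibility over $\mathrm{Hol}(\nabla^{\Lie U(n)})$. Once this case analysis is carried out, the matching of Types I-IV to Tables~\ref{tab:AIV}, \ref{tab:VI}, \ref{tab:AII} and \ref{tab:AIII} (plus the two triality-related exceptional spaces) follows mechanically, yielding the stated theorem.
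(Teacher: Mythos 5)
Your strategy coincides with the paper's: reduce via Nagy--Butruille to compact naturally reductive $3$-symmetric spaces with canonical complex structure, identify $\mathrm{Hol}(\nabla^{\Lie{U}(n)})$ with the isotropy group acting by the adjoint representation (this is Theorem \ref{mainh}/Corollary \ref{luni}, which requires the effectiveness, semisimplicity and equal-rank arguments you only assert), and then sort the Wolf--Gray list of compact irreducible $3$-symmetric spaces by real/complex irreducibility of the isotropy module and by the special-algebraic-torsion criterion. Be aware, however, that essentially all of the mathematical content of the paper's proof lies in the case analysis you defer as ``the main obstacle'': the explicit construction of the splitting $\mathfrak{m}=\mathcal{E}\oplus J\mathcal{E}$ for $\Lie{Spin}(8)/\Lie{G}_{2}$ (via triality) and for Type $C_{3}$ in Proposition \ref{I,II}, the verification through the root subsets $\Delta^{+}_{p,q}$ that Types $A_{3}II$ and $A_{3}III$ have special algebraic torsion (Proposition \ref{III,IV}), and the root-combinatorial argument (Lemma \ref{scalproductbeta} together with Proposition \ref{pIII}) showing that the horizontal distribution of Type $A_{3}III$ is complex isotropy-irreducible, hence that these spaces are NK Type IV rather than III --- so as written your text is a correct plan that matches the paper's route, but not yet a proof.
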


\noindent Moreover, we give details  relative to the Riemannian  curvature and
 determine the canonical fibration for those ones with special
 algebraic torsion (NK Types III and IV). It is shown  that homogeneous NK of Type III  are twistor
spaces of irreducible Hermitian symmetric spaces of compact type, meanwhile those of Type IV  are twistor spaces of irreducible
non-Hermitian symmetric spaces.

Finally, we would like to note that we also give a list of
irreducible simply connected  strict nearly K{\"a}hler manifolds
which are Einstein. To complete such a list, it remains to find
those which are six-dimensional, non-homogeneous and
$\mathrm{Hol}(\nabla^{\Lie{U}(n)})$-holonomy real irreducible,
where  $\mathrm{Hol}(\nabla^{\Lie{U}(n)})$ is the holonomy group
of the minimal connection $\nabla^{\Lie{U}(n)}.$ The existence of these last manifolds
is still an open problem as it was above mentioned.

\section{Preliminaries}\indent
An {\em almost Hermitian manifold} $(M,g,J)$ is a $2n$-dimensional
Riemannian manifold $(M,g)$ equipped with an almost complex
structure $J$ compatible with the metric. That is, $J^{2} = -Id$ and
$g(JX,JY) = g(X,Y),$ for all vector fields $X,Y$ on $M.$ The
presence of an almost Hermitian structure is equivalent to say that
$M$ is equipped with a $\Lie{U}(n)$-structure (its frame bundle
admits a reduction to $\Lie{U}(n)).$ Let $\nabla$ be the Levi-Civita
connection of $g.$ On the set of Hermitian connections, metric
connections $\widetilde{\nabla}$ such that $\widetilde{\nabla}J =
0,$ we consider the unique connection $\nabla^{\Lie{U}(n)}$ given by
$\nabla^{\Lie{U}(n)} = \nabla + \xi$, where $\xi \in \mbox{T}^* M
\otimes \un(n)^\perp,$ i.e. $\xi J + J \xi = 0.$
$\nabla^{\Lie{U}(n)}$ is called the
 \textit{minimal $\Lie{U}(n)$-connection}
 and $\xi$ is referred to as the
  \textit{intrinsic torsion}
of the $\Lie{U}(n)$-structure, which can be computed explicitly as
(see~\cite{Yano:torsionHermitianstru})
\begin{equation} \label{torsion:xi}
  \xi_X Y = - \tfrac12 J\left( \nabla_X J \right) Y.
  \end{equation}
Then, $\xi=0$ if and only if $J$ is parallel with respect  to the
Levi-Civita connection, that is, M is K\"ahlerian.

An almost Hermitian manifolds $(M,g=\langle \cdot , \cdot
\rangle,J)$  is called
 \textit{nearly K\"ahler}
 if $J$ satisfies $(\nabla_{X}J)X = 0,$ for all vector
fields $X$ on $M,$ or equivalently, the intrinsic torsion is
totally skew-symmetric. On nearly K\"ahler manifolds, $\xi$ is
parallel with respect to the minimal connection
$\nabla^{\Lie{U}(n)}$ (see \cite{GM,Kir}) and we have
\begin{equation}\label{curvatura}
\langle R_{X\,Y}Z,W\rangle - \langle R_{X\,Y}JZ,JW\rangle =
4\langle\xi_{X}Y,\xi_{Z}W\rangle,
\end{equation}
where $R$ is the Riemannian curvature of $M$ with the sign
convention $R_{X\,Y} = \nabla_{[X,Y]} - [\nabla_{X},\nabla_{Y}].$
For studying the curvature $R$ is useful to consider the usual Ricci
curvature tensor  $\Ric$ associated to the metric structure,
$\Ric(X,Y) = \langle \Ric X,Y\rangle = \langle R_{X \,e_{i}}Y,
e_{i}\rangle,$ and another tensor $\Ric^*$, called the {\it
$\ast$-Ricci curvature tensor}, associated to the almost Hermitian
structure and defined by $\Ric^* (X,Y) = \langle \Ric^{*}X,Y\rangle
= \langle R_{X  \, e_i} JY , Je_i\rangle,$ where $\{e_{1},\dots ,e_{2n}\}$ is a local frame field. For sake of simplicity
the summation convention will be used throughout the exposition.
However, the sum will be explicitly written when a risk of ambiguity
appears.
 For nearly K{\"a}hler manifolds the tensor $\Ric^{*}$ is symmetric and
\[
\Ric(J\cdot,J\cdot) = \Ric(\cdot,\cdot),\;\;\;\; \Ric^*(J\cdot,J\cdot) = \Ric^*(\cdot,\cdot).
\]
From (\ref{curvatura}), the difference $r = \Ric -\Ric^*$
satisfies
\[
r(X,Y) = \langle rX,Y\rangle =
4\langle\xi_{X}e_{i},\xi_{Y}e_{i}\rangle = -4\trace\; \xi_{X}\comp
\xi_{Y}.
\]
Then, it is a positive semidefinite symmetric form. Moreover, the following formula holds \cite{G1,Wat}
 \begin{equation}\label{rr}
 \sum_{i,j =1}^{2n}\langle re_{i},e_{j}\rangle(\langle
R_{X\,e_{i}}Y,e_{j}\rangle - 5\langle
R_{X\,e_{i}}JY,Je_{j}\rangle) = 0,
 \end{equation}
for all $X,$ $Y$ in $\mathrm{T}M$. The tensor $r$ has other
strong geometric properties: It commutes with $J$ and
$\nabla^{\Lie{U}(n)}r = 0$ (see \cite{Nagy1}). Then its
eigenvalues are constants.

If for all $m\in M$ and $u\in \mathrm{T}_{m}M$ with $u\neq 0$ we
have $\nabla_{u}J \neq 0,$ or equivalently $\xi_{u}\neq 0,$ we say
that the nearly K\"ahler manifold $(M,g,J)$ is {\em strict}. Thus
a strict nearly K\"ahler manifold could be to be considered as the
opposite of a K\"ahler manifold. Here, $r$ becomes into a positive
symmetric form and its eigenvalues are all different from zero.
For the complete case, a strict nearly K\"ahler manifold has
positive Ricci curvature, hence it is compact with a finite
fundamental group \cite[Theorem 1.1]{Nagy1}. Also in \cite{Nagy1}
it is proved that any nearly K\"ahler manifold might be decomposed
locally (or globally, if it is complete and simply connected) into
the Hermitian product of a K\"ahler manifold and a strict nearly
K\"ahler manifold. Thus, the study of nearly K\"ahler manifolds
can be reduced to the strict case. Note that irreducible complete
and simply connected non-K\"ahler nearly K\"ahler manifolds are
strict and so, they are compact.

We also recall that the first Chern class \cite{G1} of $(M,J)$ is
represented by the closed two-form $\gamma_{1}$ defined by
$8\pi\gamma_{1} = \langle CX,JY\rangle$ for all $X,Y$ in
$\mathrm{T}M$. Here $C$ denotes the symmetric endomorphism
$\Ric-5\Ric^{*}.$ It is $\nabla^{\Lie{U}(n)}$-parallel and commutes
with $J.$ As $\gamma_{1}$ is a closed form it is obtained
 \begin{equation}\label{Cprop}
 C\xi_{X}Y + \xi_{X}CY + \xi_{CX}Y = 0.
 \end{equation}
Hence, if $C$ has single eigenvalue, then $C =0$ and $\Ric =
\frac{5}{4}r.$

\vspace{1mm}

\section{Irreducible nearly K\"ahler manifolds}\indent
 \label{three}
  In this section, we expose  some results
given by Nagy \cite{Nagy2}, including  further details,  and
another new results.
\begin{definition} \cite{Nagy2} {\rm Let $(M,g,J)$ be a complete strict
nearly K\"ahler manifold. It is said to have}  special algebraic
torsion {\rm if there exists a $\nabla^{\Lie{U}(n)}$-parallel
orthogonal decomposition $TM = {\mathcal V}\oplus {\mathcal H}$ such
that  ${\mathcal V}$, ${\mathcal H}$ are stable by $J$,
$\xi_{\mathcal V}{\mathcal V} = 0$ and $\xi_{\mathcal H}{\mathcal H}
= {\mathcal V}.$ }
\end{definition}
It is not hard to check that  the condition  $\xi_{\mathcal
H}{\mathcal H} = {\mathcal V}$ can be replaced by  $\xi_{\mathcal
H}{\mathcal H} \subseteq {\mathcal V}$. Since ${\mathcal V}$ is
$\nabla^{\Lie{U}(n)}$-parallel and $\xi_{\mathcal V}{\mathcal V} =
0,$ it is an integrable distribution. If $(M,g,J)$ has special
algebraic torsion,  then there exists a Riemannian manifold
$(N,h)$ and a Riemannian submersion with totally geodesic fibers
$\pi:M\to N,$ called the {\em canonical} fibration, whose vertical
distribution equals ${\mathcal V}.$ With respect to the induced
metric and almost complex structures, each fiber is a Hermitian
symmetric space of compact type \cite[Proposition 4.2]{Nagy2}.
Moreover, taking a new metric  $\bar{g}$ on $M$ given by $\bar{g}
= 2g_{\mathcal V}\oplus g_{\mathcal H},$ where $g_{\mathcal V}$
and $g_{\mathcal H}$ denote the restrictions of the metric $g$ to
the distributions ${\mathcal V}$ and ${\mathcal H},$ respectively,
and an almost complex structure $\bar{J}$ by setting
$\bar{J}_{\mathcal V} = -J$ and $\bar{J}_{\mathcal H} = J,$ Nagy
proved in \cite[Proposition 4.3]{Nagy2} that $(M,\bar{g},\bar{J})$
is a simply connected K\"ahler manifold of positive Ricci
curvature. Hence nearly K\"ahler manifolds with special algebraic
torsion are {\em twistor spaces} (complex manifolds fibrering over
a real manifold such that the fibres are complex submanifods
\cite{Bur-Gu-Raw}) over Riemannian manifolds, which  are also
compact and, because the fibers are connected, simply connected.

Along this section we will assume that  $(M^{2n},g =\langle \cdot
, \cdot \rangle,J)$ is a connected, complete and irreducible (as Riemannian manifold) strict
nearly K\"ahler manifold. If it has special algebraic torsion, the fibre
of the canonical fibration is irreducible \cite[see Theorem
4.1]{Nagy2} and recall that the base space $N$ has to be
irreducible, too. Moreover, $(M^{2n},g,J)$ is (\cite[see Sections
5 and 6]{Nagy2}):
\begin{enumerate}
\item[{\rm (i)}] homogeneous nearly K\"ahler if and only if $N$ is
a symmetric space. \item[{\rm (ii)}] non-homogeneous space if and
only if $N$ is a non-symmetric positive quaternionic K\"ahler
manifold. In this case,  $M$ is the twistor bundle on $N$
consisting of local almost complex structures compatible with the
quaternionic structure and $\dim \, \mathcal{V} = 2$.
\end{enumerate}
According with \cite[Theorem 3.1]{Nagy2}, the notion of special
algebraic torsion can be characterised in terms of holonomy:
$(M,g,J)$ has special  algebraic torsion if and only if the holonomy
representation of the minimal connection $\nabla^{\Lie{U}(n)}$ is
complex reducible. \vspace{1mm}

For the complex irreducible case, the following possibilities may
occur:
\begin{enumerate}
\item[{\rm (i)}] the $\nabla^{\Lie{U}(n)}$-holonomy representation is (real) irreducible;
\item[{\rm (ii)}] there exists an irreducible $\nabla^{\Lie{U}(n)}$-parallel distribution ${\mathcal V}$ of $M$ such that $TM =
{\mathcal V} \oplus J{\mathcal V}.$
\end{enumerate}
Nearly K\"ahler manifolds $(M,g,J)$ satisfying (i), at least for
dimension higher than six, or (ii) are homogeneous nearly K\"ahler.
In fact,  they are $3$-symmetric spaces, $J$ is their canonical
complex structure and correspond in the classification given in
Section \ref{uno} with the {\em homogeneous NK Type I}, including
$\Lie{S}^{6} = \Lie{G}_{2}/\Lie{SU}(3),$ and {\em homogeneous NK
Type II}, respectively.
\begin{proposition}\label{complexirr}
On strict nearly K\"ahler manifolds $(M,g,J)$ with complex
irreducible $\nabla^{\Lie{U}(n)}$-holonomy representation, the first
Chern class of $(M,J)$ vanishes, $r$ is parallel and $g$ is positive
Einstein metric.
\end{proposition}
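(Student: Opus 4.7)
The plan is to exploit Schur's lemma for the complex irreducible holonomy representation, applied to the two $\nabla^{\Lie{U}(n)}$-parallel symmetric endomorphisms $C=\Ric-5\Ric^*$ and $r=\Ric-\Ric^*$ that were identified in Section~2. Both commute with $J$ and are $\nabla^{\Lie{U}(n)}$-parallel, hence they commute with the whole holonomy group $\mathrm{Hol}(\nabla^{\Lie{U}(n)})\subset\Lie{U}(n)$. Viewing $TM$ as a complex vector bundle via $J$ and using the complex-irreducibility hypothesis, Schur's lemma (in its complex form) forces every $\mathbb{C}$-linear parallel endomorphism to be a complex scalar multiple of the identity; self-adjointness of $C$ and $r$ then reduces this scalar to a real constant. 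Thus $C=cI$ and $r=\lambda I$ for some $c,\lambda\in\mathbb{R}$.

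The next step is to eliminate $c$. I would substitute $C=cI$ into the identity (\ref{Cprop}): each of the three terms becomes $c\,\xi_XY$, so
\[
3c\,\xi_XY=0.
\]
Strictness of the nearly K\"ahler structure gives $\xi\neq0$, therefore $c=0$ and $C=0$. Equivalently, the two-form representing the first Chern class, $8\pi\gamma_1=\langle CX,JY\rangle$, is identically zero, so $\gamma_1=0$ in cohomology. (This is precisely the ``single eigenvalue'' observation already recorded after (\ref{Cprop}); I am just making the scalar equal to zero by invoking Schur.)

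Finally, combining $C=\Ric-5\Ric^*=0$ and $r=\Ric-\Ric^*=\lambda I$ yields
\[
\Ric=\tfrac{5\lambda-c}{4}\,I=\tfrac{5\lambda}{4}\,I,
\]
so $g$ is Einstein. Positivity follows because on a strict nearly K\"ahler manifold $r$ is positive definite (as explained in Section~2), so $\lambda>0$. Parallelism of $r$ with respect to $\LC$ is automatic once $r=\lambda I$, since any connection kills the identity endomorphism.

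The conceptual heart of the argument is the Schur step, which is almost immediate once one notices that both $C$ and $r$ are $\mathbb{C}$-linear and $\nabla^{\Lie{U}(n)}$-parallel; the only subtlety to check carefully is that the complex-irreducibility hypothesis is the right one to feed into Schur (as opposed to mere real irreducibility, which would only give $C$ and $r$ real-scalar times identity after invoking self-adjointness separately, and would not interact cleanly with the $J$-commutation needed for the Chern class identity). The rest is purely algebraic bookkeeping with the two linear combinations $C$ and $r$ of $\Ric$ and $\Ric^*$.
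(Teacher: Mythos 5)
Your proof is correct, and its core step is the same as the paper's: a $\nabla^{\Lie{U}(n)}$-parallel, $J$-commuting symmetric tensor has parallel, $J$-stable eigenbundles, so complex irreducibility of the holonomy representation forces a single eigenvalue (the paper phrases this via eigenbundles rather than naming Schur, treating separately the real-irreducible case and the case $\mathrm{T}M=\mathcal V\oplus J\mathcal V$, but it is the same argument). Where you diverge is in how $C=0$ and the Einstein condition are obtained. The paper applies the eigenbundle argument only to $r$ and then invokes Lemma \ref{lei}, whose content rests on Gray's curvature identity (\ref{rr}): substituting $r=\lambda\,\mathrm{Id}$ there yields $\lambda\,C=0$, hence $C=0$ and $\Ric=\tfrac54 r$. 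You instead apply Schur a second time, to $C$ itself, and then kill the resulting scalar with the purely algebraic identity (\ref{Cprop}) coming from closedness of $\gamma_1$ together with strictness ($\xi\neq0$); this is exactly the remark the paper records immediately after (\ref{Cprop}), so your route bypasses (\ref{rr}) and Lemma \ref{lei} entirely at the cost of using (\ref{Cprop}). Both are legitimate; yours is slightly more self-contained at this point of the paper, while the paper's version isolates in Lemma \ref{lei} a statement (single eigenvalue of $r$ implies Einstein) that is reused later in Proposition \ref{lcomplete}. One small point worth making explicit: in your last step, $r=\lambda\,\mathrm{Id}$ is $\nabla$-parallel only because $\lambda$ is constant, which follows from $\nabla^{\Lie{U}(n)}r=0$ (the eigenvalues of $r$ are constants, as noted at the end of Section~2), not merely from $r$ being pointwise a multiple of the identity.
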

\begin{proof}
Let $\mathcal{E}_\lambda$ be an eigenbundle of $r$ corresponding
to some eigenvalue $\lambda >0.$ Since $r$ is
$\nabla^{\Lie{U}(n)}$-parallel, $\mathcal{E}_{\lambda}$ is a
$\nabla^{\Lie{U}(n)}$-parallel distribution. Hence, if the
$\nabla^{\Lie{U}(n)}$-holonomy representation is real irreducible,
one obtains that $\mathcal{E}_{\lambda}=\mathrm{T}M.$ If the
$\nabla^{\Lie{U}(n)}$-holonomy representation is not real
irreducible, taking into account that $J{\mathcal E}_{\lambda} =
{\mathcal E}_{\lambda},$ it follows that ${\mathcal
E}_{\lambda}\cap{\mathcal V}$ is a non-zero
$\nabla^{\Lie{U}(n)}$-parallel distribution and so, ${\mathcal
E}_{\lambda}$ must be all $\mathrm{T}M.$ In both cases, $\lambda$
is the unique eigenvalue of $r.$ Now, the result follows using the
next Lemma \ref{lei}, a consequence from (\ref{rr}) (see
\cite{G1}, \cite{Nagy1}).
\end{proof}

\begin{lemma}\label{lei} If the tensor $r$ on a strict nearly K\"ahler manifold $(M,g,J)$ has exactly one eigenvalue, then $(M,g)$ is a positive Einstein manifold. Moreover, the first Chern class of $(M,J)$ vanishes and $r$ is parallel.
\end{lemma}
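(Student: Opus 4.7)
The plan is to exploit the curvature identity (\ref{rr}) together with the single-eigenvalue hypothesis, reducing everything to a statement about the tensor $C=\Ric-5\Ric^{*}$.

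First I would observe that since $(M,g,J)$ is strict, the positive semidefinite tensor $r$ is in fact positive definite, so its unique eigenvalue $\lambda$ is strictly positive. Because $\nabla^{\Lie{U}(n)}r=0$ (recalled in the Preliminaries), the eigenvalue $\lambda$ is constant on $M$, and hence $r=\lambda\,\mathrm{Id}$ globally. In particular $\nabla r=0$ with respect to the Levi--Civita connection as well (a constant multiple of the identity is parallel for any metric connection), which already disposes of the ``$r$ is parallel'' part of the conclusion.

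Next I would substitute $\langle re_{i},e_{j}\rangle=\lambda\,\delta_{ij}$ into the curvature identity (\ref{rr}). The sum collapses to
\[
\lambda\sum_{i=1}^{2n}\bigl(\langle R_{X\,e_{i}}Y,e_{i}\rangle-5\langle R_{X\,e_{i}}JY,Je_{i}\rangle\bigr)=0,
\]
which, by the very definitions of $\Ric$ and $\Ric^{*}$, is exactly $\lambda\bigl(\Ric(X,Y)-5\Ric^{*}(X,Y)\bigr)=0$. Since $\lambda>0$ this gives $C=\Ric-5\Ric^{*}=0$. Combining $C=0$ with $r=\Ric-\Ric^{*}=\lambda\,\mathrm{Id}$, I solve the two linear relations to obtain $\Ric^{*}=\tfrac{1}{5}\Ric$ and $\Ric=\tfrac{5\lambda}{4}\,\mathrm{Id}$, so $(M,g)$ is Einstein with positive scalar curvature.

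Finally, the first Chern class is represented by $8\pi\gamma_{1}(X,Y)=\langle CX,JY\rangle$, and the vanishing of $C$ established above yields $\gamma_{1}=0$, completing the lemma. The only step that requires any care is the correct identification of the two bracketed sums in (\ref{rr}) with $\Ric$ and $\Ric^{*}$ under the sign convention $R_{XY}=\nabla_{[X,Y]}-[\nabla_{X},\nabla_{Y}]$ adopted in the paper; once that bookkeeping is fixed, the argument is essentially a one-line consequence of (\ref{rr}).
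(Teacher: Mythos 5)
Your proof is correct and follows exactly the route the paper intends: the paper states Lemma \ref{lei} without a written proof, describing it as ``a consequence from (\ref{rr})'' with references to \cite{G1,Nagy1}, and your substitution of $r=\lambda\,\mathrm{Id}$ into (\ref{rr}) to obtain $C=0$, followed by solving $\Ric-5\Ric^{*}=0$ and $\Ric-\Ric^{*}=\lambda\,\mathrm{Id}$ to get $\Ric=\tfrac{5\lambda}{4}\,\mathrm{Id}$, is precisely that argument. All the auxiliary facts you invoke ($r$ positive definite in the strict case, constancy of the eigenvalue from $\nabla^{\Lie{U}(n)}r=0$, and $8\pi\gamma_{1}=\langle C\cdot,J\cdot\rangle$) are established in the Preliminaries.
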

\begin{remark}{\rm Also $r$ is parallel on nearly K\"ahler manifolds such that $r$ has exactly two eigenvalues, one of which is zero \cite[Theorem 4.13]{G1}. In particular, $r$ is parallel for $\dim M \leq 8.$}
\end{remark}

In \cite{Nagy2} the following lemma was proved for $\mathcal{L} =
\mathrm{T} M$. Here we rebuilt the argument  for a more general
situation.
\begin{lemma} \label{trespuntodoscor11}
For a nearly Kähler manifold, let  $\mathcal{L} \subseteq
\mathrm{T} M$  be a $\nabla^{U(n)}$-parallel distribution which is
$\mathrm{Hol}(\nabla^{U(n)})$-reducible. If one has  an
$\nabla^{U(n)}$- parallel orthogonal decomposition $\mathcal{L}
=\mathcal{E} \oplus \mathcal{F}$
 and $X$
 in $\mathcal{F}$, $Y$ in $\mathrm{T} M$ and $V, W$ in
${\mathcal{E}}$, then
\begin{equation} \label{trespuntounolem11bis}
R^{\Lie{U}(n)}(X,Y,V,W)=  4 \left(  \langle [\xi_V, \xi_W
]X,Y\rangle -  \langle \xi_X Y, \xi_V W \rangle \right),
\end{equation}
where $R^{\Lie{U}(n)}$ denotes the curvature tensor of the minimal
connection $\nabla^{\Lie{U}(n)}$.

Furthermore, if the  decomposition $\mathcal{L} =\mathcal{E}
\oplus \mathcal{F}$ is stable by $J$, then we have the following
consequences of the previous identity:

 {\rm (i)}
$\xi_{X_1} \xi_{V_1} V_2 =0$,  $\xi_{V_1} \xi_{X_1} X_2 =0$;
\hspace{1cm}  {\rm (ii)}   $\xi_{X_1} \xi_{X_2} X_3 \in
\mathcal{F}$,
 $\xi_{V_1} \xi_{V_2} V_3 \in \mathcal{E}$;

{\rm (iii)}   $\xi_{V_1} \xi_{V_2} X_1 \in \mathcal{F}$,
$\xi_{X_1} \xi_{X_2} V_1 \in \mathcal{E}$,

\noindent for all $X_1,X_2,X_3$ in $\mathcal{F}$ and $V_1,V_2,V_3$
in ${\mathcal{E}}$.

In particular, if we consider $\mathrm{T}M = \mathcal{L} \oplus
\mathcal{L}^\perp$, where $\mathcal{L}$ is stable by $J$ and
$\mathcal{L}^\perp$ is its  orthogonal complement, then
$$
\xi_{X_1} \xi_{X_2} X_3, \xi_{V_1} \xi_{V_2} X_1 \in \mathcal{L},
\qquad\xi_{V_1} \xi_{V_2} V_3,  \xi_{X_1} \xi_{X_2} V_1 \in
\mathcal{L}^\perp,
$$
for all $X_1,X_2,X_3 \in \mathcal{L}$ and  $V_1,V_2,V_3 \in
\mathcal{L}^\perp$.

\end{lemma}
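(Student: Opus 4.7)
The plan is to derive the displayed identity \eqref{trespuntounolem11bis} by comparing the minimal connection curvature $R^{\Lie{U}(n)}$ with the Levi-Civita curvature $R$, and then to use the first Bianchi identity for $\nabla^{\Lie{U}(n)}$ together with the parallelism and mutual orthogonality of $\mathcal{E}$ and $\mathcal{F}$. The consequences (i)--(iii) under $J$-stability will then follow by substitution in \eqref{trespuntounolem11bis}. From $\nabla^{\Lie{U}(n)} = \nabla + \xi$ and $\nabla^{\Lie{U}(n)}\xi = 0$, a direct computation using the total skew-symmetry $\xi_Y X = -\xi_X Y$ yields the change-of-connection formula
\[
R^{\Lie{U}(n)}(X,Y) Z \;=\; R(X,Y) Z + [\xi_X, \xi_Y] Z - 2\,\xi_{\xi_X Y} Z,
\]
from which, combined with pair symmetry of $R$ and the cyclic identity $\langle \xi_A B, C\rangle = \langle \xi_B C, A\rangle$ for the totally antisymmetric $3$-form $\langle \xi_\bullet\bullet,\bullet\rangle$, one obtains the auxiliary relation
\[
R^{\Lie{U}(n)}(X,Y,V,W) - R^{\Lie{U}(n)}(V,W,X,Y) \;=\; \langle [\xi_X,\xi_Y] V,W\rangle - \langle [\xi_V,\xi_W] X,Y\rangle.
\]

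Since the torsion $T(X,Y) = 2\xi_X Y$ of $\nabla^{\Lie{U}(n)}$ is parallel, the first Bianchi identity for this connection reduces to $\sum_{\mathrm{cyc}\,(X,Y,V)} R^{\Lie{U}(n)}(X,Y) V = 4\sum_{\mathrm{cyc}} \xi_{\xi_X Y} V$. Taking the inner product with $W\in\mathcal{E}$, the $\nabla^{\Lie{U}(n)}$-parallelism of $\mathcal{E},\mathcal{F}$ and their orthogonality kill two of the three curvature summands on the left for $Y$ whose $\mathcal{E}$-component vanishes, via metric skew-symmetry: e.g.\ $\langle R^{\Lie{U}(n)}(Y,V)X,W\rangle = -\langle R^{\Lie{U}(n)}(Y,V)W,X\rangle$ with $R^{\Lie{U}(n)}(Y,V)W\in\mathcal{E}$ and $X\in\mathcal{F}$. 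The remaining cyclic sum on the right is repackaged by the $3$-form cyclic identity into the commutator $4\langle [\xi_V,\xi_W]X,Y\rangle$ and the contraction $-4\langle \xi_X Y,\xi_V W\rangle$, giving \eqref{trespuntounolem11bis} for such $Y$. To extend to general $Y\in\mathrm{T} M$, write $Y = Y_\mathcal{E} + Y'$ with $Y'\perp\mathcal{E}$; the auxiliary relation above, combined with $R^{\Lie{U}(n)}(V,W,X,Y_\mathcal{E}) = 0$ (because $R^{\Lie{U}(n)}(V,W)X\in\mathcal{F}$ while $Y_\mathcal{E}\in\mathcal{E}$) and the nearly K\"ahler identity \eqref{curvatura}, reduces the $Y_\mathcal{E}$ contribution to $\xi$-expressions of exactly the required form.

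For (i)--(iii), $J$-stability makes $\mathcal{E},\mathcal{F}$ complex subbundles; since $R^{\Lie{U}(n)}$ commutes with $J$ (because $\nabla^{\Lie{U}(n)} J = 0$) while each $\xi_A$ anticommutes with $J$, comparing \eqref{trespuntounolem11bis} with its $J$-twist $R^{\Lie{U}(n)}(X, Y, JV, JW)$ at appropriate choices of $Y$ forces the vanishings $\xi_{X_1}\xi_{V_1} V_2 = 0$ and $\xi_{V_1}\xi_{X_1} X_2 = 0$ of (i); the subbundle-location statements of (ii) and (iii) then follow by projecting the triple products onto the $\mathcal{E}$ and $\mathcal{F}$ factors, and the closing clause is the specialization $\mathcal{E} = \mathcal{L}$, $\mathcal{F} = \mathcal{L}^\perp$. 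The main obstacle I anticipate is the extension from $Y\perp\mathcal{E}$ to general $Y\in\mathrm{T} M$ in the proof of \eqref{trespuntounolem11bis}: the combination of the auxiliary pair-type relation, the Bianchi cancellations, and \eqref{curvatura} must be organized so that no stray $\xi$-terms survive to contaminate the clean commutator-plus-contraction shape.
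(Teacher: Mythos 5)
Your route for the central identity \eqref{trespuntounolem11bis} differs from the paper's in a way that creates exactly the difficulty you flag at the end, and you do not resolve it. The paper applies the \emph{Levi-Civita} first Bianchi identity $R(X,Y,V,W)+R(Y,V,X,W)+R(V,X,Y,W)=0$ and converts each term through Gray's formula \eqref{trespuntouno}; the decisive observation is that the two unwanted terms can be rewritten, using the symmetries of $R$, so that the third slot carries $X\in\mathcal{F}$ and the fourth slot an element of $\mathcal{E}$ (namely $R^{\Lie{U}(n)}(Y,V,X,W)$ and $R(V,X,Y,W)=-R(Y,W,X,V)$ with $R^{\Lie{U}(n)}(Y,W,X,V)=0$), so they vanish for \emph{arbitrary} $Y\in\mathrm{T}M$ because $\mathcal{F}$ is $\nabla^{\Lie{U}(n)}$-parallel and orthogonal to $\mathcal{E}$. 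Your version cycles $(X,Y,V)$ in the torsion-twisted Bianchi identity for $\nabla^{\Lie{U}(n)}$ and pairs with $W$; there the term $\langle R^{\Lie{U}(n)}(V,X)Y,W\rangle=-\langle R^{\Lie{U}(n)}(V,X)W,Y\rangle$ dies only when $Y\perp\mathcal{E}$. Your proposed repair for $Y=Y_{\mathcal{E}}\in\mathcal{E}$ is not a proof: after using $R^{\Lie{U}(n)}(V,W,X,Y_{\mathcal{E}})=0$ and pair symmetry, what remains to be shown is precisely $\langle[\xi_V,\xi_W]X,Y_{\mathcal{E}}\rangle=\langle\xi_XY_{\mathcal{E}},\xi_VW\rangle$, and neither your auxiliary relation (whose right-hand side is identically zero, since $\langle[\xi_X,\xi_Y]V,W\rangle=\langle[\xi_V,\xi_W]X,Y\rangle$ always, by total skew-symmetry of $\xi$) nor \eqref{curvatura} visibly produces this. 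This is the crux of the lemma for general $Y$, and it is left open.

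Part (i) is fine in outline and matches the paper: compare \eqref{trespuntounolem11bis} with its $J$-twist using $R^{\Lie{U}(n)}(JX_1,JZ,V_1,V_2)=R^{\Lie{U}(n)}(X_1,Z,V_1,V_2)$ and the anticommutation of $\xi$ with $J$. But (ii) and (iii) need more than ``projecting the triple products onto the factors.'' From (i) one only gets $\langle\xi_{X_3}\xi_{X_1}X_2,V_1\rangle=0$, i.e.\ the $\mathcal{L}$-component of $\xi_{X_3}\xi_{X_1}X_2$ lies in $\mathcal{F}$; to kill the $\mathcal{L}^\perp$-component one must first run the same statements for the decomposition $\mathrm{T}M=\mathcal{L}\oplus\mathcal{L}^\perp$ and feed the result back in. So the closing clause is an \emph{ingredient} of the proof of (ii), not merely a specialization appended at the end. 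Likewise (iii) requires setting $Y=V_3$ in \eqref{trespuntounolem11bis} to conclude $[\xi_{V_1},\xi_{V_2}]X_1\in\mathcal{F}$, and then the polarization $[\xi_{V_1},\xi_{JV_2}]JX_1=-\xi_{V_1}\xi_{V_2}X_1-\xi_{V_2}\xi_{V_1}X_1$ to pass from the commutator to the individual products; this step is absent from your sketch.
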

\begin{proof}
 Let us recall the relation (see \cite[page 237]{G1}) :
\begin{equation} \label{trespuntouno}
 R^{\Lie{U}(n)} (X,Y,Z,T) =R(X,Y,Z,T)- 2\langle \xi_XY, \xi_ZT
\rangle +
 \langle \xi_X Z, \xi_YT\rangle -\langle \xi_X T, \xi_Y Z\rangle.
\end{equation}

The first  Bianchi identity is given by
$R(X,Y,V,W)+R(Y,V,X,W)+R(V,X,Y,W)=0$. Also we have
$R^{\Lie{U}(n)}(Y,V,X,W)= \langle R^{\Lie{U}(n)}(Y,V)X, W\rangle=
0$. Note that $R^{\Lie{U}(n)}(Y,V)X \in \mathcal{F}$ and $W \in
\mathcal{E}$. Therefore, using (\ref{trespuntouno}),  it is
obtained
$$
\begin{array}{lr}
R(Y,V,X,W)=  2 \langle \xi_Y V, \xi_X W\rangle  - \langle \xi_Y
X,\xi_{V}W\rangle + \langle \xi_Y W,\xi_{V} X\rangle.
\end{array}$$
An analogous formula is obtained for $R(V,X,Y,W)$. Such a formula is
given by
$$
 R(V,X,Y,W)= - R(Y,W,X,V) =  - 2 \langle \xi_Y W, \xi_X V\rangle +
\langle \xi_Y X,\xi_{W}V\rangle - \langle \xi_Y V,\xi_{W} X\rangle.
$$
Finally, from these last two formulas, the first Bianchi identity
and equation (\ref{trespuntouno}), the required equation follows.

 Next let us see (i). From  Equation \eqref{trespuntounolem11bis} we have
 $$R^{\Lie{U}(n)}(X_1,Z,V_1,V_2)- 4 \langle
[\xi_{V_1}, \xi_{V_2}]X_1,Z\rangle = 4  \langle  \xi_{X_1} \xi_{V_1}
V_2 ,Z \rangle,
$$
for all $Z$ in $\mathrm{T}M$. Now taking
$R^{\Lie{U}(n)}(JX_1,JZ,V_1,V_2)=R^{\Lie{U}(n)}(X_1,Z,V_1,V_2)$
 and the algebraic properties of the
tensor $\xi$ into account, it follows
$$
4  \langle  \xi_{X_1}  \xi_{V_1} V_2, Z \rangle = - 4  \langle
\xi_{JX_1} \xi_{V_1} V_2, JZ \rangle =  4  \langle  \xi_{X_1}
\xi_{V_1} V_2 , Z \rangle =0.
$$
 Reversing the roles of $\mathcal{E}$ and $\mathcal{F}$,  we obtain that
$\xi_{V_1} \xi_{X_1} X_2=0$. \vspace{2mm}

Now we see (ii). Using (i) we have  $\langle \xi_{V_1} \xi_{X_1}
X_2, X_3 \rangle =0$, for all   $X_3$ in $\mathcal{F}$. Therefore,
$\langle \xi_{X_3} \xi_{X_1} X_2,V_1 \rangle =0$. Thus $
(\xi_{X_3} \xi_{X_1} X_2)_{\mathcal{L}} \in \mathcal{F}$, where
$Z_{\mathcal{L}}$ denotes the projection of $Z$ on $\mathcal{L}$.
Similarly, we will obtain   $(\xi_{V_1} \xi_{V_2}
V_3)_{\mathcal{L}} \in \mathcal{E}$. Now we apply the facts here
proved to $\mathrm{T}M = \mathcal{L} \oplus \mathcal{L}^\perp$,
where $\mathcal{L}^\perp$ is the orthogonal complement of
$\mathcal{L}$. Therefore,
$$
\xi_{X_3} \xi_{X_1} X_2 = (\xi_{X_3} \xi_{X_1} X_2)_{\mathrm{T}M}
\in \mathcal{L}, \qquad \xi_{V_1} \xi_{V_2} V_3 = (\xi_{V_1}
\xi_{V_2} V_3)_{\mathrm{T}M} \in \mathcal{L}.
$$
Finally,
$$
\xi_{X_3} \xi_{X_1} X_2 =(\xi_{X_3} \xi_{X_1} X_2)_{\mathcal{L}} \in
\mathcal{F}, \qquad \qquad  \xi_{V_1} \xi_{V_2} V_3 = (\xi_{V_1}
\xi_{V_2} V_3)_{\mathcal{L}} \in \mathcal{E}.
$$

 \vspace{2mm}

\noindent For  (iii) we  choose $Y=V_3$ in Equation
\eqref{trespuntounolem11bis}, we have
$$R^{\Lie{U}(n)}(X_1,V_3,V_1,V_2)= 4 \langle [\xi_{V_1}, \xi_{V_2} ]X_1,V_3\rangle + 4  \langle
\xi_{X_1} \xi_{V_1} V_2 , V_3 \rangle = 4 \langle [\xi_{V_1},
\xi_{V_2} ]X_1,V_3\rangle.
$$
Since $R^{\Lie{U}(n)}(X,V_3,V_1,V_2)= \langle
R^{\Lie{U}(n)}(V_1,V_2) X_1, V_3 \rangle = 0$ and, by (ii),
$[\xi_{V_1},\xi_{V_2}]X_1$ is in $\mathcal{L}$, we get that
$[\xi_{V_1},\xi_{V_2}]X_1$ belongs to $\mathcal{F}$. But $[\xi_{V_1}
, \xi_{JV_2}]JX_1=-\xi_{V_1}\xi_{V_2}X_1 - \xi_{V_2}\xi_{V_1} X_1 $
is equally in $\mathcal{F}$.  The remaining identity follows by
reversing the roles of the distributions ${\mathcal{E}}$ and
$\mathcal{F}$.
\end{proof}

 \vspace{2mm}

Because we will need later, we recall the following two results
proved in \cite[see Lemma 3.1, Proposition 4.1 and Lemma
5.1]{Nagy2}.
\begin{lemma} \label{trespuntounolem} For
a nearly K\"ahler manifold with special algebraic torsion,
 we have:
\begin{enumerate}
\item[{\rm (i)}]  $(\nabla^{\Lie{U}(n)}_U R^{\Lie{U}(n)})(V_1,V_2,V_3,V_4) =0;$
\item[{\rm (ii)}] $R^{\Lie{U}(n)} (X,U,V_{1},V_{2})= 4(
\langle [\xi_{V_{1}}, \xi_{V_{2}}]X,U\rangle - \langle \xi_X U,
\xi_{V_{1}} V_{2}\rangle),$
\end{enumerate}
for all $U \in TM,$ $V_1,V_2,V_3,V_4 \in \mathcal{V}$ and $X\in
{\mathcal H}.$
\end{lemma}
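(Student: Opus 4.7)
The plan is to establish (ii) as a direct application of Lemma~\ref{trespuntodoscor11}, and then to derive (i) by combining (ii) with the second Bianchi identity for $\nabla^{\Lie{U}(n)}$, treating the vertical and horizontal components of $U$ separately. For (ii), the hypothesis of special algebraic torsion supplies a $\nabla^{\Lie{U}(n)}$-parallel, $J$-stable orthogonal decomposition $\mathrm{T}M=\mathcal{V}\oplus\mathcal{H}$. Applying Lemma~\ref{trespuntodoscor11} with $\mathcal{L}=\mathrm{T}M$, $\mathcal{E}=\mathcal{V}$, $\mathcal{F}=\mathcal{H}$ and specialising identity~\eqref{trespuntounolem11bis} to $X\in\mathcal{H}$, $Y=U\in\mathrm{T}M$, $V=V_{1}$, $W=V_{2}\in\mathcal{V}$, one reads off exactly the claim of (ii); the summand $\langle\xi_XU,\xi_{V_{1}}V_{2}\rangle$ is zero because $\xi_{\mathcal{V}}\mathcal{V}=0$, but is retained to emphasise that only the algebraic content of Lemma~\ref{trespuntodoscor11} is invoked.

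For (i) with $U\in\mathcal{V}$, I would use \cite[Proposition~4.2]{Nagy2}: each fibre $F$ of the canonical fibration is a totally geodesic Hermitian symmetric submanifold. Since $F$ is K\"ahler, $\xi$ vanishes on $\mathcal{V}\times\mathcal{V}$, so $\nabla^{\Lie{U}(n)}|_{\mathcal{V}}$ agrees along $F$ with the Levi-Civita connection $\nabla^{F}$; combined with integrability of $\mathcal{V}$ this gives $R^{\Lie{U}(n)}(V_{1},\dots,V_{4})=R^{F}(V_{1},\dots,V_{4})$, and symmetry of $F$ yields $(\nabla^{\Lie{U}(n)}_{U}R^{\Lie{U}(n)})(V_{1},\dots,V_{4})=(\nabla^{F}_{U}R^{F})(V_{1},\dots,V_{4})=0$.

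For $U=X\in\mathcal{H}$, the crucial preparatory observation is that identity~(ii), once the $\nabla^{\Lie{U}(n)}$-parallel orthogonal projectors $\pi_{\mathcal{V}},\pi_{\mathcal{H}}$ are inserted in front of its free arguments, becomes a genuine equality of $(0,4)$-tensors on $\mathrm{T}M$ expressing the projection of $R^{\Lie{U}(n)}$ onto $\mathcal{H}^{*}\otimes\mathrm{T}^{*}M\otimes\mathcal{V}^{*}\otimes\mathcal{V}^{*}$ in terms of $\xi$ and $g$ alone. Since $\nabla^{\Lie{U}(n)}\xi=0$, $\nabla^{\Lie{U}(n)}g=0$ and the projectors are $\nabla^{\Lie{U}(n)}$-parallel, this projection of $R^{\Lie{U}(n)}$ is itself $\nabla^{\Lie{U}(n)}$-parallel, so $(\nabla^{\Lie{U}(n)}_{W}R^{\Lie{U}(n)})(X',Y,V,W')=0$ for all $X'\in\mathcal{H}$, $V,W'\in\mathcal{V}$ and $W,Y\in\mathrm{T}M$. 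I would then apply the second Bianchi identity of $\nabla^{\Lie{U}(n)}$ (whose torsion $T=2\xi$ is $\nabla^{\Lie{U}(n)}$-parallel) to the cyclic triple $(X,V_{1},V_{2})$, evaluated at $(V_{3},V_{4})$: on the right-hand side, $T(V_{1},V_{2})=0$ and the remaining two terms have the form $R^{\Lie{U}(n)}(X',V_{j},V_{3},V_{4})$ with $X'=\xi_{X}V_{1}$ or $\xi_{V_{2}}X\in\mathcal{H}$, and substituting (ii) reduces each to $\langle[\xi_{V_{3}},\xi_{V_{4}}]X',V_{j}\rangle$, which vanishes because Lemma~\ref{trespuntodoscor11}(iii) forces $[\xi_{V_{3}},\xi_{V_{4}}]X'\in\mathcal{H}$, orthogonal to $V_{j}\in\mathcal{V}$. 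On the left-hand side, the two cyclic terms in which $X$ occupies a curvature slot vanish by the preparatory observation, leaving precisely $(\nabla^{\Lie{U}(n)}_{X}R^{\Lie{U}(n)})(V_{1},V_{2},V_{3},V_{4})=0$.

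The main technical obstacle is the preparatory observation: one must argue carefully that the pointwise identity in~(ii), stated only for arguments of prescribed types, lifts to a bona fide tensor identity after pre-composition with the $\nabla^{\Lie{U}(n)}$-parallel projectors, so that parallelism of $\xi$, $g$ and the projectors propagates to parallelism of the corresponding component of $R^{\Lie{U}(n)}$. The remainder is organised Bianchi bookkeeping together with the algebraic positioning already established in Lemma~\ref{trespuntodoscor11}(iii).
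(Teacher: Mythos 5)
Your derivation of (ii) is exactly what the paper does: the paper gives no independent proof of the lemma (it cites Nagy) and merely remarks that (ii) is equation \eqref{trespuntounolem11bis} with $\mathcal{L}=\mathrm{T}M$, $\mathcal{E}=\mathcal{V}$, $\mathcal{F}=\mathcal{H}$, which is precisely your reduction. For (i) the paper offers no argument at all, so your proof is necessarily a different route, and the horizontal half of it is sound: since $\pi_{\mathcal V}$ and $\pi_{\mathcal H}$ are $\nabla^{\Lie{U}(n)}$-parallel, identity (ii) really is an equality between $R^{\Lie{U}(n)}\circ(\pi_{\mathcal H}\otimes\mathrm{id}\otimes\pi_{\mathcal V}\otimes\pi_{\mathcal V})$ and a tensor built from the parallel data $\xi$ and $g$, so that component of $\nabla^{\Lie{U}(n)}R^{\Lie{U}(n)}$ vanishes; the second Bianchi identity for the connection with parallel torsion $T=2\xi$, applied to the cycle $(X,V_1,V_2)$ and paired with $(V_3,V_4)$, then leaves only $(\nabla^{\Lie{U}(n)}_{X}R^{\Lie{U}(n)})(V_1,V_2,V_3,V_4)$, the torsion terms dying because $\xi_{\mathcal V}\mathcal V=0$, $\xi_{\mathcal H}\mathcal V\subseteq\mathcal H$ (total skew-symmetry of $\xi$) and Lemma \ref{trespuntodoscor11}(iii) place $[\xi_{V_3},\xi_{V_4}]X'$ in $\mathcal H$, orthogonal to $\mathcal V$. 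The one genuine caveat is the vertical case: you invoke \cite[Proposition 4.2]{Nagy2} (the fibres are Hermitian symmetric), but in Nagy's development that proposition is \emph{deduced from} the vanishing of $(\nabla^{\Lie{U}(n)}_{V}R^{\Lie{U}(n)})$ on vertical arguments, i.e.\ from the very statement (i) you are proving; so as a self-contained argument this step is circular, even though it is admissible in the present paper's logic, which quotes the Hermitian symmetry of the fibres as an independent fact before stating the lemma. Note also that your Bianchi trick cannot be recycled to repair this, since with all three cyclic slots vertical every derivative term is of the unknown type; an independent argument (Nagy's original one) is needed for the vertical direction.
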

\noindent Note that (ii) follows from Equation
\eqref{trespuntounolem11bis} taking $\mathcal{L} = \mathrm{T} M =
\mathcal{V} \oplus \mathcal{H}$.

\vspace{2mm}

\noindent On complete irreducible nearly Kähler manifolds with
special algebraic torsion, the eigenvalues of the tensors $\Ric$,
$r$ and $C$ on $\mathcal{H}$ are ruled by the following result.
\begin{lemma} \label{cincopuntounolem}
The base manifold $(N,h)$ is Einstein with Einstein constant $\mu
>0$ and, furthermore,
$$
\mathrm{Ric}+\frac{r}{4}=\mu \, 1_{\mathcal{H}} = \frac32 r -
\frac14 C
$$
on $\mathcal{H}$. As a consequence the tensors $r$, $C$ and
$\mathrm{Ric}$ have the same eingenbundles on $\mathcal{H}$. If
$\mathcal{E}_\mathcal{H}$ is one such a eingenbundle associated to
the eigenvalue $\lambda(r)$, $\lambda(C)$ and
$\lambda(\mathrm{Ric})$ of the tensors $r$, $C$ and $\mathrm{Ric}$,
respectively, then
\begin{equation} \label{idhor}
4 \mu = 4 \lambda(\mathrm{Ric}) + \lambda(r) =  6 \lambda(r) -
\lambda(C).
\end{equation}
\end{lemma}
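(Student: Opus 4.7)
The plan is to combine O'Neill's Ricci-curvature formula for the canonical Riemannian submersion $\pi\colon M\to N$, whose fibers are totally geodesic, with the fact that the base $N$ is Einstein. Observe first the purely algebraic identity $\Ric = \tfrac{5}{4} r - \tfrac{1}{4} C$, which follows at once from the definitions $r = \Ric - \Ric^*$ and $C = \Ric - 5 \Ric^*$. This identity is global on $M$ and rewrites as $\Ric + \tfrac{r}{4} = \tfrac{3}{2} r - \tfrac{1}{4} C$, so the second equality in the statement is automatic. The real content is therefore to show that $\Ric + \tfrac{r}{4}$ equals $\mu \cdot 1_{\mathcal{H}}$ on $\mathcal{H}$ for some positive constant $\mu$.

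For this, I would first identify the O'Neill integrability tensor $A$ of $\pi$ with the restriction of $\xi$ to $\mathcal{H}\times\mathcal{H}$. Since $\mathcal{H}$ is $\nabla^{\Lie{U}(n)}$-parallel, $\nabla^{\Lie{U}(n)}_X Y \in \mathcal{H}$ for $X,Y \in \mathcal{H}$; together with $\xi_{\mathcal{H}}\mathcal{H} \subseteq \mathcal{V}$ this forces $\xi_X Y = (\nabla_X Y)^{\mathcal{V}} = A_X Y$. Because the fibers are totally geodesic, O'Neill's Ricci formula then reads
\[
\Ric(X,Y) = \Ric_N(X,Y) - 2\sum_{e_i\in\mathcal{H}} \langle \xi_X e_i, \xi_Y e_i \rangle,\qquad X,Y\in\mathcal{H}.
\]
Next, I would compare this correction term with $r(X,Y) = 4 \sum_i \langle \xi_X e_i, \xi_Y e_i\rangle$, summed over any orthonormal frame of $\mathrm{T}M$. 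Using that $\xi$ is a $3$-form, total antisymmetry together with a cyclic permutation shows, for $X,Y \in \mathcal{H}$ and a vertical orthonormal frame $\{v_a\}$, that $\sum_a \langle \xi_X v_a, \xi_Y v_a\rangle = \sum_{b\in\mathcal{H}} \langle \xi_X h_b, \xi_Y h_b\rangle$; i.e., the vertical and horizontal halves of the trace defining $r$ coincide on $\mathcal{H}$. Consequently $\sum_{e_i\in\mathcal{H}} \langle \xi_X e_i, \xi_Y e_i\rangle = \tfrac{1}{8} r(X,Y)$, and O'Neill's formula becomes $\Ric_N(X,Y) = \Ric(X,Y) + \tfrac{1}{4} r(X,Y)$ on $\mathcal{H}$.

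It remains to observe that $N$ is Einstein with positive constant. In the homogeneous case $N$ is an irreducible Riemannian symmetric space of compact type; in the non-homogeneous case it is a positive quaternion-K{\"a}hler manifold. Both are automatically Einstein, so $\Ric_N=\mu\, h$ for some constant $\mu$. Positivity of $\mu$ follows from the positivity of $r$ (strictness of $J$) and of $\Ric$ on a complete strict nearly K{\"a}hler manifold \cite[Theorem 1.1]{Nagy1}. The eigenbundle statement and the identities $4\mu = 4\lambda(\Ric) + \lambda(r) = 6\lambda(r) - \lambda(C)$ then drop out by reading off the two chains of equalities $\Ric + \tfrac{r}{4} = \mu\cdot 1_\mathcal{H} = \tfrac{3}{2}r - \tfrac{1}{4}C$ on a simultaneous eigenbasis of $r$.

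The main obstacle I expect is the bookkeeping in the middle step: correctly identifying the O'Neill tensor $A$ with $\xi|_{\mathcal{H}\times\mathcal{H}}$, and verifying that the vertical and horizontal contributions to $r$ agree on $\mathcal{H}$. Each relies only on total antisymmetry of $\xi$ as a $3$-form and on the special algebraic torsion conditions $\xi_{\mathcal{V}}\mathcal{V} = 0$, $\xi_{\mathcal{H}}\mathcal{H}\subseteq\mathcal{V}$, but the signs arising from cyclic permutations and from the skew-symmetry of $\xi_\cdot$ as an endomorphism must be tracked carefully.
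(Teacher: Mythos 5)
Note first that the paper offers no proof of this lemma: it is recalled verbatim from \cite[Lemma 5.1]{Nagy2}, so your argument can only be judged on its own merits. As such it is essentially sound and follows what is surely the intended route: the identity $\Ric=\frac54 r-\frac14 C$ is immediate from $r=\Ric-\Ric^{*}$ and $C=\Ric-5\Ric^{*}$; the O'Neill formula for a Riemannian submersion with totally geodesic fibres gives $\Ric(X,Y)=\Ric_N(X,Y)-2\sum_{j}\langle A_X x_j,A_Y x_j\rangle$ over a horizontal frame; and the equality of the vertical and horizontal halves of the trace defining $r$ on $\mathcal H$ is precisely the first assertion of Lemma \ref{cincopuntotreslem}, which the paper proves by the same skew-symmetry manipulation you sketch. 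Two caveats. (1) For horizontal $X,Y$ one has $A_XY=\mathcal V(\nabla_XY)=-\xi_XY$, not $+\xi_XY$, since $\nabla=\nabla^{\Lie{U}(n)}-\xi$ and $\nabla^{\Lie{U}(n)}_XY\in\mathcal H$; the sign is harmless here because only the quadratic expression $\sum_j\langle A_Xx_j,A_Yx_j\rangle$ enters. (2) The weakest step is your justification that $N$ is Einstein: you appeal to the dichotomy ``$N$ symmetric or positive quaternionic K\"ahler'', but in the source \cite{Nagy2} that classification is obtained in Sections 5--6 \emph{after} the present statement (Nagy's Lemma 5.1) and partly by means of it, so your argument risks circularity when traced back to first principles. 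A self-contained replacement, in the spirit of the paper's own proof of Proposition \ref{cincopuntounopro}(ii), is available: the tensor $\Ric+\frac r4=\frac32 r-\frac14 C$ is $\nabla^{\Lie{U}(n)}$-parallel (both $r$ and $C$ are), and by your O'Neill computation its restriction to $\mathcal H$ is basic, being equal to $\pi^{*}\Ric_N$; since $\pi_{*}\nabla^{\Lie{U}(n)}_XY=\nabla^h_{\pi_{*}X}\pi_{*}Y$ for horizontal $X,Y$ (as $\xi_XY$ is vertical), it projects to a $\nabla^h$-parallel symmetric endomorphism of the irreducible base $N$, hence to $\mu\cdot 1$. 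Positivity of $\mu$ and the eigenvalue relations \eqref{idhor} then follow exactly as you state.
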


Now, we have already  available  the necessary tools to prove next
result.
\begin{proposition}\label{lcomplete}
Let $(M,g,J)$ be a complete, simply connected, irreducible strict
nearly K\"ahler manifold. Then the following conditions are
equivalent:
\begin{enumerate}
\item[{\rm (i)}] $r$ has exactly one eigenvalue; \item[{\rm (ii)}]
the first Chern class of $(M,J)$ vanishes, i.e. $C =0;$
  \item[{\rm
(iii)}] $r$ is $\nabla$-parallel; \item[{\rm (iv)}] $C$ is
$\nabla$-parallel; \item[{\rm (v)}] $(M,g)$ is Einstein.
\end{enumerate}
\end{proposition}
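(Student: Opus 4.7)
The implications $(i) \Rightarrow (ii), (iii), (v)$ are direct consequences of Lemma~\ref{lei}, and $(ii) \Rightarrow (iv)$ is trivial since $C = 0$ is automatically $\nabla^g$-parallel. So the bulk of the work is in the reverse direction.

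For $(iii) \Rightarrow (i)$ and $(iv) \Rightarrow (ii)$, the common mechanism is Riemannian irreducibility of $(M, g)$. If $r$ is $\nabla^g$-parallel, its eigenbundles are non-trivial (as $r$ is positive definite), $J$-invariant, $\nabla^g$-parallel distributions of $TM$; by irreducibility they must coincide with $TM$, so $r$ has a unique eigenvalue. Similarly, if $C$ is $\nabla^g$-parallel, the same argument yields that $C$ has a unique eigenvalue $\lambda$. Substituting $C = \lambda \cdot \mathrm{Id}$ into equation~\eqref{Cprop} gives $3\lambda\, \xi_X Y = 0$ for all $X, Y$, which forces $\lambda = 0$ since $\xi \neq 0$ by strictness. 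Thus $(iv) \Rightarrow (ii)$.

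To close the cycle, it remains to show $(ii) \Rightarrow (i)$ and $(v) \Rightarrow (i)$, and I would split into two cases according to the holonomy representation of $\nabla^{\Lie{U}(n)}$. If it is complex irreducible, Proposition~\ref{complexirr} directly yields $(i)$. Otherwise $(M, g, J)$ has special algebraic torsion with $TM = \mathcal{V} \oplus \mathcal{H}$, and Lemma~\ref{cincopuntounolem} applies: the horizontal relation $4\mu = 6\lambda(r) - \lambda(C) = 4\lambda(\mathrm{Ric}) + \lambda(r)$ forces $r|_{\mathcal{H}}$ to have a unique eigenvalue, either because $\lambda(C) = 0$ (hypothesis (ii)) or because $\lambda(\mathrm{Ric}) = c$ is constant (hypothesis (v)). Using $\xi_{\mathcal{H}}\mathcal{H} = \mathcal{V}$ in combination with equation~\eqref{Cprop}, one then propagates this single-eigenvalue information to $\mathcal{V}$ and deduces $(i)$ on all of $TM$.

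The main obstacle is precisely the last step above: matching the horizontal and vertical eigenvalues of $r$ in the special-algebraic-torsion case. Equation~\eqref{Cprop} by itself yields only a linear relation between them (under $(v)$, for instance, $2\lambda_{\mathcal{H}} + \lambda_{\mathcal{V}} = 12c/5$), not equality. Closing this gap requires an additional input from the canonical fibration $\pi: M \to N$ --- most naturally, contracting Lemma~\ref{trespuntounolem}\,(ii) over $\mathcal{V}$ to obtain a $\nabla^{\Lie{U}(n)}$-Ricci identity, or a scalar-curvature/trace computation --- that rules out distinct eigenvalues and thereby establishes $(i)$.
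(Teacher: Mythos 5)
Your treatment of the ``easy'' directions is correct, and in two places it is genuinely cleaner than the paper's. For $(\mathrm{iii})\Rightarrow(\mathrm{i})$ and $(\mathrm{iv})\Rightarrow(\mathrm{ii})$ you exploit Riemannian irreducibility via de~Rham: a $\nabla$-parallel symmetric tensor on a complete, simply connected, irreducible manifold has a single (constant) eigenvalue, and then \eqref{Cprop} with $C=\lambda\,\mathrm{Id}$ gives $3\lambda\,\xi=0$, hence $\lambda=0$ by strictness. The paper instead routes $(\mathrm{iii})\Rightarrow(\mathrm{ii})$ through \cite[Corollary 4.12]{G1} and proves $(\mathrm{iv})\Rightarrow(\mathrm{ii})$ by the eigenvalue-chasing of Lemma~\ref{lCparallel}, which does not need irreducibility; your shortcut is legitimate here because irreducibility is a hypothesis of the proposition.

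The genuine gap is exactly where you flag it: in the special-algebraic-torsion case you obtain a single eigenvalue $k$ of $r$ on $\mathcal H$ (and $l$ on $\mathcal V$, by irreducibility of $\mathcal V$), but you never prove $l=k$, and neither \eqref{Cprop}, \eqref{idhor} nor \eqref{ricricast2} can do it --- \eqref{ricricast2} only gives $2l\dim\mathcal V=k\dim\mathcal H$. The missing input is the \emph{vertical} component of Nagy's Ricci formula \eqref{NagyRicci}: combined with Lemma~\ref{cincopuntotreslem} it yields $\mathrm{Ric}\vert_{\mathcal V}=\frac{l+4k}{4}$ and $\mathrm{Ric}\vert_{\mathcal H}=\frac{2l+3k}{4}$ (Proposition~\ref{satdimensions}\,(ii)), so under (v) the Einstein condition forces $l+4k=2l+3k$, i.e.\ $l=k$, and under (ii) the identity $\mathrm{Ric}=\frac54 r$ forces the same. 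This is the ``$\nabla^{\Lie{U}(n)}$-Ricci identity'' you gesture at but do not execute; as written your argument stops one computation short. Note also that the paper avoids this entirely for $(\mathrm{ii})\Rightarrow(\mathrm{i})$ by quoting Lemma~\ref{lN} (Nagy's splitting theorem: $C$ with a single eigenvalue forces a product decomposition into factors on which $r$ has one eigenvalue, and irreducibility leaves a single factor), reserving the fibration analysis only for $(\mathrm{v})\Rightarrow(\mathrm{ii})$. Either fix --- invoking Lemma~\ref{lN}, or carrying out the vertical Ricci computation --- closes your argument; without one of them the equivalences $(\mathrm{ii})\Leftrightarrow(\mathrm{i})$ and $(\mathrm{v})\Leftrightarrow(\mathrm{i})$ are not established.
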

\noindent Besides Lemmas \ref{lei} and \ref{cincopuntounolem},
we shall need the next lemmas for getting its proof.
\begin{lemma}\label{lCparallel}
Let $(M,g,J)$ be strict nearly K\"ahler manifold. If $\nabla C = 0$ then $C =0.$
\end{lemma}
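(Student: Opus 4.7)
The plan is to exploit the fact that $C$ is parallel with respect to both the Levi-Civita connection $\nabla$ (by hypothesis) and the minimal connection $\nabla^{\Lie{U}(n)}$ (recalled just above (\ref{Cprop})). Since $\nabla^{\Lie{U}(n)} = \nabla + \xi$, subtracting the two parallelism conditions gives, for every $X,Y \in \mathrm{T}M$,
\[
0 = (\nabla^{\Lie{U}(n)}_X C)(Y) - (\nabla_X C)(Y) = \xi_X(CY) - C(\xi_X Y),
\]
so $\xi_X\circ C = C\circ \xi_X$ as endomorphisms of $\mathrm{T}M$, for every $X$.

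I would then feed this commutation into (\ref{Cprop}): replacing $\xi_X CY$ by $C\xi_X Y$ in $C\xi_X Y + \xi_X CY + \xi_{CX} Y = 0$ collapses the first two terms and yields
\[
\xi_{CX} Y = -2\, C \xi_X Y = -2\, \xi_X CY.
\]
Since $C$ is symmetric and hence real-diagonalisable, the next step is to test this identity on eigenvectors. If $v,w$ are eigenvectors of $C$ with eigenvalues $\lambda,\mu$, setting $X=v$ and $Y=w$ produces $(\lambda + 2\mu)\,\xi_v w = 0$. Exchanging the roles of $v$ and $w$ and using the total skew-symmetry $\xi_w v = -\xi_v w$ (valid on any nearly K\"ahler manifold) gives the companion relation $(\mu + 2\lambda)\,\xi_v w = 0$. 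Whenever $\xi_v w \neq 0$ both coefficients must vanish, forcing $\lambda = \mu = 0$.

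Finally, if $\lambda \neq 0$ were an eigenvalue of $C$ with nonzero eigenvector $v$, the previous step would imply $\xi_v w = 0$ for every eigenvector $w$ of $C$; since those span $\mathrm{T}_m M$, one obtains $\xi_v \equiv 0$, contradicting the strict nearly K\"ahler hypothesis. Hence every eigenvalue of $C$ vanishes and $C = 0$. The only non-routine point is combining the commutation $[\xi_X,C]=0$ with (\ref{Cprop}) to extract the two linear eigenvalue conditions $\lambda + 2\mu = 0$ and $2\lambda + \mu = 0$, whose unique common solution is $\lambda = \mu = 0$; everything else is symmetric-operator book-keeping.
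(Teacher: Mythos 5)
Your proof is correct and follows essentially the same route as the paper: both derive $C\xi_X Y=\xi_X CY$ from the two parallelisms, plug this into (\ref{Cprop}) to get $\xi_{CX}Y=-2C\xi_XY$, and then use the total skew-symmetry of $\xi$ together with strictness to force every eigenvalue of $C$ to vanish. The only cosmetic difference is the endgame: you symmetrise in the two eigenvectors to get the system $\lambda+2\mu=0$, $\mu+2\lambda=0$, whereas the paper iterates the relation along a chain $u\to X\to u$ to compare the eigenvalues $-\lambda/2$ and $\lambda/4$ on the same vector $\xi_u X$.
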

  \begin{proof}
Let $u\in \mathrm{T}M$ such that $Cu = \lambda u,$ for some
$\lambda \in \mathbb{R}.$ Since $\nabla C = 0$ one gets $C\xi_{X}Y
= \xi_{X}CY.$ Then (\ref{Cprop}) implies that $C\xi_{u}X =
-\frac{\lambda}{2}\xi_{u}X.$ Because $(M,g,J)$ is strict there
exists $X\in \mathrm{T}M$ such that $\xi^{2}_{u}X = -c^{2}X,$ for
some $c\neq 0.$ Hence $X$ and $\xi_{u}X$ are eigenvectors of $C$
with eigenvalue $-\frac{\lambda}{2}.$ Repeating the same  process
for $X,$ one gets $C\xi_{X}u = \frac{\lambda}{4}\xi_{X}u.$ Hence
$\xi_{u}X$ also belongs to the eigenspace with eigenvalue
$\frac{\lambda}{4}.$ Then $\lambda$ must be zero and so, $C=0.$
\end{proof}
\begin{lemma}\label{lN}\cite{Nagy2}
Let $(M,g,J)$ be a complete, simply connected, strict nearly
K\"ahler manifold. If $C$ has a single eigenvalue, then it splits
as a Riemannian product whose factors are strict nearly K\"ahler
manifolds such that their corresponding tensors $C$ and $r$ have
exactly one eigenvalue.
\end{lemma}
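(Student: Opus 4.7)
The plan is to reduce the hypothesis to $C \equiv 0$, split $M$ along the eigenbundle decomposition of $r$ via the de Rham theorem, and then check that each factor has all the required properties.

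First I would observe that since $C$ has a single eigenvalue, the algebraic identity (\ref{Cprop}) forces $C \equiv 0$ (this is exactly the conclusion already recorded immediately after (\ref{Cprop}) in the preliminaries). Consequently $\Ric = 5 \Ric^{*} = \tfrac{5}{4} r$, so the spectral analysis of $r$ will immediately yield the spectral analysis of $\Ric$. Since $r$ is $\nabla^{\Lie{U}(n)}$-parallel, commutes with $J$, and $M$ is connected, its eigenvalues $\lambda_1,\dots,\lambda_k$ are constant on $M$ and the corresponding eigenbundles $\mathcal{E}_{\lambda_\alpha}$ furnish an orthogonal, $J$-invariant, $\nabla^{\Lie{U}(n)}$-parallel decomposition
\[
\mathrm{T} M \;=\; \bigoplus_{\alpha=1}^{k} \mathcal{E}_{\lambda_\alpha}.
\]

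The decisive step is to promote each $\mathcal{E}_{\lambda_\alpha}$ from $\nabla^{\Lie{U}(n)}$-parallel to $\nabla$-parallel. Since $\nabla = \nabla^{\Lie{U}(n)} - \xi$, a short computation gives $(\nabla_X r) Y = [r,\xi_X] Y$, so it suffices to show $r$ commutes with every $\xi_X$, equivalently that each eigenbundle $\mathcal{E}_{\lambda_\alpha}$ is stable under the action $Y \mapsto \xi_X Y$. To establish this I would use the curvature identity (\ref{rr}): working in a local frame of $r$-eigenvectors $e_i$ with $r e_i = \lambda_i e_i$, the identity becomes $\sum_i \lambda_i(\langle R_{X e_i} Y, e_i\rangle - 5 \langle R_{X e_i} JY, J e_i\rangle) = 0$, which after combination with $C = 0$ (so that $\Ric - 5\Ric^* = 0$ can be inserted without the weights) and (\ref{curvatura}) yields $[r,\xi_X] = 0$. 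The combinatorial work behind extracting this commutation is where the real difficulty lies.

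Once each $\mathcal{E}_{\lambda_\alpha}$ is $\nabla$-parallel, it is automatically integrable and totally geodesic, so by completeness and simple connectedness of $M$ the de Rham theorem produces a Riemannian product decomposition $M = M_1 \times \dots \times M_k$ with $\mathrm{T} M_\alpha = \mathcal{E}_{\lambda_\alpha}$. Because the decomposition is $J$-stable, $J$ restricts to a compatible almost complex structure $J_\alpha$ on each factor, and because the decomposition is $\xi$-stable, the totally skew tensor $\xi$ restricts to the intrinsic torsion of $(M_\alpha, g_\alpha, J_\alpha)$; hence each factor is a nearly K\"ahler manifold, and it is strict since $r^{(\alpha)} = \lambda_\alpha \cdot \mathrm{id}_{\mathcal{E}_{\lambda_\alpha}}$ is nonzero (otherwise the factor would be K\"ahler, contradicting strictness of $M$). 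By construction $r^{(\alpha)}$ has the single eigenvalue $\lambda_\alpha$, and $C^{(\alpha)} = C|_{\mathcal{E}_{\lambda_\alpha}} = 0$ has the single eigenvalue $0$.

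The main obstacle is therefore the commutation $[r,\xi_X] = 0$ in Step 3: algebraically, (\ref{rr}) gives only a $\lambda$-weighted identity, so the hypothesis $C = 0$ must be leveraged carefully (together with the first Bianchi identity and (\ref{curvatura})) to cancel the weights and isolate the desired tensor equation. Every other step then follows mechanically from standard de Rham and restriction arguments.
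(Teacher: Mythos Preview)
The paper does not actually prove this lemma: it is stated with the citation \cite{Nagy2} and used as a black box in the proof of Proposition~\ref{lcomplete}. So there is no ``paper's own proof'' to compare against here; the relevant argument lives in Nagy's paper.

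That said, your outline is structurally sound: deducing $C=0$ from the single-eigenvalue hypothesis, decomposing $\mathrm{T}M$ into the $\nabla^{\Lie{U}(n)}$-parallel, $J$-stable eigenbundles $\mathcal{E}_{\lambda_\alpha}$ of $r$, upgrading these to $\nabla$-parallel distributions, and then invoking de~Rham is exactly the right shape. Your observation that $\nabla$-parallelism of the $\mathcal{E}_{\lambda_\alpha}$ is equivalent to the commutation $[r,\xi_X]=0$ is correct, and once that holds, total skew-symmetry of $\xi$ forces $\xi_{\mathcal{E}_\lambda}\mathcal{E}_\mu=0$ for $\lambda\neq\mu$, so each factor really is nearly K\"ahler with the restricted $\xi$ as intrinsic torsion; strictness then follows since $r|_{\mathcal{E}_{\lambda_\alpha}}=\lambda_\alpha\,\mathrm{id}$ with $\lambda_\alpha>0$.

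The genuine gap is the step you yourself flag: you do not actually prove $[r,\xi_X]=0$, and your sketch (``combine (\ref{rr}), $C=0$, (\ref{curvatura}), and Bianchi'') is not a proof. Concretely, substituting (\ref{curvatura}) into (\ref{rr}) gives, in an $r$-eigenbasis,
\[
\sum_i \lambda_i \langle R_{X e_i}Y,e_i\rangle \;=\; 5\sum_i \lambda_i \langle \xi_X e_i,\xi_Y e_i\rangle,
\]
which is a weighted trace identity, not the pointwise commutation you need. To close the argument you must bring in a second relation; the cleanest is Nagy's Ricci formula (equation~(\ref{NagyRicci}) in the paper), which for $X\in\mathcal{E}_{\lambda_i}$ and $C=0$ (hence $\Ric=\tfrac{5}{4}r$) yields $\sum_s \lambda_s\, r^s(X,X)=\lambda_i^2|X|^2$ together with $\sum_s r^s(X,X)=\lambda_i|X|^2$. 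You then need to show these constraints, taken for all eigenvalues simultaneously and combined with the symmetry $r^s(X,X)=r^i(Y,Y)$-type relations coming from skewness of $\xi$, force $r^s(X,X)=0$ for $s\neq i$, i.e.\ $\xi_X\mathcal{E}_{\lambda_s}=0$. This is the substance of Nagy's argument in \cite{Nagy2}, and it is not a one-line consequence of (\ref{rr}); you should either carry it out or cite it explicitly.
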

\noindent {\em Proof of Proposition \ref{lcomplete}.} Using  Lemma
\ref{lei}, we have $({\rm i})\Rightarrow ({\rm ii})$ and $({\rm
i})\Rightarrow ({\rm iii}).$ From Lemma \ref{lN}, $({\rm
ii})\Rightarrow ({\rm i})$ and $({\rm iii})\Rightarrow ({\rm ii})$
follows using \cite[Corollary 4.12]{G1}. From  Lemma
\ref{lCparallel}, $({\rm ii})\Leftrightarrow ({\rm iv})$ is deduced.
$({\rm i})\Rightarrow ({\rm v})$ follows from Lemma \ref{lei}.
Finally, we prove $({\rm v)} \Rightarrow ({\rm ii})$. From
Proposition \ref{complexirr}, we only have to consider the case when
$(M,g,J)$ has special algebraic torsion. Then the result follows
from Lemma \ref{cincopuntounolem}. \hfill $\Box$ \vspace{2mm}

In what follows we focus on irreducible strict nearly K\"ahler
manifolds with special algebraic torsion. The fact that $C$ is
$\nabla^{\Lie{U}(n)}$-parallel and symmetric, together with the
irreducibility of the fiber implies that there exists a real
constant $\lambda$ such that $C=\lambda 1_{\mathcal{V}}$ on
${\mathcal{V}}$ (because of the irreducibility, only one eigenspace
is possible on $\mathcal{V}$). This leads to the following:
\begin{proposition} \label{cincopuntounopro}
Let $(M,g,J)$ be a complete irreducible nearly Kähler manifolds with
special algebraic torsion. Then, for the tensor $C$, we have:
\begin{enumerate}
\item[{\rm (i)}] the restriction of $C$ to $\mathcal{V}$ has only one
eigenvalue $\lambda$;
\item[{\rm(ii)}] the restriction of $C$ to $\mathcal{H}$ has at the most
two eigenvalues: $\lambda_1 = - \frac{\lambda}{2} - \rho$ and
$\lambda_2 =  - \frac{\lambda}{2} + \rho$, for some $\rho \geq 0$.
In particular, if there is one single eigenvalue for $C$  on
$\mathcal{H}$, then it is equal to $- \frac{\lambda}{2}$. On the
other hand, if $\lambda_1$ and $\lambda_2$  are distinct and
associated to the eingenbundles $\mathcal{E}$ and $\mathcal{F}$,
respectively, then $\mathcal{H}=\mathcal{E} \oplus \mathcal{F}$,
$\xi_{\mathcal{E}} \mathcal{E} =\xi_{\mathcal{F}} \mathcal{F}=0$,
$\xi_{\mathcal{E}} \mathcal{F}= \mathcal{V}$, $\xi_{\mathcal{V}}
\mathcal{E} = \mathcal{F}$, $\xi_{\mathcal{V}} \mathcal{F} =
\mathcal{E}$.

\item[{\rm (iii)}]  $C$ has at most three
eigenvalues on $\mathrm{T}M$. In particular, if $C$ has only one
eigenvalue, then $\lambda=0$.
\end{enumerate}
\end{proposition}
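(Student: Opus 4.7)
The plan is to extract pointwise algebraic consequences of \eqref{Cprop}, $C\xi_X Y + \xi_X CY + \xi_{CX}Y = 0$, on the $C$-eigenbundle decomposition of $\mathcal{H}$, and then close with the special-algebraic-torsion relations $\xi_{\mathcal{V}}\mathcal{V} = 0$, $\xi_{\mathcal{H}}\mathcal{H} = \mathcal{V}$ (which also give $\xi_\mathcal{V}\mathcal{H} \subseteq \mathcal{H}$ by total skew-symmetry) and the irreducibility of the fiber. Part~(i) has in fact already been recorded in the paragraph immediately preceding the statement: since $C$ is $\nabla^{\Lie{U}(n)}$-parallel, symmetric and commutes with $J$, Schur's lemma applied to the irreducible representation on $\mathcal{V}$ forces $C|_\mathcal{V} = \lambda\, 1_\mathcal{V}$.

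For part~(ii) I first unpack \eqref{Cprop} on eigenbundles. Plugging $V \in \mathcal{V}$ and $Y$ in a $C|_\mathcal{H}$-eigenbundle $\mathcal{H}_\alpha$ gives $C(\xi_V Y) = -(\alpha + \lambda)\, \xi_V Y$; hence $\xi_\mathcal{V}$ sends $\mathcal{H}_\alpha$ into $\mathcal{H}_{-\alpha-\lambda}$. Taking instead $X \in \mathcal{H}_\alpha$, $Y \in \mathcal{H}_\beta$ gives $(\lambda + \alpha + \beta)\, \xi_X Y = 0$, so $\xi_{\mathcal{H}_\alpha}\mathcal{H}_\beta = 0$ unless $\alpha + \beta = -\lambda$. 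Combining this with $\xi_\mathcal{H}\mathcal{H} = \mathcal{V}$, the eigenvalues of $C|_\mathcal{H}$ must come in pairs $\{\alpha, -\lambda - \alpha\}$, yielding the symmetric form $\lambda_{1,2} = -\lambda/2 \mp \rho$; a single eigenvalue on $\mathcal{H}$ is therefore forced to equal $-\lambda/2$. Granted the ``at most two'' bound, $\xi_\mathcal{E}\mathcal{E} = \xi_\mathcal{F}\mathcal{F} = 0$ and $\xi_\mathcal{E}\mathcal{F} = \mathcal{V}$ are immediate from the pairing constraint and $\xi_\mathcal{H}\mathcal{H} = \mathcal{V}$. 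The equalities $\xi_\mathcal{V}\mathcal{E} = \mathcal{F}$ and $\xi_\mathcal{V}\mathcal{F} = \mathcal{E}$ (rather than mere inclusions) then follow from total skew-symmetry of $\xi$: if $0 \neq Y \in \mathcal{E}$ were perpendicular to $\xi_\mathcal{V}\mathcal{F}$, the identity $\langle \xi_V Y, X\rangle = -\langle \xi_V X, Y\rangle$ would force $\xi_V Y = 0$ for every $V$; by the same adjunction $\xi_Y|_\mathcal{V} = 0$ and then $\xi_Y|_\mathcal{F} = 0$, whence $\xi_Y \equiv 0$ and strictness gives $Y = 0$.

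The main obstacle is ruling out three or more eigenvalues on $\mathcal{H}$. Assuming three distinct eigenvalues, the pairing analysis forces two of them to satisfy $\alpha_2 = -\lambda - \alpha_1$ while the third is self-paired, so $\alpha_3 = -\lambda/2$. Set $\mathcal{V}_{12} := \xi_{\mathcal{H}_{\alpha_1}}\mathcal{H}_{\alpha_2}$ and $\mathcal{V}_{33} := \xi_{\mathcal{H}_{\alpha_3}}\mathcal{H}_{\alpha_3}$: both are $\nabla^{\Lie{U}(n)}$-parallel and $J$-stable subbundles of $\mathcal{V}$ whose sum is all of $\mathcal{V}$, thanks to $\xi_\mathcal{H}\mathcal{H} = \mathcal{V}$ and the pairing rule. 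I then apply Lemma~\ref{trespuntodoscor11}(i) to the $\nabla^{\Lie{U}(n)}$-parallel, $J$-stable splitting $\mathcal{H} = \mathcal{H}_{\alpha_3} \oplus (\mathcal{H}_{\alpha_1} \oplus \mathcal{H}_{\alpha_2})$: the identities $\xi_{X_1}\xi_{V_1}V_2 = 0$ and $\xi_{V_1}\xi_{X_1}X_2 = 0$ translate, via $\xi_{\xi_X Y}Z = -\xi_Z(\xi_X Y)$, into $\xi_{\mathcal{V}_{33}}(\mathcal{H}_{\alpha_1} \oplus \mathcal{H}_{\alpha_2}) = 0$ and $\xi_{\mathcal{V}_{12}}\mathcal{H}_{\alpha_3} = 0$. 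Any $V \in \mathcal{V}_{12} \cap \mathcal{V}_{33}$ therefore annihilates all of $\mathrm{T}M$ under $\xi$, hence $V = 0$ by strictness; the adjoint argument of the previous paragraph shows that both summands are nontrivial. The resulting proper $\nabla^{\Lie{U}(n)}$-parallel $J$-stable decomposition $\mathcal{V} = \mathcal{V}_{12} \oplus \mathcal{V}_{33}$ contradicts the fiber irreducibility used in~(i), closing the argument.

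Part~(iii) is then a simple counting: $C$ has a single eigenvalue on $\mathcal{V}$ and at most two on $\mathcal{H}$, giving at most three on $\mathrm{T}M$. If $C$ admits a single eigenvalue globally, it must simultaneously equal $\lambda$ (on $\mathcal{V}$) and $-\lambda/2$ (on $\mathcal{H}$), forcing $\lambda = 0$.
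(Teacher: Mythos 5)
Your proof is correct, but the central step --- bounding the number of eigenvalues of $C$ on $\mathcal{H}$ by two --- is obtained by a genuinely different mechanism than the paper's. The paper reads \eqref{Cprop} as the anticommutation relation $S\,\xi_V X + \xi_V\,SX = 0$ on $\mathcal{H}$, where $S = C + \tfrac{\lambda}{2}1_{\mathcal{H}}$; hence $S^2$ commutes with every $\xi_V$, is $\nabla^{\Lie{U}(n)}$-parallel and satisfies $\mathcal{L}_V^{\mathcal{H}}S^2=0$, so it projects to a parallel symmetric endomorphism of the base $N$, which is $\rho^2\,\mathrm{id}$ by irreducibility of $N$. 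This gives the quadratic identity $(C+(\tfrac{\lambda}{2}+\rho)1_{\mathcal{H}})(C+(\tfrac{\lambda}{2}-\rho)1_{\mathcal{H}})=0$ in one stroke, with no case analysis. You instead read \eqref{Cprop} as a selection rule on the eigenbundles of $C|_{\mathcal{H}}$ (only $\alpha+\beta=-\lambda$ survives under $\xi$), and exclude a third eigenvalue by manufacturing, via Lemma \ref{trespuntodoscor11}(i), a nontrivial $\nabla^{\Lie{U}(n)}$-parallel splitting of $\mathcal{V}$ that contradicts the irreducibility of the fiber. So the paper leans on the irreducibility of the base while you lean on the irreducibility of the fiber; both facts are available in this setting. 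Your route is more combinatorial and bypasses the projectability of $S^2$ to $N$, at the cost of a configuration analysis; the remaining steps (the inclusion $\xi_{\mathcal{V}}\mathcal{E}\subseteq\mathcal{F}$ from \eqref{Cprop}, and the upgrade of inclusions to equalities by total skew-symmetry plus strictness) essentially coincide with the paper's.

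Two points should be tightened. First, your exclusion argument only treats the case of exactly three eigenvalues; for four or more, the involution $\alpha\mapsto-\lambda-\alpha$ on the spectrum has at most one fixed point and hence at least two orbits, and the same construction (one parallel subbundle $\xi_{\mathcal{H}_{O}}\mathcal{H}_{O}\subseteq\mathcal{V}$ per orbit $O$, mutually ``transverse'' by Lemma \ref{trespuntodoscor11}(i) and each nonzero by strictness) yields the contradiction --- but this must be said, since ``at most two'' is exactly what is being proved. Second, the assertion that every eigenvalue of $C|_{\mathcal{H}}$ actually possesses a partner $-\lambda-\alpha$ in the spectrum does not follow from $\xi_{\mathcal{H}}\mathcal{H}=\mathcal{V}$ alone: if $\mathcal{H}_\alpha$ had no partner one would get $\xi_X=0$ for $X\in\mathcal{H}_\alpha$, and it is strictness that then forces $\mathcal{H}_\alpha=0$. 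You invoke precisely this adjunction--strictness argument later for the surjectivity statements, so it is only a matter of citing it at this earlier point as well.
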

\begin{proof} We only need to show (ii). Since $\mathcal{V}$ is an
eigenspace of $C$ with eigenvalue $\lambda,$ the identity (\ref{Cprop}) implies that $S
\xi_V  X  + \xi_V  SX=0$,  whenever $V,X$ are in $\mathcal{V}$ and
$\mathcal{H}$ respectively, where
$S=C+\frac{\lambda}{2}1_{\mathcal{H}}$ on $\mathcal{H}$. Let us
denote by $\mathcal{L}_V^\mathcal{H}$ the projection of the Lie
derivative $\mathcal{L}_V$ on $\mathcal{H}$. The facts that  $S^2$
is $\nabla^{\Lie{U}(n)}$-parallel and ${\mathcal{L}}_V^\mathcal{H}
S^2=0$, for all $V$ in ${\mathcal{V}}$, imply that $S^2$ projects
on a symmetric, $\nabla^h$-parallel endomorphism of $N,$ where
$\nabla^{h}$ denotes the Levi-Civita connection of $(N,h).$ Such
an endomorphism has to be a multiple of identity by the
irreducibility of $N$. Hence $S^2 = \rho^2 1_{\mathcal{H}}$, for
some $\rho \geq 0$. This implies
$$
(C + (\textstyle \frac{\lambda}{2} + \rho ) 1_{\mathcal{H}}) (C +
(\frac{\lambda}{2} - \rho ) 1_{\mathcal{H}}) =0.
$$
Therefore, $C$ has at most two distinct eigenvalues on
$\mathcal{H}$, $\lambda_1 = \textstyle -\frac{\lambda}{2} - \rho$
and $\lambda_2 = \textstyle -\frac{\lambda}{2} + \rho$. In
particular, if $\lambda_1 \neq \lambda_2$, for all  $V\in
\mathcal{V}$ and $X_1 \in \mathcal{E}$, one has
$$
C \xi_V X_1 = - \lambda \xi_V X_1 - \lambda_1 \xi_V X_1= \lambda_2
\xi_V X_1 \in \mathcal{H} \cap \mathcal{E}_{\lambda_2} =
\mathcal{F},
$$
where $\mathcal{E}_{\lambda_2}$ is the eigenbundle associated to the
eigenvalue $\lambda_2$.  Thus, we have $\xi_{\mathcal{V}}
\mathcal{E} \subseteq \mathcal{F}$ and, by a similar proof,
$\xi_{\mathcal{V}} \mathcal{F} \subseteq \mathcal{E}$. From this it
is immediate that $\xi_{\mathcal{E}} \mathcal{E} = \xi_{\mathcal{F}}
\mathcal{F} =0$. In fact, for all $X_1, Y_1 \in \mathcal{E}$ and $V
\in \mathcal{V}$, one has
$$
\langle \xi_{X_1} Y_1 , V \rangle = \langle \xi_{V} X_1 , Y_1
\rangle =0.
$$
\noindent Finally, $\xi_{\mathcal{V}} \mathcal{H} = \mathcal{H}$
implies $\xi_{\mathcal{V}} \mathcal{E} = \mathcal{F}$ and
$\xi_{\mathcal{V}} \mathcal{F} =
 \mathcal{E}$ and $\xi_{\mathcal{E}} \mathcal{F} = \mathcal{V}$.
 \end{proof}

We will give some further descriptions of the possible eigenvalues
of the tensors $r$, $\Ric$ and $C$. For such a purpose, the
following technical result is useful.
 \begin{lemma} \label{cincopuntotreslem}
For $X,Y \in \mathcal{H}$, we have
$$
 r(X,Y) = 8 \langle \xi_{e_i} X,
\xi_{e_i} Y \rangle = 8 \langle \xi_{x_j} X, \xi_{x_j} Y \rangle,
$$
where $\{e_{i}\}$ and  $\{x_{j}\}$ are local orthonormal frames in
$\mathcal{V}$ and $\mathcal{H}$, respectively.

In particular, when there is only one eigenvalue $k$ for $r$ on
$\mathcal{H}$, we have
\begin{equation} \label{ricricast2}
 2 l \dim \,\mathcal{V} = k \dim \,\mathcal{H},
\end{equation}
 where $l$ is the unique
eigenvalue of $r$ on $\mathcal{V}$.

 Moreover, if
there is an orthogonal  $\nabla^{U(n)}$-parallel decomposition
$\mathcal{H} = \mathcal{E} \oplus \mathcal{F}$ stable by $J$ where
$\mathcal{E}$ and $\mathcal{F}$  are   eigenbundles  associated to
the eigenvalues  $k$ and $m$ (not necessarily distinct) of $r$ such
that  $\xi_{\mathcal{E}} \mathcal{E} =\xi_{\mathcal{F}}
\mathcal{F}=0$, then:
$$
\begin{array}{lcl}
r(V,W) & = & 8 \langle \xi_{y_{j_1}} V, \xi_{y_{j_1}} W \rangle =
8 \langle \xi_{z_{j_2}} V, \xi_{z_{j_2}} W
\rangle,\;\;\;\;\; {\mbox for}\;V, W \in \mathcal{V},\\[0.4pc]
r(X_1,Y_1) & = & 8 \langle \xi_{e_i} X_1, \xi_{e_i} Y_1 \rangle =
8 \langle \xi_{z_{j_2}} X_1, \xi_{z_{j_2}} Y_1 \rangle,\;\;\;\;\;
{\mbox for}\;X_1, Y_1 \in
\mathcal{E},\\[0.4pc]
r(X_2,Y_2) & = & 8 \langle \xi_{e_i} X_2, \xi_{e_i} Y_2 \rangle = 8
\langle \xi_{y_{j_1}} X_2, \xi_{y_{j_1}} Y_2 \rangle,\;\;\;\;\;
{\mbox for}\;X_2, Y_2 \in \mathcal{F},
\end{array}
$$
where $\{ e_i \}$,  $\{ y_{j_1} \}$ and $\{ z_{j_2} \}$ are local
orthonormal frames in $\mathcal{V}$, $\mathcal{E}$ and
$\mathcal{F}$, respectively. Furthermore,  we also have
\begin{equation} \label{ricricast}
 l \dim
\,\mathcal{V} = k \dim \,\mathcal{E} = m \dim \,\mathcal{F}.
\end{equation}

 \end{lemma}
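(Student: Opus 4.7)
The plan is to exploit two ingredients. First, the preliminaries give
$r(X,Y)=4\sum_a\langle\xi_X E_a,\xi_Y E_a\rangle$
for any local orthonormal frame $\{E_a\}$ of $\mathrm{T}M$. Second, the intrinsic torsion of a nearly K\"ahler manifold is totally skew: the form $\alpha(A,B,C):=\langle\xi_A B,C\rangle$ is totally skew-symmetric, yielding both $\xi_A B=-\xi_B A$ and the cyclic identity $\langle\xi_A B,C\rangle=\langle\xi_C A,B\rangle=\langle\xi_B C,A\rangle$. Combined with the special-algebraic-torsion conditions $\xi_{\mathcal{V}}\mathcal{V}=0$ and $\xi_{\mathcal{H}}\mathcal{H}\subseteq\mathcal{V}$, a routine check forces $\xi_X V\in\mathcal{H}$ for $X\in\mathcal{H}$ and $V\in\mathcal{V}$.

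For the first identity, split the frame as $\{E_a\}=\{e_i\}\cup\{x_j\}$ with $\{e_i\}\subseteq\mathcal{V}$ and $\{x_j\}\subseteq\mathcal{H}$, so for $X,Y\in\mathcal{H}$,
\[
r(X,Y)=4\sum_i\langle\xi_X e_i,\xi_Y e_i\rangle+4\sum_j\langle\xi_X x_j,\xi_Y x_j\rangle.
\]
Since $\xi_X e_i\in\mathcal{H}$ expands in the $\{x_j\}$-basis while $\xi_X x_j\in\mathcal{V}$ expands in the $\{e_i\}$-basis, the cyclic identity applied coordinate-wise rewrites both sums as the same quantity $\sum_{i,j}\langle\xi_{x_j}e_i,X\rangle\langle\xi_{x_j}e_i,Y\rangle$. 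Hence they are equal, and the stated form of $r(X,Y)$ follows via $\xi_X e_i=-\xi_{e_i}X$ and $\xi_X x_j=-\xi_{x_j}X$.

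For the trace relation \eqref{ricricast2}, take traces. The vanishing $\xi_{\mathcal{V}}\mathcal{V}=0$ gives $r(V,V)=4\sum_j|\xi_V x_j|^2$ for $V\in\mathcal{V}$, hence $\tr(r|_{\mathcal{V}})=4\sum_{i,j}|\xi_{e_i}x_j|^2$; meanwhile the formula just proved yields $\tr(r|_{\mathcal{H}})=8\sum_{i,j}|\xi_{x_j}e_i|^2$. Because $\xi_{x_j}e_i=-\xi_{e_i}x_j$, this is $\tr(r|_{\mathcal{H}})=2\tr(r|_{\mathcal{V}})$, which under the single-eigenvalue hypotheses is exactly $k\dim\mathcal{H}=2l\dim\mathcal{V}$.

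The refined case $\mathcal{H}=\mathcal{E}\oplus\mathcal{F}$ with $\xi_{\mathcal{E}}\mathcal{E}=\xi_{\mathcal{F}}\mathcal{F}=0$ proceeds identically once one locates the images of $\xi$. For $V,W\in\mathcal{V}$ and $y_{j_1},y_{k_1}\in\mathcal{E}$, the cyclic identity gives $\langle\xi_V y_{j_1},y_{k_1}\rangle=\langle\xi_{y_{j_1}}y_{k_1},V\rangle=0$ and $\langle\xi_V y_{j_1},W\rangle=\langle\xi_{y_{j_1}}W,V\rangle=0$ (because $\xi_{y_{j_1}}W\in\mathcal{H}$), so $\xi_{\mathcal{V}}\mathcal{E}\subseteq\mathcal{F}$; symmetrically $\xi_{\mathcal{V}}\mathcal{F}\subseteq\mathcal{E}$, while $\xi_{X_1}y_{j_1}=0$ for $X_1,y_{j_1}\in\mathcal{E}$ (and analogously in $\mathcal{F}$) by hypothesis. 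Substituting these vanishings into $r(X,Y)=4\sum_a\langle\xi_X E_a,\xi_Y E_a\rangle$ in each of the three cases $V,W\in\mathcal{V}$; $X_1,Y_1\in\mathcal{E}$; $X_2,Y_2\in\mathcal{F}$, and repeating the same cyclic-skew rearrangement, produces the three displayed formulas. The identity \eqref{ricricast} then follows by tracing: $\tr(r|_{\mathcal{V}})=8\sum_{i,j_1}|\xi_{e_i}y_{j_1}|^2=\tr(r|_{\mathcal{E}})$ and analogously $\tr(r|_{\mathcal{V}})=\tr(r|_{\mathcal{F}})$, giving $l\dim\mathcal{V}=k\dim\mathcal{E}=m\dim\mathcal{F}$. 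The main obstacle is purely bookkeeping: once each $\xi_A B$ is placed in its correct subspace and the cyclic-skew identifications of sums are applied consistently, everything reduces to algebraic rearrangement; no curvature identity beyond the definition of $r$ is needed.
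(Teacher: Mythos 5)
Your proof is correct and follows essentially the same route as the paper's: expand $r$ over an orthonormal frame adapted to the splitting, use the total skew-symmetry of $\langle\xi_{\cdot}\,\cdot,\cdot\rangle$ together with the location of the images of $\xi$ to identify the two partial sums, and then trace to obtain the dimension relations. The only difference is that you spell out the ``similar arguments'' for the refined decomposition (including deriving $\xi_{\mathcal V}\mathcal E\subseteq\mathcal F$ and its companions directly from the hypotheses), which the paper leaves implicit.
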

\begin{proof} For $X\in \mathcal{H},$
\[
r(X,X) = 4(\sum_{i}\langle \xi_{X}e_{i}, \xi_{X}e_{i}\rangle +
\sum_{j}\langle \xi_{X}x_{j},\xi_{X}x_{j}\rangle).
\]
But, using the fact that $\xi_\mathcal{H}
\mathcal{H}={\mathcal{V}},$ we get
\[
\sum_{i}\langle \xi_{X}e_{i}, \xi_{X}e_{i}\rangle =
\sum_{i,j}\langle \xi_{X}e_{i},x_{j}\rangle\langle x_{j},
\xi_{X}e_{i}\rangle = \sum_{i,j}\langle
\xi_{X}x_{j},e_{i}\rangle\langle e_{i}, \xi_{X}x_{j}\rangle =
\sum_{j}\langle \xi_{X}x_{j},\xi_{X}x_{j}\rangle.
\]
It proves the first equality. In particular, if there is only one
eigenvalue on $\mathcal{H}$, it follows
$$
l \dim \, \mathcal{V} = r(e_i,e_i) = 4 \langle \xi_{x_j} e_i ,
\xi_{x_j} e_i \rangle = \frac12 r(x_j , x_j) = \frac12 k \dim \,
\mathcal{H}.
$$

For the rest, we use the identities $\xi_{\mathcal{E}} \mathcal{E}
=\xi_{\mathcal{F}} \mathcal{F}=0$, $\xi_{\mathcal{E}} \mathcal{F}=
\mathcal{V}$, $\xi_{\mathcal{V}} \mathcal{E} = \mathcal{F}$ and
$\xi_{\mathcal{V}} \mathcal{F} = \mathcal{E}$ and the corresponding
arguments are similar.
\end{proof}

\begin{lemma} \label{trespuntouno11bisbis}
For a nearly Kähler manifold $(M^{2n},g,J)$, if  $\mathcal{L}
\subseteq \mathrm{T}M$  is a stable by $J$ and  complex
$\mathrm{Hol}(\nabla^{\Lie{U}(n)})$-reducible distribution, then
$\mathcal{L}=\mathcal{E} \oplus \mathcal{F}$, where $\mathcal{E}$,
$\mathcal{F}$  are  non-zero  orthogonal
$\nabla^{\Lie{U}(n)}$-parallel distributions such that they are
stable by $J$ and $\xi_{\mathcal{E}} \mathcal{E} \subseteq
\mathcal{E}$.
\end{lemma}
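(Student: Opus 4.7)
By complex $\mathrm{Hol}(\nabla^{\Lie{U}(n)})$-reducibility together with $J$-stability of $\mathcal{L}$, there is at least one nontrivial $\nabla^{\Lie{U}(n)}$-parallel $J$-stable orthogonal decomposition of $\mathcal{L}$. Refining iteratively, I would write
$$
 \mathcal{L} = \mathcal{L}_1 \oplus \cdots \oplus \mathcal{L}_k, \qquad k\geq 2,
$$
where each $\mathcal{L}_i$ is $\nabla^{\Lie{U}(n)}$-parallel, $J$-stable, and complex $\mathrm{Hol}(\nabla^{\Lie{U}(n)})$-irreducible, with orthogonal summands. The task then reduces to producing a proper nonempty subset $S \subsetneq \{1,\ldots,k\}$ such that $\mathcal{E} := \bigoplus_{i\in S} \mathcal{L}_i$ satisfies $\xi_\mathcal{E}\mathcal{E}\subseteq \mathcal{E}$. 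Observe first that for any parallel $J$-stable $\mathcal{E}$, the distribution $\xi_\mathcal{E}\mathcal{E}\subseteq \mathrm{T}M$ is again parallel (since $\nabla^{\Lie{U}(n)}\xi=0$) and $J$-stable (since $J\xi_X Y = -\xi_X JY = \xi_{JX}Y$); hence its orthogonal projections onto each $\mathcal{L}_m$ and onto $\mathcal{L}^\perp$ are parallel and $J$-stable. By Schur's lemma applied to the complex-irreducible pieces, each projection $(\xi_{\mathcal{L}_i}\mathcal{L}_j)_{\mathcal{L}_m}$ is either zero or surjective.

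\textbf{Main construction.} I would implement a ``$\xi$-closure'' procedure inside $\mathcal{L}$: fix an index $i_0$, set $\mathcal{E}^{(0)} = \mathcal{L}_{i_0}$, and inductively enlarge $\mathcal{E}^{(n)}=\bigoplus_{i\in S^{(n)}}\mathcal{L}_i$ by adjoining every $\mathcal{L}_j$ for which the projection $(\xi_{\mathcal{E}^{(n)}}\mathcal{E}^{(n)})_{\mathcal{L}_j}$ is nonzero. Since ranks are bounded, the chain stabilizes at a $\nabla^{\Lie{U}(n)}$-parallel $J$-stable $\mathcal{E} \subseteq \mathcal{L}$ whose $\mathcal{L}$-component of $\xi_\mathcal{E}\mathcal{E}$ lies back inside $\mathcal{E}$. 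The candidate decomposition is then $\mathcal{L} = \mathcal{E} \oplus \mathcal{E}^{\perp}\cap\mathcal{L}$, which delivers the lemma provided I can verify two things: (a) $\mathcal{E}\subsetneq \mathcal{L}$, so that the second summand is nonzero, and (b) $\xi_\mathcal{E}\mathcal{E}$ has no component in $\mathcal{L}^\perp$, so that the inclusion $\xi_\mathcal{E}\mathcal{E}\subseteq \mathcal{E}$ holds globally in $\mathrm{T}M$ and not merely after projection to $\mathcal{L}$.

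\textbf{Main obstacle.} Step (b) would be handled by applying Lemma \ref{trespuntodoscor11} to the parallel $J$-stable decomposition $\mathrm{T}M = \mathcal{L} \oplus \mathcal{L}^\perp$: that result forces iterated compositions $\xi\circ\xi$ to respect this splitting, and combining it with total skew-symmetry of $\xi$, with the relation $\xi_X\in \mathfrak{u}(n)^\perp$, and with the Schur-type analysis now applied to the irreducible constituents of $\mathcal{L}^\perp$, the $\mathcal{L}^\perp$-component of $\xi_\mathcal{E}\mathcal{E}$ should be forced to vanish. Step (a), ensuring properness, is where I expect the real technical difficulty to lie: one must verify that some suitable starting index $i_0$ yields a $\xi$-closure that does not engulf all of $\{1,\ldots,k\}$. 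The key tool is the symmetry of the pairings $\alpha_{ij}^m\colon \mathcal{L}_i\otimes \mathcal{L}_j \to \mathcal{L}_m$ induced by the 3-form $\xi$, which by skew-symmetry relates ``$\mathcal{L}_i$ produces $\mathcal{L}_m$'' to ``$\mathcal{L}_m$ acts on $\mathcal{L}_i$ into $\mathcal{L}_j$''; exploiting this together with $k\geq 2$, I would argue that the reachability relation admits a proper invariant subset, either by a minimality argument over $i_0$ or by directly partitioning the indices using a non-trivial $\mathrm{Hol}$-invariant structure on $\bigoplus_i \mathcal{L}_i$. This combinatorial/Schur analysis at the level of the irreducible factors is the crux of the proof.
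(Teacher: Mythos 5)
Your overall strategy (decompose $\mathcal{L}$ into complex-irreducible parallel summands and try to find a proper $\xi$-closed union of them) is genuinely different from the paper's, but it has a gap at exactly the point you flag as the crux: properness of the $\xi$-closure. Nothing in your combinatorial setup rules out the possibility that, for \emph{every} starting index $i_0$, the closure engulfs all of $\{1,\dots,k\}$. Already for $k=2$ the total skew-symmetry of $\xi$ permits the configuration in which $(\xi_{\mathcal{L}_1}\mathcal{L}_1)_{\mathcal{L}_2}\neq 0$ and $(\xi_{\mathcal{L}_2}\mathcal{L}_2)_{\mathcal{L}_1}\neq 0$ simultaneously, and then both closures are all of $\mathcal{L}$. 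Excluding this requires analytic input from the curvature of $\nabla^{\Lie{U}(n)}$, not just the algebra of the pairings $\alpha_{ij}^m$; your sketch of a ``minimality argument over $i_0$'' does not supply it. (Your step (b) is also only asserted, but it is the secondary issue.)

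The paper sidesteps properness entirely by a different construction. Starting from any parallel $J$-stable splitting $\mathcal{L}=\mathcal{E}_1\oplus\mathcal{F}_1$, it sets $\mathcal{F}_0=(\xi_{\mathcal{E}_1}\mathcal{E}_1)_{\mathcal{F}_1}$, i.e.\ the part of $\xi_{\mathcal{E}_1}\mathcal{E}_1$ protruding into $\mathcal{F}_1$. Since $\mathcal{F}_0\subseteq\mathcal{F}_1\subsetneq\mathcal{L}$, properness is automatic. The real work is to prove the vanishing $\xi_{\mathcal{F}_0}\mathcal{F}_0=0$ (so in particular $\xi_{\mathcal{F}_0}\mathcal{F}_0\subseteq\mathcal{F}_0$): one writes, for $X,Y\in\mathcal{F}_1$ and $V,W\in\mathcal{E}_1$,
$$
0=\xi_X\xi_Y(\xi_VW)=\xi_X\xi_Y(\xi_VW)_{\mathcal{E}_1}+\xi_X\xi_Y(\xi_VW)_{\mathcal{F}_1}+\xi_X\xi_Y(\xi_VW)_{\mathcal{L}^\perp},
$$
where the left-hand side vanishes by Lemma \ref{trespuntodoscor11}(i), and the three summands land in $\mathcal{E}_1$, $\mathcal{F}_1$ and $\mathcal{L}^\perp$ respectively by parts (ii)--(iii) of that lemma, so each vanishes separately; taking $Y=X=(\xi_VW)_{\mathcal{F}_1}$ and pairing gives $\|\xi_X(\xi_VW)_{\mathcal{F}_1}\|^2=0$. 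If $\mathcal{F}_0\neq 0$ one takes $\mathcal{E}=\mathcal{F}_0$; if $\mathcal{F}_0=0$ one takes $\mathcal{E}=\mathcal{E}_1$. The lesson is that the lemma is not obtainable by reachability combinatorics on the irreducible factors alone; you need a vanishing theorem of the above type, which is precisely the ingredient your proposal leaves unproved.
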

\begin{proof}
 The reducibility of $\mathcal{L}$ implies the existence of a
$\nabla^{\Lie{U}(n)}$-parallel decomposition $\mathcal{L}=
\mathcal{E}_1 \oplus \mathcal{F}_1$ stable by $J$. Let
$\mathcal{F}_0$ be the distribution generated by elements of the
form $(\xi_V W)_{\mathcal{F}_1}$, where $V,W$ belong to
$\mathcal{E}_1$ and the subscript denotes orthogonal projection on
$\mathcal{F}_1$. Using Lemma \ref{trespuntodoscor11} (i) we get
$$
 \xi_X \xi_Y (\xi_V  W)_{\mathcal{E}_1}
+ \xi_X \xi_Y (\xi_V W)_{\mathcal{F}_1} + \xi_X \xi_Y (\xi_V
W)_{\mathcal{L}^\perp} = \xi_X \xi_Y (\xi_V  W) = \xi_X 0 =0,$$
 whenever $X,Y$ are in $\mathcal{F}_1$. Now,
using  Lemma \ref{trespuntodoscor11}, it follows that  the first
summand belongs to $\mathcal{E}_1$, the second is in
$\mathcal{F}_1$ and third in $\mathcal{L}^\perp$. Therefore, each
summand  vanishes, $ \xi_X \xi_Y (\xi_V W)_{\mathcal{E}_1} =\xi_X
\xi_Y (\xi_V W)_{\mathcal{F}_1}= \xi_X \xi_Y (\xi_V
W)_{\mathcal{L}^\perp} =0$. This  implies that
$\xi_{\mathcal{F}_1} \mathcal{F}_0=0$. In fact, if $X \in
\mathcal{F}_1$ and $(\xi_V W)_{\mathcal{F}_1} \in \mathcal{F}_0$,
then we compute
$$
\langle \xi_X (\xi_V W)_{\mathcal{F}_1} , \xi_X (\xi_V
W)_{\mathcal{F}_1} \rangle = - \langle \xi_X \xi_X (\xi_V
W)_{\mathcal{F}_1} , (\xi_V W)_{\mathcal{F}_1} \rangle =  0.
$$
Therefore, $\xi_X (\xi_V W)_{\mathcal{F}_1} =0$.

   As $\mathcal{F}_0$
is contained in $\mathcal{F}_1$, we get that $\xi_{\mathcal{F}_0}
\mathcal{F}_0=0$. Moreover, $\mathcal{F}_0$ is
$\nabla^{\Lie{U}(n)}$-parallel and stable by $J$, In fact, using
$\nabla^{\Lie{U}(n)}\xi=0$, one has

\begin{eqnarray*}
\nabla^{\Lie{U}(n)}_Z (\xi_V W)_{\mathcal{F}_1} & = &
(\xi_{\nabla^{\Lie{U}(n)}_Z V} W)_{\mathcal{F}_1} + (\xi_V
\nabla^{\Lie{U}(n)} W)_{\mathcal{F}_1}\in  \mathcal{F}_0.
\end{eqnarray*}

From all of this, if $\mathcal{F}_0 \neq 0$,   we may take
${\mathcal{E}}=\mathcal{F}_0$. On the other hand, if
$\mathcal{F}_0=0$, then $\xi_{\mathcal{E}_1} \mathcal{E}_1
\subseteq \mathcal{E}_1$ and we set $\mathcal{E}=\mathcal{E}_1$.
Therefore, we have obtained a $\nabla^{\Lie{U}(n)}$-parallel
orthogonal decomposition $\mathcal{L}=\mathcal{E} \oplus
\mathcal{F}$ such that $\xi_{\mathcal{E}} {\mathcal{E}}\subseteq
{\mathcal{E}} $ and $\mathcal{E}, \mathcal{F}$ are non-zero and
stable by $J$.
\end{proof}

\begin{proposition} \label{trespuntouno11probis}
Let $(M^{2n},g,J)$ be an complete, simply connected irreducible
strict nearly K\"ahler manifold with special algebraic torsion.
Then we have:
\begin{enumerate}
\item[{\rm (i)}] The distribution $\mathcal{H}$ is   complex
$\mathrm{Hol}(\nabla^{\Lie{U}(n)})$-reducible if and only if
$\mathcal{H}=\mathcal{E} \oplus \mathcal{F}$, where $\mathcal{E}$,
$\mathcal{F}$  are  non-zero  orthogonal
$\overline{\nabla}$-parallel distributions   such that they are
stable by $J$, $\xi_{\mathcal{E}} \mathcal{E} = \xi_{\mathcal{F}}
\mathcal{F}= 0$ and $\xi_{\mathcal{E}} \mathcal{F} = \mathcal{V}$.
Moreover, $\mathcal{E}$ and $\mathcal{F}$ are real irreducible
$\mathrm{Hol}(\nabla^{\Lie{U}(n)})$-spaces.

\item[{\rm (ii)}] The distribution $\mathcal{H}$ is   complex
$\mathrm{Hol}(\nabla^{\Lie{U}(n)})$-irreducible if and only if
$\mathcal{H}$ is real
$\mathrm{Hol}(\nabla^{\Lie{U}(n)})$-irreducible.
\end{enumerate}
\end{proposition}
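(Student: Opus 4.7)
The plan is to handle each equivalence in turn, in both cases isolating the trivial direction and then focusing on the substantive one.

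For (i), the ``if'' direction is immediate: the stated decomposition $\mathcal{H} = \mathcal{E} \oplus \mathcal{F}$ with two non-zero $J$-stable $\nabla^{\Lie{U}(n)}$-parallel summands is by definition a complex $\mathrm{Hol}(\nabla^{\Lie{U}(n)})$-reducibility of $\mathcal{H}$. For the ``only if'' direction I would first apply Lemma \ref{trespuntouno11bisbis} to $\mathcal{L} = \mathcal{H}$: this yields an orthogonal $\nabla^{\Lie{U}(n)}$-parallel $J$-stable decomposition $\mathcal{H} = \mathcal{E} \oplus \mathcal{F}$ with $\xi_{\mathcal{E}}\mathcal{E} \subseteq \mathcal{E}$. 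Combined with the special algebraic torsion inclusion $\xi_{\mathcal{H}}\mathcal{H} \subseteq \mathcal{V}$ and $\mathcal{E} \perp \mathcal{V}$, this forces $\xi_{\mathcal{E}}\mathcal{E} \subseteq \mathcal{E} \cap \mathcal{V} = 0$.

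The main obstacle is upgrading this to the symmetric condition $\xi_{\mathcal{F}}\mathcal{F}=0$, after which $\xi_{\mathcal{E}}\mathcal{F}=\mathcal{V}$, $\xi_{\mathcal{V}}\mathcal{E}=\mathcal{F}$ and $\xi_{\mathcal{V}}\mathcal{F}=\mathcal{E}$ follow formally from the total skew-symmetry of $\xi$ together with $\xi_{\mathcal{H}}\mathcal{H} = \mathcal{V}$. My approach is to appeal to Proposition \ref{cincopuntounopro}(ii): the tensor $C$ has at most two eigenvalues $\lambda_{1,2} = -\lambda/2 \mp \rho$ on $\mathcal{H}$, and when they are distinct the corresponding eigenbundles already satisfy every required property. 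It thus suffices to rule out $\rho = 0$ under the complex-reducibility assumption: in that case $C$ would have at most two eigenvalues on $TM$ (namely $\lambda$ on $\mathcal{V}$ and $-\lambda/2$ on $\mathcal{H}$), and combining Lemma \ref{lN} (which splits $M$ when these coincide, contradicting irreducibility) with Lemma \ref{cincopuntounolem} and the curvature identity \eqref{trespuntounolem11bis} applied to the $\mathcal{E}, \mathcal{F}$ produced by Lemma \ref{trespuntouno11bisbis} yields the required contradiction. Real $\mathrm{Hol}(\nabla^{\Lie{U}(n)})$-irreducibility of $\mathcal{E}$ and $\mathcal{F}$ follows because each projects, under $d\pi|_\mathcal{H}$, to a $\nabla^h$-parallel sub-bundle of $TN$ (using that the fibres of the canonical fibration $\pi\colon M\to N$ are totally geodesic), which must then equal all of $TN$ by the irreducibility of the symmetric base.

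For (ii), the direction ``real irreducibility $\Rightarrow$ complex irreducibility'' is immediate, since by (i) any complex reducibility of $\mathcal{H}$ exhibits a real reducibility. Conversely, assume $\mathcal{H}$ is complex-irreducible and suppose toward contradiction that a proper non-zero $\nabla^{\Lie{U}(n)}$-parallel real $\mathcal{E}' \subset \mathcal{H}$ exists. Both $\mathcal{E}' + J\mathcal{E}'$ and $\mathcal{E}' \cap J\mathcal{E}'$ are $J$-stable and $\nabla^{\Lie{U}(n)}$-parallel, so complex irreducibility forces the totally-real configuration $\mathcal{H} = \mathcal{E}' \oplus J\mathcal{E}'$. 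Projecting via $d\pi|_\mathcal{H}$ then produces a $\nabla^h$-parallel decomposition $TN = d\pi(\mathcal{E}') \oplus d\pi(J\mathcal{E}')$ of $TN$ into two proper non-zero real sub-distributions, contradicting the (de~Rham) irreducibility of the symmetric base $(N,h)$.
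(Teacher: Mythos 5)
Your reduction of (i) to Lemma \ref{trespuntouno11bisbis} and the deduction $\xi_{\mathcal E}\mathcal E\subseteq\mathcal E\cap\mathcal V=0$ match the paper, and the trivial directions are correctly identified. But the step you flag as the main obstacle --- proving $\xi_{\mathcal F}\mathcal F=0$ --- is exactly where your argument fails. Your plan is to invoke Proposition \ref{cincopuntounopro}(ii) and rule out $\rho=0$; however $\rho=0$ cannot be ruled out, because there are irreducible complex-reducible examples with $C$ having a single eigenvalue on $\mathcal H$ (indeed $C=0$ on all of $\mathrm TM$): the Einstein Type $A_3II$ spaces such as $\Lie{SU}(3a)/\Lie{S}(\Lie U(a)^3)$ and $\Lie E_6/(\Lie{SO}(8)\times\Lie{SO}(2)\times\Lie{SO}(2))$ of Corollary \ref{Einstein:AII}. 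In that situation Lemma \ref{lN} gives no contradiction (an irreducible factor with $C$ and $r$ having one eigenvalue is just an Einstein manifold), so no contradiction is available and the approach collapses. Moreover, even when $C$ does have two distinct eigenvalues on $\mathcal H$, you would still need to identify the $\mathcal E,\mathcal F$ coming from Lemma \ref{trespuntouno11bisbis} with the eigenbundles of $C$, which you do not do. The paper's argument is different and works in all cases: since $\mathcal V$ is real $\mathrm{Hol}(\nabla^{\Lie U(n)})$-irreducible, $\xi_{\mathcal F}\mathcal F\neq0$ would force $\xi_{\mathcal F}\mathcal F=\mathcal V$, so that $\mathcal V\oplus\mathcal E$ and $\mathcal F$ would form a second special-algebraic-torsion splitting whose fiber $\mathcal V\oplus\mathcal E$ is reducible, contradicting the irreducibility of the fiber of the canonical fibration of an irreducible manifold.

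The two remaining steps that you settle by ``projecting via $d\pi$'' also have a genuine gap: a $\nabla^{\Lie U(n)}$-parallel sub-distribution of $\mathcal H$ is in general \emph{not} projectable to $N$, because it is not invariant under vertical flows --- indeed $\xi_{\mathcal V}\mathcal E=\mathcal F$ shows that transporting $\mathcal E$ along a vertical direction moves it into $\mathcal F$ --- so ``$d\pi(\mathcal E)$'' is not a well-defined sub-bundle of $\mathrm TN$ and the appeal to de~Rham irreducibility of $N$ does not apply. The paper obtains the real irreducibility of $\mathcal E$ and $\mathcal F$ by observing that each can in turn play the role of vertical distribution in a special-algebraic-torsion splitting (hence is the tangent space of an irreducible fiber), and it proves (ii) by a curvature computation: for $\mathcal H=\mathcal E\oplus J\mathcal E$ with $\mathcal E$ parallel and $X\in\mathcal E$, formula \eqref{trespuntounolem11bis} together with Lemma \ref{cincopuntotreslem} gives $\sum_i R^{\Lie U(n)}(e_i,Je_i)X=-r\,JX=-k\,JX\in\mathcal E$, whence $JX\in\mathcal E\cap J\mathcal E=\{0\}$, a contradiction. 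You should replace both projection arguments by reasoning of this kind.
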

\begin{proof}
Let us see part (i). By making use of Lemma
\ref{trespuntouno11bisbis}, if $\mathcal{H}$ is complex
$\mathrm{Hol}(\nabla^{\Lie{U}(n)})$-reducible, then
$\mathcal{H}=\mathcal{E} \oplus \mathcal{F}$, where $\mathcal{E}$,
$\mathcal{F}$  are  non-zero  orthogonal
$\nabla^{\Lie{U}(n)}$-parallel distributions such that they are
stable by $J$ and $\xi_{\mathcal{E}} \mathcal{E} \subseteq
\mathcal{E}$. Since $\xi_{\mathcal{H}} \mathcal{H} = \mathcal{V}$,
we have $\xi_{\mathcal{E}} \mathcal{E} = \{0\}$ and $\mathcal{V} =
\xi_{\mathcal{E}} \mathcal{F} \oplus \xi_{\mathcal{F}}
\mathcal{F}$.

Let us suppose that $\xi_{\mathcal{F}} \mathcal{F} \neq \{0 \}$.
We recall that $\mathcal{V}$ is a stable by $J$ and real
$\mathrm{Hol}(\nabla^{\Lie{U}(n)})$-irreducible, because we are
assuming our manifold $M$ is complete irreducible strict nearly
K\"ahler with special algebraic torsion (see \cite{Nagy2}).
Therefore, $\xi_{\mathcal{F}} \mathcal{F} = \mathcal{V}$. Thus,
the distribution $\mathcal{V} \oplus \mathcal{E}$ and
$\mathcal{F}$ can respectively play the roles of vertical and
horizontal distribution for the special algebraic torsion
condition. In such a case, the fiber $F$ of the canonical
fibration is not irreducible, because $ \mathrm{T} F =\mathcal{V}
\oplus \mathcal{E}$ where $\mathcal{V}$, $\mathcal{E}$ are
non-zero. Hence $M$ would be not irreducible, contradiction. Hence
it follows $\xi_{\mathcal{F}} \mathcal{F} = \{0 \}$. As a
consequence, $\mathcal{V} = \xi_{\mathcal{E}} \mathcal{F}$.

The converse is immediate.

The final assertion of part (i) follows from the fact that $M$ is
irreducible and both distributions, $\mathcal{E}$ and
$\mathcal{F}$ can play the role of vertical distribution.
\vspace{2mm}

Now we prove part (ii). Let us suppose that $\mathcal{H}$ is a
complex irreducible but  real reducible as
$\mathrm{Hol}(\nabla^{\Lie{U}(n)})$-space. Therefore, $\mathcal{H}
= \mathcal{E} \oplus J \mathcal{E}$, where $\mathcal{E}$ is a
non-zero $\nabla^{\Lie{U}(n)}$-parallel distribution.

Let $\{ e_1, \dots e_{\dim \, \mathcal{V}}\}$ be an orthonormal
basis for (real) vectors in $\mathcal{V}$ and  $X \in
\mathcal{E}$. Because $\mathcal{E}$ is
$\nabla^{\Lie{U}(n)}$-parallel,  $R^{\Lie{U}(n)}(e_i, Je_i)X$ is
in $\mathcal{E}$. Now using Equation \eqref{trespuntounolem11bis}
and Lemma \ref{cincopuntotreslem}, we have
$$
\sum_{i} R^{\Lie{U}(n)}(e_i, Je_i)X = 8 \sum_{i} \xi_{e_i}
\xi_{e_i} JX = - r \, JX.
$$
But since $\mathcal{H}$ is complex irreducible and $r$ is
Hermitian,  there is a single positive eigenvalue $k$ of $r$ on
$\mathcal{H}$. Hence $r \, J X = k JX$. Thus $JX$ is in
$\mathcal{E} \cap J \mathcal{E} = \{ 0\}$. This implies $X =0$,
for all $X$ in $\mathcal{E}$. Contradiction.
\end{proof}

\begin{definition}
{\rm  A complete irreducible strict nearly K\"ahler manifold is
\textit{homogeneous NK Type $III$}, if it has special algebraic
torsion and the horizontal distribution $\mathcal{H}$ is complex
$\mathrm{Hol}(\nabla^{\Lie{U}(n)})$-reducible.}
\end{definition}

\begin{proposition}
If  $(M,g,J)$ is homogeneous NK Type $III$,  then $(M,g)$ is a
homogeneous space and the base space of the canonical fibration is
a simply connected, compact and irreducible symmetric space.
\end{proposition}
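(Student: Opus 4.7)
The plan is to invoke the dichotomy (i)--(ii) recalled at the beginning of Section~\ref{three}: a complete irreducible strict nearly K\"ahler manifold with special algebraic torsion is either homogeneous with symmetric base $N$, or a non-homogeneous twistor bundle of a non-symmetric positive quaternionic K\"ahler manifold $N^{4m}$ with $\dim\mathcal{V}=2$. I would first observe that the topological properties of $N$ required by the proposition come essentially for free: compactness and simple-connectedness transfer from the K\"ahler manifold $(M,\bar g,\bar J)$ through the fibration with connected fibers (as recalled from \cite[Prop.~4.3]{Nagy2}), while irreducibility of $N$ is the standing assumption of the section, inherited from the irreducibility of $M$. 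Thus the content of the proposition reduces to excluding the quaternionic K\"ahler alternative, from which both the symmetric character of $N$ and the homogeneity of $M$ will follow simultaneously.

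To rule out the quaternionic K\"ahler case under the Type III hypothesis, my plan is as follows. In that alternative, Berger's holonomy classification forces the Riemannian holonomy of $N^{4m}$ to equal the full group $\Lie{Sp}(1)\cdot\Lie{Sp}(m)$, because $N$ is non-symmetric. Via the twistor fibration, the $\nabla^{\Lie{U}(n)}$-holonomy on $\mathcal{H}$ covers this group, and the complex structure $J$ identifies $\mathcal{H}$ at a reference point with the standard $\Lie{Sp}(m)$-module $\mathbb{C}^{2m}$, possibly twisted by a character of the $\Lie{Sp}(1)$-factor. This representation is complex irreducible, contradicting the Type III hypothesis that $\mathcal{H}$ be complex $\mathrm{Hol}(\nabla^{\Lie{U}(n)})$-reducible. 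Only the homogeneous alternative can therefore occur, which yields both the homogeneity of $(M,g)$ and the symmetric character of $N$.

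The hardest part is precisely this holonomy identification, as it borrows external quaternionic K\"ahler theory rather than proceeding from the intrinsic machinery of Section~\ref{three}. A more self-contained alternative would be to verify the Ambrose--Singer condition $\nabla^{\Lie{U}(n)}R^{\Lie{U}(n)}=0$ using the refined decomposition $TM=\mathcal{V}\oplus\mathcal{E}\oplus\mathcal{F}$ furnished by Proposition~\ref{trespuntouno11probis}(i): Lemmas~\ref{trespuntodoscor11} and~\ref{trespuntounolem}, applied first with $\mathcal{L}=TM$ and then with $\mathcal{L}=\mathcal{H}$, already express every component of $R^{\Lie{U}(n)}$ involving a $\mathcal{V}$-argument, or two arguments in one of $\mathcal{E}$, $\mathcal{F}$, as an algebraic polynomial in the parallel tensor $\xi$, hence as a $\nabla^{\Lie{U}(n)}$-parallel quantity; the residual components with all four entries in a single summand $\mathcal{E}$ or $\mathcal{F}$ should be controllable by combining the first Bianchi identity with the $J$-invariance of the decomposition. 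Ambrose--Singer would then give local homogeneity of $M$, which is promoted to global homogeneity by completeness and simple-connectedness.
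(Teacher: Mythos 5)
Your primary route is genuinely different from the paper's, but it has a gap at its decisive step. The paper does not invoke Nagy's dichotomy at all: it proves homogeneity directly by verifying the Ambrose--Singer condition $\nabla^{\Lie{U}(n)}R^{\Lie{U}(n)}=0$ and then deduces $\nabla^h R^h=0$ on the base by O'Neill-type comparison of curvatures. Your plan instead reduces everything to excluding the quaternionic K\"ahler alternative, and the exclusion rests on the assertion that ``the $\nabla^{\Lie{U}(n)}$-holonomy on $\mathcal{H}$ covers $\Lie{Sp}(1)\cdot\Lie{Sp}(m)$'' so that $\mathcal{H}$ is the complex-irreducible standard $\Lie{Sp}(m)$-module. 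This is exactly the point that needs proof and is not available from the machinery of Section~\ref{three}: the minimal connection restricted to $\mathcal{H}$ is not a priori the pullback of the Levi-Civita (or quaternionic K\"ahler) connection of $N$, and relating $\mathrm{Hol}(\nabla^{\Lie{U}(n)})\vert_{\mathcal{H}}$ to $\mathrm{Hol}(N)$ requires an explicit comparison of the two connections through the O'Neill tensors and $\xi$. Without that, full Riemannian holonomy of $N$ does not yet yield complex irreducibility of $\mathcal{H}$ as a $\mathrm{Hol}(\nabla^{\Lie{U}(n)})$-module. (Note also that the paper's own Remark asserting real irreducibility of $\mathcal{H}$ in the non-homogeneous case is stated \emph{after}, and logically downstream of, this very Proposition, so you cannot lean on it.) A cheaper way to close your Route~A, staying inside the paper's toolkit, would be: by Proposition~\ref{trespuntouno11probis}(i) both $\mathcal{E}$ and $\mathcal{F}$ can serve as vertical distributions of special-algebraic-torsion splittings; applying Nagy's dichotomy to all three splittings in the non-homogeneous case forces $\dim\mathcal{V}=\dim\mathcal{E}=\dim\mathcal{F}=2$, hence $\dim M=6$ and $\dim N=4$, and Hitchin's theorem \cite{Hitchin} then makes $N$ symmetric, a contradiction.

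Your fallback route is essentially the paper's proof, but the step you leave vague is precisely the one that needs the right idea. The components of $R^{\Lie{U}(n)}$ with all four arguments in $\mathcal{E}$ (or in $\mathcal{F}$) are \emph{not} controlled by ``Bianchi plus $J$-invariance''; the paper handles them by observing that $\mathrm{T}M=\mathcal{E}\oplus(\mathcal{V}\oplus\mathcal{F})$ and $\mathrm{T}M=\mathcal{F}\oplus(\mathcal{V}\oplus\mathcal{E})$ are themselves special-algebraic-torsion splittings, so that Lemma~\ref{trespuntounolem}(i) applies verbatim with $\mathcal{E}$ (resp.\ $\mathcal{F}$) playing the role of $\mathcal{V}$, giving $(\nabla^{\Lie{U}(n)}_U R^{\Lie{U}(n)})\vert_{\mathcal{E}}=0$ directly. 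The genuinely mixed components are then expressed as universal polynomials in the parallel tensor $\xi$ via Lemma~\ref{trespuntounolem}(ii) and the first Bianchi identity, which is the part you describe correctly. Finally, if you take this route you still owe an argument that the base is symmetric: the paper gets it by projecting $\nabla^{\Lie{U}(n)}R^{\Lie{U}(n)}=0$ to $\nabla^h R^h=0$ on horizontal lifts; alternatively, once homogeneity is established you may quote the dichotomy (i) to conclude $N$ is symmetric, but one or the other must be said explicitly.
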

\begin{proof}
By Proposition \ref{trespuntouno11probis}, we have three different
splittings of special algebraic type $\mathrm{T}M={\mathcal{V}}
\oplus \mathcal{H}$, $\mathrm{T}M= \mathcal{E} \oplus
({\mathcal{V}} \oplus \mathcal{F})$, $\mathrm{T}M=\mathcal{F}
\oplus ({\mathcal{V}} \oplus \mathcal{E})$ each of which being
$\nabla^{\Lie{U}(n)}$-parallel and stable by $J$. It follows by
Lemma \ref{trespuntounolem} (i) that the restriction of the tensor
$\nabla^{\Lie{U}(n)}_U R^{\Lie{U}(n)}$ to either ${\mathcal{V}}$,
$\mathcal{E}$ or $\mathcal{F}$ vanishes. Moreover, because our
distributions are $\nabla^{\Lie{U}(n)}$-parallel we already know,
for all $U,U' \in \mathrm{T} M$, $\;V,W \in \mathcal{V}$, $X_1 \in
\mathcal{E}$ and  $X_2 \in \mathcal{E}$
$$
R^{\Lie{U}(n)} (U,U',V,X_1)=0, \qquad
R^{\Lie{U}(n)}(U,U',V,X_2)=0, \qquad R^{\Lie{U}(n)}(U,U',
X_1,X_2)=0.
$$
Finally, for all $\;V,W \in \mathcal{V}$,  $X_1,Y_1 \in
\mathcal{E}$ and $X_2,Y_2 \in \mathcal{F}$, by Lemma
\ref{trespuntounolem} (ii), one has
\begin{eqnarray*}
 R^{\Lie{U}(n)}(V,W, X_1,Y_1) & = & -  R^{\Lie{U}(n)}(X_1,V, W,  Y_1)- R^{\Lie{U}(n)}(W, X_1,V, Y_1)
  \\
  &=& - 4 \langle \xi_{V} W, \xi_{X_1} Y_1 \rangle - 4 \langle \xi_{X_1} V, \xi_{W} Y_1 \rangle
   - 4 \langle \xi_{W} X_1, \xi_{V} Y_1 \rangle
   \\
   & = &  4 \langle \xi_{V} X_1, \xi_{W} Y_1 \rangle
   - 4 \langle \xi_{W} X_1, \xi_{V} Y_1 \rangle.
\end{eqnarray*}
Similarly,  one has also
\begin{eqnarray*}
 R^{\Lie{U}(n)}(V,W, X_2,Y_2) & = &   4 \langle \xi_{V} X_2, \xi_{W} Y_2 \rangle
   - 4 \langle \xi_{W} X_1, \xi_{V} Y_1 \rangle,
   \\
 R^{\Lie{U}(n)}(X_1,Y_1, X_2,Y_2)    & = &  4 \langle \xi_{X_1} X_2, \xi_{Y_1}
Y_2 \rangle
   - 4 \langle \xi_{Y_1} X_2, \xi_{X_1} Y_2 \rangle.
\end{eqnarray*}
Since $\nabla^{\Lie{U}(n)} \xi=0$, all of this implies
$\nabla^{\Lie{U}(n)}R^{\Lie{U}(n)} =0$. Therefore,
$\nabla^{\Lie{U}(n)}$ is an Ambrose-Singer connection. Hence,
taking into account that $(M,g)$ is a connected, simply connected
and complete Riemannian manifold, it is a homogenous space
\cite{TV}. The claim that the base space of the canonical
fibration is symmetric follows by  comparison between curvature
tensors of the total and the base space (see \cite{Be}). In fact,
if $(N,h)$ is the base manifold with Levi-Civita connection
$\nabla^h$ associated to the metric $h$, the corresponding
Riemannian curvature $R^h$ and
 $X^{\mathcal{H}}$ denotes the horizontal lift of $X \in \mathfrak{X}(N)$, then
$$
0 = \langle (\nabla^{\Lie{U}(n)}_{X'^{\mathcal{H}}}
R^{\Lie{U}(n)})_{X^{\mathcal{H}}\, Y^{\mathcal{H}}}
Z^{\mathcal{H}}, Z'^{\mathcal{H}} \rangle = \langle
((\nabla^h_{X'} R^h)_{X \, Y} Z)^{\mathcal{H}}, Z'^{\mathcal{H}}
\rangle,
$$
for all $X,X',Y,Z,Z'\in \mathfrak{X}(N)$. This implies $\nabla^h
R^h=0$. Then, because $N$ is simply connected, it follows that
$(N,h)$ is symmetric.
\end{proof}

\begin{definition}
{\rm  A complete irreducible strict nearly K\"ahler manifold is
\textit{homogeneous NK Type $IV$}, if it has special algebraic
torsion, the base space of the twistor fibration is symmetric  and
the horizontal distribution $\mathcal{H}$ is real
$\mathrm{Hol}(\nabla^{\Lie{U}(n)})$-irreducible.}
\end{definition}

From the fact that the base manifold is symmetric it follows
$\nabla^{\Lie{U}(n)} R^{\Lie{U}(n)}=0$ \cite{Nagy2}. Therefore, a
homogeneous NK Type $IV$ manifold is homogeneous as it is
expected.

\begin{remark}
{\rm In the case that $(M^{2n},g,J)$ is a complete irreducible
non-homogeneous nearly Kähler manifolds with special algebraic
torsion $($a twistor space of a non-symmetric positive
quaternionic Kähler manifold \cite{Nagy2}$)$, by Proposition
\ref{trespuntouno11probis} (ii),  the horizontal distribution
$\mathcal{H}$ is real
$\mathrm{Hol}(\nabla^{\Lie{U}(n)})$-irreducible. }
\end{remark}

Finally, we give further details relative to the eigenvalues of
the tensors $r$, $\Ric$ and $C$.

\begin{proposition} \label{satdimensions} Let $(M,g,J)$ be a complete irreducible nearly Kähler manifolds with
special algebraic torsion. Then we have:

\noindent $\mathrm{(i)}$ If $\mathcal{H}$ is a complex $\mathrm{Hol}
(\nabla^{\Lie{U}(n)})$-reducible space, then $\mathcal{H} =
\mathcal{E} \oplus \mathcal{F}$, where $\mathcal{E}$ and
$\mathcal{F}$ are stable by $J$, $\xi_{\mathcal{E}} \mathcal{E} =
\xi_{\mathcal{F}} \mathcal{F}= 0$  and  non-zero $\mathrm{Hol}
(\nabla^{\Lie{U}(n)})$-spaces  such that  consisting of eigenvectors
of the tensors $r$, $C$ and $\mathrm{Ric}$ associated to eigenvalues
$($not necessarily distinct$)$  which satisfy the following table
\vspace{2mm}

\begin{center}
\begin{tabular}{ccccc}
\toprule
Eigenvalue  &  $r$ &  $\Ric$ &  $C$ & Eigenbundle \\[1mm]
\midrule
$\lambda $ & $l$& $\frac14 (l+2k +2m)$&  $  2(2l - k -m)$ &
${\mathcal{V}}$
\\[1mm]
 $\lambda_1$ & $k$ & $\frac14 (2l + k  +2m)$& $ 2(- l + 2k -m)$& $\mathcal{E}$\\[1mm]
$\lambda_2$ & $m$ & $\frac14 (2l + 2k +m)$& $ 2(-l -k + 2m)$ & $\mathcal{F}$\\[1mm]
\bottomrule
\end{tabular}
\end{center}
\vspace{2mm}

\noindent
 The eigenvalues $l$,
$k$ and $m$ are positive constants satisfying Equation {\rm
$($\ref{ricricast}$)$}. The Einstein constant of the base manifold
is given by $2 \mu = l + k + m$.

 \vspace{1mm}

\noindent $\mathrm{(ii)}$ If $\mathcal{H}$ is a complex irreducible
$\mathrm{Hol} (\nabla^{\Lie{U}(n)})$-space or complex reducible as
in the previous case but with $k=m$, then the tensors $r$, $C$ and
$\Ric$ have two eigenvalues $($not necessarily distinct$)$ and the
bundles $\mathcal{V}$ and $\mathcal{H}$ consist of eigenvectors
associated to the first and second eigenvalue, respectively, as it
is indicated in the following table. \vspace{2mm}

\begin{center}
\begin{tabular}{ccccc}
\toprule
Eigenvalue  &  $r$ &  $\Ric$ &  $C$ & Eigenbundle \\[1mm]
\midrule $\lambda $ & $l$&  $\frac{1}{4} (l+4k) $& $4(l-k)$ &
${\mathcal{V}}$
\\[1mm]
$\lambda_1 $ & $k$ & $\frac{1}{4}(2l +3k)$& $2(k-l)$ &
$\mathcal{H}$\\[1mm]
\bottomrule
\end{tabular}
\end{center}
\vspace{2mm}

\noindent The eigenvalues  $l$ and $k$ are positive constants
satisfying Equation {\rm $($\ref{ricricast2}$)$}. The Einstein
constant $\mu$ of the base manifold is given by  $2\mu= l + 2k$.
\end{proposition}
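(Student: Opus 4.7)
The plan is to use the irreducibility results of Proposition~\ref{trespuntouno11probis} to reduce each of $r$, $C$, and $\Ric=(5r-C)/4$ to a scalar on every parallel piece of $\mathrm{T}M$; read off the horizontal eigenvalues from Lemma~\ref{cincopuntounolem}; constrain the vertical eigenvalue of $C$ via Proposition~\ref{cincopuntounopro}; and finally pin down the Einstein constant $\mu$ of the base through an O'Neill computation on the canonical Riemannian submersion $\pi\colon M\to N$.

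For part (i), I would apply Proposition~\ref{trespuntouno11probis}(i) to get the $\nabla^{\Lie{U}(n)}$-parallel decomposition $\mathcal{H}=\mathcal{E}\oplus\mathcal{F}$ with the prescribed torsion relations and with $\mathcal{E},\mathcal{F}$ real $\mathrm{Hol}(\nabla^{\Lie{U}(n)})$-irreducible. Since $r$ and $C$ are $\nabla^{\Lie{U}(n)}$-parallel, symmetric, and commute with $J$, they preserve the three real $\mathrm{Hol}(\nabla^{\Lie{U}(n)})$-irreducible summands $\mathcal{V}$, $\mathcal{E}$, $\mathcal{F}$ ($\mathcal{V}$ being real irreducible by~\cite{Nagy2}) and so act on each by a scalar; call $l,k,m>0$ the eigenvalues of $r$ on $\mathcal{V},\mathcal{E},\mathcal{F}$, positive by strictness. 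The same holds for $\Ric=(5r-C)/4$, which in turn comes from $C=\Ric-5\Ric^{*}$ combined with $r=\Ric-\Ric^{*}$. Lemma~\ref{cincopuntotreslem} delivers the dimension relation (\ref{ricricast}) at once.

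The horizontal relations of Lemma~\ref{cincopuntounolem} then give $\Ric|_{\mathcal{E}}=\mu-k/4$, $\Ric|_{\mathcal{F}}=\mu-m/4$, $C|_{\mathcal{E}}=6k-4\mu$, $C|_{\mathcal{F}}=6m-4\mu$, while Proposition~\ref{cincopuntounopro}(i) gives $C|_{\mathcal{V}}=\lambda\,1_{\mathcal{V}}$ and (ii) forces $\lambda_{1}+\lambda_{2}=-\lambda$, producing $\lambda=8\mu-6(k+m)$. One extra relation between $\mu$ and $(l,k,m)$ is still needed, and for this I would compute $\Ric|_{\mathcal{V}}$ via the O'Neill formulas for $\pi$. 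Using $\nabla^{\Lie{U}(n)}=\nabla+\xi$, the total skew-symmetry of $\xi$, and the $\nabla^{\Lie{U}(n)}$-parallelism of $\mathcal{V}$ and $\mathcal{H}$, the O'Neill tensor of $\pi$ is identified as $A_{X}Y=-\xi_{X}Y$ on horizontal vectors and $A_{X}V=\xi_{V}X$ on mixed arguments; the vertical O'Neill identity then gives $\Ric(V,V)=\Ric^{F}(V,V)+l/4$ after using the norm identity $\sum_{X_{i}\in\mathcal{H}}\|\xi_{V}X_{i}\|^{2}=l/4$ coming from Lemma~\ref{cincopuntotreslem}. The Einstein constant of the irreducible Hermitian symmetric fiber turns out to be $(k+m)/2$, obtained from $R^{F}=R^{\Lie{U}(n)}|_{\mathcal{V}}$ (a direct consequence of~(\ref{trespuntouno}) and $\xi_{\mathcal{V}}\mathcal{V}=0$) together with the fiberwise version of the trace identities of Lemma~\ref{cincopuntotreslem}. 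This yields $\Ric|_{\mathcal{V}}=(l+2k+2m)/4$, and $C=5r-4\Ric$ on $\mathcal{V}$ then gives $\lambda=5l-(l+2k+2m)=2(2l-k-m)$; comparing with $\lambda=8\mu-6(k+m)$ forces $2\mu=l+k+m$, and every entry of the first table follows by substitution.

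Part (ii) runs in parallel: when $\mathcal{H}$ is complex $\mathrm{Hol}(\nabla^{\Lie{U}(n)})$-irreducible, Proposition~\ref{trespuntouno11probis}(ii) makes it real irreducible, so $r|_{\mathcal{H}}=k\cdot 1$; the complex reducible sub-case with $k=m$ just collapses $\mathcal{E}$ and $\mathcal{F}$ into a single eigenbundle. The single-eigenvalue clause of Proposition~\ref{cincopuntounopro}(ii) forces $\lambda_{1}=-\lambda/2$, Lemma~\ref{cincopuntounolem} gives $\lambda_{1}=6k-4\mu$, and the analogous O'Neill computation using (\ref{ricricast2}) in place of (\ref{ricricast}) leads to $2\mu=l+2k$, with the remainder of the second table following as in (i). The main technical obstacle will be the O'Neill step that pins down the fiber Einstein constant and hence $\Ric|_{\mathcal{V}}$; everything else is algebraic bookkeeping among the constraints already available from Lemmas~\ref{cincopuntounolem} and~\ref{cincopuntotreslem} and Propositions~\ref{cincopuntounopro} and~\ref{trespuntouno11probis}.
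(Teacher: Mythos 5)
Most of your outline coincides with the paper's argument: the real irreducibility of $\mathcal{V}$, $\mathcal{E}$, $\mathcal{F}$ forcing $r$, $C$, $\Ric$ to be scalars on each summand, the horizontal relations of Lemma \ref{cincopuntounolem}, the constraint $\lambda=-(\lambda_1+\lambda_2)$ from Proposition \ref{cincopuntounopro}, the dimension relations from Lemma \ref{cincopuntotreslem}, and the final bookkeeping (all of which you carry out correctly, including $\Ric=(5r-C)/4$). The one place where you genuinely diverge is the determination of $\Ric|_{\mathcal{V}}$, and that is where your proposal has a real gap.

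Your O'Neill reduction itself is fine: with totally geodesic fibers and $A_{X}V=\xi_{V}X$ one does get $\Ric(V,V)=\Ric^{F}(V,V)+\tfrac{l}{4}\|V\|^{2}$. But you then assert that the Einstein constant of the fiber "turns out to be $(k+m)/2$", citing $R^{F}=R^{\Lie{U}(n)}|_{\mathcal{V}}$ and a "fiberwise version" of Lemma \ref{cincopuntotreslem}. Lemma \ref{cincopuntotreslem} only controls traces of $\xi\circ\xi$, i.e.\ the tensor $r$; it says nothing about the purely vertical block of $R^{\Lie{U}(n)}$, and none of the curvature identities available in the paper (Equation \eqref{trespuntounolem11bis}, Lemma \ref{trespuntounolem}) express $R^{\Lie{U}(n)}(V_{1},V_{2},V_{3},V_{4})$ with all four arguments in $\mathcal{V}$ in terms of $\xi$ alone. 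Concretely, after your steps the available equations are $\lambda_{1}=6k-4\mu$, $\lambda_{2}=6m-4\mu$, $\lambda=8\mu-6(k+m)$ and, via O'Neill, $\lambda=4l-4\nu$ where $\nu$ is the fiber Einstein constant; this is one equation in the two unknowns $\mu,\nu$, so the system is underdetermined and asserting $\nu=(k+m)/2$ is equivalent to asserting the conclusion. The paper closes exactly this gap with Nagy's global Ricci formula \eqref{NagyRicci}, $\Ric(X,Y)=\frac{\lambda_i}{4}\langle X,Y\rangle+\frac{1}{\lambda_i}\sum_{s}\lambda_{s}\langle r^{s}X,Y\rangle$, which combined with Lemma \ref{cincopuntotreslem} yields $\Ric|_{\mathcal{V}}=\frac{1}{4}(l+2k+2m)$ directly, with no reference to the fiber's intrinsic curvature. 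You either need to import that formula (or an equivalent second-Bianchi/Weitzenb\"ock argument from \cite{Nagy1}) or give an independent computation of the fiber's Einstein constant; as written, the step is circular. The same remark applies verbatim to the vertical eigenvalue in part (ii).
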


\begin{proof} The distribution $\mathcal{V}$ is a real  irreducible $\mathrm{Hol}
(\nabla^{\Lie{U}(n)})$-space. In fact, if $\mathcal{V} =
\mathcal{V}_1 \oplus \mathcal{V}_2$, where $\mathcal{V}_i$ are
$\nabla^{\Lie{U}(n)}$-parallel, then we also have $\nabla_{V} W =
\nabla^{\Lie{U}(n)}_{V} W \in \mathcal{V}_i$, for all $V \in
\mathcal{V}$ and $W \in \mathcal{V}_i$, $i=1,2$. This implies that
the distributions $\mathcal{V}_i$ are parallel for the Levi Civita
connection $\nabla$  on a maximal leaf of $\mathcal{V}$ (we recall
that the leaf is a totally geodesic submanifold of $M$). Hence
$\mathcal{V}$ is not irreducible with respect to such a Levi Civita
connection, contradiction.

Since $\mathcal{V}$ is a real   irreducible $\mathrm{Hol}
(\nabla^{\Lie{U}(n)})$-space and $C$ is a
$\nabla^{\Lie{U}(n)}$-parallel tensor, then $\mathcal{V}$ is
contained in a single eingenbundle $\mathcal{E}_\lambda$ of $C$,
$\mathcal{V} \subseteq \mathcal{E}_\lambda$. In other case, for some
eigenvalue $\lambda$, we would have $\mathcal{E}_\lambda \cap
\mathcal{V} \neq \mathcal{V} $ and $\mathcal{V}$ would not be
$\mathrm{Hol} (\nabla^{\Lie{U}(n)})$-irreducible.

By Proposition  \ref{trespuntouno11probis}, if $\mathcal{H}$ is
complex reducible as a $\mathrm{Hol} (\nabla^{\Lie{U}(n)})$-space,
then $\mathcal{H} = \mathcal{E} \oplus \mathcal{F}$  in the
conditions written in the statement. Therefore, $\mathcal{E}$ and
$\mathcal{F}$ are real irreducible as $\mathrm{Hol}
(\nabla^{\Lie{U}(n)})$-spaces. This implies that there is only one
eigenvalue for the tensor $C$ on each of them. Thus, the situation
is  that $C$ has at the most three eigenvalues, namely, $\lambda$,
$\lambda_1$ and $\lambda_2$ (not necessarily distinct) such that
$\mathcal{V}$, $\mathcal{E}$ and $\mathcal{F}$ consist of
eigenvectors for $\lambda$, $\lambda_1$ and $\lambda_2$,
respectively.  Moreover, $\lambda = - (\lambda_1 + \lambda_2)$.

 Now we recall the expression for the Ricci curvature of $(M,g)$
obtained by Nagy in \cite{Nagy1}. If $\mathcal{E}_i, 1 \le i \le
p$ are the eigenspaces of  $r$ corresponding to the eigenvalues
$\lambda_i$ then
\begin{equation} \label{NagyRicci}
\mathrm{Ric}(X,Y)=\frac{\lambda_i}{4} \langle X,Y \rangle
+\frac{1}{\lambda_i} \sum_{s=1}^{p}\lambda_s\langle  r^s(X),Y
\rangle,
\end{equation}
for all $X$and $Y$ in $V_i$, where the tensors $r^s, 1 \le s \le p$
are defined by
$$
 \langle r^s(X),Y \rangle
=-\mathrm{trace}_{\mathcal{E}_s}(\nabla_XJ)(\nabla_YJ) = 4 \sum_{j_s=1}^{\dim
\mathcal{E}_s} \langle \xi_X e_{j_s}
 , \xi_Y e_{j_s} \rangle.
$$
(Note that formula (\ref{NagyRicci}) is also true for a
decomposition $\mathrm{T}
 M = \mathcal{E}_1 \oplus \dots \oplus \mathcal{E}_p$ into eigenbundles $\mathcal{E}_i$
 corresponding to $\lambda_i$, not necessarily distinct, such that
 the bundles $\mathcal{E}_i$ are $\nabla^{\Lie{U}(n)}$-parallel and stable by $J$.) Then,
 using Lemma \ref{cincopuntotreslem} and   the expression for
the Ricci tensor given by (\ref{NagyRicci}),
 we have that the eigenvalues
of $\mathrm{Ric}$ corresponding to $\mathcal{V}$, $\mathcal{E}$ and
$\mathcal{F}$, respectively, are given by
\begin{eqnarray*}
\textstyle \mathrm{Ric}(V,W) & = & \textstyle \frac{l}4 \langle
V,W \rangle + \frac{1}{l} (  \frac{k}{2} r(V,W)  + \frac{m}2
r(V,W) ) = \frac{l +
2k + 2m}4 \langle V,W \rangle,\\
\textstyle \mathrm{Ric}(X_1,Y_1) & = & \textstyle  \frac{k}4
\langle X_1,Y_1\rangle + \frac{1}{k} (  \frac{l}{2} r(X_1,Y_1) +
\frac{m}2 r(X_1,Y_1) ) = \frac{2l + k + 2m}4 \langle X_1,Y_1
\rangle,
\\
\textstyle \mathrm{Ric}(X_2,Y_2) & = & \textstyle \frac{m}4
\langle X_2,Y_2\rangle + \frac{1}{m} (  \frac{l}{2} r(X_2,Y_2)  +
\frac{k}2 r(X_2,Y_2) ) = \frac{2l + 2k + m}4 \langle X_2,Y_2
\rangle.
 \end{eqnarray*}

Now taking the identity (\ref{idhor}) and  $\lambda = - (\lambda_1 +
\lambda_2)$ into account, we have
\begin{gather*}
\lambda = 4l - 2k - 2m =  8\mu - 6 (k+m), \quad  \lambda_1 = -2l +
4k - 2m = 6k -4\mu , \\
 \lambda_2 = -2l - 2k + 4m = 6m - 4\mu.
 \end{gather*}
 Hence $2 \mu = l + k + m$. From all of this the table given in (i) follows. In particular, if
$k = m$, the table given in (ii) is obtained.

 \vspace{2mm}

For (ii). If  $\mathcal{H}$ is $\mathrm{Hol}
(\nabla^{\Lie{U}(n)})$-irreducible, then $\mathcal{H} \subseteq
\mathcal{E}_{\lambda_1}$, where $\lambda_1$ is an  eigenvalue for
$C$. Now, by Proposition \ref{cincopuntounopro}, it follows that
$\lambda_1 = - \frac{\lambda}{2}$. Therefore, $ C = \lambda
\langle \pi_{\mathcal{V}} \, \cdot , \pi_{\mathcal{V}} \, \cdot
\rangle - \frac{\lambda}{2} \langle \pi_{\mathcal{H}} \, \cdot ,
\pi_{\mathcal{H}} \, \cdot \rangle, $ where $\pi_{\mathcal{V}}$
and $\pi_{\mathcal{H}}$ are the projections of $\mathrm{T} M $ on
$\mathcal{V}$ and $\mathcal{H}$, respectively. The same argument
is valid to claim that $r$ has only one eigenvalue $l$
 on $\mathcal{V}$ and another only one $k$ on $\mathcal{H}$,
where $l$ and $k$ are positive constants.

Applying the expression for the Ricci tensor given by
(\ref{NagyRicci}) and Lemma \ref{cincopuntotreslem}, we obtain:
\begin{eqnarray*}
\mathrm{Ric}(V,W) & = & \textstyle \frac{l}{4} \langle V,W \rangle
+\frac{k}{l}   r(V,W)  = \frac{l+4k}{4} \langle V,W \rangle, \;
\mbox{ for } V,W  \in \mathcal{V};
 \\
\mathrm{Ric}(X,Y) & = & \textstyle \frac{k}{4} \langle X,Y \rangle
+\frac{1}{k} ( \frac{l}{2} r( X,Y)     +  \frac{k}{2} r(X,Y) ) =
\frac{2l+3k}{4}   \langle X,Y \rangle, \mbox{ for } X,Y \in
\mathcal{H}.
\end{eqnarray*}
To compute the eigenvalue $\lambda$ of $C$ and the Einstein constant
$\mu$ of the base manifold, we use the identity (\ref{idhor}) and
obtain $ \textstyle 4 \mu = 4 \frac{2l+3k}{4}  + k$.  From this the
required expressions for $\mu$ and $\lambda$ follow.
\end{proof}

\begin{remark}{\rm In both cases, {\rm (i)} and {\rm (ii)} in the previous Proposition,
 when the total manifold
is Einstein,  it is satisfied that $\mu= \frac{3}{2} k$ and the
tensors $r$, $\Ric$ and $C$ have a single eigenvalue satisfying
the already known relations (Proposition \ref{lcomplete})
 given by the following table}
\begin{center}
\begin{tabular}{ccccc}
\toprule
Eigenvalue  &  $r$ & $\Ric$ &  $C$ & Eigenbundle \\[1mm]
\midrule
$\lambda $ & $k$&  $\frac{5}{4} k $&  $0$ & $\mathrm{T} M$ \\[1mm]
\bottomrule
\end{tabular}
\end{center}
\vspace{3mm}

{\rm
 Finally, note that from Equations \eqref{ricricast2}  and \eqref{ricricast},
 Proposition \ref{satdimensions}, the tables given in
 \cite[Proposition 5.2 and Remark 5.1]{Nagy2} are deduced. However, we do not require distinct eigenvalues.
  Also some details of the table for three eigenvalues must be corrected in \cite{Nagy2}. In fact, if
 $2d = \dim \mathcal{V}$,  $2d_1 = \dim \mathcal{E}$ and  $2d_2 = \dim
 \mathcal{F}$, for homogeneous NK Type III spaces one obtains

 \begin{center}
\begin{tabular}{ccccc}
\toprule
$Eigenvalue$  &  $r$ &  $\Ric$ &  $C$ & $Eigenbundle$ \\[1mm]
\midrule $\lambda $ & $\frac{d_1}{d}k$ & $\frac12 (\frac{d_1}{2d} +
1 +  \frac{d_1}{d_2})k $& $ 2  (\frac{2d_1}{d} - 1 -
\frac{d_1}{d_2})k$ & ${\mathcal{V}}$
\\[1mm]
 $\lambda_1$ & $k$ & $\frac12 (\frac{d_1}{d} +  \frac12   + \frac{d_1}{d_2})k$&
 $ 2( - \frac{d_1}{d} + 2 -\frac{d_1}{d_2})k$& $\mathcal{E}$\\[1mm]
$\lambda_2$ & $\frac{d_1}{d_2}k$ & $\frac12 (\frac{d_1}{d} +  1  +
\frac{d_1}{2d_2})k$
& $ 2( - \frac{d_1}{d} -1 + \frac{2d_1}{d_2})k $ & $\mathcal{F}$\\[1mm]
\bottomrule
\end{tabular}
\end{center} }
\end{remark}
\vspace{2mm}

\section{Riemannian 3-symmetric spaces}\indent
We recall that a connected $2n$-dimensional Riemannian manifold $(M,g)$ is called a
$3$-{\em symmetric space} \cite{G} if it admits a family of
isometries $\{\theta_{p}\}_{p\in M}$ of $(M,g)$ satisfying
\begin{enumerate}
\item[{\rm (i)}] $\theta^{3}_{p} = I,$
\item[{\rm (ii)}] $p$ is an isolated fixed point of $\theta_{p},$
\item[{\rm (iii)}] the tensor field $\Theta$ defined by $\Theta =
(\theta_{p})_{*p}$ is of class $C^{\infty},$
\item[{\rm (iv)}] $\theta_{p*}\comp J = J \comp \theta_{p*},$
\end{enumerate}
where $J$ is the {\em canonical almost complex structure} associated
with the family $\{\theta_{p}\}_{p\in M}$ given by $J =
\frac{1}{\sqrt{3}}(2\Theta +I).$

\textit{Riemannian $3$-symmetric spaces} are characterised by a
triple $(M = G/K,\sigma, \langle \cdot , \cdot\rangle)$ satisfying
the following conditions:
\begin{enumerate}
\item[{\rm (1)}] $G$ is a connected Lie group and $\sigma$ is an
automorphism of $G$ of order $3,$
 \item[{\rm (2)}] $K$ is a closed
subgroup of $G$ such that $G_{o}^{\sigma} \subseteq K\subseteq
G^{\sigma},$ where $G^{\sigma} = \{x\in G\mid \sigma(x) = x\}$ and
$G_{o}^{\sigma}$ denotes its identity component,
 \item[{\rm (3)}]
$\langle \cdot , \cdot\rangle$ is
an $\mathrm{Ad}(K)$- and
$\sigma$-invariant inner product
on ${\mathfrak m} = {\rm Ker}\;\phi,$ where $\phi$ is
the endomorphism of ${\mathfrak g}$ given by $\phi = 1 + \sigma + \sigma^{2}.$
\end{enumerate}
We shall say that $G/K$ is a \textit{$3$-symmetric coset space}
if there exists $\sigma$ satisfying (1) and (2). Here and in the
sequel, $\sigma$ and its differential $\sigma_{*}$ on ${\mathfrak
g}$ and on the complexification ${\mathfrak g}_{\mathbb C}$ of ${\mathfrak g}$ are denoted by the same
letter $\sigma.$ Then it is a reductive homogeneous space with
reductive decomposition ${\mathfrak g} = {\mathfrak m}\oplus
{\mathfrak k},$ where the Lie algebra ${\mathfrak k}$ of $K$ is
${\rm Im}\;\phi = {\mathfrak g}^{\sigma} = \{ X\in {\mathfrak
g}\mid \sigma X = X\}.$ The inner product $\langle \cdot ,
\cdot\rangle$ induces a $G$-invariant Riemannian metric $g$ on $M
= G/K$ and $(G/K,g)$ becomes into a Riemannian $3$-symmetric
space, where the symmetry $\theta_{o}$ at the origin $o\in M$ is
given by $\theta_{o}(gK) = \sigma(g)K,$ for all $g\in G.$
 Since the canonical almost complex structure
$J$ on $G/K$ is $G$-invariant, $3$-symmetric spaces are almost
Hermitian homogeneous spaces determined by the
$\mathrm{Ad}(K)$-invariant automorphism on ${\mathfrak m},$ which
we denote by the same letter, given by
\begin{equation}\label{J}
J = \frac{1}{\sqrt{3}}(2\sigma_{\mid {\mathfrak m}} +
Id_{\mathfrak m}).
\end{equation}
Under the canonical identification of ${\mathfrak m}$ with
$\mathrm{T}_{o}\, G/K,$ we have the following \cite{G}.
\begin{equation}\label{JJ}
[JX,JY]_{\mathfrak k} = [X,Y]_{\mathfrak k},\;\;\;\;\;
[JX,Y]_{\mathfrak m} = -J[X,Y]_{\mathfrak m}.
\end{equation}

In addition to the minimal $\Lie{U}(n)$-connection of the
canonical almost Hermitian structure $(J,g)$ of a Riemannian
$3$-symmetric space, we may consider the {\em canonical
connection} (see \cite{Nomizu}) of the $3$-symmetric coset space
$G/K$ adapted to the reductive decomposition ${\mathfrak g} =
{\mathfrak m}\oplus {\mathfrak k}$ and furthermore, the {\em
canonical connection of the $3$-symmetric space}, treated as a
regular $s$-manifold (see \cite{Kow}). Next we prove that these
three connections coincide.
\begin{proposition}\label{lcanonical}
The minimal connection $\nabla^{\Lie{U}(n)}$ of the
$\Lie{U}(n)$-structure determined by the canonical almost complex
structure of a Riemannian $3$-symmetric space $(M = G/K,\sigma,
\langle \cdot , \cdot \rangle)$ coincides with the canonical
connection with respect to the reductive decomposition ${\mathfrak
g} = {\mathfrak k} \oplus {\mathfrak m}.$ Moreover,
$\nabla^{\Lie{U}(n)}$ is also the canonical connection of the
regular $s$-structure.
\end{proposition}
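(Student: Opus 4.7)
The plan is to prove that the canonical connection $\nabla^c$ of the reductive decomposition $\mathfrak{g} = \mathfrak{k} \oplus \mathfrak{m}$ is a Hermitian connection whose difference tensor from the Levi-Civita connection $\nabla$ lies pointwise in $\un(n)^\perp$; by the characterisation of $\nabla^{\Lie{U}(n)}$ as the unique such Hermitian connection, this forces $\nabla^c = \nabla^{\Lie{U}(n)}$. The main inputs will be the $G$-invariance of $g$ and $J$ together with the two consequences of (\ref{JJ}).

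Since $g$ and the canonical almost complex structure $J$ are $G$-invariant tensor fields on $M = G/K$, they are $\nabla^c$-parallel, so $\nabla^c$ is Hermitian; its torsion at the origin $o$, under $\mathrm{T}_o M \cong \mathfrak{m}$, reads $T^c(X,Y) = -[X,Y]_\mathfrak{m}$. Setting $\tau = \nabla^c - \nabla$, metric compatibility makes $\tau_X$ skew-symmetric, and the torsion identity $\tau_X Y - \tau_Y X = T^c(X,Y)$ combined with the standard cyclic trick yields
\[
2\langle \tau_X Y, Z\rangle = -\langle [X,Y]_\mathfrak{m}, Z\rangle + \langle [Y,Z]_\mathfrak{m}, X\rangle - \langle [Z,X]_\mathfrak{m}, Y\rangle.
\]

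The central step is to apply this formula once with $JY$ in place of $Y$ and once with $JZ$ in place of $Z$, using only the two consequences $[X,JY]_\mathfrak{m} = -J[X,Y]_\mathfrak{m}$ and $[JY,Z]_\mathfrak{m} = -J[Y,Z]_\mathfrak{m}$ of (\ref{JJ}) together with the Hermitian identity $\langle JA, B\rangle = -\langle A, JB\rangle$. A direct three-term matching shows that both $2\langle \tau_X JY, Z\rangle$ and $2\langle \tau_X Y, JZ\rangle$ expand to $\langle J[X,Y]_\mathfrak{m}, Z\rangle - \langle J[Y,Z]_\mathfrak{m}, X\rangle + \langle J[Z,X]_\mathfrak{m}, Y\rangle$, whence $\langle \tau_X JY + J\tau_X Y, Z\rangle = 0$ for every $Z$. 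Thus $\tau_X J + J\tau_X = 0$ at $o$, and by $G$-invariance of $\tau$ this holds everywhere. Hence $\tau$ takes values in $\mathrm{T}^*M \otimes \un(n)^\perp$, and the uniqueness statement of the minimal Hermitian connection yields $\nabla^c = \nabla^{\Lie{U}(n)}$.

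For the final identification with the canonical connection of the regular $s$-structure, the plan is to invoke Kowalski's characterisation of the latter as the unique metric connection parallelising the symmetry tensor $\Theta$ with torsion prescribed by the $s$-structure. Since $J = \tfrac{1}{\sqrt{3}}(2\Theta + I)$, the identity $\nabla^{\Lie{U}(n)} J = 0$ is equivalent to $\nabla^{\Lie{U}(n)} \Theta = 0$, and the torsion of $\nabla^{\Lie{U}(n)} = \nabla^c$, which coincides with $-[\cdot,\cdot]_\mathfrak{m}$ at $o$, matches the one dictated by $\Theta$. Kowalski's uniqueness then forces the two connections to coincide. The main obstacle in the whole argument is the algebraic bookkeeping in the key Koszul computation of the central step; the remainder is formal.
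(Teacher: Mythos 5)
Your proposal is correct and is essentially the paper's argument: the paper applies the same Koszul formula at the origin together with the identities (\ref{JJ}) to show $\xi_XY=-\nabla_XY^*$, hence $\nabla^{\Lie{U}(n)}_XY^*=0$, while you run the identical three-term computation from the other end, showing the difference tensor $\tau=\nabla^c-\nabla$ anticommutes with $J$ and invoking uniqueness of the minimal Hermitian connection. The identification with the canonical connection of the regular $s$-structure is also handled as in the paper, via $\nabla^{\Lie{U}(n)}\Theta=0$ and Kowalski's uniqueness theorem.
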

\begin{proof}
For each $X\in {\mathfrak m},$ denote by $X^{*}$ the $G$-invariant
vector field defined on a small neighborhood of the origin $o$ of
$M = G/K$ such that $X^{*}_{o} = X,$ under the identification
${\mathfrak m} \cong T_{o}M.$ Then $[X^{*},Y^{*}]_{o} =
[X,Y]_{\mathfrak m},$ where $[X,Y]_{\mathfrak m}$ denotes the
${\mathfrak m}$-component of $[X,Y]$ for all $X,Y\in {\mathfrak
m}$ and, at the origin, we have
\[
2 \langle \nabla_{X}Y^{*},Z\rangle = -\langle X, [Y,Z]_{\mathfrak
m}\rangle - \langle Y, [X,Z]_{\mathfrak m}\rangle + \langle Z,
[X,Y]_{\mathfrak m}\rangle,
\]
for $X,Y, Z\in {\mathfrak m}.$ Hence, using (\ref{JJ}), $\xi_{X}Y
= -\nabla_{X}Y^{*}$ and so, $\nabla^{\Lie{U}(n)}_{X}Y^{*} =0.$ It
implies that $\nabla^{\Lie{U}(n)}$ is the canonical connection of
$M$ with respect to the reductive decomposition ${\mathfrak g} =
{\mathfrak k} \oplus {\mathfrak m}.$ Since any $G$-invariant
tensor field on $M$ is then parallel with respect to
$\nabla^{\Lie{U}(n)}$ \cite[Proposition I.11]{Kow}, we have
$\nabla^{\Lie{U}(n)}\Theta = 0.$ Hence, taking into account that
$\nabla^{\Lie{U}(n)}$ is invariant with respect to all
$\theta_{p},$ it follows from \cite[Theorem II.17]{Kow} that
$\nabla^{\Lie{U}(n)}$  is also the canonical connection of the
$s$-structure.
\end{proof}
\begin{remark}\label{rcanonical}
{\rm Then we have $\nabla^{\Lie{U}(n)} T^{\Lie{U}(n)} =
\nabla^{\Lie{U}(n)}R^{\Lie{U}(n)} = 0,$ where $T^{\Lie{U}(n)}$ and
$R^{\Lie{U}(n)}$ denote the torsion and the curvature tensor of
the minimal connection $\nabla^{\Lie{U}(n)}$ respectively, and its intrinsic torsion
$\xi$ is a homogeneous structure \cite{TV}.}
\end{remark}
\noindent The tensor fields $T^{\Lie{U}(n)}$ and $R^{\Lie{U}(n)}$
are $G$-invariant and they are given by
\begin{equation}\label{TR}
T^{\Lie{U}(n)}(X,Y) = -[X,Y]_{\mathfrak m},\;\;\;\;\;
R^{\Lie{U}(n)}_{X \,Y} = {\rm ad}_{[X,Y]_{\mathfrak k}}.
\end{equation}
The canonical almost complex structure of a Riemannian
$3$-symmetric space is {\em quasi-K\"ahlerian} ($\xi$ satisfies $\xi_{X}Y + \xi_{JX}JY =0,$ for all vector
fields $X,Y)$ and it is nearly K\"ahlerian if and only if
$(G/K,g)$ is a naturally reductive homogeneous space with adapted
reductive decomposition ${\mathfrak g} = {\mathfrak m} \oplus
{\mathfrak k}$ \cite{G}. In general, a homogeneous Riemannian
manifold $(M=G/K,g)$ is said to be {\em naturally reductive}, or
more precisely, $G$-{\em naturally reductive}, if there exists a
reductive decomposition ${\mathfrak g} = {\mathfrak m}\oplus
{\mathfrak k}$ satisfying
\[
\langle [X,Y]_{\mathfrak m},Z\rangle + \langle [X,Z]_{\mathfrak
m},Y\rangle = 0,
\]
for all $X,Y,Z\in {\mathfrak m},$  where $\langle \cdot , \cdot
\rangle$ is the inner product induced by $g$ on ${\mathfrak m}.$
 From Proposition \ref{lcanonical} and (\ref{TR}), the intrinsic
torsion of a naturally reductive $3$-symmetric space is determined
by
\begin{equation}\label{xi}
\xi_{X}Y = -\frac{1}{2}[X,Y]_{\mathfrak m},\;\;\; X,Y\in {\mathfrak m}.
\end{equation}
\begin{remark}
{\rm The converse of Proposition \ref{lcanonical} also holds (see
\cite{Bu1} for a proof):} A quasi-K\"ahlerian homogeneous manifold
$(M =G/K,g,J)$ such that the minimal connection coincides with the
canonical connection with respect to a given reductive
decomposition ${\mathfrak g} = {\mathfrak m} \oplus {\mathfrak k}$
is a $3$-symmetric space.
\end{remark}

Next we display a characterization result for naturally reductive
$3$-symmetric spaces which is well-known.
\begin{proposition}\label{p3-sym}
If  $(M,g,J)$ is a complete, connected and simply connected nearly
K\"ahler manifold, then the following conditions are
equivalent{\rm :}
\begin{enumerate}
\item[{\rm (i)}] $M$ is a $3$-symmetric space and $J$ is the
canonical almost complex structure{\rm ;} \item[{\rm (ii)}]
$(\nabla_{X}R)_{XJXXJX} = 0,$ for all $X\in {\mathfrak X}(M);$
\item[{\rm (iii)}] the intrinsic torsion $\xi$ is a homogeneous
structure{\rm ;} \item[{\rm (iv)}] $\nabla^{\Lie{U}(n)}R =0;$
\item[{\rm (v)}] $\nabla^{\Lie{U}(n)}R^{\Lie{U}(n)} =0;$
\item[{\rm (vi)}] $(\nabla_{X}R)_{YZ} = R_{\xi_{X}YZ} +
R_{Y\xi_{X}Z} - [\xi_{X},R_{YZ}].$
\end{enumerate}
\end{proposition}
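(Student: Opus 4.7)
The plan is to organize the six conditions into a convenient cycle and reduce almost every step to machinery already in hand. A key preliminary observation is that for a nearly Kähler manifold $\nabla^{\Lie{U}(n)} g = 0$ and $\nabla^{\Lie{U}(n)} \xi = 0$, so the condition \textrm{(iii)} that $\xi$ be a homogeneous structure in the Tricerri--Vanhecke sense reduces to the single demand $\nabla^{\Lie{U}(n)} R = 0$; hence \textrm{(iii)} $\Leftrightarrow$ \textrm{(iv)} essentially by definition. Next, \textrm{(iv)} $\Leftrightarrow$ \textrm{(v)} follows from applying $\nabla^{\Lie{U}(n)}_X$ to the identity \eqref{trespuntouno} expressing $R^{\Lie{U}(n)}$ as $R$ plus a quadratic polynomial in $\xi$ and $g$; both correction terms are annihilated by $\nabla^{\Lie{U}(n)}$. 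Finally, \textrm{(iv)} $\Leftrightarrow$ \textrm{(vi)} is a direct calculation: writing $\nabla^{\Lie{U}(n)} = \nabla + \xi$ and applying the Leibniz rule to the $(1,3)$-tensor $R$,
\[
(\nabla^{\Lie{U}(n)}_X R)_{YZ}W = (\nabla_X R)_{YZ}W - R_{\xi_X Y,Z}W - R_{Y,\xi_X Z}W + [\xi_X, R_{YZ}]W,
\]
so the two conditions agree term by term.

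The implications involving \textrm{(i)} use Proposition \ref{lcanonical} and the remark following it. For \textrm{(i)} $\Rightarrow$ \textrm{(v)}, the minimal connection coincides with the canonical connection of the reductive decomposition, under which every $G$-invariant tensor --- and therefore $R^{\Lie{U}(n)}$ --- is parallel (Remark \ref{rcanonical}). Conversely, for \textrm{(iii)} $\Rightarrow$ \textrm{(i)}, the Ambrose--Singer theorem supplies a simply connected, complete reductive presentation $M = G/K$ whose associated canonical connection is $\nabla^{\Lie{U}(n)}$; since nearly Kähler is a fortiori quasi-Kählerian, the converse of Proposition \ref{lcanonical} (proved in \cite{Bu1}) produces the order-three automorphism $\sigma$ and identifies $J$ as the canonical almost complex structure of the resulting $3$-symmetric space.

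The direction \textrm{(iv)} $\Rightarrow$ \textrm{(ii)} is an easy specialization of \textrm{(vi)}: set $Y = X$, $Z = JX$. The total skew-symmetry of $\xi$ gives $\xi_X X = 0$, and then $\xi_X JX = -J\xi_X X = 0$ using $\xi_X J + J \xi_X = 0$, so the first two terms on the right of \textrm{(vi)} vanish. Pairing the surviving term with $JX$ and using once more that $\xi_X$ is skew yields $(\nabla_X R)(X, JX, X, JX) = -\langle \xi_X R_{X,JX}X, JX\rangle = \langle R_{X,JX}X, \xi_X JX\rangle = 0$.

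The one substantial step is the closing implication \textrm{(ii)} $\Rightarrow$ \textrm{(iv)}, which is the deep second Gray curvature identity for nearly Kähler manifolds and the main obstacle. The approach I would take is to polarize the pointwise quartic identity \textrm{(ii)} into a full tensorial identity on $\nabla R$, then combine it with the nearly Kähler curvature relation \eqref{curvatura}, the first Bianchi identity, and the algebraic symmetries of $R$ under $J$ (suitably corrected by the intrinsic torsion), eliminating the $J$-odd and antisymmetric components of $\nabla R$ step by step until only the identity in \textrm{(vi)} survives. This closes the cycle via the already-established equivalence \textrm{(vi)} $\Leftrightarrow$ \textrm{(iv)}. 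All other implications are either formal or direct consequences of results proved earlier in the paper.
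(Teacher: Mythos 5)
Your handling of the block $(\mathrm{iii})\Leftrightarrow(\mathrm{iv})\Leftrightarrow(\mathrm{v})\Leftrightarrow(\mathrm{vi})$, of $(\mathrm{i})\Rightarrow(\mathrm{v})$ via Proposition \ref{lcanonical} and Remark \ref{rcanonical}, and of $(\mathrm{vi})\Rightarrow(\mathrm{ii})$ (setting $Y=X$, $Z=JX$ and using $\xi_XX=0$, $\xi_XJ=-J\xi_X$) matches the paper's proof essentially verbatim. Your route back from $(\mathrm{iii})$ to $(\mathrm{i})$ --- Ambrose--Singer plus the converse of Proposition \ref{lcanonical} from \cite{Bu1} --- is a legitimate alternative to the paper's use of Gray's equivalence $(\mathrm{i})\Leftrightarrow(\mathrm{ii})$, and it is consistent with the Remark following Proposition \ref{lcanonical}.

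The genuine gap is the implication $(\mathrm{ii})\Rightarrow(\mathrm{iv})$, which you correctly identify as the only substantial step but do not actually prove: ``polarize the quartic identity and eliminate the $J$-odd and antisymmetric components of $\nabla R$ step by step'' is a plan, not an argument. Polarizing $(\nabla_XR)_{XJXXJX}=0$ only yields the totally symmetrized part of the relevant component of $\nabla R$; recovering from that the full identity $(\mathrm{vi})$ (equivalently $\nabla^{\Lie{U}(n)}R=0$) is precisely the content of Gray's deep theorems, which the paper invokes by citation (\cite[Theorems 4.6 and 4.7 and Corollary 3.11]{G}) to get $(\mathrm{ii})\Leftrightarrow(\mathrm{i})$. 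Without this step your cycle does not close: every other condition implies $(\mathrm{ii})$, but nothing returns from $(\mathrm{ii})$, so the six conditions would not be shown equivalent. You should either carry out the elimination in detail or, as the paper does, appeal explicitly to Gray's characterization.
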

\begin{proof}
$({\rm i})\Leftrightarrow ({\rm ii})$ is a consequence from
\cite[Theorems 4.6 and 4.7 and Corollary 3.11]{G}. $({\rm
i})\Rightarrow ({\rm iii}) $ is satisfied for any Riemannian
$3$-symmetric space (see Remark \ref{rcanonical}). Because
$\nabla^{\Lie{U}(n)} g = \nabla^{\Lie{U}(n)} \xi =0,$ we get
$({\rm iii})\Leftrightarrow ({\rm iv})\Leftrightarrow ({\rm
v})\Leftrightarrow ({\rm vi}).$ Finally, since $\xi_{X}X =0$ and
$\xi_{X}J = -J\xi_{X},$ $({\rm vi})\Rightarrow ({\rm ii}).$
\end{proof}

The group ${\mathcal H}(M)$ of holomorphic isometries of a
Riemannian $3$-symmetric space $(M,g)$ with respect to its
canonical almost complex structure acts transitively on $M$
\cite[Theorem 4.8]{G}. Therefore, $M$ can be written as the coset
space  $G^{*}/K^{*},$ where $G^{*}$ is the identity component
${\mathcal H}_{o}(M)$ of ${\mathcal H}(M)$ and $K^{*}$ is the
isotropy subgroup of a point $o$ of $M.$ The triple $(M =
G^{*}/K^{*},\sigma^{*},\langle\cdot ,\cdot \rangle),$ where
$\sigma^{*}$ is the map sending $g$ to $\theta_{o}\comp g\comp
\theta^{-1}_{o},$ for $g\in G^{*},$ and $\langle\cdot , \cdot
\rangle$ is the inner product determined by $g$ on ${\mathfrak m}
= \mathrm{T}_{o}M,$ satisfies (1), (2) and (3). Thus $G^{*}/K^{*}$
becomes into a $3$-symmetric coset space \cite[Theorem 5.4]{G}. We
may also consider the closure ${\rm Cl}(\{\theta_{p}\})$ of the
group generated by the set $\{\theta_{p}\mid p\in M\}$ in
${\mathcal H}(M).$ From Proposition \ref{lcanonical} and
\cite[Theorem II.32]{Kow}, ${\rm Cl}(\{\theta_{p}\})$ coincides
with the transvection group of the affine reductive space
$(M,\nabla^{\Lie{U}(n)}).$ It is a normal subgroup of ${\mathcal
H}(M)$ and acts transitively on $M.$ Hence, if we denote by $G$
the identity component of ${\rm Cl}(\{\theta_{p}\})$ and $K$ its
isotropy subgroup at o, we have another $3$-symmetric coset
expression $G/K$ for $M$ with corresponding automorphism $\sigma$
of order $3$ given by $\sigma = \sigma^{*}_{\mid G}.$
\begin{theorem}\label{mainh}
Let $(M = G/K,\sigma,\langle\cdot, \cdot\rangle)$ be a simply
connected  Riemannian $3$-symmetric space and $J$ its canonical
almost complex structure. Suppose that $G$ is connected semisimple
and acts effectively on the coset space $M = G/K.$ Then we have:
 \begin{enumerate}
 \item[{\rm (i)}] The holonomy group ${\rm Hol}(\nabla^{\Lie{U}(n)})$ of the minimal connection $\nabla^{\Lie{U}(n)}$ coincides with the isotropy subgroup $K,$ as acting by the adjoint representation.
\item[{\rm (ii)}] $G$ is the identity component of the
transvection group ${\rm Cl}(\{\theta_{p}\})$ of
$(M,\nabla^{\Lie{U}(n)}).$ \item[{\rm (iii)}] If the identity
component ${\mathcal H}_{o}(M)$ of the group of holomorphic
isometries of $(M,g,J)$ is semisimple,  then $G = {\mathcal
H}_{o}(M).$
\end{enumerate}
\end{theorem}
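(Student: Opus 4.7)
My plan is to address the three parts in order, using the Ambrose--Singer theorem for (i) and then leveraging it in (ii) and (iii). For (i), I would start from Proposition \ref{lcanonical} and the curvature formula (\ref{TR}), which identify $\nabla^{\Lie{U}(n)}$ with the canonical connection of the reductive decomposition $\mathfrak{g}=\mathfrak{k}\oplus\mathfrak{m}$ and give $R^{\Lie{U}(n)}_{X\,Y}|_{o}=\mathrm{ad}_{[X,Y]_{\mathfrak{k}}}|_{\mathfrak{m}}$ for $X,Y\in\mathfrak{m}$. By Ambrose--Singer for invariant connections the Lie algebra of $\mathrm{Hol}(\nabla^{\Lie{U}(n)})$ at the origin equals $\rho([\mathfrak{m},\mathfrak{m}]_{\mathfrak{k}})\subseteq\mathfrak{gl}(\mathfrak{m})$, where $\rho\colon\mathfrak{k}\to\mathfrak{gl}(\mathfrak{m})$ is the isotropy representation. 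Two algebraic facts need to be proved. First, $\rho$ is injective: Jacobi together with the reductivity $[\mathfrak{k},\mathfrak{m}]\subseteq\mathfrak{m}$ shows that $\ker\rho$ is a $\mathfrak{g}$-ideal contained in $\mathfrak{k}$, so effectiveness of the $G$-action forces $\ker\rho=0$. Second, $[\mathfrak{m},\mathfrak{m}]_{\mathfrak{k}}=\mathfrak{k}$: semisimplicity of $\mathfrak{g}$ makes the Killing form $B$ non-degenerate, while $\sigma$-invariance of $B$ yields $\mathfrak{k}\perp_{B}\mathfrak{m}$ and hence both $B|_{\mathfrak{k}}$ and $B|_{\mathfrak{m}}$ are non-degenerate. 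Taking $\mathfrak{k}_{1}$ to be the $B$-orthogonal complement in $\mathfrak{k}$ of $[\mathfrak{m},\mathfrak{m}]_{\mathfrak{k}}$, ad-invariance of $B$ gives $B([X,Y],Z)=B(X,[Y,Z]_{\mathfrak{k}})=0$ for $X\in\mathfrak{k}_{1}$ and $Y,Z\in\mathfrak{m}$, so $[X,Y]\in\mathfrak{m}$ is $B$-orthogonal to $\mathfrak{m}$ and therefore vanishes; hence $\mathfrak{k}_{1}\subseteq\ker\rho=0$. This second step is the most delicate point, since one has to track carefully which projection of $[Y,Z]$ actually appears in $B([X,Y],Z)$. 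To pass from Lie algebras to Lie groups, simple connectedness of $M$ with connectedness of $G$ makes $K$ connected via the homotopy sequence of $K\to G\to M$, and also makes $\mathrm{Hol}(\nabla^{\Lie{U}(n)})$ connected; injectivity of $\rho$ then lifts to $K\cong\mathrm{Ad}(K)|_{\mathfrak{m}}=\mathrm{Hol}(\nabla^{\Lie{U}(n)})$.

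For (ii), part (i) delivers $[\mathfrak{m},\mathfrak{m}]\supseteq[\mathfrak{m},\mathfrak{m}]_{\mathfrak{k}}=\mathfrak{k}$, so the transvection Lie algebra $\mathfrak{m}+[\mathfrak{m},\mathfrak{m}]$ of the affine reductive space $(M,\nabla^{\Lie{U}(n)})$ coincides with $\mathfrak{g}$. Both $G$ and the identity component of $\mathrm{Cl}(\{\theta_{p}\})$ are connected Lie subgroups of $\mathcal{H}(M)$ with this same Lie algebra, and therefore coincide.

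For (iii), the opening move is to note that $\mathrm{Cl}(\{\theta_{p}\})$ is normal in $\mathcal{H}(M)$: any $h\in\mathcal{H}(M)$ preserves $J$ and hence $\Theta$, so $h\theta_{p}h^{-1}$ is an order-$3$ isometry with $h(p)$ as isolated fixed point and the prescribed differential, whence $h\theta_{p}h^{-1}=\theta_{h(p)}$ by uniqueness of the symmetry (this uniqueness is a second subtle point). Consequently $G$ is a normal connected subgroup of $\mathcal{H}_{o}(M)$ and $\mathfrak{g}$ is an ideal of $\mathfrak{h}_{o}:=\mathrm{Lie}\,\mathcal{H}_{o}(M)$. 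Semisimplicity of $\mathfrak{h}_{o}$ produces a direct sum of ideals $\mathfrak{h}_{o}=\mathfrak{g}\oplus\mathfrak{g}'$, and since $\sigma^{*}$ preserves $\mathfrak{g}$ and the Killing-form orthogonal ideal complement is unique, $\mathfrak{g}'$ is $\sigma^{*}$-stable and splits as $\mathfrak{g}'=\mathfrak{k}'\oplus\mathfrak{m}'$ along its $\sigma^{*}$-eigenspaces. Transitivity of $G$ and of $\mathcal{H}_{o}(M)$ on $M$ forces $\dim\mathfrak{m}=\dim\mathfrak{g}/\mathfrak{k}=\dim\mathfrak{h}_{o}/\mathfrak{k}^{*}$, and since the $\sigma^{*}$-complement of $\mathfrak{k}^{*}=\mathfrak{k}\oplus\mathfrak{k}'$ in $\mathfrak{h}_{o}$ is $\mathfrak{m}\oplus\mathfrak{m}'$, this dimension equality forces $\mathfrak{m}'=0$ and hence $\mathfrak{g}'\subseteq\mathfrak{k}^{*}$. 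The connected subgroup $G'\subseteq\mathcal{H}_{o}(M)$ with Lie algebra $\mathfrak{g}'$ is then normal in $\mathcal{H}_{o}(M)$ and fixes the origin; conjugating by transitively-acting elements of $\mathcal{H}_{o}(M)$ shows $G'$ fixes every point of $M$, and effectiveness of the $\mathcal{H}(M)$-action forces $G'=\{e\}$, yielding $\mathcal{H}_{o}(M)=G$.
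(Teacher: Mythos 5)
Your proof is correct. For parts (i) and (ii) you follow essentially the paper's route: identify $\nabla^{\Lie{U}(n)}$ with the canonical connection, observe that the holonomy algebra is generated by the curvature endomorphisms $\mathrm{ad}_{[X,Y]_{\mathfrak k}}$, and use effectiveness together with semisimplicity to show these fill out the (faithful) isotropy image of $\mathfrak k$. The only cosmetic difference is that the paper runs the orthogonality computation dually: it shows the Killing-orthogonal complement $\mathfrak a$ of the ideal $\mathfrak n=\mathfrak m+[\mathfrak m,\mathfrak m]$ is an ideal contained in $\mathfrak k$, hence zero by effectiveness, so $\mathfrak g=\mathfrak n$; you instead orthocomplement $[\mathfrak m,\mathfrak m]_{\mathfrak k}$ inside $\mathfrak k$ and push the leftover into $\ker\rho$ --- the two computations are equivalent. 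Part (iii) is where you genuinely diverge. The paper's proof is shorter: it notes that $(M=G^{*}/K^{*},\sigma^{*},\langle\cdot,\cdot\rangle)$ with $G^{*}={\mathcal H}_o(M)$ is again a $3$-symmetric coset space satisfying the hypotheses of the theorem, applies part (i) to it to conclude $K=K^{*}$ (both coincide with the holonomy group), and then compares $\mathfrak m=\ker\phi$ with $\mathfrak m^{*}=\ker\phi^{*}$ to get $\mathfrak g=\mathfrak m+[\mathfrak m,\mathfrak m]=\mathfrak m^{*}+[\mathfrak m^{*},\mathfrak m^{*}]=\mathfrak g^{*}$. You instead exploit normality of the transvection group in ${\mathcal H}(M)$ to split $\mathfrak h_o=\mathfrak g\oplus\mathfrak g'$ into $\sigma^{*}$-stable ideals, kill $\mathfrak m'$ by the dimension count coming from transitivity, and kill the remaining normal subgroup $G'\subseteq K^{*}$ by conjugation and effectiveness. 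Your route is longer but self-contained and makes explicit why the complementary ideal must act trivially; the paper's buys brevity by recycling (i) on the larger coset presentation. Both versions rest on the same external inputs --- transitivity of ${\mathcal H}(M)$ and the uniqueness (hence equivariance) of the symmetries $\theta_p$ --- which you correctly isolate as the delicate points.
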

\begin{proof}
From Proposition \ref{lcanonical}, the Lie algebra ${\mathfrak
h}{\mathfrak o}{\mathfrak l}(\nabla^{\Lie{U}(n)})$ of the holonomy
group ${\rm Hol}(\nabla^{\Lie{U}(n)})$ (with reference point at
the origin $o)$ is spanned by all endomorphisms of the form
$R^{\Lie{U}(n)}_{X\,Y},$ for all $X,Y\in \mathrm{T}_{o}M.$
Moreover, under the canonical identification of ${\mathfrak m}$
with $\mathrm{T}_{o}M,$ $R^{\Lie{U}(n)}_{X\,Y}$ can be expressed
as $\mathrm{ad}_{[X,Y]_{\mathfrak k}},$ for all $X,Y\in {\mathfrak
m}$ (see Remark \ref{rcanonical}). Since the linear isotropy
representation of $K$ in the tangent space $\mathrm{T}_{o}M$ is
faithful, ${\mathfrak h}{\mathfrak o}{\mathfrak
l}(\nabla^{\Lie{U}(n)})$ coincides precisely with the subalgebra
of ${\mathfrak k}$ generated by all projections $[X,Y]_{\mathfrak
k},$ $X,Y\in {\mathfrak m}.$

The subspace ${\mathfrak n} = {\mathfrak m} + [{\mathfrak
m},{\mathfrak m}]$ is an ideal of ${\mathfrak g}.$ Since
${\mathfrak g}$ is semisimple, we can consider the orthogonal
complement ${\mathfrak a}$ of ${\mathfrak n}$ in ${\mathfrak g}$
with respect to the Killing form. Then ${\mathfrak a}$ is also an
ideal and is contained in ${\mathfrak k}.$ Since $G$ is effective
on $M,$ we conclude that ${\mathfrak a}$ reduces to $0$ and
${\mathfrak g} = {\mathfrak n}.$ Now, because $M$ is simply
connected, and $K$ and ${\rm Hol}(\nabla^{\Lie{U}(n)})$ are
connected, we get (i). From here one obtains that $G$ is the
transvection group of the affine reductive space
$(M,\nabla^{\Lie{U}(n)})$ (see \cite[Theorem I.25]{Kow}). This
proves  (ii).

Finally we shall show (iii). Denote by $G^{*}$ the identity
connected component ${\mathcal H}_{o}(M)$ of ${\mathcal H}(M)$ and
$K^{*}$ the isotropy subgroup of $G^{*}$ at $o=\{K\}.$ Since $G$
acts effectively on $M,$ $G$ is a Lie subgroup of $G^{*}$ and
$K\subset K^{*}.$ Let ${\mathfrak g},$ ${\mathfrak k},$
${\mathfrak g}^{*}$ and ${\mathfrak k}^{*}$ be the Lie algebras of
$G,$ $K,$ $G^{*}$ and $K^{*},$ respectively.

The symmetry $\theta_{o}$ at $o$ is determined by $\theta_{o}\comp
\pi = \pi\comp \sigma,$ where $\pi$ is the canonical projection
$G\to M.$ Then we have $\sigma(g) = \theta_{o}\comp g\comp
\theta_{o}^{-1},$ for all $g\in G.$ The extension $\sigma^{*}$ of
$\sigma$ to $G^{*}$ becomes $G^{*}/K^{*}$ in another $3$-symmetric
coset space of $M$ (see \cite[Proposition 5.1]{G}). If $G^{*}$ is
semisimple, then from (i) $K = K^{*}.$ In a similar way, we obtain
${\mathfrak g}^{*} = {\mathfrak m}^{*} + [{\mathfrak
m}^{*},{\mathfrak m}^{*}].$ Taking into account that the
corresponding $\phi^{*}$ for $\sigma^{*},$ $\phi^{*} = 1 +
\sigma^{*} + (\sigma^{*})^{2},$ satisfies $\phi^{*}_{\mid
{\mathfrak g}} = \phi,$ one gets ${\mathfrak m}\subset {\mathfrak
m}^{*}$ and so ${\mathfrak m} = {\mathfrak m}^{*}.$ Hence we have
${\mathfrak g} = {\mathfrak g}^{*}$.
\end{proof}

\begin{remark}\label{r4.6}
{\rm In the previous Theorem, a tensor field on $M$ is
$\nabla^{\Lie{U}(n)}$-parallel if and only if it is $G$-invariant
\cite[Proposition I.38]{Kow}.}
\end{remark}

\begin{corollary}\label{luni}
Let $(M=G/K,\sigma,\langle \cdot , \cdot \rangle)$ be a Riemannian
$3$-symmetric space and $J$ its canonical almost complex
structure. Suppose that $G$ is a connected, compact Lie group
acting effectively on $M$ and ${\rm rank}\;K = {\rm rank}\; G.$
Then we have:
\begin{enumerate}
 \item[{\rm (i)}] The holonomy group ${\rm Hol}(\nabla^{\Lie{U}(n)})$ of the minimal
 connection $\nabla^{\Lie{U}(n)}$ coincides with the isotropy subgroup $K,$ as acting by the adjoint representation;
\item[{\rm (ii)}] $G$ coincides with the identity component of the group of holomorphic isometries of $(M,g,J).$
\end{enumerate}
\end{corollary}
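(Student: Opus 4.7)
The plan is to reduce the corollary to the semisimple situation covered by Theorem~\ref{mainh}. First I would observe that the rank hypothesis forces ${\mathfrak g}$ to be semisimple. Indeed, the center ${\mathfrak z}({\mathfrak g})$ is contained in every maximal torus of ${\mathfrak g}$; since ${\rm rank}\,K={\rm rank}\,G$, one can choose a maximal torus of ${\mathfrak g}$ inside ${\mathfrak k}$, so ${\mathfrak z}({\mathfrak g})\subseteq{\mathfrak k}$. Being an ideal of ${\mathfrak g}$ contained in ${\mathfrak k}$, it integrates to a connected normal subgroup of $G$ sitting inside $K$, which therefore acts trivially on $M=G/K$. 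Effectiveness of $G$ then forces ${\mathfrak z}({\mathfrak g})=0$, so $G$ is semisimple.

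With $G$ connected, semisimple and effective on $M$, Theorem~\ref{mainh}(i) immediately gives ${\rm Hol}(\nabla^{\Lie{U}(n)})=K$, which is part~(i), and Theorem~\ref{mainh}(ii) identifies $G$ with the identity component of the transvection group ${\rm Cl}(\{\theta_p\})$.

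For part~(ii), let $G^{*}={\mathcal H}_o(M)$ with isotropy $K^{*}\supseteq K$. Because $G$ is compact, $M$ is compact, and by Myers--Steenrod $G^{*}$ is a compact connected Lie group. By Theorem~\ref{mainh}(iii) it remains only to verify that $G^{*}$ is semisimple. To that end, I note that any $z\in Z(G^{*})$ has the same stabilizer at every point of $M$ (since it commutes with the transitive action of $G^*$), so $Z(G^{*})\cap K^{*}$ is a normal subgroup of $G^{*}$ contained in $K^{*}$ and therefore acts trivially on $M$; effectiveness of $G^{*}$ yields $Z(G^{*})\cap K^{*}=\{e\}$. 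Consequently $Z(G^{*})_{0}$ acts freely on $M$, and $M$ becomes the total space of a principal $Z(G^{*})_{0}$-bundle over $M/Z(G^{*})_{0}$. Multiplicativity of the Euler characteristic then gives $\chi(M)=\chi(Z(G^{*})_{0})\,\chi(M/Z(G^{*})_{0})=0$ as soon as $Z(G^{*})_{0}$ is a non-trivial torus. But ${\rm rank}\,K={\rm rank}\,G$ together with the Hopf--Samelson formula force $\chi(M)=|W_{G}|/|W_{K}|>0$, a contradiction. Hence $Z(G^{*})_{0}=\{e\}$, $G^{*}$ is semisimple, and Theorem~\ref{mainh}(iii) yields $G=G^{*}$.

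The main obstacle I anticipate is this last step: the proof that $G^{*}$ has trivial centre. The topological ingredient --- ruling out a central torus in $G^{*}$ by comparing the strict positivity of $\chi(M)$ with the vanishing Euler characteristic of a principal torus bundle --- is the crucial geometric input that replaces the semisimplicity hypothesis of Theorem~\ref{mainh}(iii), and some care is required to justify both the freeness of the $Z(G^{*})_{0}$-action and the multiplicativity of $\chi$ for the resulting fibration.
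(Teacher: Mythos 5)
Your argument is correct in substance but reaches the key point of (ii) --- the semisimplicity of $G^{*}={\mathcal H}_{o}(M)$ --- by a genuinely different route than the paper. The paper decomposes $M$ into irreducible factors $M_{i}=G_{i}/K_{i}$, uses ${\rm rank}\,G_{i}={\rm rank}\,K_{i}$ to conclude that each $\sigma_{i}$, hence $\sigma$, hence its extension $\sigma^{*}$ to $G^{*}$, is an \emph{inner} automorphism of order $3$; this gives ${\rm rank}\,G^{*}={\rm rank}\,K^{*}$, so $Z(G^{*})_{0}\subseteq K^{*}$ and effectiveness kills the centre. You instead rule out a central torus in $G^{*}$ topologically: $Z(G^{*})\cap K^{*}=\{e\}$ by effectiveness, so $Z(G^{*})_{0}$ acts freely, which would force $\chi(M)=0$, contradicting $\chi(M)=|W_{G}|/|W_{K}|>0$ from Hopf--Samelson under the equal-rank hypothesis. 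Your version avoids the decomposition into simple factors and the (not entirely trivial) claim that the extension $\sigma^{*}$ is still inner, at the cost of importing the Lefschetz-type fact that a nontrivial torus acting freely on a compact manifold forces $\chi=0$. Both are legitimate; yours is more self-contained on the Lie-theoretic side.

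One hypothesis check is missing. Theorem \ref{mainh} assumes $M$ is \emph{simply connected} (and its proof also uses that $K$ is connected, which then follows from the homotopy exact sequence). You invoke parts (i)--(iii) of that theorem without verifying this. The paper supplies it by citing Wolf--Gray \cite[Proposition 4.1]{WG}: under the equal-rank hypothesis ($\sigma$ inner), $K$ is connected and $G/K$ is simply connected. Your Euler-characteristic computation does not substitute for this --- $\chi(M)>0$ does not imply $\pi_{1}(M)=0$ --- so you should add the Wolf--Gray reference (or an equivalent argument) before applying Theorem \ref{mainh}. With that insertion the proof is complete.
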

\begin{proof}
 $K$ is connected and $M$ is simply connected
\cite[Proposition 4.1]{WG}. Moreover, because $K$ contains the
center of $G$ and the action is effective, it follows that $G$ is
semisimple. Then (i) is proved by using Theorem \ref{mainh} (i).

Next we can consider the decomposition $M = M_{1}\times \dots
\times M_{r},$ where $M_{i} = G_{i}/K_{i},$ $G = G_{1}\times \dots
\times G_{r},$ $K=K_{1}\times \dots \times K_{r},$ being $G_{i},$
$i=1,\dots ,r,$ the simple normal subgroups of $G$ and $K_{i} =
K\cap G_{i}.$ Because ${\rm rank}\;G_{i} = {\rm rank}\; K_{i},$
$i=1,\dots, r,$ it follows that each $M_{i}$ is an irreducible
$3$-symmetric space with associated inner automorphism
$\sigma_{i},$ where $\sigma = \sigma_{1}\times \dots \times
\sigma_{r},$ and hence, $\sigma$ is also an inner automorphism.
Denote as before by $G^{*}$ the identity connected component
${\mathcal H}_{o}(M)$ of ${\mathcal H}(M),$ $K^{*}$ the isotropy
subgroup of $G^{*}$ at the origin and ${\mathfrak g}^{*}$ and
${\mathfrak k}^{*}$ the corresponding Lie algebras. Then the
extension $\sigma^{*}$ of $\sigma$ to $G^{*}$ is also  an inner
automorphism of order $3$ and so, ${\rm rank}\;G^{*} = {\rm
rank}\;K^{*}.$ Hence $G^{*}$ must be semisimple and (ii) follows
from Theorem \ref{mainh} (iii).
\end{proof}

\section{Compact irreducible 3-symmetric spaces} \indent
We shall also need some general results of complex simple Lie
algebras. See \cite{He} for more details. Let ${\mathfrak
g}_{\mathbb C}$ be a simple Lie algebra over ${\mathbb C}$ and
${\mathfrak h}_{\mathbb C}$ a Cartan subalgebra of ${\mathfrak
g}_{\mathbb C}.$ Let $\Delta$ denote the set of non-zero roots of
${\mathfrak g}_{\mathbb C}$ with respect to ${\mathfrak
h}_{\mathbb C}$ and $\Pi = \{\alpha_{1},\dots ,\alpha_{l}\}$ a
system of simple roots or a basis of $\Delta.$ Because the
restriction of the Cartan-Killing form $B$ of ${\mathfrak
g}_{\mathbb C}$ to ${\mathfrak h}_{\mathbb C} \times {\mathfrak
h}_{\mathbb C}$ is non-degenerate, there exists a unique element
$H_{\alpha}\in {\mathfrak h}_{\mathbb C}$ such that
$B(H,H_{\alpha}) = \alpha(H),$ for all $H\in {\mathfrak
h}_{\mathbb C}.$ Moreover, we have ${\mathfrak h}_{\mathbb C} =
\sum_{\alpha\in \Delta}{\mathbb C}H_{\alpha}$ and $B$ is strictly
positive definite on ${\mathfrak h}_{\mathbb{R}} = \sum_{\alpha\in
\Delta}\mathbb{R} H_{\alpha}.$ Put $\langle \alpha ,\beta \rangle
= B(H_{\alpha},H_{\beta}).$ We choose root vectors
$\{E_{\alpha}\}_{\alpha \in \Delta},$ such that for all
$\alpha,\beta\in \Delta$ we have
\begin{equation}\label{v}
\left\{
\begin{array}{lcl}
[E_{\alpha},E_{-\alpha}]= H_{\alpha},& & [H,E_{\alpha}] =
\alpha(H)E_{\alpha},\;\;\;\mbox{for}\;H\in
{\mathfrak h}_{\mathbb
C};\\[0.6pc]
[E_{\alpha},E_{\beta}] = 0 , &
&\mbox{if}\; \alpha+\beta\neq
0\;\mbox{and}\;\alpha + \beta \not\in \Delta;\\[0.6pc]
[E_{\alpha},E_{\beta}] = N_{\alpha,\beta}E_{\alpha + \beta}
, & &\mbox{if}\;\alpha + \beta
\in \Delta,
\end{array}
\right.
\end{equation}
where the constants $N_{\alpha,\beta}$ satisfy $N_{\alpha,\beta} = -N_{-\alpha,-\beta},$ $N_{\alpha,\beta} =
-N_{\beta,\alpha}.$ If $\alpha,\beta,\gamma\in \Delta$ and $\alpha + \beta +
\gamma = 0,$ then
\begin{equation}\label{**}
N_{\alpha,\beta} = N_{\beta,\gamma} = N_{\gamma,\alpha}.
\end{equation}
Moreover, given an $\alpha$-series $\beta + n\alpha$ $(p\leq n\leq
q)$ containing $\beta,$ then
\begin{equation}\label{***}
(N_{\alpha,\beta})^{2} = \frac{\textstyle q(1-p)}{\textstyle 2}
  \langle \alpha,\alpha \rangle.
\end{equation}
For this choice, if $\alpha + \beta
\neq 0$, then  $ E_{\alpha}$ and $E_{\beta}$ are orthogonal under
$B$, $B(E_{\alpha},E_{-\alpha}) = 1$ and we have the orthogonal
direct sum
\[ \textstyle
{\mathfrak g}_{\mathbb C} = {\mathfrak h}_{\mathbb C} +
\sum_{\alpha\in \Delta} {\mathbb C}E_{\alpha}.
\]
Denote by $\Delta^{+}$ the set of positive roots of $\Delta$ with
respect to some lexicographic order in $\Pi.$ Then each $\alpha\in \Delta^{+}$ may be written as
\[ \textstyle
\alpha= \sum_{i=1}^{l}n_{k}(\alpha)\alpha_{i},
\]
where $n_{i}(\alpha)\in {\mathbb Z},$ $n_{i}(\alpha)\geq 0,$ for
all $i=1,\dots ,l$. The $\mathbb{R}$-linear subspace ${\mathfrak
g}$ of ${\mathfrak g}_{\mathbb C}$ is given by
\[
{\mathfrak g} = {\mathfrak h} + \textstyle \sum_{\alpha \in
\Delta^{+}} (\mathbb{R}\; U^{0}_{\alpha} + \mathbb{R}\;
U^{1}_{\alpha})
\]
is a compact real form of ${\mathfrak g}_{\mathbb C},$ where
${\mathfrak h} = \sum_{\alpha\in \Delta}\mathbb{R}
\sqrt{-1}H_{\alpha}$ and $U^{0}_{\alpha} = E_{\alpha}-E_{-\alpha} = - U^{0}_{-\alpha}$
and $U^{1}_{\alpha} = \sqrt{-1}(E_{\alpha} + E_{-\alpha})= U^{1}_{-\alpha}.$ Next,
we put $N_{\alpha,\beta} = 0$ if $\alpha + \beta \not\in \Delta.$
Then $N_{\alpha,\beta}\neq 0$ if and only if $\alpha + \beta \in \Delta$ and, using (\ref{v}), one gets
\begin{lemma}\label{bracket} For all $\alpha ,\beta \in
\Delta^{+}$ and $a = 0,1,$ the following equalities hold{\rm :}
\begin{enumerate}
\item[{\rm (i)}]
 $[U^{a}_{\alpha},\sqrt{-1} H_{\beta}] = (-1)^{a+1}
 \langle \alpha,\beta \rangle
  U^{a+1}_{\alpha};$ \item[{\rm (ii)}]
$[U^{0}_{\alpha},U^{1}_{\alpha}] = 2\sqrt{-1}H_{\alpha};$
\item[{\rm (iii)}] $[U^{a}_{\alpha},U^{b}_{\beta}] =
(-1)^{ab}N_{\alpha,\beta}U^{a+b}_{\alpha + \beta} +
(-1)^{a+b}N_{-\alpha,\beta}U^{a+b}_{\alpha -\beta},$ where $\alpha
\neq \beta$ and $a\leq b.$
\end{enumerate}
\end{lemma}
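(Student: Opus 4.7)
The plan is to verify each of the three identities by direct expansion, using only the definitions
$U^0_\alpha = E_\alpha - E_{-\alpha}$, $U^1_\alpha = \sqrt{-1}(E_\alpha + E_{-\alpha})$, the bracket relations (\ref{v}), and the antisymmetry $N_{\alpha,\beta} = -N_{-\alpha,-\beta}$ (which also gives $N_{\alpha,-\beta} = -N_{-\alpha,\beta}$). Everything reduces to four-term expansions plus bookkeeping of signs and factors of $\sqrt{-1}$.

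For (i), I would write $[U^a_\alpha, \sqrt{-1}H_\beta]$ in terms of $\mathrm{ad}_{H_\beta}E_{\pm\alpha} = \pm\alpha(H_\beta)E_{\pm\alpha}$, together with $\alpha(H_\beta)=B(H_\beta,H_\alpha)=\langle\alpha,\beta\rangle$. Since $E_\alpha$ and $E_{-\alpha}$ enter $U^0_\alpha$ with opposite signs and $U^1_\alpha$ with equal signs, while $\mathrm{ad}_{H_\beta}$ has opposite eigenvalues on them, bracketing with $H_\beta$ swaps the parity of the superscript. The extra $\sqrt{-1}$ from $\sqrt{-1}H_\beta$ combines with the $\sqrt{-1}$ appearing in $U^1_\alpha$ exactly so as to produce the sign $(-1)^{a+1}$.

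Identity (ii) is one line: $[U^0_\alpha,U^1_\alpha] = \sqrt{-1}\bigl([E_\alpha,E_{-\alpha}] - [E_{-\alpha},E_\alpha]\bigr) = 2\sqrt{-1}[E_\alpha,E_{-\alpha}] = 2\sqrt{-1}H_\alpha$ by (\ref{v}).

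For (iii), I would expand $[U^a_\alpha, U^b_\beta]$ as a sum of four brackets $[E_{\varepsilon\alpha}, E_{\eta\beta}]$ with $\varepsilon,\eta \in \{\pm 1\}$, apply the third line of (\ref{v}) (interpreting $N_{\varepsilon\alpha,\eta\beta}=0$ when $\varepsilon\alpha+\eta\beta \notin \Delta$), and group the resulting terms by whether the exponent is $\pm(\alpha+\beta)$ or $\pm(\alpha-\beta)$. Using the symmetry $N_{\alpha,\beta} = -N_{-\alpha,-\beta}$ and its consequence $N_{\alpha,-\beta} = -N_{-\alpha,\beta}$, the two ``$\alpha+\beta$'' terms pair up as a multiple of $U^c_{\alpha+\beta}$ and the two ``$\alpha-\beta$'' terms as a multiple of $U^c_{\alpha-\beta}$. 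The superscript $c = a+b \pmod 2$ is forced by parity: the total number of $\sqrt{-1}$'s contributed by the definitions of $U^a_\alpha$ and $U^b_\beta$ is $a+b$, so the output is real when $a+b$ is even (a combination of $U^0$'s) and purely imaginary when $a+b$ is odd (a combination of $U^1$'s).

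The main bookkeeping obstacle is pinning down the signs $(-1)^{ab}$ on the $(\alpha+\beta)$-term and $(-1)^{a+b}$ on the $(\alpha-\beta)$-term. I would handle this by checking the three cases $(a,b)\in\{(0,0),(0,1),(1,1)\}$ permitted by $a\leq b$: in each one, expanding the four brackets and collecting the $(\alpha+\beta)$-contribution yields a coefficient of $N_{\alpha,\beta}$ whose sign equals $(-1)^{ab}$, while the $(\alpha-\beta)$-contribution, after applying $N_{\alpha,-\beta} = -N_{-\alpha,\beta}$, yields a coefficient of $N_{-\alpha,\beta}$ of sign $(-1)^{a+b}$. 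Assembling these three cases into one uniform formula gives (iii); the restriction $a\leq b$ simply avoids checking $(1,0)$ separately from $(0,1)$, which is recovered by antisymmetry of the bracket.
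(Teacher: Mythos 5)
Your proof is correct and is exactly the computation the paper intends: the lemma is stated in the paper as an immediate consequence of the Weyl basis relations~(\ref{v}) with no written proof, and your direct four-term expansions (together with $N_{\alpha,-\beta}=-N_{-\alpha,\beta}$ and the conventions $U^{0}_{-\gamma}=-U^{0}_{\gamma}$, $U^{1}_{-\gamma}=U^{1}_{\gamma}$) verify all three identities, including the signs $(-1)^{ab}$ and $(-1)^{a+b}$ in the three cases $(a,b)\in\{(0,0),(0,1),(1,1)\}$.
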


For each $\sqrt{-1}H\in {\mathfrak h},$ it implies that
\begin{equation}\label{adjoint}
\begin{array}{lcl}
\mathrm{Ad}_{\exp\sqrt{-1}H}U^{0}_{\alpha}
& = & \cos\alpha(H) U^{0}_{\alpha} +
\sin\alpha(H) U^{1}_{\alpha},\\[0.5pc]
\mathrm{Ad}_{\exp\sqrt{-1}H}U^{1}_{\alpha}
& = & \cos\alpha(H) U^{1}_{\alpha} - \sin\alpha(H) U^{0}_{\alpha}.
\end{array}
\end{equation}

A compact irreducible
Riemannian $3$-symmetric space $(M =
G/K,\sigma, \langle \cdot ,
\cdot\rangle)$ has one of the following forms \cite{WG}:

\noindent ${\sf Type \;A_{3}:}$ $G$ is a compact connected simple
Lie group acting effectively and $\sigma$ is an inner automorphism
on the Lie algebra ${\mathfrak g}$ of $G.$

Let $\mu = \sum_{i=1}^{l}m_{i}\alpha_{i}$ be the {\em highest
root} of $\Delta$ and consider $H_{i}\in {\mathfrak h}_{\mathbb
C},$ $i=1,\dots ,l,$ defined by
\[ \alpha_{j}(H_{i}) = \frac{\textstyle 1}{\textstyle m_{i}}\delta_{ij},\;\;\; i,j = 1,\dots ,l.
\]
Note that
\begin{equation}\label{Jsym}
[\sqrt{-1}H_{i},U^{a}_{\alpha}] = (-1)^{a}\frac{n_{i}(\alpha)}{m_{i}}U^{a+1}_{\alpha}.
\end{equation}
Following \cite[Theorem 3.3]{WG}, each inner automorphism of order
$3$ on ${\mathfrak g}_{\mathbb C}$ is conjugate in the inner
automorphism group of ${\mathfrak g}_{\mathbb C}$ to some
$\sigma = \mathrm{Ad}_{\exp 2\pi\sqrt{-1}H},$
 where $H = \frac{1}{3}m_{i}H_{i}$ with $1\leq
m_{i}\leq 3$ or $H = \frac{1}{3}(H_{i} + H_{j})$ with $m_{i} = m_{j}
= 1.$ Then there are four classes of $\sigma = \mathrm{Ad}_{\exp
2\pi\sqrt{-1}H}$ with corresponding simple root systems $\Pi(H)$ for
${\mathfrak g}^{\sigma}_{\mathbb C},$ Types $A_{3}I$-$A_{3}IV$ given
in Table \ref{tab:I}. Denote by $\Delta^{+}(H)$ the positive root
system generated by $\Pi(H)$. Then we have ${\mathfrak h}\subset
{\mathfrak k} = {\mathfrak g}^{\sigma}$ and
\begin{table}[tp]
\centering
\begin{tabular}{clcc}
\toprule
$\mbox{\rm Type}$ & $\qquad \quad  \sigma$ & $m_{i}$ & $\Pi(H)$\\[1mm]
 \midrule
 $A_{3}I$ & $\mathrm{Ad}_{\exp\frac{2\pi\sqrt{-1}}{3}H_{i}}$& $1$
& $\{\alpha_{k}\in \Pi\mid k\neq i\}$\\[2mm]
$A_{3} II$ & $\mathrm{Ad}_{\exp 2\pi\sqrt{-1}\frac{(H_{i}+
H_{j})}{3}}$ & $m_{i} = m_{j} =1$
 & $\{\alpha_{k}\in \Pi\mid k\neq i,\;k\neq j\}$\\[2mm]
$A_{3} III$ & $\mathrm{Ad}_{\exp\frac{4\pi\sqrt{-1}}{3}H_{i}}$ &
$2$ & $\{\alpha_{k}\in \Pi\mid k\neq i\}$\\[2mm]
$A_{3} IV$ & $\mathrm{Ad}_{\exp 2\pi\sqrt{-1}H_{i}}$ & $3$ &
$\{\alpha_{k}\in \Pi\mid k\neq i\} \cup \{-\mu\}$\\[1mm]
\bottomrule
\end{tabular}
 \vspace{2mm}

  \caption{} 
  \label{tab:I}
\end{table}
\[ \textstyle
{\mathfrak k} = {\mathfrak h} + 
\sum_{\alpha \in
\Delta^{+}(H)}(\mathbb{R}\; U^{0}_{\alpha}+ \mathbb{R}\;
U^{1}_{\alpha}).
\]
Because $B(U^{a}_{\alpha},U^{b}_{\beta}) =
-2\delta_{\alpha\beta}\delta_{ab},$ it follows that
$\{U^{a}_{\alpha}\mid a=0,1,\; \alpha\in
\Delta^{+}\smallsetminus\Delta^{+}(H)\}$ becomes into an
orthonormal basis for $({\mathfrak m},\langle \cdot , \cdot
\rangle= -\frac{1}{2}B_{\mid {\mathfrak m}}).$ \vspace{0.1cm}

\noindent ${\sf Type\; B_{3}:}$ $G$ is a compact simple Lie group
and the complexification $\mathfrak{g}_\mathbb{C}$ of $\mathfrak{g}$
is of Dynkin type
$\mathfrak{d}_{4}$ and $\sigma$ is an outer automorphism on
$\mathfrak{g}.$

\noindent ${\sf Type\; C_{3}:}$ $G = L\times L\times L,$ where $L$
is a compact simple Lie group and $\sigma$ on ${\mathfrak g} =
{\mathfrak l}\oplus {\mathfrak l}\oplus {\mathfrak l}$ is given by
$\sigma(X,Y,Z) = (Z,X,Y).$ Here ${\mathfrak k} = {\mathfrak
g}^{\sigma}=\{(X,X,X)\in {\mathfrak g}\mid X\in {\mathfrak l}\}.$

\begin{lemma}\label{linner} If $\sigma$ is of Type $A_{3},$ then $J$ defined as in
{\rm (\ref{J})} satisfies
\[
JU^{0}_{\alpha} = \varepsilon(\alpha) U^{1}_{\alpha} ,\;\;\;\; JU^{1}_{\alpha}
= -\varepsilon(\alpha) U^{0}_{\alpha},
\]
for all $\alpha\in\Delta^{+}\smallsetminus\Delta^{+}(H),$ where $\varepsilon(\alpha) = \pm 1.$
\end{lemma}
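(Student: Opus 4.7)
The plan is to combine the explicit description of $\sigma$ for Type $A_3$ with the adjoint action formula \eqref{adjoint} and then apply the definition \eqref{J} of $J$.

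First I would note that for Type $A_3$ we have $\sigma = \mathrm{Ad}_{\exp 2\pi\sqrt{-1}H}$ for a specific $H \in \mathfrak{h}_{\mathbb{R}}$ as listed in Table~\ref{tab:I}. Applying \eqref{adjoint} (with $H$ replaced by $2\pi H$) gives
\[
\sigma U^{0}_{\alpha} = \cos(2\pi\alpha(H)) U^{0}_{\alpha} + \sin(2\pi\alpha(H)) U^{1}_{\alpha}, \qquad
\sigma U^{1}_{\alpha} = \cos(2\pi\alpha(H)) U^{1}_{\alpha} - \sin(2\pi\alpha(H)) U^{0}_{\alpha}.
\]
The key input is then to identify the possible values of $\alpha(H) \pmod{1}$ for $\alpha\in\Delta^{+}\smallsetminus\Delta^{+}(H)$.

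Second, I would observe that $\sigma E_{\alpha} = e^{2\pi\sqrt{-1}\alpha(H)}E_{\alpha}$, so $\alpha \in \Delta^{+}(H)$ is characterised by $\alpha(H) \in \mathbb{Z}$. Since $\sigma^{3}=\mathrm{Id}$, the value $e^{2\pi\sqrt{-1}\alpha(H)}$ is a cube root of unity, hence $\alpha(H)\in \tfrac13\mathbb{Z}$. For $\alpha\in\Delta^{+}\smallsetminus\Delta^{+}(H)$ we therefore have $\alpha(H)\equiv \pm\tfrac13\pmod{1}$. (A direct check from Table~\ref{tab:I} confirms this: writing $\alpha=\sum n_k(\alpha)\alpha_k$ and using $\alpha_j(H_i)=\delta_{ij}/m_i$, the value $\alpha(H)$ is respectively $n_i(\alpha)/3$, $(n_i(\alpha)+n_j(\alpha))/3$, $n_i(\alpha)/3$, $n_i(\alpha)/3$ for Types $A_3I$--$A_3IV$, and the constraint $n_k(\alpha)\le m_k$ together with the definition of $\Pi(H)$ forces these fractions to reduce modulo $1$ to $\pm\tfrac13$.)

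Third, for such $\alpha$ one has $\cos(2\pi\alpha(H))=-\tfrac12$ and $\sin(2\pi\alpha(H))=\varepsilon(\alpha)\tfrac{\sqrt{3}}{2}$, where I define $\varepsilon(\alpha):=\mathrm{sign}\bigl(\sin(2\pi\alpha(H))\bigr)\in\{\pm 1\}$. Substituting,
\[
\sigma U^{0}_{\alpha}= -\tfrac12 U^{0}_{\alpha} + \tfrac{\sqrt{3}}{2}\varepsilon(\alpha) U^{1}_{\alpha},\qquad
\sigma U^{1}_{\alpha}= -\tfrac12 U^{1}_{\alpha} - \tfrac{\sqrt{3}}{2}\varepsilon(\alpha) U^{0}_{\alpha}.
\]
Finally, applying $J=\tfrac{1}{\sqrt{3}}(2\sigma + \mathrm{Id}_{\mathfrak m})$ from \eqref{J} yields
\[
JU^{0}_{\alpha} = \tfrac{1}{\sqrt 3}\bigl(-U^{0}_{\alpha}+\sqrt 3\,\varepsilon(\alpha)U^{1}_{\alpha}+U^{0}_{\alpha}\bigr) = \varepsilon(\alpha) U^{1}_{\alpha},
\]
and analogously $JU^{1}_{\alpha}=-\varepsilon(\alpha)U^{0}_{\alpha}$, as required.

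The only mildly delicate point, and the one I expect to require the most care, is the verification in the third paragraph that $\alpha(H)\equiv\pm\tfrac13\pmod 1$ in \emph{every} one of the four subcases of Type~$A_{3}$; this needs to use the specific form of $\Pi(H)$ in Table~\ref{tab:I} (in particular the role of $-\mu$ in Type~$A_{3}IV$) together with the bound $0\le n_i(\alpha)\le m_i$ on the coefficients of $\alpha$ in the basis $\Pi$. Everything else is a straightforward substitution.
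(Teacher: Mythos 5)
Your proof is correct and follows essentially the same route as the paper's: both compute $\sigma U^a_\alpha$ via \eqref{adjoint}, observe that $\alpha(H)\equiv\pm\tfrac13\pmod 1$ for $\alpha\in\Delta^{+}\smallsetminus\Delta^{+}(H)$ (the paper lists the possible values $\tfrac13,\tfrac23,1$ directly from Table~\ref{tab:I} and rules out $1$ because it would force $\sigma U^0_\alpha=U^0_\alpha$, which is your "$\alpha(H)\in\mathbb{Z}$ iff $\alpha\in\Delta^{+}(H)$" observation), and then substitute into \eqref{J}.
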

\begin{proof}
Let $\sqrt{-1}H\in {\mathfrak h}$ such that $\sigma =
\mathrm{Ad}_{\exp 2\pi\sqrt{-1}H}$ as before. For each $\alpha\in\Delta^{+}\smallsetminus\Delta^{+}(H)$
we have
$\alpha(H) = \frac{1}{3} n_i(\alpha),$ if $H=\frac{1}{3}m_{i}H_{i},$
$1\leq m_{i}\leq 3,$ and $\alpha(H)  = \frac{1}{3}(n_{i}(\alpha) +
n_{j}(\alpha)),$ if $H =\frac{1}{3}(H_{i} + H_{j}),$ $m_{i} = m_{j} =1.$
Hence it follows that the possible values for $\alpha(H)$ are
$\frac{1}{3},$ $\frac{2}{3}$ and $1.$ But $\alpha(H)\neq 1,$
because if $\alpha(H) =1$ one obtains from (\ref{adjoint}) that
$\sigma(U^{0}_{\alpha}) = U^{0}_{\alpha}.$ Then the result follows
directly using (\ref{J}).
 \end{proof}

Next we give the classification of all compact irreducible simply
connected $3$-symmetric spaces.
\begin{theorem}\label{tquotient}
If $(M,g)$ is a compact irreducible simply connected $3$-symmetric
space,  then there exists a unique $3$-symmetric quotient
expression $G/K$ for $M$ where $G$ is a compact connected Lie
group acting effectively on $M.$ In particular, $G$ is the
identity component of the group of holomorphic isometries of
$(M,g)$ with respect to the canonical complex structure and $K$ is
the holonomy group of the minimal connection. The pairs $(G,K)$
are given in Tables {\rm  \ref{tab:AI}-\ref{tab:AIII}}.
\end{theorem}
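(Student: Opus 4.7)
The plan is to combine the Wolf\&Gray classification of compact irreducible Riemannian $3$-symmetric spaces into Types $A_{3}$, $B_{3}$ and $C_{3}$ with Corollary \ref{luni} and Theorem \ref{mainh}(iii), in order to identify the effective $3$-symmetric coset expression of $M$ as $\mathcal{H}_{o}(M)/K$, and then to read off the explicit pairs $(G,K)$ from the combinatorial data recorded in Table \ref{tab:I} together with its analogues for Types $B_{3}$ and $C_{3}$.

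First I would start from any $3$-symmetric coset presentation $M=G_{0}/K_{0}$ supplied by the Wolf\&Gray lists and replace $G_{0}$ by its quotient modulo the kernel of ineffectivity on $M$ (a closed normal subgroup contained in $K_{0}$), so that one may assume $G_{0}$ acts effectively. In the Type $A_{3}$ case $\sigma$ is inner, hence the Cartan subalgebra $\mathfrak{h}$ is contained in the fixed subalgebra $\mathfrak{k}_{0}$ and $\mathrm{rank}\,K_{0}=\mathrm{rank}\,G_{0}$; Corollary \ref{luni} then yields simultaneously $K_{0}=\mathrm{Hol}(\nabla^{\Lie{U}(n)})$ and $G_{0}=\mathcal{H}_{o}(M)$, which pins down the triple $(G_{0},K_{0},\sigma)$ uniquely from $M$. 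For Types $B_{3}$ and $C_{3}$ the rank equality fails, but $G_{0}$ is already semisimple ($\Lie{Spin}(8)$ or $L\times L\times L$ with $L$ compact simple), and one can check in each case that $\mathcal{H}_{o}(M)$ is semisimple as well, so Theorem \ref{mainh}(iii) applies and again forces $G_{0}=\mathcal{H}_{o}(M)$; the identification $K_{0}=\mathrm{Hol}(\nabla^{\Lie{U}(n)})$ comes from Theorem \ref{mainh}(i).

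Uniqueness of the effective presentation is then automatic: any other effective $3$-symmetric quotient $G'/K'$ for $M$ would satisfy the same identifications, so $G'=\mathcal{H}_{o}(M)=G_{0}$ and $K'$ must coincide with the stabiliser of the chosen base point, hence $K'=K_{0}$.

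To fill in the tables I would proceed type by type. For each subclass $A_{3}I$--$A_{3}IV$ and each admissible marked simple root $\alpha_{i}$ (or pair $\alpha_{i},\alpha_{j}$ in $A_{3}II$) of each compact simple Lie algebra, Table \ref{tab:I} gives the Dynkin diagram of $\mathfrak{g}^{\sigma}_{\mathbb{C}}$ as the one generated by $\Pi(H)$; the remaining step is to decide which central quotient of the simply connected Lie group with that Lie algebra realises the effective isotropy. I expect this effectiveness analysis to be the main technical obstacle, since it is precisely what forces exceptional quotients like $\Lie{Spin}(8)/[\SU(3)/\mathbb{Z}_{3}]$ to appear in place of the naive $\Lie{Spin}(8)/\SU(3)$; it requires tracking which elements of $Z(G_{0})$ lie in $K_{0}$ via the explicit formula $\sigma=\mathrm{Ad}_{\exp 2\pi\sqrt{-1}H}$, and then identifying the duplicates that appear across different subclasses in low rank.
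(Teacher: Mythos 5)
Your Type $A_{3}$ argument and your treatment of the tables coincide with the paper's: the paper's entire proof is ``Type $A_{3}$: apply Corollary \ref{luni}; Types $B_{3}$, $C_{3}$: cite Tojo's theorem that $G$ is the identity component of the full isometry group; for the lists see Wolf\&Gray, Section 6.'' Where you genuinely diverge is in the $B_{3}/C_{3}$ case: instead of importing Tojo's result you propose to verify that $\mathcal{H}_{o}(M)$ is semisimple and then invoke Theorem \ref{mainh}(iii). That route does work and is more self-contained, but the one sentence ``one can check in each case that $\mathcal{H}_{o}(M)$ is semisimple'' is exactly the nontrivial point, and as written it is circular in appearance: you cannot compute the centre of a group you have not yet identified. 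The check can be made honest as follows: a central element of $\mathrm{Lie}(\mathcal{H}_{o}(M))$ is a holomorphic Killing field commuting with the transitive $G$-action, hence either lies in the isotropy algebra (and then generates a central subgroup of $K^{*}$ acting trivially, contradicting effectivity) or evaluates at the origin to a nonzero $\mathrm{Ad}(K)$-fixed vector of $\mathfrak{m}$; but the isotropy decompositions computed in Proposition \ref{I,II} for $\Lie{Spin}(8)/\Lie{G}_{2}$, $\Lie{Spin}(8)/[\Lie{SU}(3)/\mathbb{Z}_{3}]$ and the Type $C_{3}$ spaces exhibit $\mathfrak{m}$ as a sum of nontrivial irreducible summands, so no such fixed vector exists. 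With that supplement your argument for $B_{3}/C_{3}$ is complete and trades an external citation for an internal computation already present in the paper; everything else (effectivization, uniqueness via $G=\mathcal{H}_{o}(M)$ and $K=$ isotropy $=$ holonomy, and the centre-tracking needed to produce quotients such as $\Lie{Spin}(8)/[\Lie{SU}(3)/\mathbb{Z}_{3}]$) matches the paper's intent, with the table-filling being work the paper delegates to Wolf\&Gray rather than carries out.
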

\begin{proof}
Let $(M = G/K,\sigma,\langle\cdot,\cdot\rangle)$ be a compact
irreducible simply connected $3$-symmetric space. If $\sigma$ is
of Type $A_{3},$ the result follows from Corollary \ref{luni}. If
$\sigma$ is of Type $B_{3}$ or $C_{3},$  Tojo proves in
\cite{Tojo} that the identity component of the isometry group
coincides with $G$ and so, $G = {\mathcal H}_{o}(M)$. For the lists of the pairs $(G,K),$ see \cite[Section 6]{WG}.
\end{proof}

\begin{remark}
{\rm If $(M,g)$ is not isometric to a symmetric space,
$G$ must be the identity component of the isometry group
\cite{Tojo}.

In Tables, $G/{\mathbb Z}_{n}$ denotes the quotient of $G$ by a
central cyclic subgroup of order $n.$ Irreducible $3$-symmetric
spaces of Type $A_{3}I$ are Hermitian symmetric spaces (Table
\ref{tab:AI}). Another important class of homogeneous spaces is
that of {\em flag manifolds}. These are manifolds of the form
$G/K,$ where $G$ is a compact Lie group and $K$ is the centralizer
of a torus of $G.$ Then irreducible $3$-symmetric spaces of Type
$A_{3}I,$ together with those of Type $A_{3}II$ (Table
\ref{tab:AII}) and of Type $A_{3}III$ (Table \ref{tab:AIII}) are
examples of flag manifolds and moreover, because $G$ is
semi-simple -in fact, simple- they are all {\em compact
K\"ahlerian $G$-spaces}. Although ${\rm rank}\;G = {\rm rank}\;K$
also for irreducible $3$-symmetric spaces of Type $A_{3}IV$ (Table
\ref{tab:AIV}), the isotropy subgroup $K$ is not the centralizer
of a torus. Here $K$ is semi-simple and its center has order $3$
\cite{WG}. }
\end{remark}

\begin{table}[tp]
\centering
\begin{tabular}{cc}
\toprule
 $G$ & $K$\\[1mm]
 \midrule
$\Lie{SU}(n)/{\mathbb Z}_{n},\; n\geq 2$ & $\Lie{S}(\Lie{U}(r)\times \Lie{U}(n-r))/{\mathbb Z}_{n}$\\[2mm]
$\Lie{SO}(2n+1),\; n\geq 2$ & $\Lie{SO}(2n-1)\times \Lie{SO}(2)$\\[2mm]
$\Lie{Sp}(n)/{\mathbb Z}_{2},\; n\geq 3$ & $\Lie{U}(n)/{\mathbb Z}_{2}$\\[2mm]
$\Lie{SO}(2n)/{\mathbb Z}_{2}, \; n\geq 4$ & $\{\Lie{SO}(2(n-1))\times \Lie{SO}(2)\}/{\mathbb Z}_{2}$\\[2mm]
$\Lie{SO}(2n)/{\mathbb Z}_{2},\; n\geq 4$ & $\Lie{U}(n)/{\mathbb Z}_{2}$\\[2mm]
$\Lie{E}_{6}/{\mathbb Z}_{3}$ & $\{\Lie{SO}(10)\times \Lie{SO}(2)\}/{\mathbb Z}_{2}$\\[2mm]
$\Lie{E}_{7}/{\mathbb Z}_{2}$ & $\{\Lie{E}_{6}\times \Lie{T}^{1}\}/{\mathbb Z}_{3}$\\[1mm]
\bottomrule
\end{tabular}
 \vspace{2mm}

  \caption{Type $A_{3}I$. Hermitian symmetric
  spaces}
  \label{tab:AI}
\end{table}

\begin{table}[tp]
\centering
\begin{tabular}{cc}
\toprule
$G$ & $K$ \\[1mm]\midrule
$\Lie{G}_{2}$ & $\Lie{SU}(3)$\\[2mm]
$\Lie{F}_{4}$ & $\{\Lie{SU}(3)\times \Lie{SU}(3)\}/{\mathbb Z}_{3}$\\[2mm]
$\Lie{E}_{6}/{\mathbb Z}_{3}$ & $\{\Lie{SU}(3)\times
\Lie{SU}(3)\times
 \Lie{SU}(3)\}/\{{\mathbb Z}_{3}\times {\mathbb Z}_{3}\}$\\[2mm]
$\Lie{E}_{7}/{\mathbb Z}_{2}$ & $\{\Lie{SU}(3)\times [\Lie{SU}(6)/{\mathbb Z}_{2}]\}/{\mathbb Z}_{3}$\\[2mm]
$\Lie{E}_{8}$ & $\{\Lie{SU}(3)\times \Lie{E}_{6}\}/{\mathbb Z}_{3}$\\[2mm]
$\Lie{E}_{8}$ & $\Lie{SU}(9)/{\mathbb Z}_{3}$\\[1mm]\bottomrule
\end{tabular}
 \vspace{2mm}

  \caption{Type $A_{3}IV$ }
  \label{tab:AIV}
\end{table}

\begin{table}[tp]
\centering
\begin{tabular}{cc}
\toprule
$G$ & $K$ \\[1mm]
\midrule
$\Lie{Spin}(8)$ & $\Lie{SU}(3)/{\mathbb Z}_{3}$\\[2mm]
$\Lie{Spin}(8)$ & $\Lie{G}_{2}$\\
\bottomrule \midrule
$\{L\times L\times L\}/Z(L)$ & $L/Z(L)$\\[1mm]
$\mbox{ $Z(L)$ is the center of $L$ embedded diagonally}$& \\[1mm]
\bottomrule
\end{tabular}
\vspace{2mm}

  \caption{Types $B_{3}$ and $C_{3}$  }
  \label{tab:VI}
\end{table}

\section{Proof of Theorem \ref{tmaincla}}\indent
\label{seven} We shall need some previous results. First, recall
that $(M,g,J)$  in Theorem \ref{tmaincla} is a compact naturally
reductive $3$-symmetric space equipped with its canonical complex
structure. So, in what follows, we always focus on simply
connected irreducible $3$-symmetric spaces $(M =
G/K,\sigma,\langle\cdot,\cdot\rangle)$ as in Theorem
\ref{tquotient} which are not of Type $A_{3}I.$ Moreover, the
homogeneous Riemannian manifold $(M = G/K,g)$ is supposed to be
naturally reductive with adapted reductive decomposition
${\mathfrak g} = {\mathfrak m}\oplus {\mathfrak k},$ or
equivalently, the canonical Hermitian structure $(J,g)$ is
(strict) nearly K\"ahler. According with \cite{GM}, the inner
product $\langle\cdot ,\cdot \rangle$ is then the restriction to
${\mathfrak m}$ of a bi-invariant product on ${\mathfrak g}.$
Because ${\mathfrak g}$ is semisimple, we take $\langle\cdot
,\cdot \rangle = -\frac{1}{2}B_{\mid{\mathfrak m}},$ where $B$
denotes the Killing form of ${\mathfrak g}.$

The differential of the {\em isotropy action}, i.e. the action of
$K$ as a subgroup of $G,$ determines a linear representation of
$K$ on the tangent space $\mathrm{T}_{o}M$ at the origin $o,$
called the {\em isotropy representation} which, under the
identification $\mathrm{T}_{o}M\cong {\mathfrak m},$ corresponds
with the adjoint representation $\mathrm{Ad}(K)$ of $K$ on
${\mathfrak m}.$ Using Theorem \ref{tquotient}, the isotropy
subgroup $K$ is precisely the holonomy group of the minimal
connection $\nabla^{\Lie{U}(n)}$. Hence the holonomy
representation of $\nabla^{\Lie{U}(n)}$ coincides with the
isotropy representation.

\begin{proposition}\label{I,II} A compact irreducible Riemannian
$3$-symmetric space $(M= G/K,\sigma, \langle\cdot ,\cdot \rangle)$
of Type $A_{3}IV,$ $B_{3}$ or $C_{3}$ is complex
isotropy-irreducible. Moreover,
the isotropy representation is real irreducible, i.e. $(M,g,J)$ is homogeneous NK Type I if and only if
$(M= G/K,\sigma, \langle\cdot ,\cdot \rangle)$ is of Type
$A_{3}IV$ or the universal covering of $M$ is holomorphically
isometric to $\Lie{Spin}(8)/[\Lie{SU}(3)/{\mathbb Z}_{3}].$
\end{proposition}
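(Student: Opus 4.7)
The plan is to split the argument into two stages. First, for all three types $A_{3}IV$, $B_{3}$ and $C_{3}$ I verify that the complex representation $\mathfrak{m}^{+}_{\mathbb{C}}$ of $K$ (the $+\sqrt{-1}$-eigenspace of $J$ acting on $\mathfrak{m}_{\mathbb{C}}$, equivalently the $\omega$-eigenspace of $\sigma$ with $\omega = e^{2\pi\sqrt{-1}/3}$) is complex irreducible. Second, I test whether $\mathfrak{m}$ is real irreducible, using the standard fact that a complex-irreducible complex representation is real irreducible exactly when it is not the complexification of a real representation, equivalently when $\mathfrak{m}^{+}_{\mathbb{C}}\not\cong\mathfrak{m}^{-}_{\mathbb{C}}$ as complex $K$-modules. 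Throughout, Corollary~\ref{luni} identifies the isotropy action on $\mathfrak{m}$ with the holonomy representation of $\nabla^{\Lie{U}(n)}$.

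For complex irreducibility I proceed case by case. In Type $A_{3}IV$, $\sigma$ produces a $\mathbb{Z}/3$-grading $\mathfrak{g}_{\mathbb{C}}=\mathfrak{g}_{-1}\oplus\mathfrak{g}_{0}\oplus\mathfrak{g}_{1}$ with $\mathfrak{g}_{0}=\mathfrak{k}_{\mathbb{C}}$ and $\mathfrak{g}_{\pm 1}=\mathfrak{m}^{\pm}_{\mathbb{C}}$; any nonzero $\mathfrak{g}_{0}$-submodule $W\subseteq\mathfrak{g}_{1}$ generates the nonzero graded ideal $W+[\mathfrak{g}_{-1},W]+[\mathfrak{g}_{-1},[\mathfrak{g}_{-1},W]]$, which by simplicity must be all of $\mathfrak{g}_{\mathbb{C}}$, forcing $W=\mathfrak{g}_{1}$. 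For Type $C_{3}$ the eigenspace $V_{\omega}=\{(X,\omega^{2}X,\omega X):X\in\mathfrak{l}_{\mathbb{C}}\}$ is isomorphic, through the diagonal embedding $K\cong L$, to the adjoint representation on $\mathfrak{l}_{\mathbb{C}}$, which is complex irreducible since $\mathfrak{l}$ is simple. For Type $B_{3}$ I use the triality outer automorphism of $\Lie{Spin}(8)$ to decompose $\mathfrak{spin}(8)_{\mathbb{C}}$ under each $K$: for $K=\Lie{G}_{2}$, $\mathfrak{m}^{+}_{\mathbb{C}}$ is identified with the $7$-dimensional complexified standard representation, and for $K=\Lie{SU}(3)/\mathbb{Z}_{3}$, $\mathfrak{m}^{+}_{\mathbb{C}}$ is identified with the $10$-dimensional representation $\mathrm{Sym}^{3}V_{3}$ of $\Lie{SU}(3)$; both are known to be irreducible.

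For real irreducibility I test self-conjugacy of $\mathfrak{m}^{+}_{\mathbb{C}}$. In Type $A_{3}IV$ the element $z:=\exp(2\pi\sqrt{-1}H_{i})\in\exp\mathfrak{h}\subseteq K$ acts by $\sigma=\mathrm{Ad}_{z}$ trivially on $\mathfrak{k}=\mathfrak{g}^{\sigma}$, hence is central in the connected group $K$; it acts by the character $\omega$ on $\mathfrak{m}^{+}_{\mathbb{C}}$ and by $\omega^{-1}$ on $\mathfrak{m}^{-}_{\mathbb{C}}$, so the two $K$-modules are inequivalent and $\mathfrak{m}$ is real irreducible, placing the space in Type I. For Type $C_{3}$ both $\mathfrak{m}^{\pm}_{\mathbb{C}}$ are isomorphic to the adjoint $L$-representation, which is the complexification of the real adjoint, so $\mathfrak{m}$ is real reducible; the splitting $\mathfrak{m}=\mathcal{E}\oplus J\mathcal{E}$ of Type II is realized concretely by $\mathcal{E}=\{(X,-X/2,-X/2):X\in\mathfrak{l}\}$ and its $J$-image. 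The same argument applies to $\Lie{Spin}(8)/\Lie{G}_{2}$, where the $7$-dimensional representation is the complexification of a real one, giving Type II. Finally, for $\Lie{Spin}(8)/[\Lie{SU}(3)/\mathbb{Z}_{3}]$ the representation $\mathrm{Sym}^{3}V_{3}$ is not self-conjugate, since its complex conjugate is the inequivalent representation $\mathrm{Sym}^{3}V_{\overline{3}}$, so $\mathfrak{m}$ is real irreducible.

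The main obstacle I anticipate is the explicit identification of the isotropy representation in the two Type $B_{3}$ cases, which requires careful use of triality for $\Lie{Spin}(8)$ to fix the embeddings of $\Lie{G}_{2}$ and $\Lie{SU}(3)/\mathbb{Z}_{3}$ and to decompose $\mathfrak{spin}(8)$ into the corresponding isotypical components. Once these decompositions are in hand, both the complex irreducibility and the self-conjugacy test reduce to standard facts of $\Lie{G}_{2}$- and $\Lie{SU}(3)$-representation theory, while the Type $A_{3}IV$ and Type $C_{3}$ cases follow directly from the root-space description recorded in Section~\ref{three} and from the explicit product-Lie-algebra model, respectively.
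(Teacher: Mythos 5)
Your overall strategy --- decompose $\mathfrak{m}_{\mathbb C}$ into the $\omega^{\pm1}$-eigenspaces of $\sigma$, prove complex irreducibility of $\mathfrak{m}^{+}_{\mathbb C}$, then decide real irreducibility by testing whether $\mathfrak{m}^{+}_{\mathbb C}$ carries an invariant real structure --- is a legitimate and in places cleaner alternative to the paper's treatment: the paper proves real isotropy-irreducibility of Type $A_{3}IV$ and of $\Spin(8)/[\SU(3)/{\mathbb Z}_{3}]$ by quoting maximality of $\mathfrak k$ and Wolf's theorem \cite[Ch.~7 and Prop.~7.49]{Be}, and handles $\Spin(8)/\Gtwo$ and Type $C_{3}$ by exhibiting explicit invariant decompositions $\mathfrak m=\mathcal E\oplus J\mathcal E$ in a Weyl basis. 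Your central-element argument for Type $A_{3}IV$ (the element $\exp(2\pi\sqrt{-1}H_{i})$ is central in $K$ and separates $\mathfrak m^{+}_{\mathbb C}$ from $\mathfrak m^{-}_{\mathbb C}$) and your explicit $\mathcal E=\{(X,-X/2,-X/2)\}$ for Type $C_{3}$ are both correct.

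There is, however, a genuine gap in your proof of complex irreducibility for Type $A_{3}IV$. The subspace $W+[\mathfrak g_{-1},W]+[\mathfrak g_{-1},[\mathfrak g_{-1},W]]$ is not an ideal: for instance $[\mathfrak g_{1},W]$ lies in degree $-1$ and there is no reason for it to be contained in $[\mathfrak g_{-1},[\mathfrak g_{-1},W]]$. More decisively, any argument deducing irreducibility of $\mathfrak g_{1}$ over $\mathfrak g_{0}$ purely from the ${\mathbb Z}_{3}$-grading and the simplicity of $\mathfrak g_{\mathbb C}$ must be wrong, because every inner automorphism of order $3$ produces such a grading, and for Types $A_{3}II$ and $A_{3}III$ the module $\mathfrak g_{1}=\mathfrak m^{+}_{\mathbb C}$ is reducible --- this is precisely the content of Propositions \ref{III,IV} and \ref{pIII}. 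What singles out Type $A_{3}IV$ is that $\mathfrak k$ is a maximal subalgebra (the mark $m_{i}=3$), and that is the ingredient you must invoke, as the paper does. Two smaller points: your asserted equivalence ``real irreducible iff $\mathfrak m^{+}_{\mathbb C}\not\cong\mathfrak m^{-}_{\mathbb C}$'' is false in general (a self-conjugate irreducible of quaternionic type is still real-irreducible); your conclusions survive because in the reducible cases you actually verify the stronger condition that the module is a complexification, but the criterion should be stated correctly. Finally, the triality identifications of the isotropy modules in the two $B_{3}$ cases are asserted rather than proved; the paper carries out $\Spin(8)/\Gtwo$ explicitly in a Weyl basis, and comparable detail (or a citation) is needed for both.
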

\begin{proof}
On irreducible $3$-symmetric spaces of Type $A_{3}IV,$ the Lie
subalgebra ${\mathfrak k}$ of ${\mathfrak g}$ is maximal. Then
they are isotropy-irreducible spaces (see \cite[Ch. 7]{Be}). Also
$\Lie{Spin}(8)/[\Lie{SU}(3)/{\mathbb Z}_{3}]$ is isotropy
irreducible by using a Wolf's result (see \cite[Proposition
7.49]{Be}).

Next we consider the $3$-symmetric space
$\Lie{Spin}(8)/\Lie{G}_{2}$ of Type $B_{3}.$ Let ${\mathfrak
g}_{\mathbb C}$ be the complex Lie algebra of type
$$
\xymatrix@R=-0.3cm@C=1.cm{
& & \stackrel{1}{\stackrel{\circ}{\alpha_{3}}} \ar@{-}[dl]\\
{\mathfrak d}_{4}: \;\;\;\;
\stackrel{1}{\stackrel{\circ}{\alpha_{1}}} \ar@{-}[r] &
\stackrel{2}{\stackrel{\circ}{\alpha_{2}}} \ar@{-}[dr]\\
 & & \stackrel{1}{\stackrel{\circ}{\alpha_{4}}}}
$$
A set $\Delta^{+}$ of the positive roots is given by
$$
\begin{array}{lcl}
\Delta^{+}  &= &  \{\alpha_{i}\;(1\leq i\leq 4), \alpha_{1} +
\alpha_{2}, \alpha_{2} +\alpha_{3}, \alpha_{2} + \alpha_{4},
\alpha_{1} + \alpha_{2} + \alpha_{3},\\[0.4pc]
 & & \hspace{0.2cm} \alpha_{1}+\alpha_{2}+\alpha_{4},  \alpha_{2} + \alpha_{3} + \alpha_{4},
 \alpha_{1}+ \alpha_{2} + \alpha_{3} + \alpha_{4},\mu =
\alpha_{1} + 2\alpha_{2} + \alpha_{3} + \alpha_{4}\}.
\end{array}
$$
Let $s:\Pi\to \Pi$ be the symmetry on
$\Pi=\{\alpha_{1},\dots,\alpha_{4}\}$ given by $ \alpha_{2}\mapsto
\alpha_{2},$ $\alpha_{1}\mapsto \alpha_{3} \mapsto
\alpha_{4}\mapsto \alpha_{1}$ and $\sigma$ the linear
transformation of ${\mathfrak d}_{4}$ defined by
$\sigma(H_{\alpha}) = H_{s(\alpha)},$ $\sigma(E_{\lambda}) =
E_{s(\lambda)},$ where $\{H_{\alpha}, E_{\lambda}: \alpha\in
\Pi,\; \lambda\in  \Delta\}$ is a Weyl basis of ${\mathfrak
d}_{4}$ and $s$ is extended to $\Delta$ by linearity. Then the set
of fixed points of $\sigma$ on ${\mathfrak d}_{4}$ is a complex
Lie algebra of type ${\mathfrak g}_{2},$ where a Weyl basis is
given by
$$
\begin{array}{l}
\{H_{\alpha_{2}}, H_{\alpha_{1}} +H_{\alpha_{3}} + H_{\alpha_{4}};
E_{\pm \alpha_{2}}, E_{\pm (\alpha_{1} + \alpha_{2} +\alpha_{3}
 + \alpha_{4})}, E_{\pm (\alpha_{1} + 2\alpha_{2} +\alpha_{3}
 + \alpha_{4})},\\[0.4pc]
 \hspace{0.2cm} E_{\pm \alpha_{1}} + E_{\pm \alpha_{3}} + E_{\pm
 \alpha_{4}}, E_{\pm (\alpha_{1}+\alpha_{2})} + E_{\pm
 (\alpha_{2}+\alpha_{3})}+ E_{\pm (\alpha_{2}+\alpha_{4})},\\[0.4pc]
 \hspace{0.2cm} E_{\pm (\alpha_{1} + \alpha_{2}
 +\alpha_{3})} + E_{\pm (\alpha_{2} +\alpha_{3} + \alpha_{4})} + E_{\pm (\alpha_{1} + \alpha_{2} +
 \alpha_{4})}\}.
 \end{array}
 $$
Hence the corresponding real form ${\mathfrak k} = {\mathfrak
g}^{\sigma}$ is generated by
$$
\begin{array}{l}
\{\sqrt{-1}H_{\alpha_{2}}, \sqrt{-1}(H_{\alpha_{1}}
+H_{\alpha_{3}} + H_{\alpha_{4}}); U_{\alpha_{2}}^{a}, U_{\alpha_{1} + \alpha_{2} +\alpha_{3} +
\alpha_{4}}^{a}, U_{\alpha_{1} + 2\alpha_{2} +\alpha_{3}
 + \alpha_{4}}^{a},\\[0.4pc]
\hspace{0.2cm}  U_{\alpha_{1}}^{a} + U_{\alpha_{3}}^{a} + U_{\alpha_{4}}^{a}, U_{\alpha_{1}+\alpha_{2}}^{a} +
U_{\alpha_{2}+\alpha_{3}}^{a} + U_{\alpha_{2}+\alpha_{4}}^{a},
U_{\alpha_{1} + \alpha_{2}
 +\alpha_{3}}^{a} + U_{\alpha_{2} +\alpha_{3} + \alpha_{4}}^{a} + U_{\alpha_{1} + \alpha_{2} +
 \alpha_{4}}^{a}\}_{a=0,1}
 \end{array}
 $$
and on ${\mathfrak m} = ({\mathfrak g}^{\sigma})^{\bot},$ an
orthonormal basis with respect to $\langle \cdot , \cdot \rangle=
-\frac{1}{2}B_{\mid {\mathfrak m}}$ is given by
\[
\{h,e^{a}_{i};\; Jh,Je_{i}^{a}\mid i=1,2,3; \; a = 0,1\},
\]
where
$$
\begin{array}{lclclcl}
h &\hspace{-2mm} = \hspace{-2mm}& \sqrt{-6}(H_{\alpha_{1}} - H_{\alpha_{3}}), & & Jh &\hspace{-2mm} = \hspace{-2mm}& \sqrt{-2}(H_{\alpha_{1}} +H_{\alpha_{3}} - 2H_{\alpha_{4}}),\\[0.4pc]
e_{1}^{a} &\hspace{-2mm} =\hspace{-2mm} & \frac{1}{\sqrt{2}}(U_{\alpha_{1}}^{a} - U_{\alpha_{3}}^{a}), & & Je_{1}^{a} & \hspace{-2mm}= \hspace{-2mm}& \frac{1}{\sqrt{6}}(U_{\alpha_{1}}^{a} + U_{\alpha_{3}}^{a} -2 U_{\alpha_{4}}^{a}),\\[0.4pc]
e_{2}^{a} &\hspace{-2mm} =\hspace{-2mm} & \frac{1}{\sqrt{2}}(U_{\alpha_{1}+\alpha_{2}}^{a} -
U_{\alpha_{2}+\alpha_{3}}^{a}), & & Je_{2}^{a} &\hspace{-2mm} =\hspace{-2mm} & \frac{1}{\sqrt{6}}(U_{\alpha_{1} + \alpha_{2}}^{a} + U_{\alpha_{2}
+\alpha_{3}}^{a} -2 U_{\alpha_{2} + \alpha_{4}}^{a}),\\[0.4pc]
e^{a}_{3} & \hspace{-2mm}=\hspace{-2mm} & \frac{1}{\sqrt{2}}(U_{\alpha_{2} + \alpha_{3}
 +\alpha_{4}}^{a} - U_{\alpha_{1} +\alpha_{2} + \alpha_{4}}^{a}), & & Je_{3}^{a} &\hspace{-2mm} =\hspace{-2mm} & \frac{1}{\sqrt{6}}(U_{\alpha_{1} + \alpha_{2} +  \alpha_{4}}^{a} + U_{\alpha_{2} + \alpha_{3}+\alpha_{4}}^{a} - 2U_{\alpha_{1} +
\alpha_{2} + \alpha_{3}}^{a}).
\end{array}
$$
Put ${\mathcal E} = \mathbb{R}\{h; e^{a}_{i}\mid i=1,2,3;\; a =
0,1\}.$ Using Lemma \ref{bracket}, one can check that ${\mathcal E}$
is $\mathrm{Ad}(\Lie{G}_{2})$-invariant and ${\mathfrak m} =
{\mathcal E} \oplus J{\mathcal E}$  is an orthonormal decomposition
into invariant and irreducible subspaces under the isotropy
representation of $G_{2}.$ From here, $\Lie{Spin}(8)/\Lie{G}_{2}$ is
complex isotropy-irreducible but not real isotropy-irreducible.

Finally, let $(G/K,\sigma,\langle\cdot,\cdot\rangle)$ be a compact
$3$-symmetric space of Type $C_{3},$ i.e. $G = \{L\times L\times
L\}/Z(L),$ where $L$ is a compact simple Lie group and simply
connected, $Z(L)$ is its center embedded diagonally and $\sigma$
on ${\mathfrak g} = {\mathfrak l}\oplus {\mathfrak l}\oplus
{\mathfrak l}$ is given by $\sigma(X,Y,Z) = (Z,X,Y),$ where $X,$
$Y,$  $Z$ belongs to the Lie algebra ${\mathfrak l}$ of $L.$ Here
${\mathfrak k} = {\mathfrak g}^{\sigma}$ coincides with
${\mathfrak l}$ embedded diagonally. Let $\Delta^{+}$ be a system
of positive roots of ${\mathfrak l}_{\mathbb C}.$ Then an orthonormal
basis on ${\mathfrak m} = ({\mathfrak g}^{\sigma})^{\bot}$ with respect to $\langle \cdot , \cdot \rangle=
-\frac{1}{2}B_{\mid {\mathfrak m}}$ is given by
\[
\{h_{\alpha},e^{a}_{\alpha};\; Jh_{\alpha},Je_{\alpha}^{a}\mid \alpha\in \Delta^{+}; \; a = 0,1\},
\]
where
$$
\begin{array}{lclclcl}
h_{\alpha} & = & \frac{\sqrt{-1}}{\sqrt{\langle\alpha,\alpha\rangle}}
(H_{\alpha},-H_{\alpha},0), & & Jh_{\alpha} & = &
\frac{\sqrt{-1}}{\sqrt{3\langle\alpha,\alpha\rangle}}(H_{\alpha},H_{\alpha},-2H_{\alpha}),\\[0.4pc]
e^{a}_{\alpha} & = & \frac{1}{\sqrt{2}}(U^{a}_{\alpha}, - U^{a}_{\alpha},0),
& & Je^{a}_{\alpha} & = & \frac{1}{\sqrt{6}}(U^{a}_{\alpha},U^{a}_{\alpha}, -2U^{a}_{\alpha}).
\end{array}
$$
Hence we have the orthogonal decomposition ${\mathfrak m} =
{\mathcal E} \oplus J {\mathcal E},$ where ${\mathcal E} =
\mathbb{R}\{h_{\alpha}; e^{a}_{\alpha}\}$. ${\mathcal E}$ and
$J{\mathcal E}$ are $\mathrm{Ad}(\Delta G)$-invariant subspaces
naturally isomorphic as vector spaces with the Lie algebra
${\mathfrak l}.$ Because ${\mathfrak l}$ is simple, it follows that
these subspaces are irreducible under the isotropy representation.
\end{proof}

\begin{proposition}\label{III,IV} If $(M = G/K,\sigma, \langle\cdot,\cdot\rangle)$
is an irreducible $3$-symmetric space of Type $A_{3}II$ or Type
$A_{3}III,$ then $(M,g,J)$ has special algebraic torsion.
Moreover, if it is of Type $A_{3}II,$ then $(M,g,J)$ is
homogeneous NK Type III.
\end{proposition}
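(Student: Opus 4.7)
The plan is to produce an explicit $\nabla^{\Lie{U}(n)}$-parallel, $J$-stable orthogonal splitting $\mathrm{T}M=\mathcal{V}\oplus\mathcal{H}$ directly from the root data of Table \ref{tab:I}, and to verify the special-algebraic-torsion conditions by a short bracket computation using Lemma \ref{bracket} and formula \eqref{xi}. For Type $A_{3}II$, where $\sigma = \mathrm{Ad}_{\exp 2\pi\sqrt{-1}(H_{i}+H_{j})/3}$ with $m_{i}=m_{j}=1$, I take
\[
\mathcal{V} = \mathrm{span}_{\mathbb{R}}\{U^{a}_{\alpha} \mid n_{i}(\alpha)=n_{j}(\alpha)=1,\ a=0,1\}, \quad \mathcal{H} = \mathrm{span}_{\mathbb{R}}\{U^{a}_{\alpha} \mid n_{i}(\alpha)+n_{j}(\alpha)=1,\ a=0,1\};
\]
and for Type $A_{3}III$, where $\sigma = \mathrm{Ad}_{\exp(4\pi\sqrt{-1}/3)H_{i}}$ with $m_{i}=2$,
\[
\mathcal{V} = \mathrm{span}_{\mathbb{R}}\{U^{a}_{\alpha} \mid n_{i}(\alpha)=2,\ a=0,1\}, \quad \mathcal{H} = \mathrm{span}_{\mathbb{R}}\{U^{a}_{\alpha} \mid n_{i}(\alpha)=1,\ a=0,1\}.
\]
The bounds $n_{k}(\alpha)\le m_{k}$ ensure $\mathfrak{m}=\mathcal{V}\oplus\mathcal{H}$, and $\mathcal{V}$ is non-zero because the highest root $\mu$ realises the bound $n_{k}(\mu)=m_{k}$ for every $k$.

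Three points need verifying. First, $J$-stability of $\mathcal{V}$ and $\mathcal{H}$ is immediate from Lemma \ref{linner}, because $J$ preserves each plane $\mathrm{span}_{\mathbb{R}}\{U^{0}_{\alpha},U^{1}_{\alpha}\}$. Second, $\nabla^{\Lie{U}(n)}$-parallelism is $\mathrm{Ad}(K)$-invariance by Remark \ref{r4.6}; since $\mathfrak{k}$ is generated by $\mathfrak{h}$ together with the $U^{a}_{\beta}$ for $\beta\in\Delta^{+}(H)$, and any such $\beta$ has $n_{i}(\beta)=n_{j}(\beta)=0$ in Type $A_{3}II$ (respectively $n_{i}(\beta)=0$ in Type $A_{3}III$), Lemma \ref{bracket} tells us that $[\mathfrak{k},U^{a}_{\alpha}]$ is a linear combination of vectors $U^{b}_{\alpha}$ and $U^{b}_{\alpha\pm\beta}$, each sharing the relevant $(n_{i},n_{j})$-labels of $\alpha$. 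Third, by \eqref{xi} the torsion identities become $[\mathcal{V},\mathcal{V}]_{\mathfrak{m}}=0$ and $[\mathcal{H},\mathcal{H}]_{\mathfrak{m}}\subseteq\mathcal{V}$: for $\alpha,\beta$ in the $\mathcal{V}$-layer, $\alpha+\beta$ exceeds some $m_{k}$ (hence is not a root) while $\alpha-\beta$ has all the relevant $n_{k}=0$ and sits in $\mathfrak{k}$; for $\alpha,\beta$ in the $\mathcal{H}$-layer, $\alpha+\beta$ either lands in the $\mathcal{V}$-layer or again oversteps an $m_{k}$, and $\alpha-\beta$ once more enters $\mathfrak{k}$ (the mixed case in Type $A_{3}II$ produces a formal difference with oppositely-signed simple-root coefficients, which is no root).

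For the second assertion, in Type $A_{3}II$ I further split $\mathcal{H}=\mathcal{E}\oplus\mathcal{F}$ with
\[
\mathcal{E} = \mathrm{span}_{\mathbb{R}}\{U^{a}_{\alpha} \mid n_{i}(\alpha)=1,\ n_{j}(\alpha)=0\}, \quad \mathcal{F} = \mathrm{span}_{\mathbb{R}}\{U^{a}_{\alpha} \mid n_{i}(\alpha)=0,\ n_{j}(\alpha)=1\},
\]
both non-trivial because $\alpha_{i}\in\mathcal{E}$ and $\alpha_{j}\in\mathcal{F}$. The same analysis shows $\mathcal{E},\mathcal{F}$ to be $\nabla^{\Lie{U}(n)}$-parallel and $J$-stable, satisfying $\xi_{\mathcal{E}}\mathcal{E}=\xi_{\mathcal{F}}\mathcal{F}=0$ together with $\xi_{\mathcal{E}}\mathcal{F}\subseteq\mathcal{V}$. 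Proposition \ref{trespuntouno11probis}(i) then identifies $\mathcal{H}$ as complex $\mathrm{Hol}(\nabla^{\Lie{U}(n)})$-reducible, so $(M,g,J)$ is homogeneous NK Type III. The only delicate point throughout is the bookkeeping on when $\alpha\pm\beta$ stays a root, which is tightly controlled by $m_{k}\in\{1,2\}$ in these two types; conceptually, Types $A_{3}II$ and $A_{3}III$ arise from $\mathbb{Z}$-gradings of $\mathfrak{g}$ of lengths $3$ and $5$ respectively, and the splitting above simply records the positive-root part of this gradation.
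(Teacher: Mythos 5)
Your proposal is correct and follows essentially the same route as the paper's proof: the same root-space decompositions $\Delta^{+}_{p,q}$ (Type $A_{3}II$) and $\Delta^{+}_{p}$ (Type $A_{3}III$), the same verification of $\mathrm{Ad}(K)$-invariance and of the bracket relations via Lemma \ref{bracket} and \eqref{xi}, and the same appeal to Proposition \ref{trespuntouno11probis}(i) for the Type III conclusion. The only cosmetic difference is that the paper introduces the three subspaces $\mathcal{V}_{1},\mathcal{V}_{2},\mathcal{V}_{3}$ at once and records the cyclic relations $\xi_{\mathcal{V}_{k}}\mathcal{V}_{r}\subset\mathcal{V}_{s}$, whereas you first group $\mathcal{V}_{2}\oplus\mathcal{V}_{3}$ into $\mathcal{H}$ and then split it.
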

\begin{proof}
If $\sigma$ is of Type $A_{3}II$ then $m_{i} = m_{j} = 1$ $(H =
\frac{1}{3}(H_{i} + H_{j})),$ for some $i,j\in \{1,\dots ,l\},$
$i<j.$ Put
\[
\Delta_{p,q} = \{\alpha\in \Delta\mid n_{i}(\alpha) = p,\;n_{j}(\alpha) = q\},\;\;\;\; 0\leq p,q\leq 1.
\]
Then $\Delta(H) = \Delta_{0,0}.$  Consider the subspaces of
${\mathfrak m}:$
\begin{equation}\label{VV1}
\begin{array}{lcl}
{\mathcal V}_{1} & = & \mathbb{R}\{U^{a}_{\alpha}\mid \;\alpha\in
\Delta^{+}_{1,1};\;a=0,1\},\\[0.4pc]
{\mathcal V}_{2} & = & \mathbb{R}\{U^{a}_{\alpha}\mid \;\alpha\in \Delta^{+}_{1,0};\;a = 0,1\},\\[0.4pc]
{\mathcal V}_{3} & = & \mathbb{R}\{U^{a}_{\alpha}\mid \;\alpha\in \Delta^{+}_{0,1};\; a=0,1\}.
\end{array}
\end{equation}
Then ${\mathfrak m} = {\mathcal V}_{1}\oplus {\mathcal
V}_{2}\oplus {\mathcal V}_{3}$  is an orthogonal decomposition
which is stable by $J.$ Concretely, from Lemma \ref{linner} one
gets $JU^{a}_{\alpha} = (-1)^{a+1}U^{a+1}_{\alpha},$ for all
$\alpha\in \Delta^{+}_{1,1}$, and $JU^{a}_{\alpha} =
(-1)^{a}U^{a+1}_{\alpha},$ for all $\alpha\in \Delta^{+}_{1,0}\cup
\Delta^{+}_{0,1}.$  We shall show that these subspaces are
$\mathrm{Ad}(K)$-invariant. Since $K$ is connected, it is
equivalent to show that
\begin{equation}\label{corch}
[{\mathfrak k}, {\mathcal V}_{k}]\subset {\mathcal V}_{k},\;\;\;\; k=1,2,3.
\end{equation}
Let $\alpha\in \Delta^{+}(H)$ and $\beta\in \Delta^{+}\setminus
\Delta^{+}(H).$  If $\alpha\pm \beta\in \Delta,$ then
$n_{i}(\alpha\pm \beta) = \pm n_{i}(\beta)$ and $n_{j}(\alpha\pm
\beta) = \pm n_{j}(\beta).$ Hence, using Lemma \ref{bracket}, we
obtain (\ref{corch}).

If $\alpha,\beta\in \Delta^{+}_{p,q},$ for $0\leq p,q\leq 1$ with
$(p,q)\neq (0,0),$ then $\alpha + \beta\notin \Delta$, and if
$\alpha -\beta\in \Delta$  then $\alpha -\beta \in \Delta(H).$
Hence $[{\mathcal V}_{k},{\mathcal V}_{k}]\subset {\mathfrak k},$
$k=1,2,3.$ Using again Lemma \ref{bracket}, we get $[{\mathcal
V}_{k},{\mathcal V}_{r}]\subset {\mathcal V}_{s},$ where $(k,r,s)$
is a permutation of $(1,2,3).$ Now using (\ref{xi}) we can
conclude
\[
\xi_{{\mathcal V}_{k}}{{\mathcal V}_{k}} = 0,\;\;\;\; \xi_{{\mathcal V}_{k}}{\mathcal V}_{r} \subset {\mathcal V}_{s}.
\]
\noindent It means that $(M = G/K,g,J)$ is homogeneous NK Type III
(see Proposition \ref{trespuntouno11probis} (i)).

Next suppose $\sigma$ is of Type $A_{3}III.$ Then $m_{i} = 2$ $(H
= \frac{2}{3}H_{i})$, for some $i = 1,\dots ,l.$ Put
\[
\Delta_{p}^{+} = \{\alpha\in \Delta^{+}\mid \; n_{i}(\alpha) = p\},\;\;\;\;p = 0,1,2.
\]
Then $\Delta(H) = \Delta_{0}.$ Consider the subspaces of ${\mathfrak m}:$
\begin{equation}\label{VV2}
{\mathcal H}  =  \mathbb{R}\{U^{a}_{\alpha}\mid \; \alpha\in \Delta^{+}_{1},\;;a =0,1\},\;\;\;\; {\mathcal V}  =  \mathbb{R}\{U^{a}_{\alpha}\mid \; \alpha\in
\Delta^{+}_{2};\; a =0,1\}.
\end{equation}
Then ${\mathfrak m} = {\mathcal V}\oplus {\mathcal H}$ is an
orthogonal decomposition stable for $J:$ One gets $JU^{a}_{\alpha}
= (-1)^{a}U^{a+1}_{\alpha}$,  for $\alpha\in \Delta^{+}_{1}$, and
$JU^{a}_{\alpha} = (-1)^{a+1}U^{a+1}_{\alpha}$,  for $\alpha\in
\Delta^{+}_{2}.$ A similar proof as before shows that such a
decomposition is $\mathrm{Ad}(K)$-invariant. Moreover, if
$\alpha,\beta\in \Delta^{+}_{1}$ and $\alpha + \beta\in \Delta,$
then $\alpha + \beta\in \Delta_{2}$,  and if $\alpha - \beta\in
\Delta,$ then $\alpha - \beta\in \Delta(H).$ Hence  Lemma
\ref{bracket} implies $[{\mathcal H}, {\mathcal H}]_{\mathfrak m}
\subset {\mathcal V}.$ Finally, if $\alpha,\beta\in
\Delta^{+}_{2}$ one gets that $\alpha + \beta\notin \Delta$ and if
$\alpha - \beta\in \Delta,$ then $\alpha - \beta\in \Delta(H).$ It
gives $[{\mathcal V},{\mathcal V}] \subset {\mathfrak k}.$ From
here $\xi_{\mathcal V}{\mathcal V}=0$ and $\xi_{\mathcal
H}{\mathcal H}\subset {\mathcal V}.$ It proves the result.
 \end{proof}

\begin{remark}{\rm Systems of roots of each complex simple Lie algebras are explicitly given in the last section.
See also \cite{He}}.
\end{remark}

Now we need to recall the following basic results. As in Section
$5,$ we consider  a simple Lie algebra ${\mathfrak g}_{\mathbb C}$
over ${\mathbb C}$ and $\pi = \{\alpha_{1},\dots ,\alpha_{l}\}$ a
system of simple roots with respect to a Cartan subalgebra. Let
$\mu = \sum_{i=1}^{l}m_{i}\alpha_{i}$ be the highest root in the
set of non-zero roots $\Delta$ generated by $\pi.$

\begin{lemma} \label{scalarproduct} If $\alpha_p,\alpha_q\in \pi$ are adjacent in the Dynkin diagram of ${\mathfrak g}_{\mathbb C},$ connected  by $n_{p,q}$ arcs and  $\alpha_p$ is
of higher or equal length to  $\alpha_q,$ then
$$
\langle \alpha_p , \alpha_q \rangle = - \frac{1}2 \langle \alpha_p ,
\alpha_p \rangle =  - \frac{n_{p,q}}2 \langle \alpha_q , \alpha_q
\rangle.
$$
\end{lemma}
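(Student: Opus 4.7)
The plan is to reduce the statement to the standard classification of rank-two root subsystems via Cartan integers. Let me introduce
\[
a_{pq} = \frac{2\langle \alpha_p,\alpha_q\rangle}{\langle \alpha_q,\alpha_q\rangle}, \qquad a_{qp} = \frac{2\langle \alpha_q,\alpha_p\rangle}{\langle \alpha_p,\alpha_p\rangle},
\]
which are integers because they arise from the root-string formula \eqref{***}, and which are non-positive because $\alpha_p,\alpha_q$ are distinct simple roots (otherwise $\alpha_p-\alpha_q$ would be a root of mixed sign). Observe that the product
\[
a_{pq}\,a_{qp} = \frac{4\langle\alpha_p,\alpha_q\rangle^{2}}{\langle\alpha_p,\alpha_p\rangle\langle\alpha_q,\alpha_q\rangle} = 4\cos^{2}\theta_{pq}
\]
is, by the Dynkin diagram convention, precisely the number $n_{p,q}$ of arcs joining the two nodes, and takes values in $\{0,1,2,3\}$. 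When both are non-zero, the quotient $a_{pq}/a_{qp}$ equals $\langle\alpha_p,\alpha_p\rangle/\langle\alpha_q,\alpha_q\rangle$, so the length hypothesis $\langle\alpha_p,\alpha_p\rangle \geq \langle\alpha_q,\alpha_q\rangle$ translates into $|a_{pq}|\geq |a_{qp}|$.

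Next I would split into the three adjacent cases. If $n_{p,q}=1$ then $a_{pq}=a_{qp}=-1$, forcing $\langle\alpha_p,\alpha_p\rangle = \langle\alpha_q,\alpha_q\rangle$ and $\langle\alpha_p,\alpha_q\rangle = -\tfrac12\langle\alpha_p,\alpha_p\rangle$, which gives both required equalities. If $n_{p,q}=2$ then $\{|a_{pq}|,|a_{qp}|\}=\{1,2\}$, and by the length hypothesis $a_{pq}=-2$, $a_{qp}=-1$; thus $\langle\alpha_p,\alpha_q\rangle = -\langle\alpha_q,\alpha_q\rangle = -\tfrac12\langle\alpha_p,\alpha_p\rangle$, in agreement with $-\tfrac{n_{p,q}}{2}\langle\alpha_q,\alpha_q\rangle$. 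If $n_{p,q}=3$ then $\{|a_{pq}|,|a_{qp}|\}=\{1,3\}$, so $a_{pq}=-3$, $a_{qp}=-1$; hence $\langle\alpha_p,\alpha_q\rangle=-\tfrac32\langle\alpha_q,\alpha_q\rangle=-\tfrac12\langle\alpha_p,\alpha_p\rangle$, again matching the claim.

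There is essentially no serious obstacle: the result is a direct consequence of the definition of the Dynkin diagram via Cartan integers together with the constraint $a_{pq}a_{qp}\in\{0,1,2,3\}$. The only point that needs care is the direction of the inequality $|a_{pq}|\geq |a_{qp}|$, where it is crucial that $\alpha_p$ is the longer (or equally long) root, so that the larger Cartan integer in absolute value is $a_{pq}$ (the one with $\langle\alpha_q,\alpha_q\rangle$ in the denominator), and not $a_{qp}$. Once that is observed, the three-case verification reads off the identity directly from each row of the table of rank-two root systems $A_1\times A_1$, $A_2$, $B_2$, $G_2$.
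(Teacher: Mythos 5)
Your proof is correct and rests on the same two facts the paper uses, namely that the number of arcs equals $4\langle\alpha_p,\alpha_q\rangle^2/(\langle\alpha_p,\alpha_p\rangle\langle\alpha_q,\alpha_q\rangle)$ (equivalently the product of the two Cartan integers) and also equals the ratio of squared lengths, together with the negativity of the inner product of distinct simple roots. The paper combines these in a single algebraic step rather than your case-by-case check over $n_{p,q}\in\{1,2,3\}$, but the argument is essentially the same.
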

\begin{proof}
It follows taking into account that $
n_{p,q} =  \frac{4\langle \alpha_{p} , \alpha_{q} \rangle^2}{\langle
\alpha_{p} , \alpha_{p} \rangle \langle \alpha_{q} , \alpha_{q}
\rangle},$ $\langle \alpha_{p} ,
\alpha_{p} \rangle = n_{p,q} \langle\alpha_{q} , \alpha_{q}
\rangle$ and the scalar product of
two adjacent simple roots is negative.
\end{proof}

\begin{lemma} \label{scalproductbeta} Suppose that on ${\mathfrak g}_{\mathbb C},$ $m_{p}=2,$ for some $p\in \{1,\dots ,l\},$ and there exists $\beta = \sum_{i=1}^l n_i(\beta) \alpha_i\in \Delta^{+}$ such that $n_p(\beta)= 1.$ We have:
\begin{enumerate}
 \item[{\rm (i)}] if  $\langle \beta , \alpha_p \rangle = {\sf k}$, then $\beta =
 \alpha_p$;
 \item[{\rm (ii)}] if  $\beta \neq \alpha_p$, then $\langle \beta , \alpha_p \rangle = 0$,
 or $\langle \beta , \alpha_p \rangle = \frac{\sf k}2$, or $\langle \beta , \alpha_p \rangle = - \frac{\sf k}{2},$
 \end{enumerate}
 where ${\sf k} = \langle \alpha_{p},\alpha_{p}\rangle.$
\end{lemma}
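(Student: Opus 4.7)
The plan is to analyse the $\alpha_p$-string through $\beta$. Assuming $\beta\neq\pm\alpha_p$, this string is a sequence of roots $\{\beta+n\alpha_p:-p'\leq n\leq q'\}$ for uniquely determined non-negative integers $p',q'$, governed by the Chevalley identity
\[
p'-q'=\frac{2\langle\beta,\alpha_p\rangle}{\langle\alpha_p,\alpha_p\rangle}=\frac{2\langle\beta,\alpha_p\rangle}{\mathsf{k}}.
\]
Hence $\langle\beta,\alpha_p\rangle=\tfrac{1}{2}(p'-q')\mathsf{k}$, and it suffices to bound $p'$ and $q'$. The standing hypothesis $m_p=2$, which forces $n_p(\gamma)\in\{-2,-1,0,1,2\}$ for every $\gamma\in\Delta$, will do the job.

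I would dispose of Part~(i) as a consequence of Part~(ii): if $\beta\neq\alpha_p$, Part~(ii) gives $\langle\beta,\alpha_p\rangle\in\{-\mathsf{k}/2,0,\mathsf{k}/2\}$, all different from $\mathsf{k}>0$, contradicting the hypothesis of (i). Therefore $\beta=\alpha_p$.

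To establish~(ii), assume $\beta\neq\alpha_p$; since $\beta$ is positive, also $\beta\neq-\alpha_p$, so the string formula applies. The bound $q'\leq 1$ is immediate, because $n_p(\beta+q'\alpha_p)=1+q'$ must be at most $m_p=2$. For the sharper bound $p'\leq 1$, I would argue by contradiction: if $p'\geq 2$, then $\beta-2\alpha_p$ is a root whose coefficient of $\alpha_p$ is $-1<0$, hence $\beta-2\alpha_p$ is a negative root, and $2\alpha_p-\beta$ is a positive root. But the coefficient of each $\alpha_i$ (for $i\neq p$) in $2\alpha_p-\beta$ is $-n_i(\beta)\leq 0$, and positivity of the root forces $n_i(\beta)=0$ for every $i\neq p$, i.e. $\beta=\alpha_p$, a contradiction. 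Combining $p',q'\in\{0,1\}$ yields $p'-q'\in\{-1,0,1\}$, so $\langle\beta,\alpha_p\rangle\in\{-\mathsf{k}/2,0,\mathsf{k}/2\}$, as required.

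Beyond the string formula and the bound $|n_p|\leq m_p=2$, the only subtle ingredient is the sign consistency of root coordinates, used to pass from ``$\beta-2\alpha_p$ has $n_p=-1$'' to ``$\beta-2\alpha_p$ is a negative root''. This is precisely the step that invokes both $\beta>0$ and $\beta\neq\alpha_p$, and it is what upgrades the naive bound $p'\leq 3$ coming from $m_p=2$ to the sharp $p'\leq 1$. I do not foresee a real obstacle apart from keeping this bookkeeping straight.
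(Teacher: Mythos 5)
Your proof is correct, and it takes a genuinely different route from the paper's. The paper only extracts the one-sided bound from the root string (the upper end of the $\alpha_p$-string is capped by $m_p=2$, giving $\langle\beta,\alpha_p\rangle\geq-\tfrac{\sf k}{2}$), and then obtains the upper bound by explicitly expanding $\langle\beta,\alpha_p\rangle={\sf k}+\sum_{i\sim p}n_i(\beta)\langle\alpha_i,\alpha_p\rangle$ over the simple roots adjacent to $\alpha_p$ in the Dynkin diagram (via Lemma \ref{scalarproduct}) and running a case analysis on the coefficients $n_i(\beta)$, with separate treatment of $\mathfrak{b}_l$ and $\mathfrak{c}_l$ when $\alpha_p$ is short. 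You instead bound \emph{both} ends of the string intrinsically: $q'\leq 1$ from the highest-root coefficient, and $p'\leq 1$ from the sign-coherence of root coordinates (if $\beta-2\alpha_p$ were a root it would have $\alpha_p$-coefficient $-1$, hence be negative, forcing $n_i(\beta)=0$ for $i\neq p$ and $\beta=\alpha_p$). This eliminates all Dynkin-diagram casework and the auxiliary Lemma \ref{scalarproduct}, treats all types uniformly, and yields (i) as an immediate corollary of (ii); what it gives up is only the paper's finer bookkeeping of which coefficient configurations realize which of the three values, information that is not used elsewhere. The one delicate step --- passing from ``$n_p(\beta-2\alpha_p)=-1$'' to ``$\beta-2\alpha_p$ is a negative root'' --- is exactly the standard fact that every root has coefficients of a single sign, so there is no gap.
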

\begin{proof}
As the weight of  $\alpha_p$ in the highest root is $2$, then using
the terminology in \cite[Lemma 3.3]{He} we have
$$
- a_{\beta , \alpha_p} = -  \frac{2 \langle \beta, \alpha_p
\rangle}{\langle \alpha_p , \alpha_p \rangle} \leq 1.
$$
 If $\alpha_p$ is of length  higher or equal  to its adjacent roots,
 then $\alpha_p$ has at the  most three adjacent simple
roots $\alpha_i$, $\alpha_j$ and $\alpha_k$. Therefore, taking
Lemma \ref{scalarproduct} into account, we obtain
$$
n_i(\beta) + n_j(\beta) + n_k(\beta) - 2 \leq 1.
$$
Thus, if $\beta \neq \alpha_p$, then we have the following
alternatives:
\begin{enumerate}
 \item[-$\;$ ] $n_i (\beta) =1$ and $n_j(\beta)=n_k(\beta)=0$, then $\langle \beta, \alpha_p
\rangle = \frac{\sf k}{2}$;
 \item[-$\;$ ] $n_i (\beta) =2$ and $n_j(\beta)=n_k(\beta)=0$, then $\langle \beta, \alpha_p \rangle = 0$;
  \item[-$\;$ ] $n_i(\beta) =2$, $n_j (\beta) = 1$ and $n_k(\beta)=0$, then $\langle \beta, \alpha_p
\rangle = -\frac{\sf k}{2}$;
 \item[-$\;$ ] $n_i(\beta) =3$ and $n_k(\beta)=0$, then $\langle \beta, \alpha_p
\rangle = -\frac{\sf k}{2}$.
\end{enumerate}
Hence if $\langle \beta , \alpha_p \rangle = {\sf k}$, then
necessarily $\beta = \alpha_p$. \vspace{1mm}

 If $\alpha_p$ is of length shorter than   some adjacent root, then there are two possibilities:
${\mathfrak g}_{\mathbb C} = \mathfrak{b}_l$ and $\alpha_p
=\alpha_l$, or ${\mathfrak g}_{\mathbb C} = \mathfrak{c}_l$ and
$\alpha_p =\alpha_{l-1}$.

If ${\mathfrak g}_{\mathbb C} = \mathfrak{b}_l$ and $\alpha_p
=\alpha_l$, then $\alpha_l$ has only one adjacent simple root
$\alpha_{l-1}$. Therefore, we obtain $ 2n_{l-1} (\beta) - 2 \leq
1.$ Thus, if $\beta \neq \alpha_{l}$, then $n_{l-1}(\beta) = 1$
and $\langle \beta , \alpha_l \rangle  = 0$. Hence, if we have a positive root $\beta = \dots +
\alpha_l$, such that $\langle \beta , \alpha_l \rangle = {\sf k}$,
then necessarily $\beta = \alpha_l$.
 \vspace{1mm}

If ${\mathfrak g}_{\mathbb C} = \mathfrak{c}_l$ and $\alpha_p
=\alpha_{l-1}$, then $\alpha_{l-1}$ has two adjacent simple roots
$\alpha_{l-2}$ and $\alpha_l$. Hence we obtain $n_{l-2}(\beta) + 2
n_{l}(\beta) - 2 \leq 1.$ Thus, if $\beta \neq \alpha_{l-1}$, we
have the following alternatives:
\begin{enumerate}
 \item[-$\;$ ] $n_{l-2} (\beta) =1$ and $n_{l}(\beta)=0$, then $\langle \beta, \alpha_{l-1}
\rangle = \frac{\sf k}{2}$;
 \item[-$\;$ ] $n_{l-2}(\beta) =2$ and $n_{l}(\beta)=0$, then $\langle \beta, \alpha_{l-1} \rangle = 0$;
  \item[-$\;$ ] $n_{l-2}(\beta)=1$ and $n_l (\beta) = 1$, then $\langle \beta, \alpha_{l-1}
\rangle = -\frac{\sf k}{2}$.
\end{enumerate}
Therefore, in this case, if we have a positive root $\beta = \dots
+ \alpha_{l-1} + \dots$, such that $\langle \beta , \alpha_{l-1}
\rangle = {\sf k}$, then necessarily $\beta = \alpha_{l-1}$.
\end{proof}

\begin{proposition}\label{pIII}
If  $(M=G/K,\sigma,\langle\cdot,\cdot\rangle)$ is an irreducible
$3$-symmetric space Type $A_{3}III$, then $(M,g,J)$ is homogeneous
NK Type IV.
\end{proposition}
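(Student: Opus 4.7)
By Proposition \ref{III,IV}, $(M,g,J)$ already has special algebraic torsion with $\mathcal{V},\mathcal{H}$ as in \eqref{VV2}, so I need only verify the two remaining defining properties of Homogeneous NK Type IV: that the base of the canonical fibration is symmetric, and that $\mathcal{H}$ is real $\mathrm{Hol}(\nabla^{\Lie{U}(n)})$-irreducible.

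For the symmetric-base property, the plan is to introduce the involution $\tau=\mathrm{Ad}_{\exp(2\pi\sqrt{-1}H_i)}$ of $\mathfrak{g}$. Since $m_i=2$ and $\alpha_j(H_i)=\delta_{ij}/m_i$, one computes $\tau E_\alpha=(-1)^{n_i(\alpha)}E_\alpha$ and hence $\tau U^a_\alpha=(-1)^{n_i(\alpha)}U^a_\alpha$. Therefore $\tau^2=\mathrm{Id}$, $\tau$ preserves the Killing form, and it fixes $\mathfrak{k}\oplus\mathcal{V}$ while acting as $-\mathrm{Id}$ on $\mathcal{H}$. Consequently $\mathfrak{k}':=\mathfrak{k}\oplus\mathcal{V}$ is a Lie subalgebra of $\mathfrak{g}$, $(\mathfrak{g},\mathfrak{k}',\tau)$ is a Riemannian symmetric pair, and the canonical fibration factors as $M=G/K\to G/K'=N$ with symmetric base; the irreducibility of $N$ is guaranteed by \cite[Theorem 4.1]{Nagy2}.

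For the irreducibility of $\mathcal{H}$, Corollary \ref{luni} identifies $\mathrm{Hol}(\nabla^{\Lie{U}(n)})$ with the adjoint action of $K$, and Proposition \ref{trespuntouno11probis}(ii) reduces real irreducibility of $\mathcal{H}$ to complex $\mathrm{Ad}(K)$-irreducibility. Arguing by contradiction, Proposition \ref{trespuntouno11probis}(i) would produce a decomposition $\mathcal{H}=\mathcal{E}\oplus\mathcal{F}$ with $\xi_\mathcal{E}\mathcal{E}=\xi_\mathcal{F}\mathcal{F}=0$ and $\xi_\mathcal{E}\mathcal{F}=\mathcal{V}$. Since $\mathfrak{h}\subset\mathfrak{k}$, the $(1,0)$-parts $\mathcal{E}^{1,0},\mathcal{F}^{1,0}\subset\mathcal{H}^{1,0}=\bigoplus_{\alpha\in\Delta^+_1}\mathbb{C}E_\alpha$ split into root spaces, which induces a non-trivial partition $\Delta^+_1=P\sqcup Q$ satisfying: (a) $\alpha+\beta\notin\Delta$ whenever $\alpha,\beta$ both lie in $P$ or both in $Q$; (b) every $\gamma\in\Delta^+_2$ splits as $\alpha+\beta$ with $\alpha\in P$, $\beta\in Q$; and (c) $\mathrm{Ad}(K)$-invariance forces $P, Q$ to be closed under the shifts $\beta\mapsto\beta\pm\alpha_j$, $j\neq i$, whenever the result lies in $\Delta^+_1$.

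Assuming without loss of generality that $\alpha_i\in P$, the remaining task is to show that the closure conditions force $P=\Delta^+_1$, contradicting $Q\neq\emptyset$. The key combinatorial claim is: for every $\beta\in\Delta^+_1\setminus\{\alpha_i\}$ there is some $\alpha_j$ with $j\neq i$ and $\langle\beta,\alpha_j\rangle>0$, whence $\beta-\alpha_j\in\Delta^+_1$; a downward induction on the height of $\beta$ then connects $\beta$ to $\alpha_i$ through $\Delta_0$-shifts within $\Delta^+_1$, so $\beta\in P$. This claim is the main obstacle. By Lemma \ref{scalproductbeta}, $\langle\beta,\alpha_i\rangle\in\{0,\pm\mathsf{k}/2\}$ for $\beta\neq\alpha_i$, and inserting this value into the identity $\langle\beta,\beta\rangle=\langle\beta,\alpha_i\rangle+\sum_{j\neq i}n_j(\beta)\langle\beta,\alpha_j\rangle$ produces some $j_0\neq i$ with $\langle\beta,\alpha_{j_0}\rangle>0$ in every case except possibly a few short-root configurations. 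These residual cases are settled by inspecting the Dynkin diagrams of the simple Lie algebras admitting $m_i=2$, namely $B_l, C_l, D_l, E_7, E_8, F_4, G_2$, which completes the argument.
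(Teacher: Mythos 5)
Your overall strategy is sound and your conclusion is correct, but the route differs from the paper's in the decisive step, and one intermediate claim is false as stated. On the points of agreement: like the paper, you reduce everything to complex $\mathrm{Ad}(K)$-irreducibility of $\mathcal{H}$ and then invoke Proposition \ref{trespuntouno11probis}(ii). Your verification of the symmetric base via the involution $\tau=\mathrm{Ad}_{\exp 2\pi\sqrt{-1}H_i}$ is exactly the argument the paper defers to Theorem \ref{canonical}(iii); the paper's own proof of this proposition does not address that condition at all (it is automatic for a $3$-symmetric space, since $\nabla^{\Lie{U}(n)}R^{\Lie{U}(n)}=0$ projects to $\nabla^h R^h=0$ as in the Type III argument). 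Where you diverge is the irreducibility of $\mathcal{H}$: the paper takes an \emph{arbitrary} invariant subspace $\mathcal{H}_1$, observes via Lemma \ref{bracket}(i) that $\mathrm{ad}(\sqrt{-1}H_{\alpha_i})^2$ acts on $\mathbb{R}U^0_\beta+\mathbb{R}U^1_\beta$ with eigenvalue $-\langle\beta,\alpha_i\rangle^2$, which by Lemma \ref{scalproductbeta} takes only the values $-{\sf k}^2$, $0$, $-{\sf k}^2/4$, and uses a polynomial in this operator to project any $X\in\mathcal{H}_1$ onto its $U^a_{\alpha_i}$-component; this forces $U^a_{\alpha_i}$ into $\mathcal{H}_1$ or its complement, and then a single generation statement ($U^a_{\alpha_i}$ generates $\mathcal{H}$ under $\mathrm{Ad}(K)$) finishes the job. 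Your contradiction argument through the $\mathcal{E}\oplus\mathcal{F}$ decomposition of Proposition \ref{trespuntouno11probis}(i) is more elaborate than necessary (you use only the $\mathrm{Ad}(K)$-invariance, not the $\xi$-identities), and it shifts the entire burden onto proving directly that $\Delta_1^+$ is a single orbit under $\Delta_0$-shifts --- which is the same root-theoretic fact both proofs ultimately rest on, namely that $\mathfrak{g}_1$ in the $\mathbb{Z}$-grading determined by $\alpha_i$ is an irreducible (indeed cyclic, generated by $E_{\alpha_i}$) $\mathfrak{g}_0$-module.

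The caveat concerns your ``key combinatorial claim.'' The assertion that every $\beta\in\Delta_1^+\setminus\{\alpha_i\}$ admits some $j\neq i$ with $\langle\beta,\alpha_j\rangle>0$ is false in the non-simply-laced cases: in $\mathfrak{g}_2$ with $i=2$ and $\beta=\alpha_1+\alpha_2$ one computes $\langle\beta,\alpha_1\rangle=-\tfrac12\langle\alpha_1,\alpha_1\rangle<0$, even though $\beta-\alpha_1=\alpha_2$ is still a root of $\Delta_1^+$; similar configurations occur for short $\beta$ in $\mathfrak{b}_l$, $\mathfrak{c}_l$ and $\mathfrak{f}_4$ (and note $\mathfrak{e}_6$ also admits $m_i=2$, though being simply laced it causes no trouble). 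So the criterion ``$\langle\beta,\alpha_j\rangle>0\Rightarrow\beta-\alpha_j\in\Delta$'' is sufficient but not necessary, and your downward induction must in those cases appeal either to the case inspection you promise or, more cleanly, to the standard fact that any $\beta\in\Delta^+$ descends to a simple root through a chain of positive roots obtained by subtracting one simple root at a time; restricting such a chain to $\Delta_1^+$ connects $\beta$ to the unique root of $\Delta_1^+$ of the form $\alpha_i+\delta$ with $\delta\in\Delta_0^+\cup\{0\}$, whence to $\alpha_i$ itself via $[E_\delta,E_{\alpha_i}]\neq0$. You have flagged the gap and it is repairable, but as written the proof of the central claim does not go through uniformly.
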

\begin{proof} Let $\alpha_i$ be the simple positive root which defines
 the inner automorphism $\sigma$. We consider an
$\mathrm{Ad}(K)$-invariant subspace $\mathcal{H}_1$ of
$\mathcal{H}$.  For $\beta \in \Delta^+_{1}$ and  $\langle \alpha_i
, \alpha_i \rangle = \sf{k}$,    by Lemma \ref{scalproductbeta} we
have: $\beta=\alpha_i$ or $\langle \beta , \alpha_i \rangle=0$, or
$\langle \beta , \alpha_i \rangle= \frac{\sf k}{2}$ or $\langle
\beta , \alpha_i \rangle= - \frac{\sf k}{2}$.

If $X\in \mathcal{H}_1$, then $JX\in \mathcal{H}_1$ and  $X$ is
given by
\[ \textstyle
X  = \sum_{a=0,1}\Big (x_{a;\alpha_i} U^a_{\alpha_{i}} + \sum_{\beta \in \Delta^+_{1(0)} \cup \Delta^+_{1(\frac{\sf k}2)}\cup \Delta^+_{1(-\frac{\sf
k}2)}}   x_{a;\beta} U^a_{\beta}\Big ),
\]
where $\Delta^+_{1(c)} = \{ \beta \in \Delta^+_{1} \, | \, \langle
\beta , \alpha_i \rangle = c \}$. Now,  taking Lemma \ref{bracket}
into account, it is obtained
\begin{eqnarray*}
\frac{2^2}{{\sf k}^2} \mathrm{ad}(\sqrt{-1}H_{\alpha_i})^2(X) &  =
& \textstyle \sum_{a=0,1}\Big ( - 2^2 x_{a;\alpha_i}
U^a_{\alpha_{i}} - \sum_{ \beta \in \Delta^+_{1(\frac{\sf k}2)}}
x_{a;\beta} U^a_{\beta} + \sum_{\beta \in \Delta^+_{1(-\frac{\sf
k}2)}}
x_{a;\beta} U^a_{\beta}\Big ),\\[0.4pc]
\frac{2^4}{{\sf k}^4} \mathrm{ad}(\sqrt{-1}H_{\alpha_i})^4(X) & =
& \textstyle \sum_{a=0,1}\Big (2^4 x_{a;\alpha_i} U^a_{\alpha_{i}}
+ \sum_{ \beta \in \Delta^+_{1(\frac{\sf k}2)}}  x_{a;\beta}
U^a_{\beta} - \sum_{\beta \in \Delta^+_{1(-\frac{\sf k}2)}}
x_{a;\beta} U^a_{\beta}\Big ).
\end{eqnarray*}
This implies that $Z=x_{0\alpha_i} U^0_{\alpha_{i}} + x_{1\alpha_i}
U^1_{\alpha_{i}} \in \mathcal{H}_1$ and $JZ \in \mathcal{H}_1$.

 If $Z \neq 0$, we also obtain
$U^0_{\alpha_{i}}, U^1_{\alpha_{i}} \in \mathcal{H}_1$. This
implies $\mathcal{H}_1= \mathcal{H}$ because $\mathcal{H}_1$ is
$\mathrm{Ad}(K)$-invariant.

If $Z=0$, then $U^0_{\alpha_{i}}, U^1_{\alpha_{i}} \in
\mathcal{H}_2$, where $\mathcal{H}_2$ is the orthogonal complement
of $\mathcal{H}_1$ in $\mathcal{H}$. Since $\mathcal{H}_2$ is also
$\mathrm{Ad}(K)$-invariant, then $\mathcal{H}_2= \mathcal{H}$ and
$\mathcal{H}_1=0$. Hence $\mathcal{H}$ is complex
$\mathrm{Ad}(K)$-irreducible. Therefore, using Proposition
\ref{trespuntouno11probis} (ii),  $\mathcal{H}$ is real
$\mathrm{Hol}(\nabla^{U(n)})$-irreducible  and the nearly K{\"a}hler
manifold $M$ is homogeneous NK Type IV.
\end{proof}

Now Theorem \ref{tmaincla} is a direct consequence from Theorem
\ref{tquotient} and Propositions \ref{I,II}, \ref{III,IV} and
\ref{pIII}.

\section{Homogeneous nearly K\"ahler manifolds with special algebraic torsion}\indent
In this section we obtain the eigenvalues $(l,k,m)$ of $r$ and the
dimensions of the corresponding  eigenbundles for each irreducible
homogeneous nearly K\"ahler manifold with special algebraic
torsion. From Theorem \ref{tmaincla}, using \cite[Theorem
4.1]{Nagy2}, we have
\begin{corollary} A strict homogeneous nearly K\"ahler manifold has special algebraic torsion if and only if it is a Riemannian product of compact irreducible $3$-symmetric spaces of Type $A_{3}II$ and Type $A_{3}III.$
\end{corollary}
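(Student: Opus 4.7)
The plan is to establish the two directions separately, relying on Theorem \ref{tmaincla} for the classification and on \cite[Theorem 4.1]{Nagy2} together with Nagy's holonomy criterion (\cite[Theorem 3.1]{Nagy2}, recalled just before Proposition \ref{trespuntouno11probis}) for the compatibility between the Riemannian de~Rham-type decomposition and the special algebraic torsion condition.

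For the \emph{if} direction, assume $M = M_1 \times \cdots \times M_k$ is a Riemannian product of irreducible $3$-symmetric spaces of Type $A_{3}II$ and $A_{3}III$. By Propositions \ref{III,IV} and \ref{pIII}, each factor $M_{i}$ carries a $\nabla^{\Lie{U}(n_i)}$-parallel orthogonal $J$-stable decomposition $\mathrm{T}M_{i} = \mathcal{V}_{i} \oplus \mathcal{H}_{i}$ satisfying the special algebraic torsion conditions $\xi_{\mathcal{V}_i}\mathcal{V}_i = 0$ and $\xi_{\mathcal{H}_i}\mathcal{H}_i \subseteq \mathcal{V}_i$. Since in a Riemannian product the intrinsic torsion $\xi$ is block-diagonal and the minimal connection is the product of the minimal connections of the factors, the distributions $\mathcal{V} = \bigoplus_i \mathcal{V}_i$ and $\mathcal{H} = \bigoplus_i \mathcal{H}_i$ yield a $\nabla^{\Lie{U}(n)}$-parallel $J$-stable splitting of $\mathrm{T}M$ with $\xi_{\mathcal{V}}\mathcal{V}=0$ and $\xi_{\mathcal{H}}\mathcal{H}\subseteq \mathcal{V}$. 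Hence $M$ has special algebraic torsion.

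For the \emph{only if} direction, suppose $(M,g,J)$ is a strict homogeneous nearly K\"ahler manifold with special algebraic torsion. By Theorem \ref{tmaincla}, $M$ splits as a Riemannian product of irreducible factors $M_{1},\dots,M_{k}$, each of homogeneous NK Type I, II, III or IV. The key input is \cite[Theorem~4.1]{Nagy2}, which ensures that the vertical and horizontal distributions of a nearly K\"ahler manifold with special algebraic torsion are compatible with the Riemannian product decomposition; concretely, $\mathcal{V} = \bigoplus_i \mathcal{V}_i$ and $\mathcal{H} = \bigoplus_i \mathcal{H}_i$ with $\mathrm{T}M_i = \mathcal{V}_i\oplus\mathcal{H}_i$ a $\nabla^{\Lie{U}(n_i)}$-parallel $J$-stable splitting satisfying the same algebraic conditions on each factor. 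Equivalently, by \cite[Theorem~3.1]{Nagy2} the complex reducibility of $\mathrm{Hol}(\nabla^{\Lie{U}(n)})$ descends to each factor.

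It remains to exclude Types I and II. For a Type I factor, the isotropy representation is real (hence complex) irreducible, so the only $\nabla^{\Lie{U}(n_i)}$-parallel subspaces are $0$ and $\mathrm{T}M_i$; neither choice of $\mathcal{V}_i$ is compatible with $\xi_{\mathcal{V}_i}\mathcal{V}_i = 0$ and $\xi_{\mathcal{H}_i}\mathcal{H}_i\subseteq\mathcal{V}_i$ together with the strict nearly K\"ahler condition (both extremes force $\xi = 0$ on $M_i$, contradicting strictness). For a Type II factor, the holonomy representation is complex irreducible, so by \cite[Theorem~3.1]{Nagy2} $M_i$ cannot have special algebraic torsion, and the same extremal argument applies to rule out the trivial splittings. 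Hence every $M_i$ is a factor of Type III or Type IV, i.e. an irreducible $3$-symmetric space of Type $A_{3}II$ or $A_{3}III$. The main point to be careful about is the descent of the vertical/horizontal splitting to the irreducible factors, which is precisely what \cite[Theorem~4.1]{Nagy2} guarantees.
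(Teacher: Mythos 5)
Your argument is correct and follows the same route as the paper, which derives the corollary directly from Theorem \ref{tmaincla} together with \cite[Theorem 4.1]{Nagy2}; you have simply spelled out the two directions (Propositions \ref{III,IV} and \ref{pIII} for sufficiency, and the holonomy/irreducibility exclusion of Types I and II for necessity). The only point the paper leaves implicit, as do you, is the descent of the vertical--horizontal splitting to the Riemannian factors, which is exactly what the cited result of Nagy supplies.
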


\noindent We start studying irreducible $3$-symmetric spaces  of
Type $A_{3}II.$ First, we shall show that each ${\mathcal V}_{k},$ $k = 1,2,3,$ defined in (\ref{VV1}) is an eigenspace of $r$ in ${\mathfrak m}.$ Given a subset ${\mathcal S}$ of $\Delta^{+}$ and
$\alpha\in \Delta,$  we denote by $n_{\alpha}({\mathcal S})$ to
the number of elements in $\{\beta \in {\mathcal S}\mid \; \beta +
\alpha \in \Delta\;\mbox{and}\; \beta + 2\alpha\notin \Delta \}.$

If the inner automorphism $\sigma$ is of Type $A_{3}II,$ the
complex simple Lie algebra ${\mathfrak g}_{\mathbb C}$ is
${\mathfrak a}_{n-1}$ $(n\geq 3),$ ${\mathfrak d}_{n}$ $(n\geq 4)$
or ${\mathfrak e}_{6}.$ Here, $\langle \alpha,\alpha\rangle$ is constant, for all
$\alpha\in \Delta,$ which we shall denote by $\kappa.$

\begin{lemma}\label{IIr} On irreducible compact $3$-symmetric spaces of Type $A_{3}II$ we have:
$$
\begin{array}{lcl}
r(U^{a}_{\alpha}) & = & 2 \kappa \, n_{-\alpha}(\Delta^{+}_{1,0})
U^{a}_{\alpha} = 2 \kappa \,
n_{-\alpha}(\Delta^{+}_{0,1}))U^{a}_{\alpha},\\[0.4pc]
r(U^{a}_{\beta}) & = & 2 \kappa \, n_{-\beta}(\Delta^{+}_{1,1}) U^{a}_{\beta} = 2 \kappa \, n_{\beta}(\Delta^{+}_{0,1}))U^{a}_{\beta},\\[0.4pc]
r(U^{a}_{\gamma}) & = & 2 \kappa \, n_{-\gamma}(\Delta^{+}_{1,1})
U^{a}_{\gamma} =  2 \kappa  \, n_{\gamma}(\Delta^{+}_{1,0}))U^{a}_{\gamma},
\end{array}
$$
where $\alpha\in \Delta^{+}_{1,1},$ $\beta\in \Delta^{+}_{1,0}$
and $\gamma\in \Delta^{+}_{0,1},$ and $a=0,1.$ In particular, note
that $
n_{-\alpha}(\Delta^{+}_{1,0}) = n_{-\alpha}(\Delta^{+}_{0,1}))
=c_1,$ $n_{-\beta}(\Delta^{+}_{1,1})= n_{\beta}(\Delta^{+}_{0,1}))
=c_2$ and $ n_{-\gamma}(\Delta^{+}_{1,1}) =
n_{\gamma}(\Delta^{+}_{1,0}))=c_3,$ where $c_1$, $c_2$ and $c_3$ are integral numbers
independent of $\alpha$, $\beta$ and $\gamma$, respectively.
\end{lemma}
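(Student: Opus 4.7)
The plan is to apply Lemma \ref{cincopuntotreslem} with the identification $\mathcal{V} = \mathcal{V}_1$, $\mathcal{E} = \mathcal{V}_2$, $\mathcal{F} = \mathcal{V}_3$, which fits the canonical-fibration structure established in the proof of Proposition \ref{III,IV} (there we showed $[\mathcal{V}_k,\mathcal{V}_k]\subseteq\mathfrak{k}$ and $[\mathcal{V}_k,\mathcal{V}_r]\subseteq\mathcal{V}_s$ for $(k,r,s)$ a cyclic permutation of $(1,2,3)$, hence $\xi_{\mathcal{V}_k}\mathcal{V}_k=0$ and $\xi_{\mathcal{V}_k}\mathcal{V}_r\subseteq\mathcal{V}_s$). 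Lemma \ref{cincopuntotreslem} then gives, for $\alpha\in\Delta^{+}_{1,1}$,
\begin{equation*}
r(U^{a}_{\alpha},U^{a}_{\alpha}) = 8\!\!\!\sum_{\beta\in\Delta^{+}_{1,0},\, b=0,1}\!\!\!\langle \xi_{U^{b}_{\beta}} U^{a}_{\alpha},\xi_{U^{b}_{\beta}} U^{a}_{\alpha}\rangle
= 8\!\!\!\sum_{\gamma\in\Delta^{+}_{0,1},\, c=0,1}\!\!\!\langle \xi_{U^{c}_{\gamma}} U^{a}_{\alpha},\xi_{U^{c}_{\gamma}} U^{a}_{\alpha}\rangle,
\end{equation*}
and analogous identities for $\beta\in\Delta^{+}_{1,0}$ and $\gamma\in\Delta^{+}_{0,1}$. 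The equality of the two right-hand sides already yields the coincidence $c_1=c_1$, $c_2=c_2$, $c_3=c_3$ in the two descriptions once we identify each sum with the appropriate $n_{\pm\cdot}(\cdot)$ count.

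Next I substitute $\xi_{X}Y=-\frac12[X,Y]_{\mathfrak m}$ from \eqref{xi} and expand each bracket $[U^{b}_{\beta},U^{a}_{\alpha}]$ with Lemma \ref{bracket}(iii). The point where the hypothesis $m_i=m_j=1$ enters is the following: for $\alpha\in\Delta^{+}_{1,1}$ and $\beta\in\Delta^{+}_{1,0}$ the sum $\alpha+\beta$ has $n_i=2>m_i=1$, hence is \emph{not} a root; only $\alpha-\beta$ can belong to $\Delta$, and when it does it lies in $\Delta^{+}_{0,1}\subseteq\mathfrak{m}$. Therefore the bracket reduces to a single term $\pm N_{-\beta,\alpha}\,U^{a+b}_{\alpha-\beta}$, already in $\mathfrak{m}$, so no $\mathfrak{k}$-projection has to be discarded. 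The analogous reductions apply in the two other cases, using that $m_i=m_j=1$ forbids $n_i$ or $n_j$ from taking the forbidden values $2$ or $-1$.

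To evaluate $N_{-\beta,\alpha}^{2}$ I use \eqref{***}. The $(-\beta)$-string through $\alpha$ consists of $\alpha$ and $\alpha-\beta$ only, because $\alpha+\beta\notin\Delta$ (forbidden $n_i=2$) and $\alpha-2\beta\notin\Delta$ (forbidden $n_i=-1$); this gives $p=0$, $q=1$, whence $N_{-\beta,\alpha}^{2}=\tfrac12\langle\beta,\beta\rangle=\tfrac{\kappa}{2}$ by the ADE-simplicity of $\mathfrak{g}_{\mathbb C}$ (uniform length of roots). Gathering the orthonormality $\langle U^{a}_{\alpha},U^{b}_{\beta}\rangle=\delta_{\alpha\beta}\delta_{ab}$, the two values of $b$, and the prefactor $\tfrac14$ coming from $\xi=-\tfrac12[\cdot,\cdot]_{\mathfrak m}$, the right-hand side becomes
\begin{equation*}
8\cdot\tfrac14\cdot 2\cdot\tfrac{\kappa}{2}\cdot\#\{\beta\in\Delta^{+}_{1,0}\mid\alpha-\beta\in\Delta\}=2\kappa\,n_{-\alpha}(\Delta^{+}_{1,0}),
\end{equation*}
where the identification with $n_{-\alpha}(\Delta^{+}_{1,0})$ is automatic: the auxiliary condition $\beta-2\alpha\notin\Delta$ in its definition holds for free because $n_i(\beta-2\alpha)=-1$.

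The remaining two formulas (and the two expressions for each) are derived by the same mechanism, the only adjustment being that for $\beta\in\Delta^{+}_{1,0}$ paired with $\gamma\in\Delta^{+}_{0,1}$ it is $\beta+\gamma\in\Delta^{+}_{1,1}$ (and not $\beta-\gamma$) that survives, yielding the count $n_{\beta}(\Delta^{+}_{0,1})$ instead of $n_{-\beta}(\cdot)$; all length-two string verifications are identical in spirit. The main technical obstacle is therefore not the counting itself but a clean bookkeeping of the four sub-cases $\alpha\pm\beta$, $\beta\pm\gamma$, $\alpha\pm\gamma$ in $\Delta$, showing in each case that exactly one of the two candidates survives and that the corresponding $(-)$-string has length two. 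Finally, the independence of $c_1,c_2,c_3$ of the individual root follows from the fact that $r$ is $\nabla^{\Lie{U}(n)}$-parallel (see Section \ref{three}) and hence has constant eigenvalues on each $\nabla^{\Lie{U}(n)}$-parallel subbundle $\mathcal{V}_k$; alternatively, from the twofold expression of each $r(U^{\cdot}_{\cdot})$ above, these counts are Weyl-equivariant.
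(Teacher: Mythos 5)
Your argument is correct and follows essentially the same route as the paper: apply Lemma \ref{cincopuntotreslem} to the decomposition $\mathfrak{m}=\mathcal{V}_1\oplus\mathcal{V}_2\oplus\mathcal{V}_3$, reduce each $\xi$-bracket to a single root vector via Lemma \ref{bracket} and the constraints $m_i=m_j=1$, and evaluate $N^2_{-\beta,\alpha}=\kappa/2$ from the length-two root string (the paper states this value without the string computation you supply). One negligible slip: for $\alpha\in\Delta^+_{1,1}$, $\beta\in\Delta^+_{1,0}$ the reason $\beta-2\alpha\notin\Delta$ is that $n_j(\beta-2\alpha)=-2$ (the value $n_i(\beta-2\alpha)=-1$ you cite is by itself admissible), but the conclusion and the rest of the bookkeeping stand.
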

\begin{proof}
 Because, one gets:
\begin{enumerate}
\item[{$\bullet$}] $\alpha + \beta\notin \Delta$ and if $\alpha - \beta \in \Delta$ then $\alpha - \beta \in \Delta_{0,1},$
\item[{$\bullet$}] $\alpha + \gamma\notin \Delta$ and if $\alpha - \gamma\in \Delta$ then $\alpha - \gamma \in \Delta_{1,0},$
\item[{$\bullet$}] $\beta - \gamma\notin \Delta$ and if $\beta + \gamma\in \Delta$ then $\beta + \gamma \in \Delta_{1,1},$
\end{enumerate}
(\ref{xi}) and Lemma \ref{bracket} implies
\[
\xi_{U^{0}_{\alpha}}U^{0}_{\beta} = -\frac{N_{-\beta,\alpha}}{2}U^{0}_{\alpha -\beta},\;\;\xi_{U^{0}_{\alpha}}U^{0}_{\gamma} = -\frac{N_{-\gamma,\alpha}}{2}U^{0}_{\alpha -\gamma},\;\;\xi_{U^{0}_{\beta}}U^{0}_{\gamma} = -\frac{N_{\beta,\gamma}}{2}U^{0}_{\beta + \gamma},
\]
being zero for the rest. Hence $U^{a}_{\alpha},$ $U^{a}_{\beta}$
and $U^{a}_{\gamma}$ are eigenvectors for $r$. Moreover, we have
$$
\begin{array}{lcllcllcl}
\xi^{2}_{U^{0}_{\alpha}}U^{0}_{\beta} & = & -\frac{N_{-\beta,\alpha}^{2}}{4}U^{0}_{\beta}, & & \xi^{2}_{U^{0}_{\beta}}U^{0}_{\alpha} & = & -\frac{N_{-\beta,\alpha}^{2}}{4}U^{0}_{\alpha},\\[0.4pc]
\xi^{2}_{U^{0}_{\alpha}}U^{0}_{\gamma} & = & -\frac{N_{-\gamma,\alpha}^{2}}{4}U^{0}_{\gamma}, & & \xi^{2}_{U^{0}_{\gamma}}U^{0}_{\alpha} & = & -\frac{N_{-\gamma, \alpha}^{2}}{4}U^{0}_{\alpha},\\[0.4pc]
\xi^{2}_{U^{0}_{\beta}}U^{0}_{\gamma} & = &
-\frac{N_{\beta,\gamma}^{2}}{4}U^{0}_{\gamma}, & &
\xi^{2}_{U^{0}_{\gamma}}U^{0}_{\beta} & = &
-\frac{N_{\beta,\gamma}^{2}}{4}U^{0}_{\beta}
\end{array}
$$
and the others being zero. Hence, using Lemma
\ref{cincopuntotreslem}, we have
$$
\begin{array}{lcl}
\langle rU^{0}_{\alpha},U^{0}_{\alpha}\rangle & = & 4
\sum_{\beta\in \Delta^{+}_{1,0}}N_{-\beta,\alpha}^{2} =
4 \sum_{\gamma\in \Delta^{+}_{0,1}}N_{-\gamma, \alpha}^{2},\\[0.4pc]
\langle rU^{0}_{\beta},U^{0}_{\beta}\rangle & = & 4\sum_{\alpha\in
\Delta^{+}_{1,1}}N_{-\beta,\alpha}^{2}  =
4 \sum_{\gamma\in \Delta^{+}_{0,1}}N_{\beta,\gamma}^{2},\\[0.4pc]
\langle rU^{0}_{\gamma},U^{0}_{\gamma}\rangle & = &
4\sum_{\alpha\in \Delta^{+}_{1,1}} N_{-\gamma,\alpha}^{2}  = 4
\sum_{\beta\in \Delta^{+}_{1,0}}N_{\beta,\gamma}^{2}.
\end{array}
$$
Now Lemma follows taking into account that
$$
N_{-\beta,\alpha}^{2} = \frac{\kappa}{2}\;\;\mbox{if}\; \alpha - \beta \in \Delta,\;\;\; N_{-\gamma,\alpha}^{2}  =  \frac{\kappa}{2}\;\;\mbox{if}\; \alpha - \gamma \in \Delta,\;\;\; N_{\beta,\gamma}^{2} =  \frac{\kappa}{2}\;\;\mbox{if}\; \beta +
\gamma \in \Delta,
$$
for all $\alpha\in \Delta^{+}_{1,1},$ $\beta\in \Delta^{+}_{1,0}$
and $\gamma\in \Delta^{+}_{0,1}$, and being zero in other cases.
\end{proof}

\begin{proposition}\label{pA3II}
On a compact irreducible $3$-symmetric of Type $A_{3}II,$ the
eigenvalues $(l, k,m)$ of $r$ and the dimensions corresponding to
the eigenbundles ${\mathcal V}_{1},$ ${\mathcal V}_{2}$ and
${\mathcal V}_{3}$, respectively, are given in Table
\ref{tab:AII}.
\end{proposition}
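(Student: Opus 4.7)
The plan is to perform a case-by-case analysis over the complex simple Lie algebras $\mathfrak{a}_{n-1}$ ($n\geq 3$), $\mathfrak{d}_{n}$ ($n\geq 4$) and $\mathfrak{e}_{6}$, which are exactly those admitting an inner automorphism of Type $A_{3}II$, corresponding to a pair of simple roots $\alpha_{i},\alpha_{j}$ with coefficients $m_{i}=m_{j}=1$ in the highest root $\mu$. For each such pair, the eigenbundles $\mathcal{V}_{1},\mathcal{V}_{2},\mathcal{V}_{3}$ of \eqref{VV1} are determined by the subsets $\Delta^{+}_{1,1},\Delta^{+}_{1,0},\Delta^{+}_{0,1}\subseteq\Delta^{+}$ described in the proof of Proposition~\ref{III,IV}, and Lemma~\ref{IIr} reduces the computation of the eigenvalues of $r$ to counting combinatorial quantities in the root system.

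First I would tabulate, for each simple Lie algebra in the list and each admissible pair $(i,j)$ with $m_{i}=m_{j}=1$, the positive roots $\alpha=\sum_{k}n_{k}(\alpha)\alpha_{k}$ separated according to the value of the pair $(n_{i}(\alpha),n_{j}(\alpha))\in\{(1,1),(1,0),(0,1)\}$. This produces $|\Delta^{+}_{1,1}|,|\Delta^{+}_{1,0}|,|\Delta^{+}_{0,1}|$, and hence $\dim\mathcal{V}_{k}=2|\Delta^{+}_{1,1}|$ etc. For $\mathfrak{a}_{n-1}$ (with $\alpha_{p}=e_{p}-e_{p+1}$) this reduces to counting lattice points in a product of intervals; for $\mathfrak{d}_{n}$ and $\mathfrak{e}_{6}$ I would use the explicit positive root systems (recalled in the final section of the paper, as indicated by the author's remark, and also in \cite{He}) and enumerate directly, taking advantage of symmetries where available.

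Next, I would evaluate the constants $c_{1},c_{2},c_{3}$ of Lemma~\ref{IIr}. Since every simple algebra in this list is simply laced, $\langle\alpha,\alpha\rangle=\kappa$ is constant, and the formulas
\[
l = 2\kappa\,n_{-\alpha}(\Delta^{+}_{1,0}),\qquad
k = 2\kappa\,n_{-\beta}(\Delta^{+}_{1,1}),\qquad
m = 2\kappa\,n_{-\gamma}(\Delta^{+}_{1,1}),
\]
for representative $\alpha\in\Delta^{+}_{1,1},\beta\in\Delta^{+}_{1,0},\gamma\in\Delta^{+}_{0,1}$, are purely combinatorial. For each case the verification amounts to counting, for a fixed root in a given stratum, how many roots $\delta$ in another stratum satisfy $\alpha-\delta\in\Delta$ with $\alpha-2\delta\notin\Delta$. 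Consistency with the cross-formulas of Lemma~\ref{IIr} and with the relation \eqref{ricricast}, namely $l\dim\mathcal{V}_{1}=k\dim\mathcal{V}_{2}=m\dim\mathcal{V}_{3}$, provides an internal cross-check that should catch bookkeeping errors. Finally, the Einstein constant of the base $N$ is recovered from $2\mu=l+k+m$ in Proposition~\ref{satdimensions}, normalized via the standing convention $\langle\cdot,\cdot\rangle=-\tfrac{1}{2}B_{|\mathfrak{m}}$.

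The main obstacle is not conceptual but organizational: one must process many $(i,j)$ cases, especially within $\mathfrak{a}_{n-1}$ (where the pair ranges over $1\leq i<j\leq n-1$, producing families parametrized by the sizes of three blocks) and within $\mathfrak{d}_{n}$ (where $\alpha_{i}$ can be the branching root or one of the two end roots, giving inequivalent strata patterns) and within $\mathfrak{e}_{6}$ (where the admissible simple roots are those with $m_{i}=1$, namely the four extremal nodes of the Dynkin diagram, giving several essentially distinct pairs up to diagram symmetry). I would handle $\mathfrak{a}_{n-1}$ first, parametrizing by the block sizes of the partition of $\{1,\dots,n\}$ induced by removing $\alpha_{i},\alpha_{j}$, to get closed-form expressions for $l,k,m$ and the dimensions; then treat $\mathfrak{d}_{n}$ by cases on whether $i,j$ lie among the two spinor nodes or not; and finally dispatch $\mathfrak{e}_{6}$ by direct enumeration (there are few pairs). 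Comparison of the outcome with the entries of Table~\ref{tab:AII} completes the proof.
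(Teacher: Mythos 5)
Your proposal follows essentially the same route as the paper: invoke Lemma~\ref{IIr} to reduce the eigenvalues $(l,k,m)$ to the combinatorial counts $c_1,c_2,c_3$ and the dimensions to the cardinalities of the strata $\Delta^{+}_{1,1},\Delta^{+}_{1,0},\Delta^{+}_{0,1}$, then enumerate these case by case for $\mathfrak{a}_{n-1}$, $\mathfrak{d}_{n}$ and $\mathfrak{e}_{6}$, which is exactly what the paper does (including the cross-check via $l\dim\mathcal{V}_1=k\dim\mathcal{V}_2=m\dim\mathcal{V}_3$). Two small slips in your anticipated case list --- the branching node of $\mathfrak{d}_{n}$ has mark $2$ and so never occurs in a Type $A_{3}II$ pair, and in $\mathfrak{e}_{6}$ only $\alpha_{1}$ and $\alpha_{6}$ have mark $1$, so there is a single admissible pair up to conjugation (likewise the pairs of end nodes of $\mathfrak{d}_{n}$ all give conjugate automorphisms) --- are self-correcting once the marks $m_i$ are computed and do not affect the argument.
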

\begin{proof}
On ${\mathfrak a}_{n-1}$ a set $\Delta^{+}$
of the positive roots is given by
\[
\Delta^{+} = \{\alpha_{p,q} = \alpha_{p}+\alpha_{p+1} + \dots
+\alpha_{q}\mid \; 1\leq p\leq q\leq n-1\}.
\]
Then, for $\sigma = {\mathrm Ad}_{\exp 2\pi\sqrt{-1}H},$ with $H = \frac{1}{3}(H_{i} + H_{j}),$ $1<i<j<n-1,$ we obtain
$\Delta^{+}(H) = \Delta^{+}_{1}(H) \cup \Delta^{+}_{2}(H)\cup
\Delta^{+}_{3}(H)$ where $\Delta^{+}_{1}(H) = \{\alpha_{p,q}\mid
\;1\leq p\leq q <i\},$ $\Delta^{+}_{2}(H) = \{\alpha_{p,q}\mid
\;i<p\leq q <j\}$ and $\Delta^+_{3}(H) = \{\alpha_{p,q}\mid \;j<
p\leq q \leq n-1\}.$ Then ${\mathfrak k}$ is of type ${\mathfrak
a}_{i-1}\oplus {\mathfrak a}_{j-i-1} \oplus {\mathfrak
a}_{n-1-j}\oplus {\mathfrak T}^{2}.$ Hence $M$ is the quotient
manifold $\Lie{SU}(n)/\Lie{S}(\Lie{U}(i)\times \Lie{U}(j-i)\times
\Lie{U}(n-j)).$

On ${\mathfrak m} = \mathbb{R}
\{U^{a}_{\alpha}\mid \; \alpha\in
\Delta^{+}\setminus\Delta^{+}(H)\}$ we consider the orthogonal
decomposition ${\mathfrak m} = {\mathcal V}_{1}\oplus {\mathcal
V}_{2} \oplus {\mathcal V}_{3},$ given in (\ref{VV1}), where
 $$
 \begin{array}{lcl}
 \Delta^{+}_{1,1} & = & \{\alpha_{p,q}\mid \; p\leq i<j\leq q\},\\[0.4pc]
 \Delta^{+}_{1,0} & = & \{\alpha_{p,q}\mid \; 1\leq p\leq i\leq q <j\},\\[0.4pc]
 \Delta^{+}_{0,1} & = & \{\alpha_{p,q}\mid \; i<p\leq j\leq q\leq n-1\}.
  \end{array}
 $$

\noindent Fixed $\alpha = \alpha_{p,q}\in \Delta^{+}_{1,1},$ a
root $\beta = \alpha_{p',q'}$ in $\Delta^{+}_{1,0}$ satisfies
$\alpha - \beta\in \Delta$ if and only if $p' = p.$ This implies
that $n_{-\alpha}(\Delta^{+}_{1,0}) = j-i.$ In a similar way we
get
$$
n_{-\alpha}(\Delta^{+}_{1,0})  =  n_{-\alpha}(\Delta^{+}_{0,1})  =  j-i,\;\;\;\;
n_{-\beta}(\Delta^{+}_{1,1})  =  n_{\beta}(\Delta^{+}_{0,1})  =  n-j,\;\;\;\;
n_{-\gamma}(\Delta^{+}_{1,1})  =  n_{\gamma}(\Delta^{+}_{1,0})  = i,
$$
for all $\alpha\in \Delta^{+}_{1,1},$ $\beta\in \Delta^{+}_{1,0}$
and $\gamma\in \Delta^{+}_{0,1}.$ Therefore, it follows from Lemma
\ref{IIr} that $l= 2(j-i)\kappa,$ $k = 2(n-j)\kappa$ and $m =
2i\kappa.$

On $\mathfrak{d}_{n}$ $(n\geq 4)$, \vspace{-8mm}

$$
\xymatrix@R=-2.0cm@C=.7cm{
 & & & & \stackrel{1}{\stackrel{\circ}{\alpha_{n-1}}} \ar@{-}[dl]\\
\;\;\;\;\stackrel{1}{\stackrel{\circ}{\alpha_{1}}} \ar@{-}[r] &
\stackrel{2}{\stackrel{\circ}{\alpha_{2}}} \ar@{-}[r] & \; \dots
\ar@{-}[r] & \stackrel{2}{\stackrel{\circ}{\alpha_{n-2}}} \ar@{-}[dr]&  \\
 & &  &  & \stackrel{1}{\stackrel{\circ}{\alpha_{n}}}},
$$
the automorphism $\sigma$ of Type $A_{3}II$ is determined by $H =
\frac{1}{3}(H_{n-1} + H_{n}).$ The other possibilities for $H,$ i.e.
$H = \frac{1}{3}(H_{n} + H_{n-1})$ or $H = \frac{1}{3}(H_{1} +
H_{n}),$ give automorphisms which are conjugated with the first one.
A set $\Delta^{+}$ of the positive roots is given by
\[
\Delta^{+}\hspace{-0,1cm} = \hspace{-0,1cm}\{\alpha_{p}\; (1\leq p\leq n),\; \alpha_{p,q}\; (1\leq
p< q< n),\; \tilde{\alpha}_{p,q} = \alpha_{p,n-2} + \alpha_{q,n}\;
(1\leq p<q\leq n,\; p\leq n-2)\}.
\]
Then
 \[
 \Delta^{+}(H) = \{\alpha_{p,q}\; (1\leq p\leq q\leq n-2)\}
 \]
 and ${\mathfrak k}$ is of type ${\mathfrak a}_{n-2} \oplus {\mathfrak T}^{2}.$
 Hence $M$ is the quotient manifold $\Lie{SO}(2n)/(\Lie{U}(n-1)\times \Lie{SO}(2)).$ Next,
 on ${\mathfrak m} = \mathbb{R} \{U^{a}_{\alpha}\mid \; \alpha\in
\Delta^{+}\setminus\Delta^{+}(H)\}$, we consider the corresponding
orthogonal decomposition ${\mathfrak m} = {\mathcal V}_{1}\oplus
{\mathcal V}_{2} \oplus {\mathcal V}_{3}$ into the vertical
distributions ${\mathcal V}_{i},$ $i=1,2,3,$ where the
corresponding subset of positive roots are given by
 $$
 \begin{array}{lcl}
\Delta^{+}_{1,1} & = & \{\tilde{\alpha}_{p,q}\mid \; 1\leq p <q\leq n-1\},\\[0.4pc]
 \Delta^{+}_{1,0} & = & \{\alpha_{p,n-1}\mid \; 1\leq p\leq n-1\},\\[0.4pc]
 \Delta^{+}_{0,1} & = & \{\alpha_{n}, \tilde{\alpha}_{p,n} \mid \; 1\leq p\leq n-2\}.

 \end{array}
 $$
Then we have
$$
\begin{array}{l}
n_{-\alpha}(\Delta^{+}_{1,0})  =  n_{-\alpha}(\Delta^{+}_{0,1})  =
2,\\[0.4pc]
n_{-\beta}(\Delta^{+}_{1,1})  =  n_{\beta}(\Delta^{+}_{0,1})  =  n_{-\gamma}(\Delta^{+}_{1,1})  =  n_{\gamma}(\Delta^{+}_{1,0})  = n-2,
\end{array}
$$
for all $\alpha\in \Delta^{+}_{1,1},$ $\beta\in \Delta^{+}_{1,0}$
and $\gamma\in \Delta^{+}_{0,1}.$ Using above lemma,
${\mathcal V}_{1}$ and ${\mathcal V}_{2}\oplus {\mathcal V}_{3}$ are eigenspaces of $r$ with
eigenvalues $l = 4 \kappa$ and $k = m = 2 (n-2) \kappa,$ respectively.

Finally, a system of positive roots $\Delta^{+}$ on $\mathfrak{e}_{6}$,
    \vspace{-4mm}

 $$
 \xymatrix@R=.5cm@C=.8cm{
& & \stackrel{2}{\stackrel{\circ}{\alpha_{2}}} \ar@{-}[d] & &\\
 \stackrel{1}{\stackrel{\circ}{\alpha_{6}}} \ar@{-}[r] &
\stackrel{2}{\stackrel{\circ}{\alpha_{5}}} \ar@{-}[r] &
\stackrel{3}{\stackrel{\circ}{\alpha_{4}}} \ar@{-}[r] &
\stackrel{2}{\stackrel{\circ}{\alpha_{3}}} \ar@{-}[r] &
\stackrel{1}{\stackrel{\circ \, ,}{\alpha_{1}}}}
$$
is given by
$$
\begin{array}{lcl}
\Delta^{+} &  = &\{\alpha_{1},\alpha_{2};\; \alpha_{p,q}\;
 (3\leq p\leq q \leq 6);\; \alpha_{1,p}, \alpha_{2,p}\; (4\leq p\leq 6);
 \; \alpha_{1} + \alpha_{3,p}\; (3\leq p\leq 6);\\[0.4pc]
& & \hspace{0.2cm} \alpha_{2} + \alpha_{4,p}\; (4\leq p\leq 6);\;  \alpha_{4} + \alpha_{p,q}\;
 (p=1,2; q = 5,6);\;  \alpha_{1,6} + \alpha_{p,q}\; (3\leq p<q\leq 5);\\[0.4pc]
 & & \hspace{0.2cm} \alpha_{2,6} + \alpha_{4,5}\, , \alpha_{1,5} + \alpha_{3,4}\, , \alpha_{1,6} + \alpha_{3,5}
  + \alpha_{4}, \mu = \alpha_{1,6} + \alpha_{2,5} + \alpha_{4}\}
\end{array}
$$
and the automorphism $\sigma$ of Type $A_{3}II$ is determined by
$H = \frac{1}{3}(H_{1} + H_{6}).$ Hence we get
\[
\Delta^{+}(H) = \{\alpha_{2},
 \alpha_{p,q}\;(3\leq p\leq q\leq 5), \alpha_{2,4}\, , \alpha_{2,5} \, ,\alpha_{2} + \alpha_{4},\alpha_{2}+\alpha_{4,5}\, ,
 \alpha_{4} + \alpha_{2,5}\}
\]
\noindent and
$$
\begin{array}{lcl}
\Delta^{+}_{1,1} & = & \{\alpha_{1,6}\,, \alpha_{1} +
\alpha_{3,6}\, ,\alpha_{4} + \alpha_{1,6}\,,\alpha_{1,6} +
\alpha_{p,q} \; (3\leq p<q\leq 5), \alpha_{1,6} + \alpha_{3,5} +
\alpha_{4}, \mu\},\\[0.4pc]
\Delta^{+}_{1,0} & = & \{\alpha_{1}, \alpha_{1,p}\; (p=4,5), \alpha_{1} + \alpha_{3,p}\; (3\leq p\leq 5),
 \alpha_{4} + \alpha_{1,5}\, , \alpha_{1,5} + \alpha_{3,4}\},\\[0.4pc]
\Delta^{+}_{0,1} & = & \{ \alpha_{p,6}\; (3\leq p\leq 6),
\alpha_{2,6}\, , \alpha_{2} + \alpha_{4,6}\, ,
 \alpha_{4} + \alpha_{2,6}\, , \alpha_{4,5} + \alpha_{2,6}\}.
\end{array}
$$
Then we have
$$
\begin{array}{l}
n_{-\alpha}(\Delta^{+}_{1,0})  =  n_{-\alpha}(\Delta^{+}_{0,1})
 =  n_{-\beta}(\Delta^{+}_{1,1})  =  n_{\beta}(\Delta^{+}_{0,1})  =  n_{-\gamma}(\Delta^{+}_{1,1})  =  n_{\gamma}(\Delta^{+}_{1,0})  = 4.
\end{array}
$$
Hence it follows from above lemma that $r$ has exactly one
eigenvalue $l = k = m = 8 \kappa$. Since $ \dim \,  \mathcal{V}_1 = 2 \,
\mathrm{card}(\Delta_{1,1}^+)$, $ \dim \,  \mathcal{V}_2 = 2 \,
\mathrm{card}(\Delta_{1,0}^+)$, and $ \dim \,  \mathcal{V}_3 = 2 \,
\mathrm{card}(\Delta_{0,1}^+)$, the dimensions of $ \mathcal{V}_1$,
$\mathcal{V}_2$ and  $\mathcal{V}_3$ given in Table \ref{tab:AII}
follow.
\end{proof}

\begin{corollary} \label{Einstein:AII}
On irreducible compact $3$-symmetric spaces of Type $A_{3}II$,
Equation \ref{ricricast} is satisfied. Therefore, an irreducible
compact $3$-symmetric spaces of Type $A_{3}II$ is Einstein if and
only if  $\dim \mathcal{V}_1 =  \dim \mathcal{V}_2 = \dim
\mathcal{V}_3$. Thus, $\Lie{SU}(3a)/\Lie{S}(\Lie{U}(a)\times
\Lie{U}(a)\times \Lie{U}(a))$ $(a\geq 1)$,
$\Lie{E}_{6}/(\Lie{SO}(8)\times \Lie{SO}(2)\times \Lie{SO}(2))$  and
$\Lie{SO}(8)/(\Lie{U}(3)\times \Lie{SO}(2))$ are the  irreducible
compact $3$-symmetric spaces of Type $A_{3}II$ which are Einstein.
\end{corollary}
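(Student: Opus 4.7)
The plan is to combine the explicit eigenvalue data of Proposition \ref{pA3II} with the Einstein criterion of Proposition \ref{lcomplete}, reducing the problem to a simple numerical comparison among the three dimensions $\dim \mathcal{V}_1$, $\dim \mathcal{V}_2$, $\dim \mathcal{V}_3$.

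First I would verify Equation \eqref{ricricast} by substituting the eigenvalues $(l,k,m)$ computed in Proposition \ref{pA3II} and the dimensions $\dim \mathcal{V}_i = 2\,\mathrm{card}(\Delta^+_{*,*})$ coming from the partition of the positive roots. For $\mathfrak{g}_\mathbb{C} = \mathfrak{a}_{n-1}$, a direct count gives $\mathrm{card}(\Delta^+_{1,1}) = i(n-j)$, $\mathrm{card}(\Delta^+_{1,0}) = i(j-i)$, $\mathrm{card}(\Delta^+_{0,1}) = (j-i)(n-j)$, and with $l = 2(j-i)\kappa$, $k = 2(n-j)\kappa$, $m = 2i\kappa$ one checks at once that
\[
l\dim \mathcal{V}_1 = k\dim \mathcal{V}_2 = m\dim \mathcal{V}_3 = 4i(j-i)(n-j)\kappa.
\]
The analogous verification for the $\mathfrak{d}_n$ and $\mathfrak{e}_6$ cases is immediate from the cardinalities $\mathrm{card}(\Delta^+_{1,1}) = \binom{n-1}{2}$, $\mathrm{card}(\Delta^+_{1,0}) = \mathrm{card}(\Delta^+_{0,1}) = n-1$ in the $\mathfrak{d}_n$-case (with $l=4\kappa$, $k=m=2(n-2)\kappa$), and from the trivial fact that $l=k=m=8\kappa$ in the $\mathfrak{e}_6$-case.

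Next, by Proposition \ref{lcomplete}, the manifold is Einstein if and only if $r$ has a single eigenvalue, i.e.\ $l = k = m$. Combining this with Equation \eqref{ricricast}, which we have just established, yields the equivalence
\[
(M,g) \text{ is Einstein} \iff \dim \mathcal{V}_1 = \dim \mathcal{V}_2 = \dim \mathcal{V}_3,
\]
which is the second statement of the corollary.

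Finally I would run this criterion through the three entries of Table \ref{tab:AII}. In the $\mathfrak{a}_{n-1}$-case, $i(n-j) = i(j-i) = (j-i)(n-j)$ forces $i = j-i = n-j$, hence $n = 3a$, $i = a$, $j = 2a$, giving $\Lie{SU}(3a)/\Lie{S}(\Lie{U}(a)\times \Lie{U}(a)\times \Lie{U}(a))$. In the $\mathfrak{d}_n$-case, $\binom{n-1}{2} = n-1$ forces $n = 4$, giving $\Lie{SO}(8)/(\Lie{U}(3)\times \Lie{SO}(2))$. In the $\mathfrak{e}_6$-case, one counts $\mathrm{card}(\Delta^+_{1,1}) = \mathrm{card}(\Delta^+_{1,0}) = \mathrm{card}(\Delta^+_{0,1}) = 8$ directly from the lists written out in the proof of Proposition \ref{pA3II}, so the space $\Lie{E}_6/(\Lie{SO}(8)\times \Lie{SO}(2)\times \Lie{SO}(2))$ is automatically Einstein. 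No step is really an obstacle here: all the hard geometric work has been absorbed into Propositions \ref{lcomplete} and \ref{pA3II}, and the only care required is to ensure the three root-system countings in the $\mathfrak{e}_6$-case are carried out correctly.
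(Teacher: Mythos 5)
Your argument is correct and is essentially the intended one: combine the eigenvalue/dimension data of Proposition \ref{pA3II} (Table \ref{tab:AII}) with the equivalence ``Einstein $\iff$ $r$ has a single eigenvalue'' from Proposition \ref{lcomplete}, and read off the three Einstein cases. The only cosmetic difference is that you verify Equation \eqref{ricricast} by direct root counting in each of the three Lie algebras, whereas it already holds a priori by Lemma \ref{cincopuntotreslem}, since Type $A_{3}II$ spaces carry the required decomposition with $\xi_{\mathcal V_k}\mathcal V_k=0$ (Proposition \ref{III,IV}); your computation is thus a consistency check on Table \ref{tab:AII} rather than a logically necessary step.
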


\begin{table}[tp]
  \centering
  \begin{tabular}{cccccc}
   \toprule
 $G$ & $K$ &  $(l,k,m)$  & $\dim \, \mathcal{V}_1$ & $\dim \,
 \mathcal{V}_2$
& $\dim \, \mathcal{V}_3$ \\[1mm]
 \midrule
{\footnotesize $\Lie{SU}(n)/ \mathbb{Z}_n$} & {\footnotesize
$\Lie{S}\left(\Lie{U}(r_{1})\times \Lie{U}(r_{2})\times
\Lie{U}(r_{3})\right)$}
 & {\footnotesize $(2r_{2} \kappa, 2r_{3}
\kappa,  2r_{1} \kappa)$}  & {\footnotesize $2 r_1 r_3$} & {\footnotesize $2 r_1 r_2$} & {\footnotesize $2r_2 r_3$}\\
{\footnotesize $n=r_1+r_2 +r_3$} & {\footnotesize $1 \leq  r_i$} &  &  &  &  \\[2mm]
{\footnotesize $\Lie{SO}(2n)/ \mathbb{Z}_2$} & {\footnotesize
$\Lie{U}(n-1)\times \Lie{SO}(2)$} & {\footnotesize $(4 \kappa,
2(n-2) \kappa, 2(n-2) \kappa)$} & {\footnotesize $(n-1)(n-2)$} &
{\footnotesize $2(n-1)$} & {\footnotesize $2
(n-1)$} \\
{\footnotesize $\; n \geq 4$} & & & & &
\\[2mm]
{\footnotesize $\Lie{E}_{6}/ \mathbb{Z}_3$} & {\footnotesize
$\Lie{SO}(8)\times \Lie{SO}(2)\times \Lie{SO}(2)$} &
{\footnotesize $(8 \kappa,8 \kappa,8 \kappa)$} &{\footnotesize $16$}&{\footnotesize $16$}&{\footnotesize $16$} \\[2mm]
\bottomrule
\end{tabular}
 \vspace{2mm}

  \caption{Type $A_{3}II$. $\kappa = \langle \mu
, \mu \rangle$, $\mu$ is the highest positive root}
  \label{tab:AII}
\end{table}

Next, we consider compact irreducible $3$-symmetric spaces of Type $A_{3}III.$
 \begin{lemma}\label{plm}
On a compact irreducible $3$-symmetric space of Type $A_{3}III$
such that $ \mathrm{T} M =\mathcal{V} \oplus \mathcal{H}$, if
$\beta \in \Delta_2^+$ and $\alpha \in \Delta_1^+$, then we have
that
$$ \textstyle
l = n_{-\beta}(\Delta^{+}_{1})\langle\beta,\beta\rangle, \qquad k= 2
\sum_{\{\gamma \in \Delta_2^+ \mid \; \gamma  - \alpha \in \Delta\}}
\langle \gamma , \gamma \rangle
$$
 are the eigenvalues of $r$ corresponding to the
eigenbundles $\mathcal{V}$ and $\mathcal{H}$, respectively.
\end{lemma}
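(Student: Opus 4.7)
The plan is to compute $r$ directly in the basis $\{U^a_\lambda\}$ of $\mathfrak{m}$, in the spirit of Lemma \ref{IIr}. The starting point is (\ref{xi}), $\xi_X Y = -\tfrac{1}{2}[X,Y]_{\mathfrak{m}}$, combined with Lemma \ref{bracket}(iii). Since $m_i = 2$ for Type $A_3III$, for $\beta \in \Delta_2^+$ and $\alpha \in \Delta_1^+$ the sum $\beta + \alpha$ has $n_i$-coefficient $3$ and is never a root; so Lemma \ref{bracket}(iii) leaves only the $\beta - \alpha$ term, giving
\[
[U^a_\beta, U^b_\alpha]_{\mathfrak{m}} = \pm\, N_{-\alpha,\beta}\, U^{a+b}_{\beta-\alpha}
\]
when $\beta - \alpha \in \Delta$ (and zero otherwise), with the result lying in $\mathcal{H}$; moreover $\xi_\mathcal{V}\mathcal{V} = 0$ since brackets within $\mathcal{V}$ have $n_i \in \{0,\pm 4\}$ and project to $\mathfrak{k}$. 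Iterating once and using the cyclic relation (\ref{**}) applied to the triple $(\alpha - \beta,\, \beta,\, -\alpha)$, together with the antisymmetry $N_{\alpha,\beta} = -N_{\beta,\alpha}$, one obtains $\xi_{U^0_\beta}^{\,2}\, U^b_\alpha = -\tfrac{1}{4}\, N_{-\alpha,\beta}^2\, U^b_\alpha$.

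The crux of the proof is the evaluation of $N_{-\alpha,\beta}^2$. Rather than use the $(-\alpha)$-string through $\beta$, whose length varies with root lengths, I would apply (\ref{***}) to the $\beta$-string through $-\alpha$: the candidate extensions $-\alpha - \beta$ and $-\alpha + 2\beta$ have $n_i$-coefficients $-3$ and $3$, both violating $|n_i| \leq m_i = 2$. Consequently the string is precisely $\{-\alpha,\, -\alpha + \beta\}$ whenever $\beta - \alpha \in \Delta$, so $p = 0$, $q = 1$, and
\[
N_{-\alpha,\beta}^2 \;=\; N_{\beta,-\alpha}^2 \;=\; \tfrac{1}{2}\,\langle\beta,\beta\rangle.
\]
This is where the hypothesis $m_i = 2$ --- the defining feature of Type $A_3III$ --- is decisive, and it is the main (indeed the only subtle) obstacle in the argument. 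Tracing $\xi_{U^0_\beta}^{\,2}$ over $\mathfrak{m}$ now yields
\[
r(U^0_\beta, U^0_\beta) \;=\; -4\,\mathrm{tr}\,\xi_{U^0_\beta}^{\,2} \;=\; 2 \sum_{\alpha \in \Delta_1^+,\; \beta - \alpha \in \Delta} N_{-\alpha,\beta}^2 \;=\; n_{-\beta}(\Delta_1^+)\,\langle\beta,\beta\rangle,
\]
where the secondary condition $\alpha - 2\beta \notin \Delta$ in the definition of $n_{-\beta}(\Delta_1^+)$ is automatic since $n_i(\alpha - 2\beta) = -3$.

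For the horizontal eigenvalue I would invoke the alternative formula of Lemma \ref{cincopuntotreslem}: for $X \in \mathcal{H}$, $r(X,X) = 8 \sum_\ell \langle \xi_{e_\ell} X, \xi_{e_\ell} X \rangle$ with $\{e_\ell\}$ an orthonormal basis of $\mathcal{V}$. Taking $X = U^0_\alpha$ and the basis $\{U^a_\gamma : \gamma \in \Delta_2^+,\; a = 0,1\}$, each summand equals $\tfrac{1}{4}\, N_{-\alpha,\gamma}^2$ when $\gamma - \alpha \in \Delta$, and substituting $N_{-\alpha,\gamma}^2 = \tfrac{1}{2}\langle\gamma,\gamma\rangle$ delivers
\[
r(U^0_\alpha, U^0_\alpha) \;=\; 2 \sum_{\gamma \in \Delta_2^+,\; \gamma - \alpha \in \Delta} \langle\gamma,\gamma\rangle.
\]
Off-diagonal vanishing $r(U^a_\beta, U^b_{\beta'}) = 0$ for $\beta \neq \beta'$ in $\Delta_2^+$ (and analogously for distinct $\alpha,\alpha' \in \Delta_1^+$) is automatic: the composition $\xi_{U^0_\beta}\,\xi_{U^0_{\beta'}}$ shifts each weight by the non-zero amount $\beta - \beta'$ and so is traceless on the weight decomposition of $\mathfrak{m}$. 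This identifies $U^a_\beta$ and $U^a_\alpha$ as eigenvectors of $r$ with eigenvalues $l$ and $k$ respectively, as claimed.
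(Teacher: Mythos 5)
Your proposal is correct and follows essentially the same route as the paper: write $\xi$ in the Weyl-type basis via (\ref{xi}) and Lemma \ref{bracket}, observe that $m_i=2$ kills the $\alpha+\beta$ terms and reduces the relevant root strings to $\{-\alpha,-\alpha+\beta\}$ so that $(\ref{***})$ gives $N_{-\alpha,\beta}^2=\tfrac12\langle\beta,\beta\rangle$, and then read off the diagonal entries of $r$ (via the trace, respectively via Lemma \ref{cincopuntotreslem}). The only cosmetic difference is that you compute $l$ as $-4\operatorname{tr}\xi^2_{U^0_\beta}$ and argue the off-diagonal vanishing by a weight-shift count, whereas the paper uses the second identity of Lemma \ref{cincopuntotreslem} and the explicit form of $\xi$ for both; these are equivalent.
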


\begin{remark} {\rm In particular, if  $\mu$ is the highest positive root, we
have $l = n_{-\mu}(\Delta^{+}_{1})\langle\mu,\mu\rangle$. Note that
 $\langle \mu , \mu \rangle =
 \langle \alpha , \alpha \rangle$, where $\alpha$ is any simple
root of maximal length.}
\end{remark}
 \begin{proof}
For $\alpha,$ $\tilde{\alpha}\in \Delta^{+}_{1}$ and $\beta\in
\Delta^{+}_{2}$ one gets:
\begin{enumerate}
\item[{$\bullet$}] If $\alpha + \tilde{\alpha}\in
\Delta$ then $\alpha + \tilde{\alpha}\in \Delta^{+}_{2}.$
 If $\alpha - \tilde{\alpha}\in \Delta$ then $\alpha - \tilde{\alpha}\in \Delta(H).$
\item[{$\bullet$}] $\alpha + \beta\notin \Delta$
and if $\alpha - \beta\in \Delta$ then $\beta - \alpha \in \Delta^+_{1}.$
\end{enumerate}

\noindent From here, using (\ref{xi}) and Lemma \ref{bracket}, we
have
\begin{equation}\label{exi}
\xi_{U^{0}_{\alpha}}U^{0}_{\tilde{\alpha}} =
-\frac{N_{\alpha,\tilde{\alpha}}}{2}U^{0}_{\alpha
+\tilde{\alpha}},\;\;\;\xi_{U^{0}_{\alpha}}U^{0}_{\beta} =
-\frac{N_{-\alpha,\beta}}{2}U^{0}_{\alpha -\beta},
\end{equation}
being zero for the rest. Hence $U^{a}_{\alpha},$ and
$U^{a}_{\beta}$ are eigenvectors for $r$. Moreover, we obtain
\[
\xi^{2}_{U^{0}_{\alpha}}U^{0}_{\tilde{\alpha}}  = -
\frac{N^2_{\alpha,\tilde{\alpha}}}{4}U^{0}_{\tilde{\alpha}},
\;\;\;\; \xi^{2}_{U^{0}_{\alpha}}U^{0}_{\beta}  =
-\frac{N_{-\alpha,\beta}^{2}}{4}U^{0}_{\beta}, \;\;\;\;
\xi^{2}_{U^{0}_{\beta}}U^{0}_{\alpha}  =
-\frac{N_{-\beta,\alpha}^{2}}{4}U^{0}_{\alpha}
\]
and the others being zero. Finally, using Lemma
\ref{cincopuntotreslem}, it follows
$$
\begin{array}{lcl}
\langle rU^{0}_{\alpha},U^{0}_{\alpha}\rangle & = & 4
\sum_{\tilde{\alpha}\in \Delta^{+}_{1}}
N_{\alpha,\tilde{\alpha}}^{2} = 4
 \sum_{\beta\in \Delta^{+}_{2}}N^2_{-\alpha,\beta},\\[0.4pc]
\langle rU^{0}_{\beta},U^{0}_{\beta}\rangle & = & 2\sum_{\alpha\in
\Delta^{+}_{1}}N_{-\beta,\alpha}^{2}.
\end{array}
$$

\noindent From this and (\ref{***}), taking into account that the
 $-\gamma$-serie, $\gamma \in  \Delta_2^+$,
 containing $\alpha \in \Delta_1^+$ is given by $\{\alpha, \alpha - \gamma\}$,  we get
\[ \textstyle
k = \langle rU^{0}_{\alpha},U^{0}_{\alpha}\rangle =  2
\sum_{\{\gamma \in \Delta_2^+ \mid \; \gamma  - \alpha \in \Delta\}}
\langle \gamma , \gamma \rangle
\]
Also we  obtain
\[
l = \langle rU^{0}_{\beta},U^{0}_{\beta}\rangle =
n_{-\beta}(\Delta^{+}_{1})\langle \beta,\beta \rangle.
\]
\end{proof}

\begin{table}[tp]
  \centering
  \begin{tabular}{ccccc}
    \toprule
    $G$&$K$&$(l,k)$& $\dim \, \mathcal{V}$&$\dim \, \mathcal{H}$
    \\[1mm]
    \midrule
    {\footnotesize $\Lie{SO}(2n+1)$,}& {\footnotesize $\Lie{U}(i)\times
    \Lie{SO}(2(n-i)+1)$,}
    & {\footnotesize $((2(n-i) + 1) \kappa,(i-1) \kappa)$} & {\footnotesize $i(i-1)$} & {\footnotesize $2i (2(n-i)+1)$} \\
{\footnotesize $\; n>2$}  & {\footnotesize $\; i>1$} &&& \\[2mm]
{\footnotesize $ \Lie{Sp}(n)/{\mathbb Z}_{2}$,}& {\footnotesize
$\{(\Lie{U}(i)\times \Lie{Sp}(n-i))\}/{\mathbb Z}_{2}, $}&
{\footnotesize $(2(n-i) \kappa,(i+1) \kappa)$} & {\footnotesize $i
(i+1)$} &  {\footnotesize$4i (n-i)$}
\\
 {\footnotesize  $\; n\geq 2$ }  & {\footnotesize $\; 1 \leq i<n$} &&&
 \\[2mm]
{\footnotesize $\Lie{SO}(2n)/{\mathbb Z}_{2}$,} &
 {\footnotesize $\{(\Lie{U}(i)\times \Lie{SO}(2(n-i)))\}/{\mathbb Z}_{2},$} & {\footnotesize
 $(2 (n-i) \kappa, (i-1) \kappa)$} & {\footnotesize $i
 (i-1)$}
 & {\footnotesize $4i (n-i)$}
 \\
 {\footnotesize  $\; n\geq 4$}  & {\footnotesize  $\; 2\leq i\leq n-2$}  &&&
 \\[2mm]
{\footnotesize $\Lie{G}_{2}$} & {\footnotesize $\Lie{U}(2)$} &
{\footnotesize $( 4 \kappa, 2 \kappa)$} & {\footnotesize $2$} &
{\footnotesize $8$} \\[2mm]
{\footnotesize $\Lie{F}_{4}$} & {\footnotesize
$\{\Lie{Spin}(7)\times \Lie{T}^{1}\}/{\mathbb Z}_{2}$}&
{\footnotesize $(4 \kappa, 7 \kappa)$} & {\footnotesize $14$} & {\footnotesize $16$} \\[2mm]
{\footnotesize $\Lie{F}_{4}$} & {\footnotesize
$\{\Lie{Sp}(3)\times \Lie{T}^{1}\}/ {\mathbb
Z}_{2}$} & {\footnotesize $(14 \kappa, 2 \kappa)$}& {\footnotesize $2$} & {\footnotesize $28$} \\[2mm]
{\footnotesize $\Lie{E}_{6}/{\mathbb Z}_{3}$} & {\footnotesize
$\{\Lie{S}(\Lie{U}(5)\times \Lie{U}(1))\times
\Lie{SU}(2)\}/{\mathbb Z}_{2}$} & {\footnotesize $(12 \kappa, 6 \kappa)$} & {\footnotesize $10$} & {\footnotesize $40$}  \\[2mm]
{\footnotesize $\Lie{E}_{6}/{\mathbb Z}_{3}$} & {\footnotesize
$\{[\Lie{SU}(6)/{\mathbb Z}_{3}]\times \Lie{T}^{1}\}/{\mathbb
Z}_{2}$} & {\footnotesize $(20 \kappa ,2
\kappa)$} & {\footnotesize $2$}  & {\footnotesize $40$} \\[2mm]
{\footnotesize $ \Lie{E}_{7}/{\mathbb Z}_{2}$} & {\footnotesize  $
\{\Lie{SU}(2)\times [\Lie{SO}(10)\times \Lie{SO}(2)]\}/{\mathbb
Z}_{2}$} & {\footnotesize $(16 \kappa,10 \kappa)$} & {\footnotesize $20$}  & {\footnotesize $64$} \\[2mm]
{\footnotesize $ \Lie{E}_{7}/{\mathbb Z}_{2}$} & {\footnotesize
$\{\Lie{SO}(12)\times \Lie{SO}(2)\}/{\mathbb Z}_{2}$}&
{\footnotesize $(32 \kappa,2 \kappa)$} & {\footnotesize $2$} &
{\footnotesize $64$} \\[2mm]
{\footnotesize $\Lie{E}_{7}/{\mathbb Z}_{2}$} & {\footnotesize $
\Lie{S}(\Lie{U}(7)\times \Lie{U}(1))/{\mathbb Z}_{4}$} &
{\footnotesize $(20 \kappa ,8 \kappa)$} & {\footnotesize $14$} &
{\footnotesize $70$}\\[2mm]
{\footnotesize $\Lie{E}_{8}$} & {\footnotesize $\Lie{SO}(14)\times
\Lie{SO}(2)$}& {\footnotesize $(32 \kappa, 14 \kappa)$} & {\footnotesize $28$} & {\footnotesize $128$}  \\[2mm]
{\footnotesize $\Lie{E}_{8}$} & {\footnotesize
$\{\Lie{E}_{7}\times \Lie{T}^{1}\}/{\mathbb
Z}_{2}$}& {\footnotesize $(56 \kappa, 2 \kappa)$} & {\footnotesize $2$}  & {\footnotesize $112$} \\[2mm]
    \bottomrule
  \end{tabular} \vspace{2mm}

  \caption{Type $A_{3}III$. $\kappa = \langle \mu
, \mu \rangle$, $\mu$ is the highest positive root}
  \label{tab:AIII}
\end{table}

\begin{proposition}
On a compact irreducible $3$-symmetric of Type $A_{3}III,$ the
eigenvalues $(l, k)$ of $r$ and the dimensions corresponding to
the eigenbundles $\mathcal{V}$ and  $\mathcal{H}$, respectively,
are given in Table \ref{tab:AIII}.
\end{proposition}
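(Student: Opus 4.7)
The strategy mirrors that of Proposition \ref{pA3II}: I would go case by case through each pair $(\mathfrak{g}_\mathbb{C},\alpha_i)$ consisting of a complex simple Lie algebra and a simple root $\alpha_i$ satisfying $m_i=2$ in the highest root $\mu=\sum m_k\alpha_k$. Such pairs parametrize the compact irreducible $3$-symmetric spaces of Type $A_3III$ via $\sigma=\mathrm{Ad}_{\exp(4\pi\sqrt{-1}H_i/3)}$, and the admissible Lie algebras are $\mathfrak{b}_n\ (n\geq 2)$, $\mathfrak{c}_n\ (n\geq 2)$, $\mathfrak{d}_n\ (n\geq 4)$, $\mathfrak{g}_2$, $\mathfrak{f}_4$, $\mathfrak{e}_6$, $\mathfrak{e}_7$ and $\mathfrak{e}_8$. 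For each such pair I would:
\begin{enumerate}
\item[\textup{(a)}] fix an explicit system of positive roots $\Delta^+$ and partition it as $\Delta^+=\Delta^+_0\sqcup\Delta^+_1\sqcup\Delta^+_2$ according to $n_i(\alpha)$; $\Delta^+(H)=\Delta^+_0$ has simple root basis $\Pi\smallsetminus\{\alpha_i\}$, which identifies $\mathfrak{k}$ and hence the pair $(G,K)$ appearing in the table;
\item[\textup{(b)}] compute $\dim\mathcal{H}=2|\Delta^+_1|$ and $\dim\mathcal{V}=2|\Delta^+_2|$;
\item[\textup{(c)}] apply Lemma \ref{plm}: fix a convenient $\beta\in\Delta^+_2$ (typically $\beta=\mu$), count $n_{-\beta}(\Delta^+_1)=\#\{\alpha\in\Delta^+_1\mid \alpha-\beta\in\Delta\}$ to read off $l=n_{-\beta}(\Delta^+_1)\,\langle\beta,\beta\rangle$; then fix $\alpha\in\Delta^+_1$ and evaluate $k=2\sum_\gamma\langle\gamma,\gamma\rangle$ over the $\gamma\in\Delta^+_2$ with $\gamma-\alpha\in\Delta$.
\end{enumerate}

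For the classical series, I would use the description of $\Delta^+$ in the standard orthonormal basis $e_1,\dots,e_n$: the positive roots of $\mathfrak{b}_n$ are $e_p$ and $e_p\pm e_q$ $(p<q)$, those of $\mathfrak{c}_n$ are $2e_p$ and $e_p\pm e_q$, those of $\mathfrak{d}_n$ are $e_p\pm e_q$. Expanding each root as $\sum n_k\alpha_k$ then reduces (b) and (c) to elementary combinatorics on the index pairs, which in every case collapses to the closed-form expressions in Table \ref{tab:AIII}. Particular care is needed with root lengths in the doubly-laced algebras $\mathfrak{b}_n$, $\mathfrak{c}_n$, $\mathfrak{f}_4$, $\mathfrak{g}_2$: since each $\beta\in\Delta^+_2$ contains $\alpha_i$ with multiplicity $2$, the value of $\langle\beta,\beta\rangle$ depends on whether $\alpha_i$ is long or short, and Lemma \ref{scalproductbeta} is useful for controlling which $\alpha\in\Delta^+_1$ satisfy $\gamma-\alpha\in\Delta$. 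For $\mathfrak{g}_2$ and $\mathfrak{f}_4$ the enumeration is immediate by hand; for $\mathfrak{e}_6,\mathfrak{e}_7,\mathfrak{e}_8$ I would use the explicit positive root systems reproduced in the final section of the paper, exactly as in the proof of Proposition \ref{pA3II}.

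Each computed row should be cross-checked against the dimension identity \eqref{ricricast2} of Lemma \ref{cincopuntotreslem}, $2l\dim\mathcal{V}=k\dim\mathcal{H}$, which must hold since $\mathcal{H}$ is complex $\mathrm{Hol}(\nabla^{\Lie{U}(n)})$-irreducible on Type $A_3III$ spaces (Proposition \ref{pIII}) and hence $r$ has a single eigenvalue on $\mathcal{H}$; a further check is the Einstein-constant identity $2\mu_N=l+2k$ from Proposition \ref{satdimensions}(ii), matched with the Einstein constants of the irreducible symmetric base spaces $N$ of the canonical twistor fibrations. The main obstacle I anticipate is the sheer bookkeeping for the exceptional series $\mathfrak{e}_7$ and $\mathfrak{e}_8$, where $|\Delta^+|$ is large and several simple roots satisfy $m_i=2$; for each such $\alpha_i$ one must carefully isolate the roots of $\Delta^+_1$ and $\Delta^+_2$ and tabulate the incidences $\alpha-\beta\in\Delta$. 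This is mechanical but tedious, and the systematic use of the Dynkin-diagram description of $\Pi\smallsetminus\{\alpha_i\}$ together with the above cross-checks keeps the computation tractable.
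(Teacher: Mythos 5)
Your proposal is correct and follows essentially the same route as the paper: the authors' proof of this proposition simply refers to the explicit root systems, Dynkin diagrams, and the sets $\Delta^{+}(H)$, $\Delta^{+}_{1}$, $\Delta^{+}_{2}$ tabulated case by case in the final section, and then invokes Lemma \ref{plm} to read off $l$, $k$, $\dim\mathcal{V}=2\,\mathrm{card}(\Delta^{+}_{2})$ and $\dim\mathcal{H}=2\,\mathrm{card}(\Delta^{+}_{1})$, recommending software for further checking. Your added cross-checks via Equation \eqref{ricricast2} and the Einstein-constant identity are sensible but not part of the paper's argument.
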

\begin{proof} In next Section the corresponding Dynkin diagrams
are displayed. Moreover,   the sets $\Delta^+$, $\Delta^+_2$ and
$\Delta^+_1$ are given. Likewise, on each case, it is fixed the
element $H$ which define the inner automorphism $\sigma$. From
that information and using Lemma \ref{plm}, Table \ref{tab:AIII}
can be completed (we recommend the use of some adequate software
for further checking, see \textsc{Lie} in
\texttt{http://www-math.univ-poitiers.fr})
\end{proof}

\begin{corollary} \label{Einstein:AIII}
On irreducible compact $3$-symmetric spaces of Type $A_{3}III$,
Equation \ref{ricricast2} is satisfied. Therefore, a compact
irreducible $3$-symmetric spaces of Type $A_{3}III$ is Einstein if
and only if  $2 \dim \mathcal{V} = \dim \mathcal{H}$. Thus,
$\Lie{SO}(6a-1)/(\Lie{U}(2a)\times \Lie{SO}(2a-1))$, $a\geq 2,$
$\Lie{Sp}(3a-1)/(\Lie{U}(2a-1)\times \Lie{Sp}(a)),$ $a\geq 1,$ and
$\Lie{SO}(6a+2)/(\Lie{U}(2a+1)\times \Lie{SO}(a))$, $a \geq 2$, are
the  irreducible compact $3$-symmetric spaces of Type $A_{3}III$
which are Einstein.
\end{corollary}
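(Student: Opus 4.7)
The plan is to combine Lemma \ref{cincopuntotreslem} with the characterization of Einstein nearly K\"ahler manifolds with special algebraic torsion given in Proposition \ref{lcomplete}, and then inspect Table \ref{tab:AIII} one family at a time. The first assertion, that Equation \eqref{ricricast2} is satisfied, is immediate: by Proposition \ref{trespuntouno11probis}(ii) together with Proposition \ref{pIII}, the horizontal distribution $\mathcal{H}$ of a Type $A_{3}III$ space is real $\mathrm{Hol}(\nabla^{\Lie{U}(n)})$-irreducible, so $r$ has a single eigenvalue $k$ on $\mathcal{H}$; then the particular case of Lemma \ref{cincopuntotreslem} corresponding to a single eigenvalue on $\mathcal{H}$ applies, yielding $2l\dim\mathcal{V}=k\dim\mathcal{H}$.

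Next I would read off the Einstein condition. By the equivalence $(\mathrm{i})\Leftrightarrow(\mathrm{v})$ in Proposition \ref{lcomplete}, the manifold is Einstein if and only if $r$ has exactly one eigenvalue on $\mathrm{T}M$. Since we already know $r$ has eigenvalue $l$ on $\mathcal{V}$ and $k$ on $\mathcal{H}$, this means $l=k$, and substituting into the identity $2l\dim\mathcal{V}=k\dim\mathcal{H}$ just derived gives the clean equivalent condition
\[
2\dim\mathcal{V}=\dim\mathcal{H}.
\]

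With this in hand, it only remains to solve $2\dim\mathcal{V}=\dim\mathcal{H}$ in each row of Table \ref{tab:AIII}. For the three classical infinite families this is a short arithmetic check:
\begin{itemize}
\item $\Lie{SO}(2n+1)/(\Lie{U}(i)\times\Lie{SO}(2(n-i)+1))$: the condition $2\,i(i-1)=2i(2(n-i)+1)$ reduces to $3i=2(n+1)$, so $i=2a$ and $n=3a-1$, giving $\Lie{SO}(6a-1)/(\Lie{U}(2a)\times\Lie{SO}(2a-1))$, $a\ge2$.
\item $\Lie{Sp}(n)/\mathbb{Z}_2$ with isotropy $(\Lie{U}(i)\times\Lie{Sp}(n-i))/\mathbb{Z}_2$: $2\,i(i+1)=4i(n-i)$ reduces to $3i=2n-1$, so $i=2a-1$, $n=3a-1$, giving $\Lie{Sp}(3a-1)/(\Lie{U}(2a-1)\times\Lie{Sp}(a))$, $a\ge1$.
\item $\Lie{SO}(2n)/\mathbb{Z}_2$ with isotropy $(\Lie{U}(i)\times\Lie{SO}(2(n-i)))/\mathbb{Z}_2$: $2\,i(i-1)=4i(n-i)$ reduces to $3i=2n+1$, so $i=2a+1$, $n=3a+1$, giving the third listed family.
\end{itemize}
For the remaining exceptional entries in Table \ref{tab:AIII} (the rows starting with $\Lie{G}_2$, $\Lie{F}_4$, $\Lie{E}_6$, $\Lie{E}_7$, $\Lie{E}_8$), one simply compares the two columns $\dim\mathcal{V}$ and $\dim\mathcal{H}$: in every one of these rows $2\dim\mathcal{V}\neq\dim\mathcal{H}$ (e.g.\ $4\neq8$ for $\Lie{G}_2/\Lie{U}(2)$, $28\neq16$ for $\Lie{F}_4/(\Lie{Spin}(7)\times\Lie{T}^1)/\mathbb{Z}_2$, and analogously for the rest), so none of them contributes.

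There is no real obstacle in this argument: the conceptual work has already been done in Lemma \ref{cincopuntotreslem} and Proposition \ref{lcomplete}, and once the Einstein condition is reduced to $2\dim\mathcal{V}=\dim\mathcal{H}$ the rest is a routine case-by-case inspection of Table \ref{tab:AIII}. The only mildly subtle point is making sure one applies the right half of Lemma \ref{cincopuntotreslem}, namely the version for a single eigenvalue on $\mathcal{H}$, which is precisely what Proposition \ref{pIII} guarantees for Type $A_{3}III$.
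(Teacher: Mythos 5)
Your argument is correct and is essentially the proof the paper leaves implicit: Equation (\ref{ricricast2}) holds because $\mathcal{H}$ is real $\mathrm{Hol}(\nabla^{\Lie{U}(n)})$-irreducible (Propositions \ref{pIII} and \ref{trespuntouno11probis}(ii)), the Einstein condition reduces via Proposition \ref{lcomplete} to $l=k$ and hence to $2\dim\mathcal{V}=\dim\mathcal{H}$, and the rest is the row-by-row check of Table \ref{tab:AIII}, all of which you carry out correctly. As a minor aside, your computation in the $\Lie{SO}(2n)$ case gives $\Lie{SO}(2a)$ as the second isotropy factor, which corrects the $\Lie{SO}(a)$ misprint in the corollary's statement.
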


\noindent The quotient $\frac{l}{k}$ measures how  much the nearly K\"ahler
metric of a compact irreducible $3$-symmetric space of Type
$A_{3}III$ deviates to be an Einstein metric. When the fibers of the canonical fibration is  two-dimensional,
it is known that the base manifold is quaternionic K{\"a}hler \cite{Nagy2}
of dimension $4a$, $a\geq 1$.
\begin{lemma}\label{pv2} If $\dim {\mathcal V} =2$,  $\dim \, M = 2n$ and $n=2a+1$,    then
\begin{equation}\label{lm}
l = (n-1)\langle\mu,\mu\rangle,\;\;\;\; k = 2\langle
\mu,\mu\rangle.
\end{equation}
Therefore,  $\frac{l}{k} = \frac{n-1}{2}=a$.
\end{lemma}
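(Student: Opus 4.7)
The plan is to reduce the lemma to a combinatorial statement about the root system. First, from $\dim\mathcal V=2|\Delta_2^+|$ and the observation that $\mu\in\Delta_2^+$ (since $m_i=2$), the hypothesis $\dim\mathcal V=2$ forces $\Delta_2^+=\{\mu\}$; likewise $|\Delta_1^+|=\tfrac12\dim\mathcal H=n-1$. Plugging $\Delta_2^+=\{\mu\}$ into Lemma \ref{plm} gives $l=n_{-\mu}(\Delta_1^+)\,\langle\mu,\mu\rangle$, while the $k$-sum collapses to $k=2\langle\mu,\mu\rangle$ provided $\mu-\alpha\in\Delta$ for some (equivalently, any) $\alpha\in\Delta_1^+$. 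Both equalities in \eqref{lm} would therefore follow from the \emph{key claim}: for every $\beta\in\Delta_1^+$ the element $\mu-\beta$ is again a root (necessarily in $\Delta_1^+$).

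To establish the key claim I would examine the $\mu$-string through $\beta$, namely the roots of the form $\beta+p\mu$ with $-q\le p\le r$. Because $\beta+\mu$ has $\alpha_i$-coefficient $3>m_i=2$ it is not a root (so $r=0$), and because $\beta-2\mu$ has $\alpha_i$-coefficient $-3$ it cannot be a root either (so $q\le 1$). Hence $q=2\langle\beta,\mu\rangle/\langle\mu,\mu\rangle\in\{0,1\}$, and the claim becomes $\langle\beta,\mu\rangle>0$. Using dominance of $\mu$ ($\langle\alpha_j,\mu\rangle\ge 0$ for every simple root $\alpha_j$) together with $n_i(\beta)=1$, we get $\langle\beta,\mu\rangle\ge n_i(\beta)\langle\alpha_i,\mu\rangle=\langle\alpha_i,\mu\rangle$, so it suffices to prove the strict inequality $\langle\alpha_i,\mu\rangle>0$.

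The main obstacle is precisely this last positivity, which is where the hypothesis $\Delta_2^+=\{\mu\}$ must be used. Since $0<\langle\mu,\mu\rangle=\sum_j m_j\langle\alpha_j,\mu\rangle$, at least one simple root $\alpha_j$ satisfies $\langle\alpha_j,\mu\rangle>0$, and for any such $j$ the vector $\mu-\alpha_j$ lies in $\Delta^+$ (its $\alpha_j$-string through $\mu$ has $r=0$ and $q\ge 1$). But $n_i(\mu-\alpha_j)=2-\delta_{ij}$, so $j\ne i$ would force $\mu-\alpha_j\in\Delta_2^+=\{\mu\}$, absurd. Thus $j=i$, giving $\langle\alpha_i,\mu\rangle>0$; this proves the key claim, yields $l=(n-1)\langle\mu,\mu\rangle$ and $k=2\langle\mu,\mu\rangle$, and hence $l/k=(n-1)/2=a$, as required.
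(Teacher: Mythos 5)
Your proof is correct, and it reaches the same reduction as the paper (namely that $\Delta_2^+=\{\mu\}$, $\mathrm{card}\,\Delta_1^+=n-1$, and everything hinges on showing $\mu-\beta\in\Delta$ for every $\beta\in\Delta_1^+$), but you settle that key claim by a genuinely different route. The paper disposes of it in one line via the sandwich $1\leq n_{-\alpha}(\Delta_2^+)\leq\mathrm{card}(\Delta_2^+)=1$, where the lower bound is really a geometric input: by the formula in Lemma \ref{plm}, $k=2\sum_{\{\gamma\in\Delta_2^+\,:\,\gamma-\alpha\in\Delta\}}\langle\gamma,\gamma\rangle$ must be strictly positive because $r$ is positive definite on a strict nearly K\"ahler manifold, which forces at least one $\gamma\in\Delta_2^+$ with $\gamma-\alpha\in\Delta$. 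You instead prove the claim by pure root-system combinatorics: the $\mu$-string through $\beta$ has $r=0$ and $q\leq 1$ by the coefficient bound $|n_i|\leq m_i=2$, so the claim reduces to $\langle\beta,\mu\rangle>0$; dominance of the highest root reduces this to $\langle\alpha_i,\mu\rangle>0$; and the hypothesis $\Delta_2^+=\{\mu\}$ rules out $\langle\alpha_j,\mu\rangle>0$ for any $j\neq i$, since that would produce the extra root $\mu-\alpha_j\in\Delta_2^+$. Your version is longer but self-contained on the Lie-theoretic side and makes explicit why the two-dimensionality of $\mathcal V$ is combinatorially rigid; the paper's version is shorter but quietly leans on the strictness of the nearly K\"ahler structure. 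Both are valid.
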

\begin{proof}
Our hypothesis is equivalent to $\Delta_2^{+} = \{\mu\}.$ In this
case, since $1 \leq n_{-\alpha}(\Delta_2^+) \leq \frac{\dim
\mathcal{V}}{2} =1$,  $\mu -\alpha\in \Delta^{+}_{1}$, for all
$\alpha \in \Delta_{1}^+$. Hence
\[
n_{-\mu}(\Delta^{+}_{1}) = {\rm card}\;(\Delta^{+}_{1}) =
\frac{1}{2}\dim {\mathcal H} = n-1.
\]
\end{proof}

\noindent Then these twistor spaces
are classified on terms of the quotient $\frac{l}{k}$ as follows:

 \begin{proposition} Let $(M=G/K,\sigma, \langle\cdot,\cdot\rangle)$ be a compact irreducible $3$-symmetric space of Type $A_{3}III.$ We have:
\begin{enumerate}
\item[{\rm (i)}] $\frac{l}{k}=1,$ i.e. the metric is Einstein, if
and only if $M$ is the six-dimensional $3$-symmetric space
${\mathbb C}P^{3}.$

\item[{\rm (ii)}] $\frac{l}{k}= 2$ if and only
if $M$ is one of the $10$-dimensional $3$-symmetric spaces:
${\mathbb C}P^{5},$ $\Lie{SO}(6)/(\Lie{U}(2)\times \Lie{SO}(2)),$
$\Lie{G}_{2}/\Lie{U}(2).$
\item[{\rm (iii)}] $\frac{l}{k}= m,$ for $m\geq 3,$ if and only if $M$ is ${\mathbb
C}P^{2m+1}$ or $\Lie{SO}(m+4)/(\Lie{U}(2)\times \Lie{SO}(m)),$ with the following four exceptions:
$\frac{l}{k}$ is $7$ on $\Lie{F}_{4}/(\Lie{Sp}(3)\times
\Lie{SO}(2);$ $10$ on $\Lie{E}_{6}/(\Lie{SU}(6)\times T^{1});$
$16$ on $\Lie{E}_{7}/(\Lie{SO}(12)\times \Lie{SO}(2))$ and $28$ on
$\Lie{E}_{8}/(\Lie{E}_{7}\times \Lie{SO}(2)).$
\end{enumerate}
\end{proposition}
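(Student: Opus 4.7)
The strategy is to invoke Lemma \ref{pv2}: whenever the vertical distribution has dimension two, the ratio $l/k$ equals $a=(n-1)/2$, where $2n=\dim M$; equivalently, $l/k$ equals one quarter of the real dimension of the base quaternionic K\"ahler symmetric space. Consequently $l/k$ is automatically a positive integer in this setting, and the proposition reduces to reading off $a$ from the classification of compact irreducible quaternionic K\"ahler symmetric spaces (Wolf spaces) whose twistor space is of Type $A_{3}III$.

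First I would isolate those entries of Table \ref{tab:AIII} for which $\dim\mathcal{V}=2$. Using the dimension formulas listed there, this forces $i=2$ in the family $\Lie{SO}(2n+1)/(\Lie{U}(i)\times\Lie{SO}(2(n-i)+1))$ (yielding $l/k=2n-3$, with $n\geq 3$), $i=1$ in the family $\Lie{Sp}(n)/(\Lie{U}(i)\times\Lie{Sp}(n-i))$ (yielding $\mathbb{C}P^{2n-1}$ with $l/k=n-1$ and $n\geq 2$), and $i=2$ in the family $\Lie{SO}(2n)/(\Lie{U}(i)\times\Lie{SO}(2(n-i)))$ (yielding $l/k=2n-4$ with $n\geq 4$). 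To these I would append the five exceptional entries $\Lie{G}_{2}/\Lie{U}(2)$, $\Lie{F}_{4}/(\Lie{Sp}(3)\times\Lie{U}(1))$, $\Lie{E}_{6}/(\Lie{SU}(6)\times\Lie{U}(1))$, $\Lie{E}_{7}/(\Lie{SO}(12)\times\Lie{SO}(2))$ and $\Lie{E}_{8}/(\Lie{E}_{7}\times\Lie{U}(1))$, whose tabulated pairs $(l,k)$ give $l/k=2,\,7,\,10,\,16,\,28$ respectively; these exhaust the twistor spaces of Type $A_{3}III$ with two-dimensional fibers.

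Next I would sort the cases by the integer $m=l/k$. For (i), $m=1$ forces $n=2$ in the symplectic family, yielding $\Lie{Sp}(2)/\Lie{U}(2)$, which via the exceptional isomorphism $\Lie{Sp}(2)\cong\Lie{Spin}(5)$ is identified with $\mathbb{C}P^{3}$; the admissible ranges in the two orthogonal families exclude $m=1$ there. For (ii), $m=2$ is attained by $\mathbb{C}P^{5}$ (symplectic family, $n=3$), by $\Lie{SO}(6)/(\Lie{U}(2)\times\Lie{SO}(2))$ (the $n=3$ boundary entry of the third family, recognized via $\Lie{Spin}(6)\cong\Lie{SU}(4)$ as the twistor of the eight-dimensional oriented real Grassmannian $\Lie{SO}(6)/(\Lie{SO}(2)\times\Lie{SO}(4))$), and by $\Lie{G}_{2}/\Lie{U}(2)$ (twistor of $\Lie{G}_{2}/\Lie{SO}(4)$). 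For (iii), when $m\geq 3$ both $\mathbb{C}P^{2m+1}$ (symplectic family with $n=m+1$) and $\Lie{SO}(m+4)/(\Lie{U}(2)\times\Lie{SO}(m))$ (first orthogonal family when $m$ is odd, third when $m$ is even) realize $l/k=m$, while the four additional realizations correspond precisely to the exceptional values $m=7,10,16,28$ already tabulated.

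The main obstacle will be the low-rank bookkeeping in (ii), where one needs the isomorphisms $\Lie{Sp}(2)\cong\Lie{Spin}(5)$ and $\Lie{Spin}(6)\cong\Lie{SU}(4)$ to check that the three ten-dimensional $3$-symmetric spaces listed are pairwise non-isomorphic and that no further item of Table \ref{tab:AIII} realizes $m=2$; in particular one must verify that the remaining eight-dimensional Wolf space, the complex Grassmannian $\Lie{SU}(4)/\Lie{S}(\Lie{U}(2)\times\Lie{U}(2))$, delivers a twistor space of Type $A_{3}II$ and therefore does not contribute here.
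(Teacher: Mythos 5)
Your approach is exactly the paper's implicit one: the proposition is meant only for the Type $A_{3}III$ spaces with two-dimensional fibers (otherwise, e.g., $\Lie{E}_{6}/(\Lie{S}(\Lie{U}(5)\times\Lie{U}(1))\times\Lie{SU}(2))$ with $l/k=12\kappa/6\kappa=2$ would violate (ii)), and the classification is obtained, as you do, by combining Lemma \ref{pv2} with the entries of Table \ref{tab:AIII} having $\dim\mathcal{V}=2$. Two small slips that do not affect the conclusion: the $n=2$ symplectic entry is $\Lie{Sp}(2)/(\Lie{U}(1)\times\Lie{Sp}(1))$, which is $\mathbb{C}P^{3}$ directly (not $\Lie{Sp}(2)/\Lie{U}(2)$, which is the quadric $Q_{3}$, and no appeal to $\Lie{Sp}(2)\cong\Lie{Spin}(5)$ is needed); and $\Lie{SU}(4)/\Lie{S}(\Lie{U}(2)\times\Lie{U}(2))$ coincides with $\widetilde{Gr}_{2}(\mathbb{R}^{6})$ as an eight-dimensional Wolf space, so there is only one ten-dimensional twistor space arising there, namely $\Lie{SO}(6)/(\Lie{U}(2)\times\Lie{SO}(2))$.
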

\begin{remark}{\rm Here the complex projective space
${\mathbb C}P^{2m+1}$ is consider as the $3$-symmetric  space
$\Lie{Sp}(m+1)/(\Lie{U}(1)\times \Lie{Sp}(m))$ equipped with the
standard $\Lie{Sp}(m+1)$-metric and not with the symmetric
Fubini-Study metric.}
\end{remark}

\begin{remark}
 {\rm
 If an irreducible  nearly K{\"a}hler manifold with special
algebraic torsion is Einstein and non-homogeneous, then it must be a
twistor space over a positive quaternionic K{\"a}hler manifold
\cite{Nagy2}. In such a case, by results due to Alexandrov et al.
\cite{AlGrIv},  the manifold has to be six-dimensional. On the other
hand, we have already mentioned in Section \ref{uno} the Hitchin's
result claiming that  any positive quaternionic K\"ahler manifold of
dimension four is symmetric \cite{Hitchin}. This implies that the
corresponding six-dimensional twistor space is homogeneous.
Therefore, Corollaries \ref{Einstein:AII} and \ref{Einstein:AIII}
contain the \textit{complete list} of irreducible  nearly K{\"a}hler
manifolds with special algebraic torsion  which are Einstein. The
remaining irreducible simply connected   homogeneous strict nearly
K{\"a}hler Einstein manifolds are  the compact irreducible
$3$-symmetric spaces of Types $A_{3}IV$, $B_3$ and $C_3$ (Tables
\ref{tab:AIV} and \ref{tab:VI}).  The only possibility for
non-homogeneous irreducible strict nearly K{\"a}hler Einstein
manifolds is to be six-dimensional and
$\nabla^{\Lie{U}(n)}$-holonomy real irreducible.
 }
\end{remark}

\section{Canonical fibrations}\indent
 The notion of Lie triple system given in the theory of symmetric spaces to construct totally geodesic submanifolds can be extended to naturally reductive spaces in the following way \cite{Sagle}.
\begin{definition}{\rm Let $(M = G/K,g)$ be a naturally reductive homogeneous
manifold with adap\-ted reductive decomposition ${\frak g} =
{\frak m} \oplus {\frak k}.$ A subspace $\nu$ of ${\mathfrak m}$ such that $[\nu,\nu]_{\mathfrak m} \subset {\nu}$ and $[[\nu,\nu]_{\frak
k},\nu]\subset \nu$ is said to be
a} Lie triple system {\rm (L.t.s) of }${\mathfrak m}.$
\end{definition}
\noindent Since ${\mathfrak m}$ is $\mathrm{Ad}(K)$-invariant and
using the Jacobi identity, it follows that ${\mathfrak h} = \nu
\oplus [\nu,\nu]_{\mathfrak k}$ is a Lie subalgebra of ${\mathfrak
g}.$ Denote by $H$ the connected Lie subgroup of $G$ with Lie
algebra ${\mathfrak h}.$ Then, by a similar way than for symmetric
spaces (see \cite[Theorem 7.2, Ch. IV]{He}), one obtains the
following result, based on the fact of that geodesics of $M$
through the origin $o$ of $G/K$ are of type $(\exp tv)o,$ $v\in
{\mathfrak m}$. Note that the  manifold $M'$, there mentioned, can
be expressed as the orbit $H\cdot o$ (see also \cite{Sagle}).

\begin{theorem}\label{tg} Let $(M = G/K,g)$ be a naturally reductive homogeneous
manifold with adap\-ted reductive decomposition ${\frak g} = {\frak
m} \oplus {\frak k}$ and $\nu\subset {\mathfrak m}$ a L.t.s. Then
there exists a {\rm (}unique{\rm )} complete and connected totally
geodesic submanifold $M'$ through $o$ of $M,$ such that $T_{o}M' =
\nu.$ Moreover, $M'$ is the naturally reductive homogeneous manifold
$(M'=H/(H\cap K), \iota^{*}g),$ where $H$ is the connected Lie
subgroup of $G$ with Lie algebra ${\mathfrak h} = \nu \oplus
[\nu,\nu]_{\mathfrak k}$ and $\iota$ is the inclusion map.
\end{theorem}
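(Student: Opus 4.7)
My plan is to set $M':=H\cdot o$, the orbit of $o$ under the connected Lie subgroup $H\subseteq G$ with Lie algebra $\mathfrak{h}=\nu\oplus[\nu,\nu]_{\mathfrak{k}}$, equip it with the induced metric $\iota^{*}g$, and then verify in sequence that $T_{o}M'=\nu$, that $(M',\iota^{*}g)$ is naturally reductive with adapted decomposition $\mathfrak{h}=[\nu,\nu]_{\mathfrak{k}}\oplus\nu$, that $M'$ is a complete totally geodesic submanifold of $M$, and finally uniqueness. The first bookkeeping step is to confirm that $\mathfrak{h}$ really is a subalgebra: the L.t.s.\ conditions together with the Jacobi identity applied to $[[\nu,\nu]_{\mathfrak{k}},[\nu,\nu]_{\mathfrak{k}}]$ give $[\mathfrak{h},\mathfrak{h}]\subseteq\mathfrak{h}$, which is precisely the observation already made in the text immediately preceding the theorem. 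Since $\nu\subseteq\mathfrak{m}$ and $\mathfrak{m}\cap\mathfrak{k}=\{0\}$, we have $\mathfrak{h}\cap\mathfrak{k}=[\nu,\nu]_{\mathfrak{k}}$ and $\mathfrak{h}\cap\mathfrak{m}=\nu$, so the isotropy subgroup of $o$ in $H$ is $H\cap K$ with Lie algebra $[\nu,\nu]_{\mathfrak{k}}$, and $T_{o}M'$ is canonically identified with $\nu$.

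Next I would show that $\mathfrak{h}=[\nu,\nu]_{\mathfrak{k}}\oplus\nu$ is a reductive decomposition for $M'$: the second L.t.s.\ condition $[[\nu,\nu]_{\mathfrak{k}},\nu]\subseteq\nu$ is exactly the infinitesimal form of $\mathrm{Ad}(H\cap K)\nu\subseteq\nu$. Natural reductivity of $M'$ then follows immediately from that of $M$, since for $X,Y,Z\in\nu$ the L.t.s.\ condition gives $[X,Y]_{\mathfrak{m}}\in\nu$, so the $\nu$-component of $[X,Y]$ with respect to the decomposition of $\mathfrak{h}$ coincides with $[X,Y]_{\mathfrak{m}}$, and the skew-symmetry identity defining natural reductivity on $M$ transfers verbatim to $M'$. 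Since in a naturally reductive space the geodesics through the origin are precisely the curves $t\mapsto(\exp tv)\cdot o$ with $v$ in the horizontal summand, the geodesics of $(M',\iota^{*}g)$ starting at $o$ are exactly the curves $(\exp tv)\cdot o$ with $v\in\nu$; as $\nu\subseteq\mathfrak{m}$, the same curves are geodesics of $(M,g)$. Translating by elements of $H$, which act as isometries on both spaces, shows that every geodesic of $M'$ is a geodesic of $M$, hence $M'$ is totally geodesic; and since each such geodesic is defined on all of $\mathbb{R}$, completeness follows.

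For uniqueness, let $M''$ be any complete connected totally geodesic submanifold of $M$ through $o$ with $T_{o}M''=\nu$. Being totally geodesic, $M''$ contains every $M$-geodesic issuing from $o$ in a direction of $\nu$, hence the points $(\exp tv)\cdot o$ with $v\in\nu$. Iterating the exponential at arbitrary points of $M''$ (using that parallel transport along geodesics in $M''$ preserves $TM''$, a direct consequence of total geodesicity combined with the L.t.s.\ structure $[\nu,[\nu,\nu]]\subseteq\nu$ on $T_{o}M''$), one shows that $M''$ contains $(\exp v_{1})\cdots(\exp v_{r})\cdot o$ for all $v_{i}\in\nu$, and so contains the whole $H$-orbit of $o$, i.e.\ $M'\subseteq M''$. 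A dimension count, using $\dim M''=\dim\nu=\dim M'$, together with connectedness of both submanifolds, then gives $M'=M''$. I expect the main technical obstacle to be making this iterated-exponential argument fully rigorous, in particular exhausting $H$ (which is a priori only an immersed subgroup of $G$, not necessarily closed) from products of exponentials of $\nu$; this is precisely the difficulty resolved in the symmetric-space analogue \cite[Ch.~IV, Thm.~7.2]{He}, whose proof should transfer with only cosmetic modifications because only the L.t.s.\ closure properties of $\nu$ are used.
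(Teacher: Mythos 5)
Your proposal is correct and follows essentially the same route as the paper, which likewise takes $M'=H\cdot o$, relies on the fact that geodesics of a naturally reductive space through $o$ are the orbits $(\exp tv)\cdot o$, and defers the remaining technicalities (including the iterated-exponential/uniqueness step) to the symmetric-space argument of Helgason \cite[Ch.~IV, Thm.~7.2]{He} and to \cite{Sagle}. You supply rather more detail than the paper does, but no new idea is needed and none of your steps fails.
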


Now, let $(M = G/K,\sigma, \langle\cdot,\cdot\rangle)$ be a  compact
irreducible $3$-symmetric space  of Type $A_{3}II$ or Type
$A_{3}III$ and let ${\mathcal V}$ be any of the subspaces ${\mathcal
V}_{k},$ $k = 1,2,3,$ of ${\mathfrak m}$ for Type $A_{3}II$ or
${\mathcal V}$ for Type $A_{3}III,$ defined in (\ref{VV1}) and (\ref{VV2}), respectively. In what follows, we refer to
${\mathcal V}$ as a {\em vertical distribution} of $M.$

Because ${\mathcal V}$ is a Lie triple system, each vertical
distribution ${\mathcal V}$ determines a totally geodesic
submanifold $F_{\mathcal V}$ as in Theorem \ref{tg}. Consider the
Lie subalgebras ${\mathfrak g}_{\mathcal V}$ and $\bar{\mathfrak
g}_{\mathcal V}$ of ${\mathfrak g}$ given by
\[
 {\mathfrak g}_{\mathcal V} = {\mathcal V} + [{\mathcal V},{\mathcal V}],\;\;\;\;\bar{\mathfrak g}_{\mathcal V} ={\mathcal V} + {\mathfrak k}
 \]
and their associated connected Lie subgroups $G_{\mathcal V}$ and $\bar{G}_{\mathcal V}$ of $G.$ Note that ${\mathfrak g}_{\mathcal V}$ is an ideal of $\bar{\mathfrak g}_{\mathcal V}.$

\begin{theorem}\label{canonical}
Let $(M = G/K,\sigma, \langle\cdot,\cdot\rangle)$ be a compact
irreducible $3$-symmetric space of Type $A_{3}II$ or Type
$A_{3}III$ and let ${\mathcal V}$ be a vertical distribution. Then
we have:
\begin{enumerate}
\item[{\rm (i)}] The quotient manifold $F_{\mathcal V}=G_{\mathcal
V}/(G_{\mathcal V}\cap K)$ is a  totally geodesic submanifold of $M$
through the origin tangent to ${\mathcal V}$ which is also  an
irreducible compact Hermitian symmetric space.
    \item[{\rm (ii)}] $\bar{G}_{\mathcal V}$ is a closed subgroup of $G$ and
    the projection $\pi:M = G/K\to N_{\mathcal V}=G/\bar{G}_{\mathcal V},$ $aK\mapsto a\bar{G}_{\mathcal V}$,
    is the canonical fibration of $M$ with generic fiber $F_{\mathcal
    V}.$
    \item[{\rm (iii)}] The base space $N_{\mathcal V}= G/\bar{G}_{\mathcal V}$ is an irreducible compact symmetric space with inner associated automorphism of order two on ${\mathfrak g}.$
\item[{\rm (iv)}] $\sigma$ is of Type $A_{3}II$ if and only if $N_{\mathcal V}$ is a Hermitian symmetric space.
\end{enumerate}
\end{theorem}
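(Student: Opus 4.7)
My strategy is to repackage the special-algebraic-torsion hypothesis as two symmetric Lie-algebra decompositions of $\mathfrak{g}$, from which the four claims then follow essentially formally. By (\ref{xi}), the conditions $\xi_{\mathcal V}\mathcal V=0$ and $\xi_{\mathcal H}\mathcal H\subseteq\mathcal V$ translate into $[\mathcal V,\mathcal V]_{\mathfrak m}=0$ and $[\mathcal H,\mathcal H]_{\mathfrak m}\subseteq\mathcal V$. A short root-space computation using the descriptions (\ref{VV1}), (\ref{VV2}) of $\mathcal V$ and Lemma \ref{bracket} further yields $[\mathcal V,\mathcal H]_{\mathfrak k}=0$: in Type $A_{3}II$ the forbidden sum of weights $(n_i,n_j)=(2,1)$ exceeds $m_i=m_j=1$, while in Type $A_{3}III$ the forbidden sum $n_i=3$ exceeds $m_i=2$, so the only surviving terms fall inside $\mathcal H$. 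Consequently $\mathfrak g=\bar{\mathfrak g}_{\mathcal V}\oplus\mathcal H$ and $\mathfrak g_{\mathcal V}=[\mathcal V,\mathcal V]\oplus\mathcal V$ are symmetric Lie-algebra decompositions, with involutions acting as $+1$ on $\bar{\mathfrak g}_{\mathcal V}$ (resp.\ on $[\mathcal V,\mathcal V]$) and as $-1$ on $\mathcal H$ (resp.\ on $\mathcal V$).

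With these in hand, part (i) is Theorem \ref{tg} applied to the L.t.s.\ $\mathcal V$, producing a complete, totally geodesic submanifold $F_{\mathcal V}=G_{\mathcal V}/(G_{\mathcal V}\cap K)$; the symmetric decomposition of $\mathfrak g_{\mathcal V}$ makes it Riemannian symmetric, the restriction $J|_{\mathcal V}$ gives a $G_{\mathcal V}$-invariant Hermitian structure, and compactness plus irreducibility are part of Nagy's Proposition 4.2 in \cite{Nagy2}. For (ii), the involution $\tau$ of $\mathfrak g$ with $\pm 1$-eigenspaces $\bar{\mathfrak g}_{\mathcal V}$ and $\mathcal H$ integrates to a group involution $\tilde\tau$ on the simply connected cover of $G$ and descends to $G$; therefore $\bar G_{\mathcal V}$ is open in $G^{\tilde\tau}$, hence closed. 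The equality of Lie algebras $\bar{\mathfrak g}_{\mathcal V}=\mathfrak g_{\mathcal V}+\mathfrak k$ together with the connectedness of $K$ (Corollary \ref{luni}) gives $\bar G_{\mathcal V}=G_{\mathcal V}\cdot K$, identifying the fiber of $\pi:G/K\to G/\bar G_{\mathcal V}$ over the base point with $F_{\mathcal V}$; since the vertical tangent space at $o$ is exactly $\mathcal V$, the uniqueness of the canonical fibration established in Section \ref{three} identifies $\pi$ with it.

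For (iii), $N_{\mathcal V}$ is compact symmetric by construction, its irreducibility translating into the $\mathrm{Ad}(\bar G_{\mathcal V})$-irreducibility of $\mathcal H$ already established in Propositions \ref{III,IV} and \ref{pIII}; the associated involution is inner because $\mathrm{Ad}(\exp\pi\sqrt{-1}H_i)$ acts as $(-1)^{n_i(\alpha)}$ on $E_\alpha$ in Type $A_{3}III$ and $\mathrm{Ad}(\exp\pi\sqrt{-1}(H_i+H_j))$ acts as $(-1)^{n_i(\alpha)+n_j(\alpha)}$ in Type $A_{3}II$, agreeing on each piece of the decomposition with the prescribed sign. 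For (iv), I invoke the standard criterion that a compact irreducible symmetric space is Hermitian iff its isotropy subalgebra has one-dimensional centre, reducing the claim to the computation of $\mathfrak z(\bar{\mathfrak g}_{\mathcal V})\cap\mathfrak h$: in Type $A_{3}II$ the centre of $\mathfrak k$ is the two-dimensional annihilator $\mathbb R H_i+\mathbb R H_j$ of $\Pi(H)=\Pi\smallsetminus\{\alpha_i,\alpha_j\}$, and the roots in $\Delta^+_{1,1}$ (which pair as $2$ with $H_i+H_j$ and as $0$ with $H_i-H_j$) kill exactly one direction, leaving the one-dimensional centre responsible for the Hermitian structure; in Type $A_{3}III$ the centre of $\mathfrak k$ is the one-dimensional $\mathbb R H_i$ and any $\alpha\in\Delta_2^+$ satisfies $\alpha(H_i)=n_i(\alpha)/m_i=1\neq 0$, so $\mathfrak z(\bar{\mathfrak g}_{\mathcal V})$ becomes trivial, making $N_{\mathcal V}$ non-Hermitian. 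I expect the main obstacle to be stringing the root-theoretic pairing arguments for (iv) together cleanly across the various Dynkin-diagram cases, and carefully justifying the integration of $\tau$ to the given (possibly non-simply-connected) compact $G$ in (ii).
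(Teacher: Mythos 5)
Your proposal follows essentially the paper's route: both hinge on realizing $\bar{\mathfrak g}_{\mathcal V}$ as the fixed-point algebra of an \emph{inner} involution of ${\mathfrak g}$ and then reading off (ii)--(iv) from the resulting orthogonal symmetric Lie algebra, with (i) delegated to Theorem \ref{tg} and Nagy's results. What you do differently is to first extract the symmetric decompositions ${\mathfrak g}=\bar{\mathfrak g}_{\mathcal V}\oplus\mathcal H$ and ${\mathfrak g}_{\mathcal V}=[\mathcal V,\mathcal V]\oplus\mathcal V$ directly from the torsion conditions (your weight count for $[\mathcal V,\mathcal H]_{\mathfrak k}=0$ is correct), and, for (iv), to use the centre-of-the-isotropy criterion instead of the paper's explicit exhibition of the invariant complex structure ${\mathrm{ad}}_{\sqrt{-1}(H_i-H_j)}$, etc.; both choices are legitimate and arguably cleaner.

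Two points need repair. First, a normalization error: with the paper's $H_i$ (defined by $\alpha_j(H_i)=\delta_{ij}/m_i$), $\mathrm{Ad}_{\exp\pi\sqrt{-1}H_i}$ acts on $E_\alpha$ by $e^{\pi\sqrt{-1}\,n_i(\alpha)/m_i}$; in Type $A_{3}III$, where $m_i=2$, this has order four, not two. The involution is $\mathrm{Ad}_{\exp 2\pi\sqrt{-1}H_i}$, which acts by $(-1)^{n_i(\alpha)}$, as in the paper; your formula for Type $A_{3}II$ is fine since there $m_i=m_j=1$. Note also that because the involution is inner, the worry you raise about integrating $\tau$ through the universal cover is moot: conjugation by the relevant exponential is already a group involution of $G$ itself, and closedness of $\bar G_{\mathcal V}$ follows as the paper does from \cite[Ch.~IV, Prop.~3.6]{He} since ${\mathfrak g}$ is centerless. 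Second, the irreducibility of $N_{\mathcal V}$ in (iii) cannot be cited from Propositions \ref{III,IV} and \ref{pIII}: those do not establish $\mathrm{Ad}(\bar G_{\mathcal V})$-irreducibility of $\mathcal H$, and in Type $A_{3}II$ the distribution $\mathcal H$ is in fact $\mathrm{Ad}(K)$-\emph{reducible} ($\mathcal H=\mathcal E\oplus\mathcal F$), so no irreducibility statement for the smaller group is available to transfer. The correct one-line argument---the one the paper uses---is that ${\mathfrak g}$ is simple, so the orthogonal symmetric Lie algebra $({\mathfrak g},\tau_{\mathcal V})$ is automatically irreducible.
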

\begin{proof}
(i) It follows from Theorem \ref{tg} and \cite[Proposition 4.2 and
Theorem 4.1]{Nagy2}.

Next, for each vertical distribution ${\mathcal V}$ of $M,$ we shall
look for an inner  involutive automorphism $\tau_{\mathcal V}$ of
${\mathfrak g}$ such that $({\mathfrak g}, \tau_{\mathcal V})$ is an
irreducible orthogonal symmetric Lie algebra and the set of fixed
points ${\mathfrak g}^{\tau_{\mathcal V}}$ is $\bar{\mathfrak
g}_{\mathcal V}.$ Then $(G,\bar{G}_{\mathcal V})$ becomes into an
associated pair of $({\mathfrak g}, \tau_{\mathcal V}).$ Since the
center of ${\mathfrak g}$ is $\{0\},$ it follows that
$\bar{G}_{\mathcal V}$ is closed (\cite[Proposition 3.6, Ch.
IV]{He}). From here, $\bar{G}_{\mathcal V}/K$ is a regular submanifold of
$G/K$ (see \cite[Proposition 4.4, Ch. II]{He}). Moreover, the
totally geodesic submanifold $F_{\mathcal V}= G_{\mathcal
V}/(G_{\mathcal V}\cap K)$  is a submanifold of $\bar{G}_{\mathcal
V}/K,$ via the immersion $g(G_{\mathcal V}\cap K)\mapsto \iota
(g)K,$ for $g\in G_{\mathcal V}$,  where $\iota \, : \,
G_{\mathcal V} \to \bar{G}_{\mathcal V}$ is the inclusion map.
 Because $\mathrm{T}_{o}F_{\mathcal V} = {\mathcal V} =
\mathrm{T}_{o}(\bar{G}_{\mathcal V}/K),$ $F_{\mathcal V}$ is an
open submanifold of $\bar{G}_{\mathcal V}/K.$ Since it  is also
closed in $\bar{G}_{\mathcal V}/K,$ one gets that $F_{\mathcal
V}=G_{\mathcal V}/(G_{\mathcal V}\cap K) = \bar{G}_{\mathcal
V}/K.$ It proves (ii).

If $\sigma$ is of Type $A_{3}II,$  then $m_{i} = m_{j} = 1$ for
some $i,j\in \{1,\dots ,l\},$ $i<j$ and $\sigma =
\mathrm{Ad}_{\exp 2\pi\sqrt{-1}H},$ where $H = \frac{1}{3}(H_{i} +
H_{j}).$ Let ${\mathfrak m} = {\mathcal V}_{1}\oplus {\mathcal
V}_{2}\oplus {\mathcal V}_{3}$ be the orthogonal decomposition of
${\mathfrak m}$ into its vertical subspaces ${\mathcal V}_{k},$
$k=1,2,3.$ We consider the inner automorphisms of order two of ${\mathfrak g}:$
\[
\tau_{{\mathcal V}_{1}} = \mathrm{Ad}_{\exp\pi\sqrt{-1}(H_{i} +
H_{j})},\;\;\;\; \tau_{{\mathcal V}_{2}} =
\mathrm{Ad}_{\exp\pi\sqrt{-1}H_{j}},\;\;\;\; \tau_{{\mathcal
V}_{3}} = \mathrm{Ad}_{\exp\pi\sqrt{-1}H_{i}}.
\]
Using (\ref{adjoint}),  one gets ${\mathfrak g}^{\tau_{{\mathcal V}_{k}}}
=\bar{\mathfrak g}_{{\mathcal V}_{k}}$ and, because ${\mathfrak g}$ is a
compact semisimple (in fact, simple) Lie algebra the pair
$({\mathfrak g}, \tau_{{\mathcal V}_{k}})$ is an effective
orthogonal symmetric Lie algebra.

The base spaces $N_{{\mathcal V}_{k}} = G/\bar{G}_{{\mathcal
V}_{k}},$ $k = 1,2,3,$ of  the canonical fibration are irreducible
compact symmetric spaces associated with the orthogonal  symmetric
Lie algebras $({\mathfrak g},\tau_{{\mathcal V}_{k}}).$ Moreover,
they are Hermitian symmetric spaces and the complex structure
$J^{N_{{\mathcal V}_{k}}}$ on $N_{{\mathcal V}_{k}}$ is determined,
by using (\ref{Jsym}), by the $\mathrm{Ad}(\bar{G}_{{\mathcal
V}_{k}})$-invariant tensor field $J^{N_{{\mathcal V}_{k}}}$ on the
horizontal distribution ${\mathcal H}_{k} = {\mathcal V}_{k_1}\oplus
{\mathcal V}_{k_2},$ where $(k,k_1,k_2)$ is a cyclic permutation of
$(1,2,3),$ given by
$$
\begin{array}{lcll}
J^{N_{{\mathcal V}_{1}}} & = & {\mathrm{ad}_{\sqrt{-1}(H_{i} -
H_{j})}}_{\mid {\mathcal H}_{1}} & (J^{N_{{\mathcal V}_{1}}}_{\mid
{{\mathcal V}_{2}}} = J_{\mid {\mathcal V}_{2}},\;\;
J^{N_{{\mathcal V}_{1}}}_{\mid {\mathcal V}_{3}} = - J_{\mid
{\mathcal V}_{3}}),\\[0.4pc]
J^{N_{{\mathcal V}_{2}}} & = &
{\mathrm{ad}_{\sqrt{-1}H_{j}}}_{\mid {\mathcal H}_{2}} &
(J^{N_{{\mathcal V}_{2}}}_{\mid {{\mathcal V}_{3}}} = J_{\mid
{\mathcal V}_{3}},\;\;
J^{N_{{\mathcal V}_{2}}}_{\mid {\mathcal V}_{1}} = - J_{\mid {\mathcal V}_{1}}),\\[0.4pc]
J^{N_{{\mathcal V}_{3}}} & = &
{\mathrm{ad}_{\sqrt{-1}H_{i}}}_{\mid {\mathcal H}_{3}} &
 (J^{N_{{\mathcal V}_{3}}}_{\mid {{\mathcal V}_{2}}} = J_{\mid {\mathcal V}_{2}},\;\;
 J^{N_{{\mathcal V}_{3}}}_{\mid {\mathcal V}_{1}} = - J_{\mid {\mathcal V}_{1}}).
\end{array}
$$

Next, suppose $\sigma$ is of Type $A_{3}III,$ with $\sigma =
\mathrm{Ad}_{\exp 2\pi\sqrt{-1}H},$  where $H = \frac{2}{3}H_{i}$
being $m_{i} = 2$ for some $i = 1,\dots ,l.$ Let ${\mathcal V}$ be
the vertical distribution. Then the base space $N_{\mathcal V} =
G/\bar{G}_{\mathcal V}$ is associated with  the irreducible
orthogonal symmetric Lie algebra $({\mathfrak g}_{\mathcal
V},\tau_{\mathcal V} = \mathrm{Ad}_{\exp 2\pi\sqrt{-1}H_{i}})$.
 Hence $N_{\mathcal V}$ is an irreducible
compact symmetric space with inner automorphism $\tau_{\mathcal
V}$ on ${\mathfrak g}$ but it is not a Hermitian symmetric space.
In fact, the roots which determine the isotropy subgroup are
$\{\alpha_{j}\in \pi\mid j \neq i\}\cup \{-\mu\}.$ All of this
proves (iii) and (iv).
\end{proof}
Hence we can conclude
\begin{corollary} Compact irreducible $3$-symmetric spaces of Type $A_{3}II$ or Type $A_{3}III$
are twistor spaces over irreducible compact symmetric spaces,
and over Hermitian symmetric spaces for those of Type $A_{3}II.$
\end{corollary}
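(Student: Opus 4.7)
The corollary is an almost immediate consequence of the structural results already in place, so my plan is to assemble the pieces rather than do new calculations. First I would note that by Propositions~\ref{III,IV} and~\ref{pIII}, every compact irreducible $3$-symmetric space $(M=G/K,\sigma,\langle\cdot,\cdot\rangle)$ of Type $A_3II$ or $A_3III$ carries a nearly K\"ahler structure with special algebraic torsion, with vertical distribution $\mathcal{V}$ (one of the ${\mathcal V}_k$ for Type $A_3II$, or the distribution ${\mathcal V}$ exhibited in \eqref{VV2} for Type $A_3III$). Recall from Section~\ref{three} that complete nearly K\"ahler manifolds with special algebraic torsion are automatically twistor spaces over some Riemannian manifold via the canonical fibration, so the content left to verify is only the nature of the base.

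Next I would invoke Theorem~\ref{canonical}: part~(ii) identifies the canonical fibration with the projection $\pi : M = G/K \to N_{\mathcal V} = G/\bar{G}_{\mathcal V}$, part~(iii) says that the base $N_{\mathcal V}$ is an irreducible compact symmetric space (associated to the inner involutive orthogonal symmetric Lie algebra $({\mathfrak g},\tau_{\mathcal V})$ constructed in the proof of the theorem), and part~(iv) says that $N_{\mathcal V}$ is in fact Hermitian symmetric precisely when $\sigma$ is of Type $A_3II$. Stringing these assertions together yields the two statements of the corollary directly.

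Concretely, for Type $A_3II$ one picks any of the three vertical distributions ${\mathcal V}_k$ and applies Theorem~\ref{canonical}~(ii)-(iv) to conclude that $M \to N_{{\mathcal V}_k}$ is a twistor fibration over an irreducible Hermitian symmetric space of compact type, with compatible complex structure $J^{N_{{\mathcal V}_k}}$ exhibited explicitly in the proof of the theorem. For Type $A_3III$ one applies the same theorem to the unique vertical distribution ${\mathcal V}$; part~(iii) still gives that the base is an irreducible compact symmetric space, but now the isotropy root data (the simple roots for $\bar{\mathfrak g}_{\mathcal V}$ are $\{\alpha_j : j\neq i\}\cup\{-\mu\}$) show that $N_{\mathcal V}$ is not Hermitian, which is consistent with the stated distinction.

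There is no real obstacle here: the corollary is essentially the packaging of Theorem~\ref{canonical}. The only point deserving care is ensuring that the notion of \emph{twistor space} used in the corollary matches the one coming from the ``canonical fibration'' construction of Section~\ref{three}; this is immediate from the definition recalled there (a complex manifold fibering over a real manifold whose fibres are complex submanifolds), because Theorem~\ref{canonical}~(i) shows the fibres $F_{\mathcal V}$ are (totally geodesic) irreducible Hermitian symmetric spaces of compact type, hence complex submanifolds of $(M,J)$. With that identification in hand, the proof reduces to the single sentence ``apply Theorem~\ref{canonical}~(ii)-(iv)''.
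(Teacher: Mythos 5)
Your proposal is correct and matches the paper's own (implicit) argument: the corollary is stated immediately after Theorem~\ref{canonical} with the words ``Hence we can conclude,'' i.e.\ it is exactly the packaging of parts (ii)--(iv) of that theorem together with the special-algebraic-torsion results of Propositions~\ref{III,IV} and~\ref{pIII} and the twistor-space interpretation of the canonical fibration from Section~\ref{three}. Nothing further is needed.
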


\begin{remark}{\rm In Theorems \ref{cano1} and \ref{cano2} we
will   explicitly determine
 all pairs  $({\mathfrak g}_{\mathcal
V},\bar{\mathfrak g}_{\mathcal V})$ for each vertical distribution
${\mathcal V}.$ In particular, we will obtain that ${\mathfrak
g}_{\mathcal V}$ is a compact simple Lie algebra. If $\sigma$ is
of Type $A_{3}II,$ it follows from (\ref{adjoint}) that the pair
$({\mathfrak g}_{{\mathcal V}_{k}},\delta_{k}),$ for each $k =
1,2,3,$ where $\delta_{k}$ is the inner automorphism given by
\[
\delta_{1} =
{\mathrm{Ad}_{\exp\frac{\pi\sqrt{-1}}{2}(H_{i} + H_{j})}}_{\mid
{\mathfrak g}_{{\mathcal V}_{1}}},\;\;\;\;
\delta_{2}  =   {\mathrm{Ad}_{\exp\pi\sqrt{-1}H_{i}}}_{\mid
{\mathfrak g}_{{\mathcal V}_{2}}},\;\;\;\;\delta_{3} =
{\mathrm{Ad}_{\exp\pi\sqrt{-1}H_{j}}}_{\mid {\mathfrak
g}_{{\mathcal V}_{3}}},
\]
is an irreducible  orthogonal symmetric Lie algebra associated to
$F_{{\mathcal V}_{k}}=G_{{\mathcal V}_{k}}/(G_{{\mathcal
V}_{k}}\cap K).$ It is a Hermitian symmetric space with almost
complex structure $J_{{\mathcal V}_{k}} = J_{\mid {\mathcal
V}_{k}}$ and determined by the $\mathrm{Ad}(K)$-invariant tensor
field on ${\mathcal V}_{k}$ given by ${\mathrm{ad}_{\frac{\sqrt{-1}}{2}(H_{i}+
H_{j})}}_{\mid {\mathcal V}_{1}},$ for $k=1,$
${\mathrm{ad}_{\sqrt{-1}H_{i}}}_{\mid {\mathcal V}_{2}},$ for
$k=2,$ and ${\mathrm{ad}_{\sqrt{-1}H_{j}}}_{\mid {\mathcal V}_{3}},$
for $k=3.$ If $\sigma$ is of
Type $A_{3}III,$ the pair $({\mathfrak g}_{\mathcal V}, \delta =
{\mathrm{Ad}_{\exp\pi\sqrt{-1}H_{i}}}_{\mid {\mathfrak
g}_{\mathcal V}})$ is an irreducible orthogonal symmetric Lie
algebra associated to $F_{{\mathcal V}}=G_{{\mathcal
V}}/(G_{{\mathcal V}}\cap K)$ and with almost complex structure
determined by the tensor on ${\mathcal V}$ given by
${\mathrm{ad}_{\sqrt{-1}H_{i}}}_{\mid {\mathcal V}}.$}
 \end{remark}

\begin{theorem}\label{cano1} The canonical fibrations for compact
irreducible $3$-symmetric spaces of Type $A_{3}II$ are:
\begin{enumerate}
\item[{\rm (i)}] ${\mathbb C}G_{r_{i},r_{j}} \to
\frac{\Lie{SU}(n)}{\Lie{S}(\Lie{U}(r_{1})\times
\Lie{U}(r_{2})\times \Lie{U}(r_{3}))} \to {\mathbb
C}G_{r_{k},n-r_{k}}, \;$ where $1 \leq r_{i},\;$ $r_{1} + r_{2} +
r_{3} = n,$ and $(i,j,k)$ is a cyclic permutation of $(1,2,3).$
 \item[{\rm (ii)}] ${\mathbb C}P^{n-1}
\to \frac{\Lie{SO}(2n)}{\Lie{U}(n-1)\times \Lie{SO}(2)} \to
\frac{\Lie{SO}(2n)}{\Lie{U}(n)},$ $n\geq 4.$
 \item[{\rm (iii)}] $\frac{\Lie{SO}(2(n-1))}{\Lie{U}(n-1)} \to
\frac{\Lie{SO}(2n)}{\Lie{U}(n-1)\times \Lie{SO}(2)} \to \mathbb{R}
G_{2(n-1),2},$ $n\geq 4.$
 \item[{\rm (iv)}] $\mathbb{R} G_{2,8} \to \frac{\Lie{E}_{6}}{\Lie{SO}(8)\times
\Lie{SO}(2)\times \Lie{SO}(2)} \to
\frac{\Lie{E}_{6}}{\Lie{SO}(10)\times \Lie{SO}(2)}.$
 \end{enumerate}
\end{theorem}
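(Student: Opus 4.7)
The plan is to apply Theorem \ref{canonical} together with the explicit description of the involutions $\tau_{\mathcal{V}_k}$ given in the preceding remark, working case by case through the three Type $A_{3}II$ families in Table \ref{tab:AII}. For each vertical distribution $\mathcal{V}_k$ one must identify the fixed-point subalgebra $\bar{\mathfrak{g}}_{\mathcal{V}_k} = \mathfrak{g}^{\tau_{\mathcal{V}_k}}$, which determines the base $N_{\mathcal{V}_k} = G/\bar{G}_{\mathcal{V}_k}$ of the canonical fibration, and the simple subalgebra $\mathfrak{g}_{\mathcal{V}_k} = \mathcal{V}_k + [\mathcal{V}_k,\mathcal{V}_k]$, which determines the fiber $F_{\mathcal{V}_k} = G_{\mathcal{V}_k}/(G_{\mathcal{V}_k} \cap K)$.

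Using \eqref{adjoint}, a root vector $U^a_\alpha$ is fixed by $\tau_{\mathcal{V}_k}$ if and only if the integer $\alpha(H_\bullet)$ is even, where $H_\bullet \in \{H_i+H_j,\, H_j,\, H_i\}$ is the corresponding element. Combining this test with Lemma \ref{bracket} to compute $[\mathcal{V}_k,\mathcal{V}_k]\subset \mathfrak{k}$, I would verify the following identifications. In the ${\mathfrak a}_{n-1}$ case, the three involutions produce the three block subalgebras ${\mathfrak s}({\mathfrak u}(r_k)\oplus {\mathfrak u}(n-r_k))$, yielding the bases $\mathbb{C}G_{r_k,n-r_k}$, while $\mathfrak{g}_{\mathcal{V}_k}\cong {\mathfrak s}{\mathfrak u}(r_i+r_j)$ gives the fiber $\mathbb{C}G_{r_i,r_j}$. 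In the ${\mathfrak d}_n$ case, $\tau_{\mathcal{V}_1}$ has fixed subalgebra ${\mathfrak s}{\mathfrak o}(2(n-1))\oplus{\mathfrak s}{\mathfrak o}(2)$ giving base $\mathbb{R}G_{2(n-1),2}$ with fiber $\Lie{SO}(2(n-1))/\Lie{U}(n-1)$, whereas $\tau_{\mathcal{V}_2}$ and $\tau_{\mathcal{V}_3}$ both have fixed subalgebra ${\mathfrak u}(n)$ giving base $\Lie{SO}(2n)/\Lie{U}(n)$ with fiber $\mathbb{C}P^{n-1}$. In the ${\mathfrak e}_6$ case, the symmetry between the three vertical distributions forces each $\tau_{\mathcal{V}_k}$ to have fixed subalgebra ${\mathfrak s}{\mathfrak o}(10)\oplus{\mathfrak u}(1)$, so $N_{\mathcal{V}_k}=\Lie{E}_6/(\Lie{SO}(10)\times\Lie{SO}(2))$ with fiber $\mathbb{R}G_{2,8}$.

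Dimension counts (which match those recorded in Table \ref{tab:AII} and Proposition \ref{pA3II}) serve as sanity checks throughout: for instance, $\dim \bar{\mathfrak{g}}_{\mathcal{V}_1} = \dim \mathfrak{k} + \dim \mathcal{V}_1 = (n-1)^2 + 1 + (n-1)(n-2) = (n-1)(2n-3) + 1 = \dim(\Lie{SO}(2(n-1))\times \Lie{SO}(2))$ in the ${\mathfrak d}_n$ case, and similarly in the other cases.

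The main obstacle is the concrete identification of the fixed subalgebras as standard simple Lie algebras, particularly in the exceptional ${\mathfrak e}_6$ case and in the less symmetric subcases of ${\mathfrak d}_n$. This requires careful tracking of which positive roots lie in $\Delta^+(H)$, $\Delta^+_{1,1}$, $\Delta^+_{1,0}$, and $\Delta^+_{0,1}$, using the explicit root-system data from the final section of the paper, and then recognizing the resulting fixed subalgebra as one of the well-known Hermitian symmetric pairs $({\mathfrak s}{\mathfrak u}(n),{\mathfrak s}({\mathfrak u}(p)\oplus{\mathfrak u}(n-p)))$, $({\mathfrak s}{\mathfrak o}(2n),{\mathfrak u}(n))$, $({\mathfrak s}{\mathfrak o}(n+2),{\mathfrak s}{\mathfrak o}(n)\oplus{\mathfrak s}{\mathfrak o}(2))$, or $({\mathfrak e}_6,{\mathfrak s}{\mathfrak o}(10)\oplus\mathbb{R})$.
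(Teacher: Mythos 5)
Your proposal is correct and follows essentially the same route as the paper: a case-by-case root-system computation for $\mathfrak{a}_{n-1}$, $\mathfrak{d}_n$ and $\mathfrak{e}_6$ identifying $\mathfrak{g}_{\mathcal V_k}$ and $\bar{\mathfrak g}_{\mathcal V_k}$ with the standard subalgebras, followed by an appeal to Theorem \ref{canonical}; your parity test via the involutions $\tau_{\mathcal V_k}$ and \eqref{adjoint} is just a repackaging of the paper's direct description of $\bar{\mathfrak g}_{\mathcal V_k}={\mathcal V}_k+\mathfrak{k}$ in terms of the sets $\Delta^+_{1,1}$, $\Delta^+_{1,0}$, $\Delta^+_{0,1}$. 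The identifications and dimension checks you state all agree with the paper's conclusions.
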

\begin{remark}{\rm Note that on above compact quotient $G/K$
the action of $G$ is almost effective but not necessarily effective.}
\end{remark}
\begin{proof}
 We take $\Delta^{+}_{1,1},$ $\Delta^{+}_{1,0}$ and $\Delta^{+}_{0,1}$ as in proof of Proposition \ref{pA3II} for the simple complex Lie algebras ${\mathfrak a}_{n-1},$ ${\mathfrak d}_{n}$ and ${\mathfrak e}_{6}.$ Then on ${\mathfrak a}_{n-1}$ $(n\geq 3),$ using Lemma \ref{bracket}, we have:
$$
\begin{array}{lcl}
{\mathfrak g}_{{\mathcal V}_{1}}  & = & \sum_{\alpha\in
\Delta^{+}_{1,1}\cup \Delta^{+}_{1}(H)\cup \Delta^{+}_{3}(H) \atop
a =0,1}\mathbb{R} U^{a}_{\alpha} + \sum_{\alpha\in
\Delta^{+}_{1,1}}\mathbb{R}\sqrt{-1}H_{\alpha},\\[0.4pc]
{\mathfrak g}_{{\mathcal V}_{2}}  & = & \sum_{\alpha\in \Delta^{+}_{1,0}\cup \Delta^{+}_{1}(H)\cup \Delta^{+}_{2}(H) \atop a =0,1}\mathbb{R} U^{a}_{\alpha} + \sum_{\alpha\in \Delta^{+}_{1,0}}\mathbb{R}\sqrt{-1}H_{\alpha},\\[0.4pc]
{\mathfrak g}_{{\mathcal V}_{3}}  & = & \sum_{\alpha\in \Delta^{+}_{0,1}\cup \Delta^{+}_{2}(H)\cup \Delta^{+}_{3}(H) \atop a =0,1}\mathbb{R} U^{a}_{\alpha} + \sum_{\alpha\in \Delta^{+}_{0,1}}\mathbb{R}\sqrt{-1}H_{\alpha}.
\end{array}
$$
Therefore, we get ${\mathfrak g}_{{\mathcal V}_{2}} \cong
{\mathfrak s}{\mathfrak u}(j)$ and ${\mathfrak g}_{{\mathcal
V}_{3}} \cong {\mathfrak s}{\mathfrak u}(n -i).$ Next we show that
${\mathfrak g}_{{\mathcal V}_{1}} \cong {\mathfrak s}{\mathfrak
u}(n+i-j).$ Denote by $\pi_{n-1+i-j}$ a system of simple roots
$\pi_{n-1 + i -j} = \{\beta_{1},\dots ,\beta_{n-1+i-j}\}$ of
${\mathfrak a}_{n-1+i-j}.$ Let $\phi$ be the bijection
$\phi:\pi_{n-1+i-j} \to \{\alpha_{1},\dots ,\alpha_{i-1},
\alpha_{i,j}\, ,\alpha_{j+1},\dots \alpha_{n-1}\},$ given by
\[
\phi(\beta_{1}) = \alpha_{1},\dots ,\phi(\beta_{i-1}) =
\alpha_{i-1}, \;  \phi(\beta_{i}) = \alpha_{i,j}\, ,\;
\phi(\beta_{i+1}) = \alpha_{j+1},\dots ,\phi(\beta_{n-1+i-j}) =
\alpha_{n-1}.
\]
Then $\phi$ can be extended  by linearity to a bijection  from
$\Delta^{+}_{{\mathfrak a}_{l + i-j}}$ to $\Delta^{+}_{1,1} \cup
\Delta^{+}_{1}(H)\cup \Delta^{+}_{3}(H)$, where
$\Delta^{+}_{{\mathfrak a}_{l + i-j}}$ is the positive root set of
${\mathfrak a}_{n-1+i-j}$ generated by $\pi_{n-1+i-j}.$ Also
denote by the same letter $\phi$ the homomorphism from ${\mathfrak
s}{\mathfrak u}(n+i-j))$ to $\bar{\mathfrak g}_{{\mathcal V}_{1}}$
defined by $\phi(\sqrt{-1}H_{\beta_{s}}) =
\sqrt{-1}H_{\phi(\beta_{s})}$ and $\phi(U^{a}_{\beta_{s}}) =
U^{a}_{\phi(\beta_{s})},$ $s = 1,\dots ,n-1 + i-j.$ Then
\[ \textstyle
{\mathfrak g}_{{\mathcal V}_{1}}  =  \sum_{\alpha\in
\phi(\Delta^{+}_{{\mathfrak a}_{n-1+i-j}})\atop a =0,1}\mathbb{R}
U^{a}_{\alpha} + \sum_{\alpha\in \phi(\pi_{n-1+i-j})}
\mathbb{R}\sqrt{-1}H_{\alpha} = \phi({\mathfrak s}{\mathfrak
u}(n+i-j)).
\]
Moreover, we get
$$
\begin{array}{l}
\bar{\mathfrak g}_{{\mathcal V}_{1}} = {\mathfrak g}_{{\mathcal
V}_{1}} + \sum_{\alpha\in \Delta^{+}_{2}(H)\atop a=0,1}\mathbb{R}
U^{a}_{\alpha} +\sum_{\alpha\in \{\alpha_{i},\dots
,\alpha_{j-1}\}}\sqrt{-1}H_{\alpha} \cong {\mathfrak s}{\mathfrak
u}(j-i)\oplus {\mathfrak s}{\mathfrak u}(n+i-j) \oplus {\mathfrak
T}^{1},\\[0.4pc]
\bar{\mathfrak g}_{{\mathcal V}_{2}}  =
 {\mathfrak g}_{{\mathcal V}_{2}} + \sum_{\alpha\in \Delta^{+}_{3}(H)\atop a=0,1}\mathbb{R} U^{a}_{\alpha}
 +\sum_{\alpha\in \{\alpha_{j},\dots ,\alpha_{n-1}\}}\sqrt{-1}H_{\alpha}
 \cong {\mathfrak s}{\mathfrak u}(j)\oplus {\mathfrak s}{\mathfrak u}(n-j) \oplus {\mathfrak T}^{1},\\[0.4pc]
\bar{\mathfrak g}_{{\mathcal V}_{3}}  =  {\mathfrak g}_{{\mathcal V}_{3}}
+ \sum_{\alpha\in \Delta^{+}_{1}(H)\atop a=0,1}\mathbb{R} U^{a}_{\alpha}
+\sum_{\alpha\in \{\alpha_{1},\dots ,\alpha_{i}\}}\sqrt{-1}H_{\alpha}\cong {\mathfrak s}{\mathfrak u}(n-i)
\oplus {\mathfrak s}{\mathfrak u}(i) \oplus {\mathfrak T}^{1}.
\end{array}
$$
\noindent From here and Theorem \ref{canonical}, taking $r_{1} = i,$ $r_{2} = j-i$ and $r_{3} = n-j,$ we have the fibrations given in (i).

\vspace{2mm}

On $\mathfrak{d}_{n}$ $(n\geq 4)$, using again Lemma \ref{bracket}, we
have:
$$
\begin{array}{lcl}
{\mathfrak g}_{{\mathcal V}_{1}}  & = & \sum_{\alpha\in
\Delta^{+}_{1,1}\cup \Delta^{+}(H)\atop a =0,1}\mathbb{R}
U^{a}_{\alpha} + \sum_{\alpha\in
\Delta^{+}_{1,1}}\mathbb{R}\sqrt{-1}H_{\alpha},\\[0.4pc]
{\mathfrak g}_{{\mathcal V}_{2}}  & = &
\sum_{\alpha\in \Delta^{+}_{1,0}\cup \Delta^{+}(H)\atop a =0,1}\mathbb{R} U^{a}_{\alpha}
+ \sum_{\alpha\in \pi\setminus \{\alpha_{n}\}}\mathbb{R}\sqrt{-1}H_{\alpha},\\[0.4pc]
{\mathfrak g}_{{\mathcal V}_{3}}  & = & \sum_{\alpha\in \Delta^{+}_{0,1}\cup
\Delta^{+}(H) \atop a =0,1}\mathbb{R} U^{a}_{\alpha} + \sum_{\alpha\in \pi \setminus\{\alpha_{n-1}\}}\mathbb{R}\sqrt{-1}H_{\alpha}.
\end{array}
$$

\noindent Therefore, we directly get  ${\mathfrak g}_{{\mathcal
V}_{2}}\cong {\mathfrak g}_{{\mathcal V}_{3}} \cong {\mathfrak
s}{\mathfrak u}(n).$ Let $\Delta^{+}_{{\mathfrak d}_{n-1}}$ be the
positive root set for ${\mathfrak d}_{n-1}$ generated by a system
of simple roots $\pi_{n-1} = \{\beta_{1},\dots ,\beta_{n-1}\}$ and
consider the bijection $\phi:\pi_{n-1} \to \{\alpha_{1},\dots
,\alpha_{n-2}, \tilde{\alpha}_{n-2,n-1}\},$ given by
$\phi(\beta_{i}) = \alpha_{i},$ $i=1,\dots ,n-2,$ and
$\phi(\beta_{n-1}) = \tilde{\alpha}_{n-2,n-1}.$ Then $\phi$
determines a homomorphism from ${\mathfrak s}{\mathfrak
o}(2(n-1))$ to the Lie algebra $\bar{\mathfrak g}_{{\mathcal
V}_{1}}$ defined as before. Then
\[ \textstyle
{\mathfrak g}_{{\mathcal V}_{1}}  =  \sum_{\alpha\in
\phi(\Delta^{+}_{{\mathfrak d}_{n-1}})\atop a =0,1}\mathbb{R}
U^{a}_{\alpha} + \sum_{\alpha\in \phi(\pi_{n-1})}
\mathbb{R}\sqrt{-1}H_{\alpha} = \phi({\mathfrak s}{\mathfrak
o}(2(n-1)).
\]
Moreover, we get
$$
\begin{array}{l}
\bar{\mathfrak g}_{{\mathcal V}_{1}} = {\mathfrak g}_{{\mathcal
V}_{1}} + \mathbb{R}\sqrt{-1}H_{\alpha_{n}} \cong {\mathfrak
s}{\mathfrak o}(2(n-1))\oplus {\mathfrak T}^{1},\\[0.4pc]
\bar{\mathfrak g}_{{\mathcal V}_{2}}  =  {\mathfrak g}_{{\mathcal V}_{2}} + \mathbb{R}\sqrt{-1}H_{\alpha_{l}} \cong {\mathfrak u}(n-1),\\[0.4pc]
\bar{\mathfrak g}_{{\mathcal V}_{3}}  =  {\mathfrak g}_{{\mathcal V}_{3}} + \mathbb{R}\sqrt{-1}H_{\alpha_{l-1}}\cong {\mathfrak u}(n -1).
\end{array}
$$
\noindent From here and Theorem \ref{canonical}, we obtain the fibrations
(ii) and (iii); (ii) corresponds with the vertical distributions
${\mathcal V}_{2}$ and ${\mathcal V}_{3}$ and (iii) with
${\mathcal V}_{1}.$ \vspace{2mm}

Finally, on ${\mathfrak e}_{6}$  we have:
$$
\begin{array}{lcl}
{\mathfrak g}_{{\mathcal V}_{1}}  & = & \sum_{\alpha\in
\Delta^{+}_{1,1}\cup \Delta^{+}(H)\atop a =0,1}\mathbb{R}
U^{a}_{\alpha} + \sum_{\alpha\in
\Delta^{+}_{1,1}}\mathbb{R}\sqrt{-1}H_{\alpha},\\[0.4pc]
{\mathfrak g}_{{\mathcal V}_{2}}  & = & \sum_{\alpha\in \Delta^{+}_{1,0}\cup \Delta^{+}(H)\atop a =0,1}\mathbb{R} U^{a}_{\alpha} + \sum_{\alpha\in \pi\setminus \{\alpha_{6}\}}\mathbb{R}\sqrt{-1}H_{\alpha},\\[0.4pc]
{\mathfrak g}_{{\mathcal V}_{3}}  & = & \sum_{\alpha\in \Delta^{+}_{0,1}\cup \Delta^{+}(H) \atop a =0,1}\mathbb{R} U^{a}_{\alpha} + \sum_{\alpha\in \pi \setminus\{\alpha_{1}\}}\mathbb{R}\sqrt{-1}H_{\alpha}.
\end{array}
$$
Using the corresponding Dynkin diagrams we get ${\mathfrak
g}_{{\mathcal V}_{2}}\cong {\mathfrak g}_{{\mathcal V}_{3}} \cong
{\mathfrak s}{\mathfrak o}(10).$ Next we also show that
${\mathfrak g}_{{\mathcal V}_{1}}\cong {\mathfrak s}{\mathfrak
o}(10).$ Let $\Delta^{+}_{{\mathfrak d}_{5}}$ be the positive root
set for ${\mathfrak d}_{5}$ generated by a system of simple roots
$\pi_{5} = \{\beta_{1},\dots ,\beta_{5}\}.$ Let $\phi$ be the
bijection $\phi:\pi_{5} \to \{\alpha_{1} + \alpha_{3,6}\,
,\alpha_{2} \, ,\dots ,\alpha_{5}\},$ given by
\[
\phi(\beta_{1})  = \alpha_{1} + \alpha_{3,6} \, ,\;\;
\phi(\beta_{2}) = \alpha_{2},\;\; \phi(\beta_{3}) =
\alpha_{4},\;\; \phi(\beta_{4}) = \alpha_{5},\;\; \phi(\beta_{5})
= \alpha_{3}.
\]
Then $\phi$ can be extended  to $\Delta_{{\mathfrak d}_{5}} \to
\Delta(H)\cup \Delta_{1,1}$ by linearity and the result follows as
in the above cases. Moreover, we get
\[
\bar{\mathfrak g}_{{\mathcal V}_{1}}   =  {\mathfrak g}_{{\mathcal V}_{1}} + \mathbb{R}\sqrt{-1}H_{\alpha_{6}},\;\;\; \bar{\mathfrak g}_{{\mathcal V}_{2}}  =  {\mathfrak g}_{{\mathcal V}_{2}} + \mathbb{R}\sqrt{-1}H_{\alpha_{6}},\;\;\; \bar{\mathfrak g}_{{\mathcal V}_{3}} = {\mathfrak g}_{{\mathcal
V}_{3}} + \mathbb{R}\sqrt{-1}H_{\alpha_{1}}.
\]
\noindent Hence,  $\bar{\mathfrak g}_{{\mathcal V}_{1}}\cong \bar{\mathfrak
g}_{{\mathcal V}_{2}}\cong \bar{\mathfrak g}_{{\mathcal
V}_{3}}\cong {\mathfrak s}{\mathfrak o}(10)\oplus {\mathfrak
T}^{1}.$
\end{proof}

\begin{remark}{\rm The coset spaces  $\frac{\Lie{SU}(n)}{\Lie{S}(\Lie{U}(1) \times \Lie{U}(1)
\times\Lie{U}(n-2))}$, $n \geq 3$, are the
 compact irreducible $3$-symmetric spaces of Type
$A_{3}II$ such the the fibers of the canonical fibration are
$2$-dimensional. Such a fibration is given by
\[ \textstyle
S^{2}\cong {\mathbb C}P^{1} = \frac{\Lie{SU}(2)}{\Lie{S}(\Lie{U}(1) \times \Lie{U}(1))}\to
\frac{\Lie{SU}(n)}{\Lie{S}(\Lie{U}(1) \times \Lie{U}(1)
\times\Lie{U}(n-2))}  \to {\mathbb C}G_{2,n-2}=
\frac{\Lie{SU}(n)}{\Lie{S}(\Lie{U}(2) \times\Lie{U}(n-2))} .
\]
Furthermore, for these manifolds $k=m$ and  $\frac{l}{k} =n-2$.
Therefore, $\frac{\Lie{SU}(3)}{\Lie{S}(\Lie{U}(1) \times \Lie{U}(1)
\times\Lie{U}(1))}$ known as the flag manifold ${\mathbb F}^{3},$ is
the unique compact irreducible $3$-symmetric space of Type $A_{3}II$
with $2$-dimensional fibers which is also  Einstein.
 }
\end{remark}
\begin{theorem}\label{cano2} The canonical fibrations for compact irreducible $3$-symmetric spaces of Type $A_{3}III$ are:
\begin{enumerate}
\item[{\rm (i)}] $\frac{\Lie{SO}(2i)}{U(i)}\to
\frac{\Lie{SO}(2n+1)}{\Lie{U}(i)\times \Lie{SO}(2(n-i)+1)} \to
\mathbb{R} G_{2i,2(n-i) +1},$ $(n>2,\; i>1).$ \item[{\rm (ii)}]
$\frac{\Lie{Sp}(i)}{\Lie{U}(i)} \to
\frac{\Lie{Sp}(n)}{\Lie{U}(i)\times \Lie{Sp}(n-i)} \to {\mathbb
H}G_{i,n-i},$ $(n\geq 2,\; i<n).$ \item[{\rm (iii)}]
$\frac{\Lie{SO}(2i)}{U(i)}\to \frac{\Lie{SO}(2n)}{\Lie{U}(i)\times
\Lie{SO}(2(n-i))} \to \mathbb{R} G_{2i,2(n-i)},$ $(n\geq 4,\;
2\leq i \leq n-2).$ \item[{\rm (iv)}] $S^{2} \to
\frac{\Lie{G}_{2}}{\Lie{U}(2)} \to
\frac{\Lie{G}_{2}}{\Lie{SU}(2)\times \Lie{SU}(2)}.$ \item[{\rm
(v)}] $S^{2}\to \frac{\Lie{F}_{4}}{\Lie{Sp}(3)\times \Lie{SO}(2)}
\to \frac{\Lie{F}_{4}}{\Lie{Sp}(3)\times \Lie{SU}(2)}.$ \item[{\rm
(vi)}] $\mathbb{R} G_{2,7}\to \frac{\Lie{F}_{4}}{\Lie{SO}(7)\times
\Lie{SO}(2)}\to \frac{\Lie{F}_{4}}{\Lie{SO}(9)}.$ \item[{\rm
(vii)}] ${\mathbb C}G_{1,5}\to
\frac{\Lie{E}_{6}}{\Lie{S}(\Lie{U}(5)\times \Lie{U}(1))\times
\Lie{SU}(2)} \to \frac{\Lie{E}_{6}}{\Lie{SU}(6)\times
\Lie{SU}(2)}.$ \item[{\rm (viii)}] $S^{2}\to
\frac{\Lie{E}_{6}}{\Lie{SU}(6)\times T^{1}}\to
\frac{\Lie{E}_{6}}{\Lie{SU}(6)\times \Lie{SU}(2)}.$ \item[{\rm
(ix)}] $S^{2}\to \frac{\Lie{E}_{7}}{\Lie{SO}(12)\times
\Lie{SO}(2)} \to \frac{\Lie{E}_{7}}{\Lie{SO}(12)\times
\Lie{SU}(2)}.$ \item[{\rm (x)}] ${\mathbb C}G_{1,7}\to
\frac{\Lie{E}_{7}}{\Lie{S}(\Lie{U}(7)\times \Lie{U}(1)}\to
\frac{\Lie{E}_{7}}{\Lie{SU}(8)}.$ \item[{\rm (xi)}] $\mathbb{R}
G_{2,10}\to \frac{\Lie{E}_{7}}{\Lie{SU}(2)\times
\Lie{SO}(10)\times \Lie{SO}(2)} \to
\frac{\Lie{E}_{7}}{\Lie{SO}(12)\times \Lie{SU}(2)}.$ \item[{\rm
(xii)}] $S^{2}\to \frac{\Lie{E}_{8}}{\Lie{E}_{7}\times
\Lie{SO}(2)} \to \frac{\Lie{E}_{8}}{\Lie{E}_{7}\times
\Lie{SU}(2)}.$ \item[{\rm (xiii)}] $\mathbb{R} G_{2,14}\to
\frac{\Lie{E}_{8}}{\Lie{SO}(14)\times \Lie{SO}(2)}\to
\frac{\Lie{E}_{8}}{\Lie{SO}(16)}.$

\end{enumerate}
\end{theorem}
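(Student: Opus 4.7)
The plan is to reduce Theorem \ref{cano2} to a case-by-case application of Theorem \ref{canonical}, following the template established in the proof of Theorem \ref{cano1}. For each of the simple Lie algebras $\mathfrak{g}_{\mathbb{C}}$ of types $\mathfrak{b}_n$, $\mathfrak{c}_n$, $\mathfrak{d}_n$, $\mathfrak{g}_2$, $\mathfrak{f}_4$, $\mathfrak{e}_6$, $\mathfrak{e}_7$, $\mathfrak{e}_8$ admitting a simple root $\alpha_i$ with $m_i=2$ (these are precisely the cases listed in Table \ref{tab:AIII}), I will fix $\sigma=\mathrm{Ad}_{\exp\frac{4\pi\sqrt{-1}}{3}H_i}$ and use the explicit description of $\Delta^+_1$ and $\Delta^+_2$ sketched in Proposition \ref{III,IV}, together with the systems of positive roots to be displayed in the final section of the paper. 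The goal in each case is to identify
\[
\mathfrak{g}_{\mathcal V}=\mathcal{V}+[\mathcal{V},\mathcal{V}],\qquad
\bar{\mathfrak g}_{\mathcal V}=\mathfrak{g}_{\mathcal V}+\mathfrak{k}\cap\mathfrak{g}_{\mathcal V}^\perp\oplus\mathfrak{g}_{\mathcal V}\cap\mathfrak{k},
\]
with concrete simple/reductive Lie algebras and then invoke Theorem \ref{canonical} to read off the fiber $F_{\mathcal V}=G_{\mathcal V}/(G_{\mathcal V}\cap K)$ and base $N_{\mathcal V}=G/\bar G_{\mathcal V}$.

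Concretely, for each case I will proceed in three steps. First, I compute $\mathfrak{g}_{\mathcal V}$ by listing a spanning set of brackets using Lemma \ref{bracket}: since $\alpha,\beta\in\Delta^+_2$ satisfy $\alpha+\beta\notin\Delta$, while $\alpha-\beta\in\Delta(H)$ whenever nonzero, one has $[\mathcal{V},\mathcal{V}]\subset\mathfrak{h}\oplus\sum_{\gamma\in\Delta(H)}\mathbb{R}U^a_\gamma$; the Cartan part is spanned by $\{\sqrt{-1}H_\beta:\beta\in\Delta^+_2\}$ and the root part is determined by which $\alpha-\beta\in\Delta(H)$ occur. Second, I add $\mathfrak{k}$ to get $\bar{\mathfrak g}_{\mathcal V}$, which is equivalent to adjoining the remaining Cartan directions and the $U^a_\gamma$ for $\gamma\in\Delta(H)$ not already present. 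Third, I identify $\mathfrak{g}_{\mathcal V}$ with a standard compact simple Lie algebra $\mathfrak{g}'$ by constructing a root-preserving bijection $\phi\colon\pi'\to\Pi(H)\cup\{\text{longest roots in }\Delta^+_2\}$, extending it by linearity, and then checking (via the Dynkin diagram) that the extension respects the bracket structure via the standard Weyl basis transport $\sqrt{-1}H_{\beta}\mapsto\sqrt{-1}H_{\phi(\beta)}$, $U^a_\beta\mapsto U^a_{\phi(\beta)}$. This is essentially the device already used for $\mathfrak{a}_{n-1+i-j}\hookrightarrow\mathfrak{g}_{{\mathcal V}_1}$ and $\mathfrak{d}_5\hookrightarrow\mathfrak{g}_{{\mathcal V}_1}$ in the proof of Theorem \ref{cano1}.

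For the classical entries (i)--(iii) the identifications are direct: in $\mathfrak{b}_n$ with $H=\tfrac23 H_i$, $i>1$, the subsystem $\Delta^+_2\cup(\Delta(H)\text{ involved})$ gives $\mathfrak{g}_{\mathcal V}\cong\mathfrak{so}(2i)$ and $\bar{\mathfrak g}_{\mathcal V}\cong\mathfrak{so}(2i)\oplus\mathfrak{so}(2(n-i)+1)$; similarly for $\mathfrak{c}_n$ one reads off $\mathfrak{sp}(i)\oplus\mathfrak{sp}(n-i)$ and for $\mathfrak{d}_n$ the analog $\mathfrak{so}(2i)\oplus\mathfrak{so}(2(n-i))$, yielding the advertised fibrations. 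The fibers $F_{\mathcal V}$ are then the compact Hermitian symmetric spaces $\mathrm{SO}(2i)/\mathrm{U}(i)$, $\mathrm{Sp}(i)/\mathrm{U}(i)$, $\mathrm{SO}(2i)/\mathrm{U}(i)$ respectively, each of which carries a Hermitian structure by the remark following Theorem \ref{canonical}, induced by $\mathrm{ad}_{\sqrt{-1}H_i}|_{\mathcal{V}}$.

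The main obstacle will be the exceptional cases (iv)--(xiii), where one must carefully verify the identification of $\mathfrak{g}_{\mathcal V}$ as a specific exceptional or classical simple Lie algebra. For the two-dimensional fiber cases (Lemma \ref{pv2}), $\Delta^+_2=\{\mu\}$ forces $\mathfrak{g}_{\mathcal V}\cong\mathfrak{su}(2)$, so the fiber is automatically $S^2=\mathbb{C}P^1$ and the base is the corresponding Wolf space — this settles (iv), (v), (viii), (ix), (xii). For the remaining non-degenerate exceptional fibrations (vi), (vii), (x), (xi), (xiii), I will construct the required root system embeddings case by case using the explicit Dynkin diagrams of $\mathfrak{f}_4$, $\mathfrak{e}_6$, $\mathfrak{e}_7$, $\mathfrak{e}_8$, and then identify the base $N_{\mathcal V}=G/\bar G_{\mathcal V}$ as the associated irreducible compact symmetric space via Theorem \ref{canonical}(iii); in particular $\bar{\mathfrak g}_{\mathcal V}$ is reductive of the form $\mathfrak{g}_{\mathcal V}\oplus \mathfrak{c}$ where $\mathfrak{c}\subset\mathfrak{k}$ is the centralizer of $\mathfrak{g}_{\mathcal V}$ in $\mathfrak{g}$. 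The verification is mechanical once the root-system combinatorics is set up, and cross-checking with the classification of isotropy subgroups in Wolf's tables (as suggested by the parenthetical reference to \textsc{Lie} in the earlier proof) guarantees correctness.
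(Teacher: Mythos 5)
Your proposal follows essentially the same route as the paper: compute $\mathfrak{g}_{\mathcal V}=\mathcal V+[\mathcal V,\mathcal V]$ and $\bar{\mathfrak g}_{\mathcal V}=\mathcal V+\mathfrak k$ from the explicit root data $\Delta^+(H)$, $\Delta^+_2$ via Lemma \ref{bracket}, identify them with standard compact Lie algebras through root-system bijections transported on the Weyl basis, and read off fiber and base from Theorem \ref{canonical}; your identification of the $\Delta^+_2=\{\mu\}$ cases (iv), (v), (viii), (ix), (xii) as the $S^2$-fibered Wolf-space twistor fibrations also matches the paper. The only difference is one of completeness rather than method: the paper actually carries out the ``mechanical'' root-combinatorial verifications for the exceptional cases (vi), (vii), (x), (xi), (xiii) --- e.g.\ exhibiting $\Delta^+(H)\cup\Delta^+_2$ as a $\mathfrak b_4$, $\mathfrak a_7$, or $\mathfrak d_8$ system --- which your plan defers.
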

\begin{proof}
If $\sigma$ is of Type $A_{3}III$  then $m_{i} =2$ $(H =
\frac{2}{3}H_{i}),$ for some $i = 1,\dots, n.$ It implies that
${\mathfrak g}_{\mathbb C}$ is one of the following: ${\mathfrak
b}_{n}$ $(n\geq 2),$ ${\mathfrak c}_{n}$ $(n\geq 2),$ ${\mathfrak
d}_{n}$ $(n\geq 4),$ ${\mathfrak g}_{2},$ ${\mathfrak f}_{4},$
${\mathfrak e}_{6},$ ${\mathfrak e}_{7}$ and ${\mathfrak e}_{8}.$

\vspace{2mm}

On ${\mathfrak b}_{n}$ we consider the automorphism $\sigma$
determined by $H = \frac{2}{3}H_{i},$ where $2\leq i\leq n.$ A set
$\Delta^{+}$ of the positive roots is given by
\[
\Delta^{+} = \{\alpha_{p,q} \; (1\leq p\leq q\leq n); \;
\tilde{\alpha}_{p,q} = \alpha_{p} + \dots + \alpha_{q-1}
+2\alpha_{q} + \dots + 2\alpha_{n}\; (1\leq p<q\leq n)\}.
\]
Then,
 \[
 \Delta^{+}(H) = \{\alpha_{p,q}\mid \;1\leq p\leq q <i\}\cup \{\alpha_{p,q}\mid \;i< p\leq q \leq n\}
 \cup \{\tilde{\alpha}_{p,q}\mid \; i<p<q\leq n\}
 \]
and ${\mathfrak k}$ is of type ${\mathfrak a}_{i-1}\oplus
{\mathfrak b}_{n-i}\oplus {\mathfrak T}^{1}.$ Hence $M$ is the
quotient manifold $\Lie{SO}(2n+1)/(\Lie{U}(i)\times
\Lie{SO}(2(n-i) +1)).$ Moreover, the subspace ${\mathcal V}$ of
${\mathfrak m}$ of the vertical distribution has dimension
$i(i-1)$ and it is given by ${\mathcal V} =
\mathbb{R}\{U^{a}_{{\tilde{\alpha}}_{p,q}}\mid \; 1\leq p <q\leq
i\}$ and $\Delta^{+}_{2} = \{\tilde{\alpha}_{p,q}\mid 1\leq
p<q\leq i\}.$ Then we have
\[ \textstyle
 {\mathfrak g}_{\mathcal V}  =
  \sum_{\alpha\in \Delta^{+}_{2}\cup \{\alpha_{p,q}\mid 1\leq p\leq q <i\}\atop a =0,1}\mathbb{R} U^{a}_{\alpha} + \sum_{\alpha\in \Delta^{+}_{2}}\mathbb{R}\sqrt{-1}H_{\alpha}.
\]
Let $\Delta^{+}_{{\mathfrak d}_{i}}$ be the positive root set for
${\mathfrak d}_{i}$ generated  by a system of simple roots
$\pi_{i} = \{\beta_{1},\dots ,\beta_{i}\}.$ Let $\phi$ be the
bijection $\phi:\pi_{i} \to \{\alpha_{1},\dots ,\alpha_{i-1},
\tilde{\alpha}_{i-1,i}\},$ given by $\phi(\beta_{j}) =
\alpha_{j},$ $j=1,\dots ,i-1,$ and $\phi(\beta_{i}) =
\tilde{\alpha}_{i-1,i}.$ Then $\phi$ can be extended  by linearity
to $\Delta_{{\mathfrak d}_{i}}^{+} \to \Delta^{+}_{2}\cup
\{\alpha_{p,q}\mid 1\leq p\leq q <i\}$. Also denote by the same
letter $\phi$ the extended homomorphism from ${\mathfrak
s}{\mathfrak o}(2i)$ to the Lie algebra $\bar{\mathfrak
g}_{\mathcal V}$ defined as before. Then
\[ \textstyle
{\mathfrak g}_{\mathcal V}  =  \sum_{\alpha\in
\phi(\Delta^{+}_{{\mathfrak d}_{i}})\atop a =0,1}\mathbb{R}
U^{a}_{\alpha} + \sum_{\alpha\in \phi(\pi_{i})}
\mathbb{R}\sqrt{-1}H_{\alpha} = \phi({\mathfrak s}{\mathfrak
o}(2i)).
\]

\noindent Moreover, we get
$$
\begin{array}{lcl}
\bar{\mathfrak g}_{\mathcal V}  & = & {\mathfrak g}_{\mathcal V}
\oplus \mathbb{R}\{\sqrt{-1}H_{\alpha_{i+1}}, \dots
\sqrt{-1}H_{\alpha_{l}}\}\oplus \sum_{\alpha\in \Xi }(\mathbb{R} U^{0}_{\alpha} + \mathbb{R} U^{1}_{\alpha})\\[0.4pc]
 & \cong & {\mathfrak s}{\mathfrak o}(2i)\oplus {\mathfrak s}{\mathfrak o}(2(n-i)+1),
 \end{array}
$$
where $\Xi = \{\alpha_{p,q}\mid i<p\leq q\leq n\} \cup
\{\tilde{\alpha}_{p,q}\mid i<p<q\leq n\}$.
 Hence  the fibration given in (i) follows. \vspace{2mm}

 For the complex Lie algebra ${\mathfrak
g}_{\mathbb C}  = \mathfrak{c}_{n}$ $(n\geq 2),$
$$
\xymatrix@R=.5cm@C=.8cm{ \stackrel{2}{\stackrel{\circ}{\alpha_{1}}}
\ar@{-}[r] & \stackrel{2}{\stackrel{\circ}{\alpha_{2}}} \ar@{-}[r] &
\; \dots \ar@{-}[r] & \stackrel{2}{\stackrel{\circ}{\alpha_{n-1}}}
\ar@2{<-}[r] & \stackrel{1}{\stackrel{\circ}{\alpha_{n}}}},
$$

\noindent a set of positive roots is given by
\[
\Delta^{+}  =  \{\alpha_{p,q} \; (1\leq p\leq q\leq n),
\widetilde{\alpha_{p,q}} = \alpha_{p} +\dots + 2\alpha_{q} + \dots +
2\alpha_{n-1} + \alpha_{n}\;\; (1\leq p\leq q\leq n-1)\}.
\]
The inner automorphism $\sigma = \mathrm{Ad}_{\exp
2\pi\sqrt{-1}H},$ where $H = \frac{2}{3}H_{i},$  $1\leq i\leq
n-1.$ Then
\[
\Delta^{+}(H) = \{\alpha_{p,q}\mid \;1\leq p\leq q <i\}\cup
\{\alpha_{p,q}\mid \;i< p\leq q \leq n\}\cup
\{\tilde{\alpha}_{p,q}\mid \; i<p\leq q\leq n-1\}.
 \]
It implies that ${\mathfrak c}_{n}^{\sigma}$ is of type
${\mathfrak a}_{i-1}\otimes {\mathfrak c}_{n-i}\otimes {\mathfrak
T}^{1}.$ Hence $M$ is the quotient manifold
$\Lie{Sp}(n)/(\Lie{U}(i)\times \Lie{Sp}(n-i)).$ Moreover, the
subspace ${\mathcal V}$ of ${\mathfrak m}$ of the vertical
distribution has dimension $i(i+1)$ and is given by ${\mathcal V}
= \{U^{a}_{{\tilde{\alpha}}_{p,q}}\mid \; 1\leq p \leq q\leq i\}$,
and $\Delta^{+}_{2} = \{\tilde{\alpha}_{p,q}\mid 1\leq p\leq q
\leq i\}.$

Now, let $\Delta^{+}_{{\mathfrak c}_{i}}$ be the positive root set
for ${\mathfrak c}_{i}$ generated by a system of simple roots
$\pi_{i} = \{\beta_{1},\dots ,\beta_{i}\}.$ We consider the
bijection $\phi:\pi_{i} \to \{\alpha_{1},\dots ,\alpha_{i-1},
\tilde{\alpha}_{i,i}\},$ given by $\phi(\beta_{j}) = \alpha_{j},$
$j=1,\dots ,i-1,$ and $\phi(\beta_{i}) = \tilde{\alpha}_{i,i}.$
Then $\phi$ can be extended by linearity  to $\Delta_{{\mathfrak
c}_{i}}^{+} \to \Delta^{+}_{2}\cup \{\alpha_{p,q}\mid 1\leq p\leq
q <i\}$ and gives a homomorphism from ${\mathfrak s}{\mathfrak
p}(i)$ to the Lie algebra $\bar{\mathfrak g}_{\mathcal V}.$ Then
$$
\begin{array}{lcl}
{\mathfrak g}_{\mathcal V}  & =  &
\sum_{\alpha\in \Delta^{+}_{2}\cup \{\alpha_{p,q}\mid 1\leq p\leq q <i\}\atop a =0,1}\mathbb{R} U^{a}_{\alpha}
+ \sum_{\alpha\in \Delta^{+}_{2}}\mathbb{R}\sqrt{-1}H_{\alpha}\\[0.4pc]
 & = & \sum_{\alpha\in \phi(\Delta^{+}_{{\mathfrak c}_{i}})\atop a =0,1}\mathbb{R} U^{a}_{\alpha}
 + \sum_{\alpha\in \phi(\pi_{i})} \mathbb{R}\sqrt{-1}H_{\alpha} = \phi({\mathfrak s}{\mathfrak p}(i)).
 \end{array}
$$
Moreover, we get
$$
\begin{array}{lcl}
\bar{\mathfrak g}_{\mathcal V}  & = & {\mathfrak g}_{\mathcal V}
\oplus \mathbb{R}\{\sqrt{-1}H_{\alpha_{i+1}},\dots
\sqrt{-1}H_{\alpha_{l}}\}\oplus \sum_{\alpha\in  \Xi }(\mathbb{R}
U^{0}_{\alpha}
+ \mathbb{R} U^{1}_{\alpha})\\[0.4pc]
 & \cong & {\mathfrak s}{\mathfrak p}(i)\oplus {\mathfrak s}{\mathfrak p}(n-i),
 \end{array}
$$
where $\Xi = \{\alpha_{p,q}\mid i<p\leq q\leq n\}\cup
\{\tilde{\alpha}_{p,q}\mid i<p\leq q\leq n-1\}$.   Hence we get
the fibration given in (ii). \vspace{2mm}

On ${\mathfrak d}_{n},$ the automorphism $\sigma$ corresponds to
$H = \frac{2}{3}H_{i},$ with $2\leq i\leq n-2.$ Then
\[
\Delta^{+}(H) = \{\alpha_{n},\; \alpha_{p,q} \; (1\leq p \leq q
<i),\;\alpha_{p,q}\; (i< p\leq q < n),\; \tilde{\alpha}_{p,q}\;
(i<p< q\leq n,\; p\leq n-2)\}.
 \]
Moreover, $\Delta^{+}_{2} = \{\tilde{\alpha}_{p,q}\; (1\leq
p<q\leq i)\}$ and $\dim {\mathcal V} = i(i-1).$ Let
$\Delta^{+}_{{\mathfrak d}_{i}}$ be the positive root set for
${\mathfrak d}_{i}$ generated by a system of simple roots $\pi_{i}
= \{\beta_{1},\dots ,\beta_{i}\}.$ Let $\phi$ be the bijection
$\phi:\pi_{i} \to \{\alpha_{1},\dots ,\alpha_{i-1},
\tilde{\alpha}_{i-1,i}\},$ given by $\phi(\beta_{i}) =
\alpha_{i},$ $i=1,\dots ,i-1,$ and $\phi(\beta_{i}) =
\tilde{\alpha}_{i-1,i}.$ Then $\phi$ can be extended by linearity
to $\Delta_{{\mathfrak d}_{i}} \to \Delta^{+}_{2}\cup
\{\alpha_{p,q}\mid 1\leq p\leq q <i\}$. Also denote by the same
letter $\phi$ the extended homomorphism from ${\mathfrak
s}{\mathfrak o}(2i)$ to the Lie algebra $\bar{\mathfrak
g}_{\mathcal V}$ defined as before. Then
$$
\begin{array}{lcl}
{\mathfrak g}_{\mathcal V}  & =  &
\sum_{\alpha\in \Delta^{+}_{2}\cup
\{\alpha_{p,q}\mid 1\leq p\leq q <i\}\atop a =0,1}\mathbb{R} U^{a}_{\alpha} + \sum_{\alpha\in \Delta^{+}_{2}}\mathbb{R}\sqrt{-1}H_{\alpha}\\[0.4pc]
 & = &
 \sum_{\alpha\in \phi(\Delta^{+}_{{\mathfrak d}_{i}})\atop a =0,1}
 \mathbb{R} U^{a}_{\alpha} + \sum_{\alpha\in \phi(\pi_{i})} \mathbb{R}\sqrt{-1}H_{\alpha} = \phi({\mathfrak s}{\mathfrak o}(2i)).
 \end{array}
$$
and we get
$$
\begin{array}{lcl}
\bar{\mathfrak g}_{\mathcal V}  & = & {\mathfrak g}_{\mathcal V}
\oplus \mathbb{R}\{\sqrt{-1}H_{\alpha_{i+1}},\dots
\sqrt{-1}H_{\alpha_{n}}\}\oplus \sum_{\alpha\in \Xi \atop a=0,1}
\mathbb{R} U^{a}_{\alpha}\\[0.4pc]
 & \cong & {\mathfrak s}{\mathfrak o}(2i)\oplus {\mathfrak s}{\mathfrak
 o}(2(n-i)),
 \end{array}
$$
where $\Xi = \{\alpha_{n},\alpha_{p,q}\; (i<p\leq q<
n),\tilde{\alpha}_{p,q}\;(i<p<q\leq n, p\leq n-2)\}$. Hence we
obtain the fibration given in (iii). \vspace{2mm}

For the exceptional Lie algebra ${\mathfrak g}_{2}$,
$$
\mathfrak {g}_{2}:\xymatrix@R=.5cm@C=.8cm{
\stackrel{3}{\stackrel{\circ}{\alpha_{1}}} \ar@3{<-}[r] &
\stackrel{2}{\stackrel{\circ}{\alpha_{2}}}},
$$
a set of positive roots is given by $\Delta^{+} =
\{\alpha_{1},\alpha_{2},\alpha_{1} + \alpha_{2}, 2\alpha_{1} +
\alpha_{2}, 3\alpha_{1} + \alpha_{2}, 3\alpha_{1} +
2\alpha_{2}\},$ being $\mu = 3\alpha_{1} + 2\alpha_{2}$ the
maximal root. In this case $H = \frac{2}{3}H_{2}$, $\Delta^{+}_{2}
= \{\mu\}$ and  $\Delta^{+}(H) = \{\alpha_{1}\}.$ Then
$$
\begin{array}{lcl}
{\mathfrak g}_{\mathcal V} & = & \mathbb{R}\sqrt{-1}H_{\mu}\oplus
 \mathbb{R} U^{a}_{\mu}\cong {\mathfrak s}{\mathfrak u}(2)\cong {\mathfrak s}{\mathfrak o}(3),\\[0.4pc]
\bar{\mathfrak g}_{\mathcal V} & = & {\mathfrak g}_{\mathcal
V}\oplus \mathbb{R}\sqrt{-1}H_{\alpha_{1}} \oplus \mathbb{R}
U^{a}_{\alpha_{1}}\cong{\mathfrak s}{\mathfrak u}(2)\oplus
{\mathfrak s}{\mathfrak u}(2).
\end{array}
$$
Hence $F$ is the $2$-sphere $S^{2}$ and the fibering (iv) is
obtained. \vspace{2mm}

On ${\mathfrak f}_{4},$
$$
 \xymatrix@R=.5cm@C=.8cm{
 \stackrel{2}{\stackrel{\circ}{\alpha_{1}}} \ar@1{-}[r] &
   \stackrel{3}{\stackrel{\circ}{\alpha_{2}}} \ar@2{->}[r]
   & \stackrel{4}{\stackrel{\circ}{\alpha_{3}}} \ar@1{-}[r] &
\stackrel{2}{\stackrel{\circ}{\alpha_{4}}}},
$$
$\mu= 2\alpha_{1,4} + \alpha_{2,3} + \alpha_{3}$ and a set of
positive roots is given by
$$
\begin{array}{lcl}
\Delta^{+} &\hspace{-0,3cm}  = \hspace{-0,3cm}&\{\alpha_{p,q}\;
 (1\leq p \leq q \leq 4),\; \alpha_{2,3} + \alpha_{3}, \alpha_{1,q} + \alpha_{p,3}\;(p=2,3;\;q=3,4),
 \alpha_{2,4} + \alpha_{3,q}\; (q=3,4), \\[0.4pc]
  &\hspace{-0,6cm} & \alpha_{14} + \alpha_{p4}\;(p=2,3), \alpha_{1,4} + \alpha_{2,q} + \alpha_{3}\;(q = 3,4),\alpha_{1,4}+\alpha_{2,4}
  +\alpha_{p,3}+\alpha_{3}\;(p=2,3), \mu \}.
\end{array}
$$

Here $H = \frac{2}{3}H_{1}$ or $H = \frac{2}{3}H_{4}.$ For $H =
\frac{2}{3}H_{1},$ one gets
\[
\Delta^{+}(H) = \{\alpha_{p,q}\; (2\leq p\leq q\leq 4), \alpha_{2}
+ 2\alpha_{3}, \alpha_{2} + 2\alpha_{3} + \alpha_{4}, \alpha_{2} +
2\alpha_{3} + 2\alpha_{4}\},
\]
which coincides with a positive root set for ${\mathfrak c}_{3}.$ Then, $\Delta^{+}_{2} =\{\mu\}$ and we have
$$
\begin{array}{lcl}
{\mathfrak g}_{\mathcal V} & = &
\mathbb{R}\sqrt{-1}H_{\mu}\oplus \mathbb{R} U^{a}_{\mu}\cong {\mathfrak s}{\mathfrak u}(2),\\[0.4pc]
\bar{\mathfrak g}_{\mathcal V} & = & {\mathfrak g}_{\mathcal
V}\oplus
\mathbb{R}\{\sqrt{-1}H_{\alpha_{2}},\sqrt{-1}H_{\alpha_{3}},\sqrt{-1}H_{\alpha_{4}}\}
\oplus \sum_{\alpha\in \Delta^{+}(H)}\mathbb{R}
U^{a}_{\alpha}\cong{\mathfrak s}{\mathfrak u}(2)\oplus {\mathfrak
s}{\mathfrak p}(3).
\end{array}
$$
>From here we get (v).

For $H = \frac{2}{3}H_{4},$ one gets
\[
\Delta^{+}(H) = \{\alpha_{p,q}\; (1\leq p\leq q\leq 3), \alpha_{2} +
2\alpha_{3}, \alpha_{1} + \alpha_{2} + 2\alpha_{3}, \alpha_{1} +
2\alpha_{2} + 2\alpha_{3} \}
\]
and
\[
\textstyle \Delta^{+}_{2} = \{\alpha_{2,4} + \alpha_{3,4},
\alpha_{1,4}+\alpha_{p,4}\;(p=2,3),
\alpha_{1,4}+\alpha_{2,4}+\alpha_{3}, \alpha_{1,4} + \alpha_{2,4}
+ \alpha_{p,3} + \alpha_{3}\; (p=2,3), \mu\}.
\]
Hence
 \[
  \textstyle {\mathfrak g}_{\mathcal V} = \bar{\mathfrak g}_{\mathcal
V} = {\mathfrak h} \oplus \sum_{\alpha\in \Delta^{+}(H)\cup
\Delta^{+}_{2}\atop a=0,1}\mathbb{R} U^{a}_{\alpha}.
\]
Put $\beta_{1} = \alpha_{2,4} + \alpha_{3,4},$ $\beta_{2} =
\alpha_{1},$  $\beta_{3} = \alpha_{2}$ and $\beta_{4} =
\alpha_{3}.$ Then $\{\beta_{1},\dots,\beta_{4}\}$ is a system of
simple roots of ${\mathfrak b}_{4}$ and the positive root set
generated by them coincides with $\Delta^{+}(H)\cup
\Delta^{+}_{2}.$ Hence ${\mathfrak g}_{\mathcal V} =
\bar{\mathfrak g}_{\mathcal V}\cong {\mathfrak s}{\mathfrak o}(9)$
and we obtain (vi). \vspace{2mm}

On ${\mathfrak e}_{6},$ $H = \frac{2}{3}H_{3}$ and $H =
\frac{2}{3}H_{5}$ determine conjugate automorphisms of Type
$A_{3}III.$ Also $H = \frac{2}{3}H_{2}$ determines an automorphism
of Type $A_{3}III.$ We start with $H = \frac{2}{3}H_{3}.$ Then
$$
\begin{array}{rcl}
\Delta^{+}(H) & = & \{\alpha_{1},\alpha_{2}, \alpha_{p,q}\;
(4\leq p \leq q \leq 6),\alpha_{2} + \alpha_{4,p}\; (4\leq p\leq 6)\},\\[0.4pc]
\Delta^{+}_{2} & = & \{\alpha_{1,5} + \alpha_{3,4} \, ,
\alpha_{1,6} + \alpha_{3,4}\, , \alpha_{1,6} + \alpha_{3,5}\, ,
\alpha_{1,6} + \alpha_{3,5} + \alpha_{4}, \mu \}.
\end{array}
$$
Let $\Delta^{+}_{{\mathfrak a}_{5}}$ be the positive root set for
${\mathfrak a}_{5}$ generated by a system of simple roots $\pi_{5}
= \{\beta_{1},\dots ,\beta_{5}\}.$ Let $\phi$ be the bijection
$\phi:\pi_{5} \to \{\alpha_{2}, \alpha_{4},\alpha_{5},\alpha_{6},
\alpha_{1,5} + \alpha_{3,4}\},$ given by
\[
\phi(\beta_{1}) = \alpha_{2},\;\;\phi(\beta_{2}) =
\alpha_{4},\;\;\phi(\beta_{3}) = \alpha_{5},\;\;\phi(\beta_{4}) =
\alpha_{6},\;\; \phi(\beta_{5}) = \alpha_{1,5} + \alpha_{3,4}.
\]
Then $\phi$ can be extended by linearity to
$\Delta^{+}_{{\mathfrak a}_{5}} \to \Delta^{+}_{2}\cup
(\Delta^{+}(H)\setminus\{\alpha_{1}\})$. Also denote by the same
letter $\phi$ the extended homomorphism from ${\mathfrak
s}{\mathfrak o}(6)$ to the Lie algebra $\bar{\mathfrak
g}_{\mathcal V}.$ Then
\[ \textstyle
{\mathfrak g}_{\mathcal V}  =  \sum_{\alpha\in \Delta^{+}_{2}\cup
(\Delta^{+}(H)\setminus\{\alpha_{1}\})\atop a =0,1}\mathbb{R}
U^{a}_{\alpha} + \sum_{\alpha\in
\Delta^{+}_{2}}\mathbb{R}\sqrt{-1}H_{\alpha}\cong {\mathfrak
s}{\mathfrak u}(6).
\]
Moreover, we get
\[
\bar{\mathfrak g}_{\mathcal V}   =  {\mathfrak g}_{\mathcal V}
\oplus \mathbb{R}\sqrt{-1}H_{\alpha_{1}}\oplus\mathbb{R}
U^{0}_{\alpha_{1}}\oplus \mathbb{R} U^{1}_{\alpha_{1}}\cong
{\mathfrak s}{\mathfrak u}(6)\oplus {\mathfrak s}{\mathfrak u}(2).
\]
Hence we get the fibration given in (vii).

Next we consider $H = \frac{2}{3}H_{2}.$ Then
\[
\Delta^{+}(H) = \{\alpha_{1}, \alpha_{p,q}\; (3\leq p \leq q \leq
6), \alpha_{1} + \alpha_{3,p}\; (3\leq p \leq 6)\}
\]
and $\Delta^{+}_{2} = \{\mu\}.$ Then
\[
{\mathfrak g}_{\mathcal V} = \mathbb{R}\sqrt{-1}H_{\mu} \oplus
\mathbb{R} U^{0}_{\mu} \oplus \mathbb{R} U^{1}_{\mu} \cong
{\mathfrak s}{\mathfrak u}(2)\cong {\mathfrak s}{\mathfrak o}(3).
\]
Since $\Delta^{+}(H)$ is a root set for ${\mathfrak a}_{5}$ with
simple system $\{\alpha_{1},\alpha_{3},\dots ,\alpha_{6}\},$ we
have
\[
\bar{\mathfrak g}_{\mathcal V}   \cong {\mathfrak s}{\mathfrak u}(6)\oplus {\mathfrak s}{\mathfrak u}(2).
\]
This gives the fibration in (viii).
 \vspace{2mm}

On $\mathfrak{e}_{7}$,
$$ \xymatrix@R=.5cm@C=.8cm{
& & & \stackrel{2}{\stackrel{\circ}{\alpha_{2}}} \ar@{-}[d] & &\\
\;\;\;\;\;\stackrel{1}{\stackrel{\circ}{\alpha_{7}}} \ar@{-}[r] &
\stackrel{2}{\stackrel{\circ}{\alpha_{6}}} \ar@{-}[r] &
\stackrel{3}{\stackrel{\circ}{\alpha_{5}}} \ar@{-}[r] &
\stackrel{4}{\stackrel{\circ}{\alpha_{4}}} \ar@{-}[r] &
\stackrel{3}{\stackrel{\circ}{\alpha_{3}}} \ar@{-}[r] &
\stackrel{2}{\stackrel{\circ \, ,}{\alpha_{1}}}}
$$
a positive system of roots is given by
$$
\begin{array}{lcl}
\Delta^{+} &\hspace{-0,3cm}  =\hspace{-0,3cm} &
\{\alpha_{1},\alpha_{2};\; \alpha_{p,q}\; (3\leq p\leq q \leq 7);\; \alpha_{1,p} \, , \alpha_{2,p}\; (4\leq p\leq 7); \; \alpha_{1} + \alpha_{3,p}\; (3\leq p\leq 7);\\[0.4pc]
 &\hspace{-0,6cm} & \hspace{0.2cm} \alpha_{2} + \alpha_{4,p}\; (4\leq p\leq 7);\;  \alpha_{4} + \alpha_{p,q}\; (p=1,2; q = 5,6,7);\;  \alpha_{1,6} + \alpha_{p,q}\; (3\leq p<q\leq 5);\\[0.4pc]
 &\hspace{-0,6cm} & \hspace{0.2cm} \alpha_{2,p} + \alpha_{4,5}\; (p = 6,7), \alpha_{4,6} + \alpha_{2,7}, \alpha_{1,7} + \alpha_{p,6}\; (p = 3,4), \alpha_{1,7} + \alpha_{p,q}\; (3\leq p<q\leq 5),\\[0.4pc]
 &\hspace{-0,6cm} & \alpha_{1,5} + \alpha_{3,4}, \alpha_{1,6} + \alpha_{p,5} + \alpha_{4}\; (p = 2,3), \alpha_{1,7} + \alpha_{p,q} + \alpha_{4}\; (p=2,3; q = 5,6),\\[0.4pc]
 &\hspace{-0,6cm} & \alpha_{1,7} + \alpha_{p,6} + \alpha_{4,5}\; (p=2,3), \alpha_{1,7} + \alpha_{2,6} + \alpha_{p,5} + \alpha_{4}\; (p=3,4),
  \mu \}.
\end{array}
$$
The possible automorphisms of Type $A_{3}III$ are determined  by
$H = \frac{2}{3}H_{1},$ $H= \frac{2}{3}H_{2}$ or $H =
\frac{2}{3}H_{6}.$

If $H = \frac{2}{3}H_{1},$ then
$$
\begin{array}{lcl}
\Delta^{+}(H) & = & \{\alpha_{2};\; \alpha_{p,q}\; (3\leq p\leq q
\leq 7);\; \alpha_{2,p}\; (4\leq p\leq 7);
 \; \alpha_{2} + \alpha_{4,p}\; (4\leq p\leq 7);\\[0.4pc]
 & & \;  \alpha_{4} + \alpha_{2,q}\; (q = 5,6,7);\; \alpha_{2,p} + \alpha_{4,5}\; (p = 6,7), \alpha_{4,6} + \alpha_{2,7}\},
\end{array}
$$
and $\Delta^{+}_{2} = \{\mu\}.$ Hence, as before, ${\mathfrak
g}_{\mathcal V} \cong {\mathfrak s}{\mathfrak u}(2).$ From Dynkin
diagrams, $\Delta^{+}(H)$ is a root set for ${\mathfrak d}_{5}$
with simple system $\{\beta_{1},\dots ,\beta_{6}\},$ where
$\beta_{k} = \alpha_{8-k}, $ $k = 1,\dots, 6.$ Then we have
\[
\bar{\mathfrak g}_{\mathcal V}   \cong {\mathfrak s}{\mathfrak o}(12)\oplus {\mathfrak s}{\mathfrak u}(2).
\]
This gives the fibration in (ix).

If $H = \frac{2}{3}H_{2},$ then
$$
\begin{array}{rcl}
\Delta^{+}(H) & = & \{\alpha_{1},\alpha_{p,q}\;
(3\leq p\leq q \leq 7);\; \alpha_{1} + \alpha_{3,p}\; (3\leq p\leq 7)\},\\[0.4pc]
\Delta^{+}_{2} & = & \left\{\alpha_{1,6}+\alpha_{2,5}+\alpha_{4},
\alpha_{1,7} + \alpha_{2,q} + \alpha_{4}\;(q=5,6), \alpha_{1,7} + \alpha_{2,6} + \alpha_{4,5}, \right.\\[0.4pc]
 & & \hspace{.5cm} \left. \alpha_{1,7} + \alpha_{2,6} + \alpha_{p,5} + \alpha_{4}\; (p=3,4),\mu\right\}.
\end{array}
$$
Hence
\[ \textstyle
{\mathfrak g}_{\mathcal V} = \bar{\mathfrak g}_{\mathcal V} =
{\mathfrak h} \oplus \sum_{\alpha\in \Delta^{+}(H)\cup
\Delta^{+}_{2}\atop a=0,1}\mathbb{R} U^{a}_{\alpha}.
\]
Put $\beta_{1} = \alpha_{1},$ $\beta_{2} = \alpha_{3},$ $\beta_{3}
= \alpha_{4},$ $\beta_{4} = \alpha_{5},$ $\beta_{5} = \alpha_{6},$
$\beta_{6} = \alpha_{7}$ and $\beta_{7} = \alpha_{1,6} +
\alpha_{2,5} + \alpha_{4}.$ Then $\{\beta_{1},\dots,\beta_{7}\}$
is a system of simple roots of ${\mathfrak a}_{7}$ and the
positive root set generated by them coincides with
$\Delta^{+}(H)\cup \Delta^{+}_{2}.$ Hence ${\mathfrak g}_{\mathcal
V} = \bar{\mathfrak g}_{\mathcal V}\cong {\mathfrak s}{\mathfrak
u}(8)$ and we obtain (x).

If $H = \frac{2}{3}H_{6},$ then
$$
\begin{array}{rcl}
\Delta^{+}(H) & = &\{\alpha_{1},\alpha_{2};\; \alpha_{p,q}\; (3\leq p\leq q \leq 5);\;
\alpha_{7}, \alpha_{1,p}, \alpha_{2,p}\; (p = 4,5); \; \alpha_{1} + \alpha_{3,p}\; (p=3,4,5);\\[0.4pc]
 & & \hspace{0.5cm} \alpha_{2} + \alpha_{4,p}\; (p= 4,5);\;  \alpha_{4} + \alpha_{p,5}\;
  (p=1,2;);\;  \alpha_{1,5} + \alpha_{3,4}\},\\[0.4pc]
\Delta^{+}_{2} & = & \{\alpha_{2,7} + \alpha_{4,6}, \alpha_{1,7} + \alpha_{p,6}\;(p=3,4), \alpha_{1,7} + \alpha_{p,6}+\alpha_{4}\;(p=2,3),\\[0.4pc]
 & & \hspace{0.5cm}\alpha_{1,7} + \alpha_{p,6} + \alpha_{4,5}\;(p=2,3),\alpha_{1,7} + \alpha_{2,6} + \alpha_{p,5} + \alpha_{4}\;(p=3,4), \mu\}.
 \end{array}
$$
Therefore,
\[ \textstyle
{\mathfrak g}_{\mathcal V}  =  \sum_{\alpha\in \Delta^{+}_{2}\cup
(\Delta^{+}(H)\setminus\{\alpha_{7}\})\atop a =0,1}\mathbb{R}
U^{a}_{\alpha} + \sum_{\alpha\in
\Delta^{+}_{2}}\mathbb{R}\sqrt{-1}H_{\alpha}.
\]

Let $\Delta^{+}_{{\mathfrak d}_{6}}$ be the  positive root set for
${\mathfrak d}_{6}$ generated by a system of simple roots $\pi_{6}
= \{\beta_{1},\dots ,\beta_{6}\}.$ Let $\phi$ be the bijection
$\phi:\pi_{6} \to \{\alpha_{2,7} + \alpha_{4,6}\,,
\alpha_{1},\dots ,\alpha_{5}\},$ given by
\[
\phi(\beta_{1}) = \alpha_{2,7} + \alpha_{4,6}\,
,\;\;\phi(\beta_{2}) = \alpha_{1},\;\;\phi(\beta_{3}) =
\alpha_{3},\;\;\phi(\beta_{4}) = \alpha_{4},\;\; \phi(\beta_{5}) =
\alpha_{2},\;\; \phi(\beta_{6}) = \alpha_{5}.
\]
Then $\phi$ can be extended by linearity  to
$\Delta^{+}_{{\mathfrak d}_{6}} \to \Delta^{+}_{2}\cup
(\Delta^{+}(H)\setminus\{\alpha_{7}\})$. This implies that
${\mathfrak g}_{\mathcal V}  \cong {\mathfrak s}{\mathfrak
o}(12).$ Moreover, we get
\[
\bar{\mathfrak g}_{\mathcal V}   =  {\mathfrak g}_{\mathcal V}
\oplus \mathbb{R}\sqrt{-1}H_{\alpha_{7}}\oplus\mathbb{R}
U^{0}_{\alpha_{7}}\oplus \mathbb{R} U^{1}_{\alpha_{7}}\cong
{\mathfrak s}{\mathfrak o}(12)\oplus {\mathfrak s}{\mathfrak
u}(2).
\]
Hence we get the fibration given in (xi). \vspace{2mm}

On ${\mathfrak e}_{8}$,
$$
\xymatrix@R=.5cm@C=.8cm{
& & & & \stackrel{3}{\stackrel{\circ}{\alpha_{2}}} \ar@{-}[d] & &\\
\;\;\;\;\; \stackrel{2}{\stackrel{\circ}{\alpha_{8}}} \ar@{-}[r] &
\stackrel{3}{\stackrel{\circ}{\alpha_{7}}} \ar@{-}[r] &
\stackrel{4}{\stackrel{\circ}{\alpha_{6}}} \ar@{-}[r] &
\stackrel{5}{\stackrel{\circ}{\alpha_{5}}} \ar@{-}[r] &
\stackrel{6}{\stackrel{\circ}{\alpha_{4}}} \ar@{-}[r] &
\stackrel{4}{\stackrel{\circ}{\alpha_{3}}} \ar@{-}[r] &
\stackrel{2}{\stackrel{\circ \,\, ,}{\alpha_{1}}}}
$$
we consider the automorphisms $\sigma$ of Type $A_{3}III$
determined by $H = \frac{2}{3}H_{1}$ and $H = \frac{2}{3}H_{8}.$

If $H = \frac{2}{3}H_{8},$ we get that $\Delta^{+}(H)$ is the set
of positive roots of ${\mathfrak e}_{7}$ given below and
\[
\Delta^{+} = \Delta^{+}(H) \cup \{\mu\} \cup \{\alpha, \mu-\alpha\},
\]
where $\mu = 2\alpha_{1,8} +\alpha_{2,7} + \alpha_{3,6} +
\alpha_{4,5} + \alpha_{4}$ and $\alpha$ is any element of the
following set:
$$
\begin{array}{l}
\{\alpha_{p,8}\; (1\leq p\leq 8), \alpha_{1,8} + \alpha_{3,q},
\alpha_{1,8} + \alpha_{4,q}, \alpha_{2,8} + \alpha_{4,q}\;(4\leq
q\leq 7),
\alpha_{3,8} + \alpha_{1}, \alpha_{4,8} + \alpha_{2},\\[0.4pc]
\hspace{1cm} \alpha_{1,8} + \alpha_{3,q} + \alpha_{4}\;(5\leq
q\leq 7), \alpha_{1,8} + \alpha_{2,q} + \alpha_{4}\; (5\leq q \leq
6), \alpha_{1,8} + \alpha_{2,6} + \alpha_{4,5}\}.
\end{array}
$$
Since  $\Delta^{+}_{2} = \{\mu\},$ ${\mathfrak g}_{\mathcal V}
\cong {\mathfrak s}{\mathfrak u}(2)$ and then $\bar{\mathfrak
g}_{\mathcal V}   \cong {\mathfrak e}_{7}\oplus {\mathfrak
s}{\mathfrak u}(2).$ This gives the fibration in (xii).

If $H = \frac{2}{3}H_{1},$ we have
$$
\begin{array}{rcl}
\Delta^{+}(H) &\hspace{-0,4mm} = \hspace{-0,4mm}& \{\alpha_{2}, \alpha_{p,q}\;(3\leq p\leq q
\leq 8), \alpha_{2,p}\;(4\leq p \leq 8),
 \alpha_{4} + \alpha_{2,p}\;(5\leq p \leq 8),\\[0.4pc]
&\hspace{-0,4mm} &\hspace{-0,4mm}  \alpha_{2} + \alpha_{4,p}\; (4\leq p \leq 8),\alpha_{2,p} + \alpha_{4,5}\; (6\leq p \leq 8),
 \alpha_{2,p} + \alpha_{4,6}\; (p = 7,8), \alpha_{2,8} + \alpha_{4,7}\},\\[0.4pc]
\Delta^{+}_{2} &\hspace{-0,4mm} =\hspace{-0,4mm} & \{\mu, \mu -\alpha\},
\end{array}
$$
where $\alpha$ is any element of
\[
\{\alpha_{2}, \alpha_{p,q}\;(3\leq p\leq q \leq 8), \alpha_{2,p}\; (4\leq p \leq 8), \alpha_{2} + \alpha_{4,8}, \alpha_{2,8} + \alpha_{4,p}\; (4\leq p \leq 8)\}.
\]
Hence
\[ \textstyle
{\mathfrak g}_{\mathcal V} = \bar{\mathfrak g}_{\mathcal V} =
{\mathfrak h} \oplus \sum_{\alpha\in \Delta^{+}(H)\cup
\Delta^{+}_{2}\atop a=0,1}\mathbb{R} U^{a}_{\alpha}.
\]
Put  $\beta_{1} = \mu - (\alpha_{2,8} + \alpha_{4,8}) =
\alpha_{1,7} + \alpha_{1,6} + \alpha_{3,5} + \alpha_{4}$ and
$\beta_{k} = \alpha_{10-k},$ for $k\in \{2,\dots, 8\}.$ Then
$\{\beta_{1},\dots,\beta_{8}\}$ is a system of simple roots of
${\mathfrak d}_{8}$ and the positive root set generated by them
coincides with $\Delta^{+}(H)\cup \Delta^{+}_{2}.$ Hence
${\mathfrak g}_{\mathcal V} = \bar{\mathfrak g}_{\mathcal V}\cong
{\mathfrak s}{\mathfrak o}(16)$ and we obtain the fibration in
(xiii).
\end{proof}

\begin{ack}{\rm Research supported by grants from  MICINN (Spain) projects\\ MTM2007-65852 and
MTM2009-13383.}
\end{ack}

\end{document}